\documentclass[11pt]{amsart}
\usepackage{amssymb,amsmath,amsfonts}
\usepackage{amsthm}
\usepackage{cite}
\usepackage[textwidth=160mm,textheight=220mm]{geometry}

\newtheorem{lemma}{Lemma}[section]
\newtheorem{theo}{Theorem}[section]
\newtheorem{cor}{Corollary}[section]
\newtheorem{prop}{Proposition}[section]

\theoremstyle{definition}
\newtheorem{defn}{Definition}[section]
\newtheorem{rem}{Remark}[section]

\numberwithin{equation}{section}

\newcommand{\tin}{t\in [a,b]}
\newcommand{\cnul}{C_0\big(\RR;\RR\big)}
\newcommand{\cnulid}{C_0\big(I;D\big)}
\newcommand{\cnulok}{C_0\big(\RR;[0,\kappa]\big)}
\newcommand{\cnulrp}{C_0\big(\RR;\RR_+\big)}
\newcommand{\cnultnr}{C_0\big([t_0,+\infty);\RR\big)}
\newcommand{\cloc}{C_{loc}\big(\RR;\RR\big)}
\newcommand{\clocrprp}{C_{loc}\big(\RR_+;\RR_+\big)}
\newcommand{\clocrnn}{C_{loc}\big(\RR;(0,+\infty)\big)}
\newcommand{\cabr}{C\big([a,b];\RR\big)}
\newcommand{\cabrp}{C\big([a,b];\RR_+\big)}
\newcommand{\cantr}{C\big([a_n,t_0];\RR\big)}
\newcommand{\cantrp}{C\big([a_n,t_0];\RR_+\big)}
\newcommand{\catr}{C\big([a,t_0];\RR\big)}
\newcommand{\catrp}{C\big([a,t_0];\RR_+\big)}
\newcommand{\ctbr}{C\big([t_0,b];\RR\big)}
\newcommand{\acloc}{AC_{loc}\big((-\infty,t_0];(0,+\infty)\big)}
\newcommand{\acloctau}{AC_{loc}\big((-\infty,\tau];(0,+\infty)\big)}
\newcommand{\aclocrp}{AC_{loc}\big((-\infty,t_0];\RR_+\big)}
\newcommand{\acloctrp}{AC_{loc}\big((-\infty,\tau];\RR_+\big)}
\newcommand{\acloctaur}{AC_{loc}\big((-\infty,\tau];\RR\big)}
\newcommand{\aclocr}{AC_{loc}\big(\RR;\RR\big)}
\newcommand{\aclocrpos}{AC_{loc}\big(\RR;(0,+\infty)\big)}
\newcommand{\aclocid}{AC_{loc}\big(I;D\big)}
\newcommand{\acabr}{AC\big([a,b];\RR\big)}
\newcommand{\acatr}{AC\big([a,t_0];\RR\big)}
\newcommand{\actbr}{AC\big([t_0,b];\RR\big)}
\newcommand{\acataur}{AC\big([a,\tau];\RR\big)}
\newcommand{\acabd}{AC\big([a,b];D\big)}
\newcommand{\acabrp}{AC\big([a,b];\RR_+\big)}
\newcommand{\acatrp}{AC\big([a,t_0];\RR_+\big)}
\newcommand{\acaton}{AC\big([a,t_0];(0,+\infty)\big)}
\newcommand{\acatauon}{AC\big([a,\tau];(0,+\infty)\big)}
\newcommand{\lloc}{L_{loc}\big(\RR;\RR\big)}
\newcommand{\llocrp}{L_{loc}\big(\RR;\RR_+\big)}
\newcommand{\lloctrp}{L_{loc}\big((-\infty,t_0];\RR_+\big)}
\newcommand{\llocirp}{L_{loc}\big(I;\RR_+\big)}
\newcommand{\linf}{L^{+\infty}\big(\RR;\RR\big)}
\newcommand{\lantrp}{L\big([a_n,t_0];\RR_+\big)}
\newcommand{\labrp}{L\big([a,b];\RR_+\big)}
\newcommand{\latrp}{L\big([a,t_0];\RR_+\big)}
\newcommand{\ltbrp}{L\big([t_0,b];\RR_+\big)}
\renewcommand{\for}{\qquad\mbox{for~}\,}
\newcommand{\mfor}{\quad\mbox{for~}\,}
\newcommand{\forae}{\qquad\mbox{for~a.~e.~}\,}
\newcommand{\mforae}{\quad\mbox{for~a.~e.~}\,}
\newcommand{\casfor}{\mbox{for~}\,}
\newcommand{\casforae}{\mbox{for~a.~e.~}\,}
\newcommand{\casif}{\mbox{if~}\,}
\newcommand{\saba}{\mathcal{S}_{ab}(a)}
\newcommand{\sabac}{\mathcal{S}'_{ab}(a)}
\newcommand{\satac}{\mathcal{S}'_{at_0}(a)}
\newcommand{\satauac}{\mathcal{S}'_{a\tau}(a)}
\newcommand{\sata}{\mathcal{S}_{at_0}(a)}
\newcommand{\sabb}{\mathcal{S}_{ab}(b)}
\newcommand{\satt}{\mathcal{S}_{at_0}(t_0)}
\newcommand{\santt}{\mathcal{S}_{a_nt_0}(t_0)}
\newcommand{\satautau}{\mathcal{S}_{a\tau}(\tau)}
\newcommand{\sataua}{\mathcal{S}_{a\tau}(a)}
\newcommand{\aab}{\mathcal{A}(a,b)}
\newcommand{\vtau}{V_{\tau}}
\newcommand{\vtn}{V_{t_0}}
\newcommand{\vtd}{V_{t_{\delta}}}
\newcommand{\vb}{V_b}
\newcommand{\atnb}{\mathcal{A}(t_0,b)}
\newcommand{\pplus}{\mathcal{P}^+}
\newcommand{\ptnplus}{\mathcal{P}^+_{t_0}}
\newcommand{\ptplus}{\mathcal{P}^+_{\tau}}
\newcommand{\klocirp}{K_{loc}\big(I\times\RR_+;\RR_+\big)}
\newcommand{\kloctnrp}{K_{loc}\big([t_0,+\infty)\times\RR_+;\RR_+\big)}
\newcommand{\kabrp}{K\big([a,b]\times\RR_+;\RR_+\big)}
\newcommand{\ktnbrp}{K\big([t_0,b]\times\RR_+;\RR_+\big)}
\newcommand{\sgn}{\operatorname{sgn}}
\newcommand{\esssup}{\operatorname{ess~sup}}
\newcommand{\essinf}{\operatorname{ess~inf}}

\def\RR{\mathbb{R}}
\def\NN{\mathbb{N}}

\begin{document}

\title[Existence and Properties of Semi-Bounded Solutions]
{Existence and Properties of Semi-Bounded Global Solutions to the Functional Differential
  Equation with Volterra's Type Operators on the Real Line}

\author{Maitere Aguerrea}
\address{Facultad de Ciencias B\'asicas, Universidad Cat\'olica del
  Maule, Casilla 617, Talca, Chile}
\email{maguerrea@ucm.cl}
\thanks{Supported by FONDECYT/INICIACION/ N$^o$ 11121457.}

\author{Robert Hakl}
\address{Institute of Mathematics, Academy of Sciences of the Czech
  Republic, \v{Z}i\v{z}kova 22, 616 62 Brno, Czech Republic}
\email{hakl@ipm.cz}
\thanks{Supported by RVO: 67985840}

\begin{abstract}
Consider the equation
$$
u'(t)=\ell_0(u)(t)-\ell_1(u)(t)+f(u)(t)\forae t\in\RR
$$
where $\ell_i:\cloc\to\lloc$ $(i=0,1)$ are linear positive continuous operators and
$f:\cloc\to\lloc$ is a continuous operator satisfying the local
Carath\'eodory conditions. The efficient conditions 
guaranteeing the existence of a global solution, which is bounded and
non-negative in the neighbourhood of $-\infty$, to the equation
considered are established provided $\ell_0$, $\ell_1$, and $f$ are
Volterra's type operators. The existence of a solution which is positive on
the whole real line is discussed, as well. Furthermore, the asymptotic properties of
such solutions are studied in the neighbourhood of
$-\infty$. The results are applied to certain models appearing in
natural sciences. 
\end{abstract}

\subjclass[2010]{34K05, 34K10, 34K12, 34K25}
\keywords{Functional differential equations, boundary value problems,
  global existence, asymptotic properties, positive solutions}

\maketitle

\section{Introduction}
\label{sec1}

Many models in natural sciences are based on the idea that the
derivative at a certain moment of time depends not only on the present
state but on some of the previous states. However, in spite of the
fact that the history of delay differential equations goes back to the
beginning of the 20th century (see, e.g., the works of Vito Volterra),
or even more back in time, the systematic study of such types of
equations started only in the beginning of the 1950s.

The main purpose of the present paper is to study the existence and
asymptotic properties of a global solution (i.e., defined on the whole
real line) to the scalar functional differential equation
\begin{equation}
\label{1.1}
u'(t)=\ell_0(u)(t)-\ell_1(u)(t)+f(u)(t).
\end{equation}
Here, $\ell_i:\cloc\to\lloc$ $(i=0,1)$ are linear continuous operators
which are positive, i.e., they transform non-negative functions into
the set of non-negative functions, and $f:\cloc\to\lloc$ is a
continuous operator satisfying the local Carath\'eodory conditions,
i.e., for every $r>0$ there exists $q_r\in\llocrp$ such that 
$$
|f(v)(t)|\leq q_r(t)\forae t\in\RR
$$
whenever
$$
\sup\big\{|v(t)|:t\in\RR\big\}\leq r.
$$
Together with the equation \eqref{1.1} consider the condition
\begin{equation}
\label{1.2}
u(t_0)=c
\end{equation}
with $t_0,c\in\RR$.

By a global solution to the equation \eqref{1.1} we understand a
function $u:\RR\to\RR$ which is absolutely continuous on every compact
interval and satisfies \eqref{1.1} for almost every $t\in\RR$.
Effective sufficient conditions for the existence of a global solution to the
problem \eqref{1.1}, \eqref{1.2} are established in the paper. More
precisely, we are interested in the study of existence of global
positive semi-bounded (i.e., bounded in the neighbourhood of $-\infty$)
solutions $u:\RR\to \RR$ to the problem \eqref{1.1}, \eqref{1.2}. 

The study of the geometric property and the existence of solutions to this class of
problems was motivated by the open problem concerned with degenerate
scalar reaction-diffusion equations with delay  
$$
\phi_t(t,x) = \phi_{xx}(t,x)- \phi(t,x) + G(\phi(t-r,x)), \qquad x \in \RR,\quad
r>0,  
$$
and the existence of positive semi-wavefront solutions $\phi(t,x) =
u(x+ct)$, $u(-\infty)=0$, when $G\in C^1(\RR_+,\RR_+)$,
$G'(0)=1,$ and $0$ and $\kappa>0$ are the only two solutions of
$G(s)=s$ (degenerate monostable case). When we do not consider
diffusion, we obtain the following equation with the boundary
condition  
\begin{equation}
\label{m1}
\begin{cases}
cu'(t)=-u(t)+G(u(t-cr)),&\\
u(-\infty)=0, &
\end{cases}
\end{equation}
with degenerate monostable nonlinearity $G$. The existence problem for
\eqref{m1} and their generalizations have been investigated in several
papers and approached by means of different methods and almost always
assuming the generate condition $G'(0)>1$.  It is worthwhile
mentioning that in the proofs of existence this condition is essential
and cannot be omitted or weakened within the framework (see
\cite{ft,map,tat} and references therein).  
 
In the case when $r=0$, without delay, only a few theoretical studies have
considered the important problem when $G'(0)=1$, i.e., the degenerate
case (see \cite{bn,zlh}). These works show that the assumption
$G'(0)>1$ is not necessary to obtain the existence and the geometric
properties of travelling solutions of a nonlocal dispersal problem or
parabolic equations. Motivated for these investigations we have
developed a more general theory that can be applied to the problem
with delay \eqref{m1} and hence to complete or to improve the research
on existence problems done so far.  

The results of the paper can be also applied to the scalar delay
logistic equation of the form
\begin{equation}
\label{log.eq}
u'(t)=u(t)F(t,u_t)
\end{equation}
describing the population growth (see Section~\ref{sec6}). The
asymptotic properties at $+\infty$ of such kinds
of models where studied e.g. in \cite{dom,f1} (see also references therein). 
For more model differential equations used in natural sciences which
can be rewritten in the form of \eqref{1.1} we recommend \cite{dom}
and references therein.

In this way, the main results of our work can be reformulated for a
particular case of the equation \eqref{1.1}, for the equation  with
argument deviation of the form  
\begin{equation}
\label{1.3}
u'(t)=p_0(t)u(\mu_0(t))-p_1(t)u(\mu_1(t))+h(t,u(t),u(\nu(t))),
\end{equation}
where $p_i\in\llocrp$, $\mu_i,\nu:\RR\to\RR$ are locally essentially
bounded measurable functions, $\mu_i(t)\leq t$, $\nu(t)\leq t$ for
almost every $t\in\RR$ $(i=0,1)$, and $h:\RR^3\to \RR$ is a function
satisfying local Carath\'eodory conditions, i.e., $h(\cdot,x,y):\RR\to\RR$
is measurable for every $x,y\in\RR$, $h(t,\cdot,\cdot):\RR^2\to\RR$ is
continuous for almost every $t\in\RR$, and for every $r>0$ there exists
$q_r\in\llocrp$ such that 
$$
|h(t,x,y)|\leq q_r(t)\forae t\in\RR,\quad |x|+|y|\leq r.
$$
Note that, when $p_0(t)=p_1(t)=1/c$,  $\mu_0(t)=\nu(t)=t-cr$,
$\mu_1(t)=t$ and $h(t,u(t),u(\nu(t)))=\big[G(u(t-cr))-u(t-cr)\big]/c$, equation
\eqref{1.3} reduces to the model equation \eqref{m1}.

The paper is organized as follows. Basic notation used in the paper
can be found just after this introduction in
Section~\ref{sec1}. The main results dealing with the existence and
positivity of a global semi-bounded solution to the problem
\eqref{1.1}, \eqref{1.2}, as well as the conditions guaranteeing that
such a solution has a limit at $-\infty$ equal to zero, are
established in Section~\ref{sec2}. The results of Section~\ref{sec2}
are reformulated for the particular case of \eqref{1.1}---the equation
\eqref{1.3}---in Section~\ref{sec3}. Sections~\ref{sec4} and
\ref{sec5} are devoted to the auxiliary propositions and proofs of the
main results, respectively. Applications of the obtained results to
the model problem \eqref{m1} and generalized logistic equation
\eqref{log.eq} can be found in Section~\ref{sec6}. 

\subsection{Basic notation}

The following notation is used throughout the paper:

$\NN$ is a set of all natural numbers.

$\RR$ is a set of all real numbers; $\RR_+=[0,+\infty)$;
$\RR^2=\RR\times\RR$; $\RR^3=\RR^2\times\RR$.

$\cabr$ is a Banach space of continuous functions $u:[a,b]\to \RR$ with the norm 
$$
\|u\|_{a,b}=\max\big\{|u(t)|:\tin\big\}.
$$

$\cabrp=\big\{u\in\cabr:u(t)\geq 0\mbox{~for~}\tin\big\}$. 

$\acabd$, where $D\subseteq\RR$, is a set of absolutely continuous
functions $u:[a,b]\to D$.

$\labrp$ is a set of Lebesgue-integrable functions $p:[a,b]\to \RR_+$. 

$\cloc$ is a space of continuous functions $u:\RR\to\RR$ with the
topology of uniform convergence on every compact interval.

If $u\in\cloc$ then $u(-\infty)$, resp. $u(+\infty)$, stands for a
limit (finite or infinite) of $u$ at $-\infty$, resp. $+\infty$, if
such a limit exists.

$\cnul$ is a Banach space of bounded continuous functions
$u:\RR\to\RR$ with the norm
$$
\|u\|=\sup\big\{|u(t)|:t\in \RR\big\}.
$$

$\cnulid$, where $I\subseteq\RR$, $D\subseteq\RR$, is a set of bounded
continuous functions $u:I\to D$. 

$\aclocid$, where $I\subseteq\RR$, $D\subseteq \RR$, is a
set of functions $u:I\to D$ which are absolutely continuous on every
compact interval contained in $I$.

$\lloc$ is a space of locally Lebesgue-integrable functions $p:\RR\to\RR$ with the
topology of convergence in the mean on every compact interval.

$\llocirp$, where $I\subseteq \RR$, is a set of functions
$p:I\to\RR_+$ which are Lebesgue-integrable on every compact interval
contained in $I$.

$\linf$ is a Banach space of essentially bounded measurable functions $p:\RR\to\RR$ with the
norm
$$
\|p\|_{\infty}=\esssup\big\{|p(t)|:t\in\RR\big\}.
$$

$\kabrp$ is the Carath\'eodory class, i.e., the set of functions
$q:[a,b]\times\RR_+\to\RR_+$ such that $q(\cdot,x):[a,b]\to\RR_+$ is
measurable for any $x\in\RR_+$, $q(t,\cdot):\RR_+\to\RR_+$ is
continuous for almost all $\tin$, and
$$
\sup\big\{q(\cdot,x):x\in D\big\}\in\labrp
$$
for any compact set $D\subset\RR_+$.

$\klocirp$, where $I\subseteq \RR$, is a set of functions
$q:I\times\RR_+\to\RR_+$ such that $q\in\kabrp$ for every
$[a,b]\subset I$. 

Let $I\subseteq\RR$ be a closed interval, $T:\cloc\to\lloc$ be a
continuous operator, and let $u:I\to\RR$ be a continuous
function. Then we put
$$
T(u)(t)\stackrel{def}{=}T(\vartheta(u))(t)\forae t\in\RR
$$
where
\begin{equation}
\label{1.4}
\vartheta(u)(t)=
\begin{cases}
u(\inf I)&\casfor t<\inf I\quad\casif\inf I>-\infty,\\
u(t)&\casfor t\in I,\\
u(\sup I)&\casfor t>\sup I\quad\casif\sup I<+\infty.
\end{cases}
\end{equation}

$\ptplus$, where $\tau\in\RR$, is a set of all linear continuous operators $\ell:\cloc\to\lloc$
such that
$$
\ell(u)(t)\geq 0\forae t\leq \tau
$$
whenever $u\in\acloctrp$ is a nondecreasing function.

$\vtau$, where $\tau\in\RR$, is a set of all continuous operators
$T:\cloc\to\lloc$ such that, for arbitrary $\zeta\leq\tau$, the equality
$$
T(u)(t)=T(v)(t)\forae t\leq \zeta
$$
holds whenever $u,v\in\cloc$ are such that
$$
u(t)=v(t)\for t\leq \zeta.
$$

$\Sigma$ is a set of all continuous functions $\sigma:\RR\to\RR$
satisfying $\sigma(t)\leq t$ for $t\in\RR$.

$\vtau(\sigma)$, where $\tau\in\RR$ and $\sigma\in\Sigma$, is a set
of all continuous operators $T:\cloc\to\lloc$ with a memory $\sigma$
on $(-\infty,\tau]$, i.e., for almost every $t\leq\tau$, the equality 
$$
T(u)(t)=T(v)(t)
$$
holds provided $u,v\in\cloc$ are such that 
$$
u(s)=v(s)\for s\in[\sigma(t),t].
$$

$V=\bigcap\limits_{\tau\in\RR}\vtau$,
$\pplus=\bigcap\limits_{\tau\in\RR}\ptplus$.

Let $I\subseteq\RR$ be a closed interval. By a solution to the
equation \eqref{1.1} on the interval $I$ we understand a function
$u:I\to\RR$ which is absolutely continuous on every compact interval
contained in $I$ and satisfies \eqref{1.1} almost everywhere on
$I$.\footnote{Remind that by $\ell_0(u)$, $\ell_1(u)$, and $f(u)$ we
  understand $\ell_0(\vartheta(u))$, $\ell_1(\vartheta(u))$, and
  $f(\vartheta(u))$, respectively, where $\vartheta$ is given by
  \eqref{1.4}.} If, moreover, $t_0\in I$ then by a solution to the
problem \eqref{1.1}, \eqref{1.2} on $I$ we understand a solution $u$
to \eqref{1.1} on $I$ satisfying \eqref{1.2}. If $I=\RR$ then we speak
about a global solution.

\section{Main Results}
\label{sec2}

\subsection{Existence theorems}

\begin{theo}
\label{t2.1}
Let $\ell_0,\ell_1,f\in V$, $\sigma\in\Sigma$,
\begin{gather}
\label{2.1}
\ell_0\in \vtn(\sigma),\\
\label{2.2}
f(0)(t)=0\forae t\leq t_0,
\end{gather}
and let there exist $\kappa>0$ such that 
\begin{equation}
\label{2.3}
f(v)(t)\geq 0 \forae t\leq t_0,\quad v\in\cnulok
\end{equation}
and
\begin{multline}
\label{2.3.5}
f(v)(t)\sgn v(t)\leq q(t,\|v\|) \forae t\geq t_0,\quad
v\in\cnul,\\ 
0\leq v(t_0)\leq \kappa,\quad -\kappa\leq v(t)\leq \kappa\mfor t\leq t_0,
\end{multline}
where $q\in\kloctnrp$ is nondecreasing in the second argument and
satisfies
\begin{equation}
\label{3.5}
\lim_{x\to +\infty}\frac{1}{x}\int_{t_0}^b q(s,x)ds=0
\end{equation}
for every $b>t_0$.
Let, moreover, there exist $\gamma\in\acloc$ such that 
\begin{gather}
\label{2.4}
\gamma'(t)\leq -\ell_1(\gamma)(t)\forae t\leq t_0,\\
\label{2.5}
\ell_0(1)(t)\geq \frac{\ell_1(\gamma)(t)}{\gamma(t)}\forae t\leq t_0,\\
\label{2.6}
\sup\left\{\int_{\sigma(t)}^t\frac{\ell_1(\gamma)(s)}{\gamma(s)}ds:t\leq t_0\right\}<+\infty.
\end{gather}
Then, for every $c\in \left[0,\kappa e^{-M_{\sigma}}\right]$, where
\begin{equation}
\label{2.7}
M_{\sigma}=\sup\left\{\int_{\sigma(t)}^t\frac{\ell_1(\gamma)(s)}{\gamma(s)}ds:t\leq t_0\right\},
\end{equation}
there exists a global solution $u$ to the problem \eqref{1.1}, \eqref{1.2} satisfying 
\begin{equation}
\label{2.8}
0\leq u(t)\leq \kappa\for t\leq t_0.
\end{equation}
\end{theo}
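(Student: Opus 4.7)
The plan is to construct the global solution in two stages: first on $(-\infty,t_0]$ as a limit of solutions to approximate problems posed on bounded intervals $[a_n,t_0]$ with $a_n\to-\infty$, and then to extend to $[t_0,+\infty)$ using the sublinear growth condition \eqref{3.5}.

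For the first stage, I fix a sequence $a_n\to-\infty$ and, on each $[a_n,t_0]$, produce a solution to \eqref{1.1}, \eqref{1.2} by a Schauder fixed-point argument on the closed convex set
$$
K_n=\big\{v\in C([a_n,t_0];\RR):\ 0\leq v(t)\leq\kappa\text{ for }t\in[a_n,t_0],\ v(t_0)=c\big\},
$$
using the operator given by the standard integral reformulation of \eqref{1.1} and the extension convention \eqref{1.4}, which is compatible with $\ell_0,\ell_1,f\in V$. Continuity of the operator follows from continuity of $\ell_0,\ell_1,f:\cloc\to\lloc$, and compactness from Arz\`ela--Ascoli together with the locally integrable majorant $\ell_0(\kappa)+\ell_1(\kappa)+q(\cdot,\kappa)$ of the right-hand side.

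The crucial and hardest step is to verify that every fixed point $u_n$ actually stays in $K_n$. The lower bound $u_n\geq 0$ follows from positivity of $\ell_0,\ell_1$ combined with \eqref{2.2}--\eqref{2.3} by a Gronwall-type argument applied to the negative part of $u_n$. For the upper bound $u_n\leq\kappa$ the function $\gamma$ is used as follows: by \eqref{2.4} and \eqref{2.5}, the quantity $-\gamma'/\gamma$ dominates $\ell_1(\gamma)/\gamma$ and is dominated by $\ell_0(1)$, so $\gamma$ behaves like a supersolution for the linearization of \eqref{1.1}; applied to $u_n/\gamma$ with a suitable integrating factor this produces a backward comparison inequality. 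The delay contribution created by $\ell_0$ is controlled by the memory condition \eqref{2.1} together with \eqref{2.6}, giving the factor $e^{M_\sigma}$; the assumption $c\leq \kappa e^{-M_\sigma}$ is precisely what is needed to preserve $u_n(t)\leq\kappa$ as one integrates backward from $t_0$. I would carry this out by contradiction, considering the first time (moving backward from $t_0$) at which $u_n$ reaches the value $\kappa$ and exhibiting a contradiction via this integrating-factor inequality. This is the step I expect to be the main obstacle.

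Once the approximations $u_n$ are available with uniform bounds $0\leq u_n\leq\kappa$, the pointwise estimate on $u_n'$ provides equicontinuity on every compact subinterval of $(-\infty,t_0]$. A diagonal Arz\`ela--Ascoli extraction yields a subsequence converging in the topology of $\cloc$ restricted to $(-\infty,t_0]$ to a limit $u\in\aclocrp$; continuity of $\ell_0,\ell_1,f$ and the Volterra property allow passage to the limit in the integral form of \eqref{1.1}, producing a solution on $(-\infty,t_0]$ satisfying \eqref{2.8} and \eqref{1.2}. Finally, standard local Carath\'eodory existence extends $u$ to a maximal interval $[t_0,t^*)$, and the sublinear growth condition \eqref{3.5}, combined with \eqref{2.3.5} and a Gronwall argument bounding $\|u\|_{t_0,t}$, excludes finite blow-up as $t\to t^*-$. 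Hence $t^*=+\infty$ and the resulting function, defined on $\RR$, is the desired global solution.
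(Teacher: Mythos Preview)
Your overall architecture — solve on $[a_n,t_0]$, pass to the limit via Arzel\`a--Ascoli, then continue forward using \eqref{3.5} — is exactly the paper's strategy (Lemmas~\ref{l2.15} and~\ref{l3.3}). The a~posteriori estimates you describe for a solution $u_n$ (nonnegativity from a comparison using $-\ell_1$, and the upper bound $u_n\leq\kappa$ from the $\gamma$-based integrating-factor argument with the factor $e^{M_\sigma}$) are also correct in spirit and correspond to Lemmas~\ref{l2.11} and~\ref{l2.8}.

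The gap is in the existence step on $[a_n,t_0]$. You propose Schauder on the set $K_n=\{0\leq v\leq\kappa,\ v(t_0)=c\}$ and then write that ``the crucial step is to verify that every fixed point $u_n$ actually stays in $K_n$''. But if Schauder is applied on $K_n$, fixed points lie in $K_n$ automatically; what you must prove is the invariance $T(K_n)\subseteq K_n$, and this is \emph{not} what your subsequent arguments establish. Your bounds use differential inequalities satisfied by a solution (e.g.\ $u_n'\geq\ell_0(u_n)-\ell_1(u_n)$ from \eqref{2.3}); they say nothing about $T(v)$ for an arbitrary $v\in K_n$, and in fact $T(v)$ can leave $[0,\kappa]$ for such $v$. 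So as written there is no mechanism producing a fixed point.

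The paper closes this gap differently: it truncates $f$ to $\overline{f}$ via the cut-off \eqref{2.130}--\eqref{2.131}, obtains existence on $[a_n,t_0]$ by the a~priori bound principle of Lemma~\ref{l2.9} (a Leray--Schauder type result for the $\lambda$-homotopy \eqref{2.105}--\eqref{2.106}, fed by Lemmas~\ref{l2.10}--\ref{l2.13}), and only \emph{afterwards} proves $0\leq u_n\leq\kappa$ (Lemmas~\ref{l2.11} and~\ref{l2.8}), which shows the truncation was inactive. If you want to salvage the Schauder route, you would likewise need a truncation so that the modified operator genuinely maps $K_n$ into itself, and then your a~posteriori bounds would show the truncation is never felt; alternatively, switch to the a~priori bound scheme.
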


\begin{rem}
\label{r2.1}
Note that the solution $u$, the existence of which is guaranteed by
Theorem~\ref{t2.1}, has also a finite limit $u(-\infty)$ according to
Theorem~\ref{t2.10} formulated below.
\end{rem}

\begin{theo}
\label{t2.2}
Let $\ell_0,\ell_1,f\in V$ be such that $\ell_0-\ell_1\in\ptnplus$,
\eqref{2.2} holds, and let there exist $\kappa>0$ such that \eqref{2.3} and
\eqref{2.3.5} are fulfilled, where $q\in\kloctnrp$ is nondecreasing in
the second argument and satisfies \eqref{3.5} for every $b>t_0$.
Then, for every $c\in[0,\kappa]$ there exists a global solution $u$ to the problem
\eqref{1.1}, \eqref{1.2} satisfying \eqref{2.8} and
\begin{equation}
\label{2.9}
u'(t)\geq 0\forae t\leq t_0.
\end{equation}
\end{theo}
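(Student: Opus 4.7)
My plan is to construct a solution on $(-\infty,t_0]$ via a Schauder-type fixed point argument that exploits the monotonicity hypothesis $\ell_0-\ell_1\in\ptnplus$, and then to extend it to $[t_0,+\infty)$ by local Carathéodory existence together with the a priori bound coming from \eqref{3.5}. In Theorem~\ref{t2.1} the weight $\gamma$ and the memory $\sigma$ were needed to manufacture an \emph{a posteriori} nonnegativity of the right-hand side along admissible candidates; here this nonnegativity is already built into the hypothesis, so the construction produces a nondecreasing solution directly.

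Introduce the convex set
$$
K=\bigl\{v\in C\bigl((-\infty,t_0];[0,\kappa]\bigr):v\ \text{nondecreasing},\ v(t_0)=c\bigr\},
$$
each element extended to $\cloc$ by \eqref{1.4}, so the extension is globally nondecreasing and lies in $\cnulok$. On $K$ I would define
$$
\Phi(v)(t)=\max\left\{0,\ c-\int_{t}^{t_0}\bigl[\ell_0(v)(s)-\ell_1(v)(s)+f(v)(s)\bigr]ds\right\}\quad(t\le t_0).
$$
The central verification is $\Phi(K)\subseteq K$. Since the extension of any $v\in K$ is nondecreasing and nonnegative, the hypothesis $\ell_0-\ell_1\in\ptnplus$ (applied after a standard density/mollification step, since $\ptnplus$ is stated on $\acloctrp$) gives $\ell_0(v)(s)-\ell_1(v)(s)\ge 0$ a.e.\ on $(-\infty,t_0]$; the hypothesis \eqref{2.3} applied to $v\in\cnulok$ gives $f(v)(s)\ge 0$ there. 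Consequently $t\mapsto c-\int_t^{t_0}[\cdots]ds$ is nondecreasing with value $c$ at $t_0$, and the truncation at $0$ preserves monotonicity while confining values to $[0,c]\subseteq[0,\kappa]$.

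Continuity of $\Phi$ in the $\cloc$ topology is inherited from continuity of $\ell_0,\ell_1,f$; relative compactness of $\Phi(K)$ in $\cloc$ follows from the uniform majorizations $0\le\ell_i(v)\le\kappa\,\ell_i(1)\in\llocrp$ and $0\le f(v)\le q_\kappa\in\llocrp$ on $(-\infty,t_0]$, which force equi-absolute-continuity of $\Phi(v)$ on every compact subinterval. Schauder--Tychonoff then produces a fixed point $u\in K$. To see that $u$ satisfies \eqref{1.1} a.e.\ on $(-\infty,t_0]$, set $G(t)=c-\int_t^{t_0}[\ell_0(u)-\ell_1(u)+f(u)]ds$, which is nondecreasing with $G(t_0)=c\ge 0$; the set $\{G\le 0\}$ is either empty or equals $(-\infty,s^*]$ for some $s^*<t_0$. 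On $\{G>0\}$ one has $u=G$, hence $u'=\ell_0(u)-\ell_1(u)+f(u)$ a.e. On $(-\infty,s^*]$ one has $u\equiv 0$, and Volterra's property of $\ell_0,\ell_1,f$ together with the linearity of $\ell_i$ and the normalization \eqref{2.2} forces $\ell_0(u)=\ell_1(u)=f(u)=0$ a.e.\ there, so $u'=0$ also satisfies the equation. Properties \eqref{1.2}, \eqref{2.8}, and \eqref{2.9} then follow directly from $u\in K$.

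To extend $u$ to $[t_0,+\infty)$ I would run a separate Carathéodory-type fixed point argument on each $[t_0,b]$ with $u|_{(-\infty,t_0]}$ serving as prescribed past data; the a priori bound needed for continuation up to arbitrary $b$ is obtained from $\frac{d}{dt}|u(t)|\le\ell_0(|u|)(t)+\ell_1(|u|)(t)+q(t,\|u\|)$ (positivity of $\ell_0,\ell_1$ and \eqref{2.3.5}), integrated over $[t_0,b]$ and closed via the sublinear-growth condition \eqref{3.5}. Concatenating with the backward construction at $t_0$ yields the desired global solution. The main obstacle I anticipate lies in verifying that the fixed point solves the equation on the set $\{u=0\}$ produced by the truncation: it is precisely there that Volterra's property of \emph{all three} operators, together with the normalization \eqref{2.2}, is indispensable.
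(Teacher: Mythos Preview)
Your argument is correct and follows a genuinely different route from the paper. The paper does \emph{not} set up an integral operator on $(-\infty,t_0]$; instead it works on finite intervals $[a_n,t_0]$ with $a_n\to-\infty$. On each such interval it invokes an abstract existence principle (Lemma~\ref{l2.9}, a Kiguradze--P\accent23u\v{z}a homotopy/a~priori bound result) for the truncated equation \eqref{2.133}, and obtains nonnegativity and monotonicity of the finite-interval solutions \emph{a posteriori} via Lemma~\ref{l2.12}, which rests on the inclusion $\ell_0-\ell_1\in\satac$ from Proposition~\ref{p2.7}. A diagonal/Arzel\`a--Ascoli limit (Lemma~\ref{l2.15}) then yields the half-line solution, and Lemma~\ref{l3.3} extends it to the right --- essentially the step you sketch at the end. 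Your approach is more self-contained: by building monotonicity and the range $[0,\kappa]$ into the domain $K$ of a Schauder--Tychonoff operator you avoid both the finite-interval approximation and the cited topological machinery, at the cost of the explicit truncation-handling step (where, as you correctly identify, the Volterra property of all three operators together with \eqref{2.2} is essential). The paper's modular route pays off elsewhere --- the same finite-interval lemmas feed Theorem~\ref{t2.1}, where no invariant monotone cone is available --- whereas your construction exploits precisely the extra structure $\ell_0-\ell_1\in\ptnplus$ that distinguishes Theorem~\ref{t2.2}. One small point to tighten: the mollification you mention to pass from $\acloctrp$ to merely continuous nondecreasing test functions is indeed routine (approximate in $\cloc$ and use continuity of $\ell_0-\ell_1$ into $\lloc$), but should be stated once explicitly.
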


\begin{rem}
\label{r2.2}
Note that the solution $u$, the existence of which is guaranteed by
Theorem~\ref{t2.2}, has also a finite limit $u(-\infty)$ because $u$
is a bounded nondecreasing function in the neighbourhood of $-\infty$.
\end{rem}

\begin{theo}
\label{t2.3}
Let all the assumptions of Theorem~\ref{t2.1} be fulfilled. Let,
moreover, there exist $g\in\llocrp$ and a continuous nondecreasing function
$h_0:(0,+\infty)\to (0,+\infty)$ such that
\begin{equation}
\label{2.10}
f(v)(t)\leq g(t)h_0(\|v\|)\forae t\leq t_0, \quad v\in\cnulok,\quad
v\not\equiv 0
\end{equation}
and
\begin{equation}
\label{2.11}
\lim_{x\to 0_+}\int_x^1\frac{ds}{h_0(s)}=+\infty.
\end{equation}
Then, for every $c\in \left(0,\kappa e^{-M_{\sigma}}\right]$
with $M_{\sigma}$ given by \eqref{2.7}, there exists a global solution $u$
to the problem \eqref{1.1}, \eqref{1.2} satisfying  
\begin{equation}
\label{2.12}
0<u(t)\leq \kappa\for t\leq t_0.
\end{equation}
\end{theo}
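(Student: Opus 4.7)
The plan is to first apply Theorem~\ref{t2.1} to produce a global solution $u$ with $0\le u(t)\le\kappa$ on $(-\infty,t_0]$ and $u(t_0)=c$, and then to upgrade this weak lower bound to the strict bound $u>0$ using the Osgood-type condition \eqref{2.11} together with the growth bound \eqref{2.10}. Since the hypotheses of Theorem~\ref{t2.3} include those of Theorem~\ref{t2.1}, the existence step is immediate; the entire work lies in proving strict positivity of $u$ on $(-\infty,t_0]$.

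I would argue by contradiction. Assuming $u$ has a zero on $(-\infty,t_0)$, by continuity and $u(t_0)=c>0$ the point $\tau=\sup\{t\le t_0:u(t)=0\}$ satisfies $\tau<t_0$, $u(\tau)=0$, and $u(t)>0$ for $t\in(\tau,t_0]$. Integrating \eqref{1.1} from $\tau$ to $t\in(\tau,t_0]$, using $u(\tau)=0$ and dropping the non-positive term $-\ell_1(u)(s)$, gives
$$u(t)\le \int_\tau^t\ell_0(u)(s)\,ds+\int_\tau^t f(u)(s)\,ds.$$
Positivity and linearity of $\ell_0$, combined with the memory hypothesis $\ell_0\in\vtn(\sigma)$, produce $\ell_0(u)(s)\le \|u\|_{[\sigma(s),s]}\ell_0(1)(s)$ by replacing $u$ with a continuous extension $\bar u_s$ agreeing with $u$ on $[\sigma(s),s]$ and bounded by $\|u\|_{[\sigma(s),s]}$ outside. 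The heart of the proof is then to estimate $f(u)(s)$ by $g(s)h_0(\Psi(s))$ for a continuous nondecreasing quantity $\Psi$ with $\Psi(\tau)=0$ (a local supremum of $u$ near $\tau$), via a combination of \eqref{2.10} with the Volterra property $f\in V$ applied to a suitable truncation of $u$.

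Granting this estimate, one arrives at a Bihari-type inequality
$$\Psi(t)\le \int_\tau^t a(s)\,\widetilde h_0(\Psi(s))\,ds\mfor t\in(\tau,t_0]$$
for some $a\in\llocrp$ and some continuous nondecreasing $\widetilde h_0:(0,+\infty)\to(0,+\infty)$ inheriting the divergence $\int_0^1 d\xi/\widetilde h_0(\xi)=+\infty$ from \eqref{2.11}. Because $\Psi$ is nondecreasing with $\Psi(\tau)=0$, a standard separation-of-variables argument forces $\Psi\equiv 0$ on a right neighbourhood of $\tau$, which contradicts $u(t)>0$ on $(\tau,t_0]$ and completes the proof.

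The main obstacle is precisely the estimate on $f(u)(s)$: the bound \eqref{2.10} involves the \emph{global} supremum $\|v\|$ over $\RR$, whereas what is needed is control by a \emph{local} quantity that vanishes as $s\to\tau^+$. Naively replacing $u$ by a truncation $v_s\in\cnulok$ with small $\|v_s\|$ fails, because the Volterra property of $f$ only guarantees $f(u)(s)=f(v_s)(s)$ when $v_s=u$ on $(-\infty,s]$, and $u$ need not vanish on $(-\infty,\tau]$. Overcoming this requires a more delicate construction---either exploiting the comparison function $\gamma$ from Theorem~\ref{t2.1} to translate the bound into a weighted form, or first establishing (via the non-negativity of $u$ together with \eqref{2.3} and a continuation argument running backward from $\tau$) that $u\equiv 0$ on $(-\infty,\tau]$, so that the truncated $v_s$ can be chosen with $\|v_s\|=\Psi(s)$. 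This is the technically delicate heart of the argument.
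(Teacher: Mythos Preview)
Your overall strategy matches the paper: invoke Theorem~\ref{t2.1} for existence, then argue by contradiction via an Osgood/Bihari uniqueness argument. You have correctly isolated the obstacle---that \eqref{2.10} involves the \emph{global} norm $\|v\|$---and correctly identified that your option (b), first proving $u\equiv 0$ on $(-\infty,\tau]$, is the way out. This is exactly what the paper does, packaged as Lemma~\ref{l2.19} followed by Lemma~\ref{l2.16}.

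The one place your sketch is genuinely incomplete is how to obtain $u\equiv 0$ on $(-\infty,\tau]$; ``non-negativity together with \eqref{2.3} and a continuation argument'' does not by itself force this, and in fact the comparison function $\gamma$ is essential here as well---so your options (a) and (b) are not really alternatives. The paper's argument is short but specific: from \eqref{1.1}, dropping the non-negative terms $\ell_0(u)$ and $f(u)$ (the latter by \eqref{2.3} and \eqref{2.8}) gives $u'(t)\ge -\ell_1(u)(t)$ for a.e.\ $t\le\tau$; combined with $\gamma'(t)\le-\ell_1(\gamma)(t)$ from \eqref{2.4}, boundedness of $u$, and $u(\tau)=0$, Lemma~\ref{l2.19} (which shows $\sup_{t\le\tau}u(t)/\gamma(t)=u(\tau)/\gamma(\tau)=0$) yields $u\le 0$ and hence $u\equiv 0$ on $(-\infty,\tau]$. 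After that, Lemma~\ref{l2.16} carries out the Bihari step exactly as you outlined: with $w(t)=\sup_{s\le t}u(s)$ vanishing on $(-\infty,\tau]$, the Volterra property $f\in V_{t_0}$ gives $f(u)(s)\le g(s)h_0(w(t))$ for a.e.\ $s\le t$, and \eqref{2.11} forces $w\equiv 0$ on all of $(-\infty,t_0]$, contradicting $u(t_0)=c>0$.
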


\begin{rem}
\label{r2.1.5}
Note that the solution $u$, the existence of which is guaranteed by
Theorem~\ref{t2.3}, has also a finite limit $u(-\infty)$ according to
Theorem~\ref{t2.10} formulated below.
\end{rem}

\begin{theo}
\label{t2.4}
Let all the assumptions of Theorem~\ref{t2.2} be fulfilled. Let,
moreover, there exist $g\in\llocrp$ and a continuous nondecreasing function
$h_0:(0,+\infty)\to (0,+\infty)$ such that \eqref{2.10} and \eqref{2.11} hold.
Then, for every $c\in (0,\kappa]$ there exists a global solution $u$ to the
problem \eqref{1.1}, \eqref{1.2} satisfying \eqref{2.9} and \eqref{2.12}.
\end{theo}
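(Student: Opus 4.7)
The plan is to start from the solution produced by Theorem~\ref{t2.2} and upgrade the nonnegativity \eqref{2.8} to the strict positivity \eqref{2.12} by combining \eqref{2.10} with the Osgood-type divergence \eqref{2.11}. Let $u$ be the solution delivered by Theorem~\ref{t2.2}, so that $u$ is nondecreasing on $(-\infty,t_0]$, satisfies $0\le u\le\kappa$ there, and $u(t_0)=c>0$. Suppose, for a contradiction, that $Z:=\{t\le t_0:u(t)=0\}$ is nonempty, and set $\tau:=\sup Z$. Continuity and monotonicity of $u$ together with $u(t_0)>0$ force $\tau\in\RR$, $\tau<t_0$, $u(\tau)=0$, and $u(t)>0$ on $(\tau,t_0]$.

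The next step is to turn the global bound \eqref{2.10} into a pointwise differential inequality on $(\tau,t_0]$. Fix $t\in(\tau,t_0]$ and consider the Volterra truncation $\hat u(s):=u(\min\{s,t\})$, which lies in $\cnulok$, is not identically zero, coincides with $u$ on $(-\infty,t]$, and, by the monotonicity of $u$, has $\|\hat u\|=u(t)$. Since $\ell_0,\ell_1,f\in V$, one obtains $f(u)(t)=f(\hat u)(t)\le g(t)\,h_0(u(t))$ from \eqref{2.10}, and $\ell_0(u)(t)=\ell_0(\hat u)(t)\le u(t)\,\ell_0(1)(t)$ by linearity and positivity of $\ell_0$ applied to the nonnegative function $u(t)-\hat u$. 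Finally, $\ell_1(u)(t)\ge 0$ because $u\ge 0$ on $(-\infty,t_0]$ and $\ell_1$ is positive and Volterra. Equation \eqref{1.1} therefore yields
\begin{equation*}
u'(t)\le u(t)\,\ell_0(1)(t)+g(t)\,h_0(u(t))\qquad\mbox{for a.e.\ }t\in(\tau,t_0].
\end{equation*}

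To close the argument I would absorb the linear term by an integrating factor and invoke Osgood's mechanism. Let $\phi(t):=\exp\!\left(\int_{t}^{t_0}\ell_0(1)(s)\,ds\right)\ge 1$ on $(-\infty,t_0]$, so that $\phi'(t)=-\ell_0(1)(t)\phi(t)$, and set $v:=\phi u$. Then $v$ is absolutely continuous on $[\tau,t_0]$, $v(\tau)=0$, $v(t_0)=c$, $u\le v$, and the preceding inequality (together with $h_0$ nondecreasing) reduces to $v'(t)\le\phi(t)\,g(t)\,h_0(v(t))$ a.e.\ on $(\tau,t_0]$. Since $v>0$ on $(\tau,t_0]$, dividing by $h_0(v(t))$ and integrating from $s\in(\tau,t_0)$ to $t_0$ gives
\begin{equation*}
\int_{v(s)}^{c}\frac{dw}{h_0(w)}\le\int_{s}^{t_0}\phi(r)\,g(r)\,dr\le\int_{\tau}^{t_0}\phi(r)\,g(r)\,dr<+\infty.
\end{equation*}
As $s\to\tau^+$ the left-hand side diverges to $+\infty$ by \eqref{2.11}, contradicting the finite right-hand side; hence $Z=\emptyset$ and \eqref{2.12} follows.

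The principal obstacle is the second step: \eqref{2.10} controls $f$ only via the \emph{global} sup-norm $\|v\|$, whereas Osgood's argument demands a \emph{pointwise} majorant of the form $g(t)\,h_0(u(t))$. The Volterra truncation $\hat u$ bridges this gap by collapsing $\|\hat u\|$ to $u(t)$, thanks to the monotonicity \eqref{2.9} supplied by Theorem~\ref{t2.2}; once this is in hand, the integrating factor removes the $u\,\ell_0(1)$ term without spoiling the $h_0$ structure, and \eqref{2.11} delivers the contradiction.
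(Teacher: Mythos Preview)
Your argument is correct and follows the same route as the paper: start from the monotone solution furnished by Theorem~\ref{t2.2} and derive a contradiction via an Osgood-type estimate based on \eqref{2.10}--\eqref{2.11}. The paper packages the Osgood step as Lemma~\ref{l2.16} (stated for general nonnegative solutions, using the running maximum $w(t)=\sup_{s\le t}u(s)$) and then simply invokes it; you inline that proof and exploit the monotonicity \eqref{2.9} so that $u$ itself plays the role of $w$, which is a mild but legitimate simplification. One technical point you pass over lightly: applying \eqref{2.10} to the $t$-dependent truncation $\hat u$ \emph{at the very point $t$} requires a Lebesgue-differentiation step, since the exceptional null set in \eqref{2.10} may depend on the test function; the paper handles this explicitly (see \eqref{2.149}--\eqref{2.150} in the proof of Lemma~\ref{l2.16}, by the device of Lemma~\ref{l2.1}), and the same fix works here.
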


\begin{rem}
\label{r2.2.5}
Note that the solution $u$, the existence of which is guaranteed by
Theorem~\ref{t2.4}, has also a finite limit $u(-\infty)$ because $u$
is a bounded nondecreasing function in the neighbourhood of $-\infty$.
\end{rem}

\begin{theo}
\label{t2.1.r}
Let $\ell_0,\ell_1,f\in V$, $\sigma\in\Sigma$, \eqref{2.1} and
\eqref{2.2} hold, and let there exist $\kappa>0$ such that \eqref{2.3}
and \eqref{2.3.5} are fulfilled where $q\in\kloctnrp$ is nondecreasing in the second argument and
satisfies \eqref{3.5} for every $b>t_0$. Let, moreover,
\begin{equation}
\label{2.3.5.r}
\ell_0(v)(t)+f(v)(t)\geq 0 \mforae t\geq t_0,\quad
v\in\cnulrp, \quad v(t)\leq \kappa\mfor t\leq t_0.
\end{equation}
Let, in addition, there exist $\gamma\in\aclocrpos$ such that 
\begin{equation}
\label{2.4.r}
\gamma'(t)\leq -\ell_1(\gamma)(t)\forae t\in\RR,
\end{equation}
and \eqref{2.5} and \eqref{2.6} are satisfied.
Then, for every $c\in \left(0,\kappa e^{-M_{\sigma}}\right]$, where
$M_{\sigma}$ is given by \eqref{2.7},
there exists a global solution $u$ to the problem \eqref{1.1}, \eqref{1.2}
satisfying \eqref{2.8} and
\begin{equation}
\label{2.8.r}
u(t)>0 \for t\geq t_0.
\end{equation}
\end{theo}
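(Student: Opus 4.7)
The strategy is to apply Theorem~\ref{t2.1} first to secure a global solution satisfying \eqref{2.8}, and then use the new hypotheses \eqref{2.3.5.r} and \eqref{2.4.r} to promote nonnegativity into strict positivity on $[t_0,+\infty)$.

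First I invoke Theorem~\ref{t2.1} with the same $\sigma$, $\kappa$, $\gamma$ and the given $c\in\left(0,\kappa e^{-M_\sigma}\right]$; all its hypotheses are assumed here, and \eqref{2.4.r} trivially implies \eqref{2.4}. This produces a global solution $u$ of \eqref{1.1}, \eqref{1.2} satisfying \eqref{2.8}, with $u(t_0)=c>0$.

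Next, to establish \eqref{2.8.r}, I argue by contradiction. Assume $\{t>t_0:u(t)=0\}$ is nonempty and put $t^*:=\inf$ of this set; continuity and $c>0$ force $t^*>t_0$, $u(t)>0$ on $[t_0,t^*)$, and $u(t^*)=0$. Introduce the truncation $\tilde u\in\cnulrp$ defined by $\tilde u(t):=u(t)$ for $t\leq t^*$ and $\tilde u(t):=0$ for $t>t^*$; then $\tilde u\geq 0$ and $\tilde u(t)\leq\kappa$ on $(-\infty,t_0]$, so \eqref{2.3.5.r} applies to $\tilde u$. Since $\ell_0,f\in V$, the Volterra property yields $\ell_0(\tilde u)(t)=\ell_0(u)(t)$ and $f(\tilde u)(t)=f(u)(t)$ for a.e. $t\leq t^*$; thus
\[
\ell_0(u)(t)+f(u)(t)\geq 0\quad\text{for a.e. }t\in[t_0,t^*],
\]
and substituting into \eqref{1.1} gives $u'(t)\geq -\ell_1(u)(t)$ a.e.\ on $[t_0,t^*]$. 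Setting $M:=c/\gamma(t_0)>0$ and $w:=u-M\gamma$, we have $w(t_0)=0$, and linearity of $\ell_1$ together with the global inequality \eqref{2.4.r} yields
\[
w'(t)\geq -\ell_1(u)(t)+M\ell_1(\gamma)(t)=-\ell_1(w)(t)\quad\text{a.e.\ on }[t_0,t^*].
\]
Concluding $w\geq 0$ on $[t_0,t^*]$ would contradict $w(t^*)=-M\gamma(t^*)<0$ and close the argument.

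The main obstacle is precisely this last step: because $\ell_1$ is nonlocal, propagating the sign of $w$ to the right of $t_0$ from the differential inequality $w'+\ell_1(w)\geq 0$ requires controlling $w$ on $(-\infty,t_0]$, where \eqref{2.8} only gives $u\geq 0$, not $u\geq M\gamma$. I expect this gap to be bridged by a comparison/maximum-principle lemma from Section~\ref{sec4} tailored to operators in $V\cap\pplus$, crucially exploiting the fact that \eqref{2.4.r} holds on all of $\RR$ (not merely on $(-\infty,t_0]$) together with \eqref{2.6}. A cleaner alternative would be to recast the existence part as a fixed-point equation directly on the closed convex subset of $\cloc$ consisting of functions $v$ with $0\leq v(t)\leq\kappa$ on $(-\infty,t_0]$, $v(t_0)=c$, and $v(t)\geq M\gamma(t)$ on $[t_0,+\infty)$, so that positivity is inbuilt; compactness and continuity of the associated operator then follow from \eqref{3.5} and \eqref{2.3.5} exactly as in the proof of Theorem~\ref{t2.1}, and Schauder's theorem supplies the desired solution in one step.
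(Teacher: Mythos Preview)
Your overall strategy---apply Theorem~\ref{t2.1} to obtain a global solution with \eqref{2.8}, then argue by contradiction to upgrade to \eqref{2.8.r}---matches the paper exactly, and your derivation of $u'(t)\geq -\ell_1(u)(t)$ a.e.\ on $[t_0,t^*]$ via \eqref{2.3.5.r} and the Volterra property is correct. The genuine gap appears at the next step: introducing $w=u-M\gamma$ with $M=c/\gamma(t_0)$ and trying to show $w\geq 0$ on $[t_0,t^*]$ cannot work as stated, precisely for the reason you identify. Since $\ell_1\in V$ is nonlocal, the inequality $w'\geq -\ell_1(w)$ on $[t_0,t^*]$ involves values of $w$ on $(-\infty,t_0]$, where $w=u-M\gamma$ may well be negative (you only know $u\geq 0$, not $u\geq M\gamma$). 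Neither of your proposed repairs---a new maximum principle or rebuilding the fixed-point argument on a cone with the lower barrier $M\gamma$---is needed.

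The fix is simpler and is exactly what the paper does: extend the differential inequality for $u$ itself to all of $(-\infty,t^*]$ and then apply Lemma~\ref{l2.19} directly to $u$, not to $w$. On $(-\infty,t_0]$ the function $u$ takes values in $[0,\kappa]$ by \eqref{2.8}, so \eqref{2.3} gives $f(u)(t)\geq 0$ and positivity of $\ell_0$ gives $\ell_0(u)(t)\geq 0$; hence from \eqref{1.1} one has $u'(t)\geq -\ell_1(u)(t)$ a.e.\ on $(-\infty,t_0]$ as well. Combining with what you already proved on $[t_0,t^*]$, the inequality \eqref{2.157} holds on all of $(-\infty,t^*]$. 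Since \eqref{2.4.r} gives \eqref{2.194} with $\tau=t^*$, and $u$ is bounded on $(-\infty,t^*]$ (by $\kappa$ on $(-\infty,t_0]$ and by continuity on the compact $[t_0,t^*]$), Lemma~\ref{l2.19} applies with $u(t^*)=0$ and forces $u(t)\leq 0$ for every $t\leq t^*$. This contradicts $u(t_0)=c>0$ and closes the argument in one line, without any auxiliary comparison function.
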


\begin{rem}
\label{r2.1.r}
Note that the solution $u$, the existence of which is guaranteed by
Theorem~\ref{t2.1.r}, has also a finite limit $u(-\infty)$ according to
Theorem~\ref{t2.10} formulated below.
\end{rem}

\begin{theo}
\label{t2.2.r}
Let $\ell_0,\ell_1,f\in V$ be such that $\ell_0-\ell_1\in\pplus$,
\eqref{2.2} holds, and let there exist $\kappa>0$ such that \eqref{2.3} and
\begin{equation}
\label{2.3.5.rr}
0\leq f(v)(t)\leq q(t,\|v\|) \mforae t\geq t_0,\quad
v\in\cnulrp, \quad v(t)\leq \kappa\mfor t\leq t_0
\end{equation}
are fulfilled, where $q\in\kloctnrp$ is nondecreasing in
the second argument and satisfies \eqref{3.5} for every $b>t_0$.
Then, for every $c\in[0,\kappa]$ there exists a global solution $u$ to the problem
\eqref{1.1}, \eqref{1.2} satisfying \eqref{2.8} and
\begin{equation}
\label{2.9.r}
u'(t)\geq 0\forae t\in\RR.
\end{equation}
\end{theo}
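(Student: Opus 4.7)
The plan is to adapt the strategy of Theorem~\ref{t2.2}, using the stronger hypotheses to promote the conclusion $u'\geq 0$ a.e.\ on $(-\infty,t_0]$ to a.e.\ on all of $\RR$. The two new ingredients are the global positivity $\ell_0-\ell_1\in\pplus$ (rather than merely $\ptnplus$) and the unconditional sign condition $f(v)(t)\geq 0$ for $t\geq t_0$ built into \eqref{2.3.5.rr}.

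On $(-\infty,t_0]$ we essentially repeat the construction of Theorem~\ref{t2.2}: since $\pplus\subset\ptnplus$ and all the approximating functions produced there are nonnegative on $(-\infty,t_0]$, the upper estimate in \eqref{2.3.5.rr} together with \eqref{2.3} supplies everything the earlier proof uses. This yields a nonnegative nondecreasing $u$ on $(-\infty,t_0]$ with $u(t_0)=c$, $0\leq u(t)\leq\kappa$, and $u'\geq 0$ a.e.

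To extend $u$ to $[t_0,+\infty)$ as a nondecreasing solution, I would fix $b>t_0$ and apply Schauder's fixed-point theorem on the convex set
\[
K_b=\big\{v\in C\big([t_0,b];\RR_+\big):v(t_0)=c,\ v\text{ nondecreasing},\ v(t)\leq R_b\big\},
\]
where $R_b$ is the a priori bound furnished by \eqref{3.5}. For $v\in K_b$ let $\bar v:\RR\to\RR_+$ coincide with $u$ on $(-\infty,t_0]$, with $v$ on $[t_0,b]$, and with the constant $v(b)$ on $[b,+\infty)$; then $\bar v\in\cnulrp$ is nondecreasing with $\bar v(t)\leq\kappa$ for $t\leq t_0$. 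Define
\[
\Phi(v)(t)=c+\int_{t_0}^{t}\big[\ell_0(\bar v)(s)-\ell_1(\bar v)(s)+f(\bar v)(s)\big]\,ds,\qquad t\in[t_0,b].
\]
The assumption $\ell_0-\ell_1\in\pplus$ gives $(\ell_0-\ell_1)(\bar v)(s)\geq 0$ a.e., while \eqref{2.3.5.rr} gives $f(\bar v)(s)\geq 0$ for a.e.\ $s\geq t_0$; hence the integrand is nonnegative, $\Phi(v)(t_0)=c$, and $\Phi(v)$ is nondecreasing, while \eqref{3.5} forces $\Phi(v)(t)\leq R_b$. Thus $\Phi:K_b\to K_b$, and continuity and compactness of $\Phi$ follow from the Carath\'eodory conditions together with the Volterra property of $\ell_0,\ell_1,f$, so Schauder produces a fixed point $u_b$.

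A diagonal extraction along a sequence $b_n\to+\infty$ glues these pieces into a global solution satisfying \eqref{2.8} and \eqref{2.9.r}. I expect the principal difficulty to be the verification that the a priori bound $R_b$ is compatible with the invariance $\Phi(K_b)\subset K_b$ and that $\Phi$ is compact on the cone $K_b$; both, however, follow from the same techniques already employed in the proofs of Theorems~\ref{t2.1}--\ref{t2.4} combined with the growth restriction \eqref{3.5}.
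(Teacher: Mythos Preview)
Your treatment on $(-\infty,t_0]$ is fine and is essentially what the paper does. The difficulty is on $[t_0,+\infty)$, and here your proposal has a genuine gap.

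The claim that $\Phi(K_b)\subset K_b$ for some finite $R_b$ is not justified by the techniques you cite. The a~priori bounds in Lemmas~\ref{l2.10} and~\ref{l3.1} are Gronwall-type estimates for \emph{solutions} of the differential equation (or of the homotopy family $u'=\ell_0(u)-\ell_1(u)+\lambda f(u)$); they do not control $\Phi(v)$ for an arbitrary $v\in K_b$. Concretely, take $\ell_0(u)(t)=2u(t)$, $\ell_1(u)(t)=u(t)$, $f\equiv 0$. Then $\ell_0-\ell_1\in\pplus$, all hypotheses of the theorem are met, and for any nondecreasing $v$ with $v\leq R_b$ one has
\[
\Phi(v)(b)=c+\int_{t_0}^{b}\bar v(s)\,ds,
\]
which can be as large as $c+R_b(b-t_0)$. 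If $b-t_0>1$ there is no $R_b>0$ with $c+R_b(b-t_0)\leq R_b$, so $\Phi$ does not map $K_b$ into itself. The growth condition \eqref{3.5} cannot help, since in this example $q\equiv 0$. The point is that the linear part $\ell_0-\ell_1$ already blows solutions up exponentially, and only a genuine differential-inequality argument captures this; invariance of a ball under the integral operator does not.

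The paper avoids this by a quite different device. It replaces $f$ by $\widetilde f(v)=f(|v|)$, checks that $\widetilde f$ satisfies the one-sided bound \eqref{2.3.5} needed for Theorem~\ref{t2.2}, and applies that theorem to obtain a \emph{global} solution $u$ of the auxiliary problem which is nondecreasing only on $(-\infty,t_0]$. Then it sets $w(t)=\sup\{u(s):s\leq t\}$ and shows $w\equiv u$: on one hand $w'\leq \ell_0(w)-\ell_1(u)+\widetilde f(u)$ (using $\ell_0-\ell_1\in\pplus$ applied to the nondecreasing $w$ and \eqref{2.3.5.rr}), so $z=w-u$ satisfies $z'\leq\ell_0(z)$ with $z\equiv 0$ on $(-\infty,t_0]$; Proposition~\ref{p2.2} then forces $z\leq 0$ on $[t_0,+\infty)$, hence $w=u$ and $u$ is nondecreasing everywhere. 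This a~posteriori monotonicity argument completely sidesteps the invariance problem that defeats your direct Schauder approach.
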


\begin{rem}
\label{r2.2.1.r}
Obviously, if $c>0$ in Theorem~\ref{t2.2.r} then \eqref{2.9.r} implies \eqref{2.8.r}.
\end{rem}

\begin{rem}
\label{r2.2.r}
Note that the solution $u$, the existence of which is guaranteed by
Theorem~\ref{t2.2.r}, has also a finite limit $u(-\infty)$ because $u$
is a bounded nondecreasing function in the neighbourhood of $-\infty$.
\end{rem}

Theorems~\ref{t2.3}--\ref{t2.2.r} together with Remark~\ref{r2.2.1.r}
imply the following results dealing with the existence of global solutions to
\eqref{1.1}, \eqref{1.2} which are positive on the whole real
line.

\begin{cor}
\label{c1.1}
Let all the assumptions of Theorem~\ref{t2.1.r} be fulfilled. Let,
moreover, there exist $g\in\llocrp$ and a continuous nondecreasing function
$h_0:(0,+\infty)\to (0,+\infty)$ such that \eqref{2.10} and \eqref{2.11} hold.
Then, for every $c\in \left(0,\kappa e^{-M_{\sigma}}\right]$
with $M_{\sigma}$ given by \eqref{2.7}, there exists a positive global solution $u$
to the problem \eqref{1.1}, \eqref{1.2} satisfying
\begin{equation}
\label{2.12.r}
u(t)\leq \kappa\for t\leq t_0.
\end{equation}
\end{cor}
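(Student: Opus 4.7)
The plan is to apply Theorem~\ref{t2.1.r} to produce a global solution and then upgrade the nonnegativity on $(-\infty,t_0]$ to strict positivity by reusing the Osgood-type argument that distinguishes Theorem~\ref{t2.3} from Theorem~\ref{t2.1}. Since the hypotheses of Theorem~\ref{t2.1.r} are assumed, the hypotheses of Theorem~\ref{t2.1} are automatically in force on $(-\infty,t_0]$ (the only differences in Theorem~\ref{t2.1.r} are the additional condition \eqref{2.3.5.r} and the fact that $\gamma$, together with \eqref{2.4.r}, is defined on all of $\RR$, both of which only strengthen the assumptions of Theorem~\ref{t2.1}). Consequently the supplementary conditions \eqref{2.10} and \eqref{2.11} added in the hypothesis of the corollary place us in the situation of Theorem~\ref{t2.3} as far as behaviour on $(-\infty,t_0]$ is concerned.

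I would first invoke Theorem~\ref{t2.1.r} to obtain, for each $c\in(0,\kappa e^{-M_\sigma}]$, a global solution $u$ of \eqref{1.1}, \eqref{1.2} satisfying \eqref{2.8} (which is the desired bound \eqref{2.12.r}) and, crucially, \eqref{2.8.r}: $u(t)>0$ for $t\geq t_0$. It therefore remains only to establish $u(t)>0$ for every $t<t_0$, since then $u$ will be positive on the whole real line, i.e.\ a positive global solution in the sense of the corollary.

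For the positivity on $(-\infty,t_0)$ I would argue by contradiction. Set $t^*:=\sup\{t\leq t_0:u(t)=0\}$; because $u(t_0)=c>0$ and $u$ is continuous, we have $t^*<t_0$, $u(t^*)=0$, and $u(s)>0$ for $s\in(t^*,t_0]$. On $(t^*,t_0]$ the equation \eqref{1.1}, the positivity of $\ell_0$, $\ell_1$ together with $u\geq 0$, and the sign condition \eqref{2.3} on $f$ allow me to majorize $u$ pointwise by the solution of a scalar problem driven by $h_0(u)$, using \eqref{2.10} to control $f(u)$ and the linearity plus $u\leq\kappa$ to control $\ell_0(u)$ and $\ell_1(u)$. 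The divergence assumption \eqref{2.11} then forces $u$ to vanish on an interval $[t^*,t^*+\varepsilon)$, contradicting $u>0$ on $(t^*,t_0]$. The main obstacle is carrying out this Osgood-type comparison in the nonlocal setting: one cannot pass directly to an ODE because $\ell_0$, $\ell_1$ and $f$ depend on $u$ as a function, so the Volterra memory must be used to localize the contributions near $t^*$. Since this is exactly the computation appearing in the proof of Theorem~\ref{t2.3}, I would simply invoke it verbatim and combine the result with \eqref{2.8.r} from Theorem~\ref{t2.1.r} to conclude.
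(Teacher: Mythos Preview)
Your overall strategy is correct and matches the paper's: the paper simply says Corollary~\ref{c1.1} is a direct consequence of Theorems~\ref{t2.3}--\ref{t2.2.r}, which amounts to exactly what you do---take the global solution furnished by Theorem~\ref{t2.1.r} (positive on $[t_0,+\infty)$, satisfying \eqref{2.8}) and then observe that the positivity argument in the proof of Theorem~\ref{t2.3} applies to \emph{any} solution satisfying \eqref{2.8}, not just the one produced by Theorem~\ref{t2.1}.

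One remark on your intermediate sketch: the forward Osgood estimate from $t^*$ that you describe does not work directly, because for $t$ just above $t^*$ the Volterra operators $\ell_0(u)(t)$ and $f(u)(t)$ see the values of $u$ on $[\sigma(t),t]$, which may reach into $(-\infty,t^*]$ where you have not yet shown $u\equiv 0$ (you only know $u(t^*)=0$, not that $u$ vanishes on the whole past). The paper's proof of Theorem~\ref{t2.3} handles this in two steps: first, from $u(\tau)=0$ and the differential inequality $u'\geq -\ell_1(u)$ together with the function $\gamma$ from \eqref{2.4}, Lemma~\ref{l2.19} propagates the zero \emph{backward} to give $u\equiv 0$ on $(-\infty,\tau]$; only then does Lemma~\ref{l2.16} (the Osgood lemma) propagate forward to force $u\equiv 0$ on all of $(-\infty,t_0]$, contradicting $u(t_0)=c>0$. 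Since you ultimately defer to that proof verbatim, your argument is fine, but the two-step structure (backward via $\gamma$, then forward via Osgood) is what actually makes it work.
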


\begin{cor}
\label{c1.2}
Let all the assumptions of Theorem~\ref{t2.2.r} be fulfilled. Let,
moreover, there exist $g\in\llocrp$ and a continuous nondecreasing function
$h_0:(0,+\infty)\to (0,+\infty)$ such that \eqref{2.10} and \eqref{2.11} hold.
Then, for every $c\in (0,\kappa]$ there exists a positive global solution $u$ to the
problem \eqref{1.1}, \eqref{1.2} satisfying \eqref{2.9.r} and \eqref{2.12.r}.
\end{cor}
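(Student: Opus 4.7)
The plan is threefold: invoke Theorem~\ref{t2.2.r} to produce a globally nondecreasing solution, use Remark~\ref{r2.2.1.r} to obtain positivity on $[t_0,+\infty)$ for free, and then run an Osgood-type contradiction (essentially the positivity argument underlying Theorem~\ref{t2.4}) to extend positivity to $(-\infty,t_0]$ for the very same solution.

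For $c\in(0,\kappa]$, Theorem~\ref{t2.2.r} supplies a global solution $u$ of \eqref{1.1}, \eqref{1.2} satisfying \eqref{2.8} and \eqref{2.9.r}, i.e., $0\le u\le\kappa$ on $(-\infty,t_0]$ and $u'\ge 0$ a.e.\ on $\RR$. Since $u$ is nondecreasing on all of $\RR$ and $u(t_0)=c>0$, Remark~\ref{r2.2.1.r} immediately yields $u(t)\ge c>0$ for every $t\ge t_0$, so \eqref{2.12.r} is all that remains to upgrade: we need $u(t)>0$ for every $t\le t_0$. I argue by contradiction: assume $u(t_*)=0$ for some $t_*<t_0$, chosen maximal, so that $u\equiv 0$ on $(-\infty,t_*]$ while $u>0$ on $(t_*,t_0]$.

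For each $t\in(t_*,t_0]$ I build $\tilde u_t\in\cnulok$ by extending $u|_{(-\infty,t]}$ by the constant $u(t)$ on $(t,+\infty)$; monotonicity of $u$ combined with $u(t)\le\kappa$ forces $\|\tilde u_t\|=u(t)$. The Volterra property of $\ell_0$ and $f$ permits replacing $u$ by $\tilde u_t$ inside the operators at time $t$. Combining $\ell_0(u)(t)\le u(t)\,\ell_0(\mathbf 1)(t)$ (linearity and positivity of $\ell_0$, together with $\tilde u_t\le u(t)\cdot\mathbf 1$), $-\ell_1(u)(t)\le 0$ (positivity of $\ell_1$), and \eqref{2.10} applied to $\tilde u_t$ (which satisfies $\tilde u_t\not\equiv 0$ since $u(t)>0$) gives the pointwise differential inequality
\[
u'(t)\;\le\;\ell_0(\mathbf 1)(t)\,u(t)+g(t)\,h_0(u(t))\qquad\text{for a.e.\ }t\in(t_*,t_0].
\]
An integrating-factor substitution $w(t)=u(t)\exp\bigl(-\!\int_{t_*}^{t}\ell_0(\mathbf 1)(s)\,ds\bigr)$ removes the linear term and produces $w'(t)\le g(t)\,\hat h_0(w(t))$, where $\hat h_0(x)=h_0(Cx)$ with $C=\exp\bigl(\int_{t_*}^{t_0}\ell_0(\mathbf 1)\bigr)$ still inherits the Osgood property from \eqref{2.11}. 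Integrating $w'/\hat h_0(w)\le g$ from $t$ to a fixed $t_1\in(t_*,t_0]$ and letting $t\to t_*^+$ drives $\int_{w(t)}^{w(t_1)}dx/\hat h_0(x)\to+\infty$ while $\int_t^{t_1}g$ stays finite, the desired contradiction.

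The principal technical obstacle is that \eqref{2.10} is a bound in the \emph{global} sup norm $\|v\|$ rather than in $v(t)$, and so does not by itself produce an ODE inequality to which Osgood applies. Monotonicity of $u$ together with the Volterra structure of $\ell_0,\ell_1,f$ is precisely what makes $\|\tilde u_t\|=u(t)$ and therefore converts the operator bound into a pointwise inequality in $u(t)$; everything else is a routine integrating-factor change of variable followed by the standard Osgood divergence.
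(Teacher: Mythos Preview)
Your proof is correct and follows essentially the same route as the paper: invoke Theorem~\ref{t2.2.r} for a solution with \eqref{2.8} and \eqref{2.9.r}, use Remark~\ref{r2.2.1.r} for positivity on $[t_0,+\infty)$, and then run the Osgood contradiction for positivity on $(-\infty,t_0]$. The paper packages this last step as Lemma~\ref{l2.16} (invoked in the proof of Theorem~\ref{t2.4}) and simply records Corollary~\ref{c1.2} as a direct consequence of Theorems~\ref{t2.3}--\ref{t2.2.r} together with Remark~\ref{r2.2.1.r}; you have reproduced the content of Lemma~\ref{l2.16} in-line using the truncations $\tilde u_t$, which is equivalent to the paper's construction $w(t)=\sup\{u(s):s\le t\}$ since your $u$ is already nondecreasing. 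One small remark: the step ``replace $u$ by $\tilde u_t$ at time $t$'' is an a.e.\ statement and is made rigorous exactly as in Lemma~\ref{l2.1} (and the passage \eqref{2.149}--\eqref{2.150} in the proof of Lemma~\ref{l2.16}); your argument is correct once read in that sense.
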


\begin{rem}
\label{r2.99}
Note that, according to Remarks~\ref{r2.1.r} and \ref{r2.2.r}, the
solution $u$, the existence of which is guaranteed by 
Corollary~\ref{c1.1}, resp. Corollary~\ref{c1.2}, has also a finite limit $u(-\infty)$.
\end{rem}

Corollaries~\ref{c1.1} and \ref{c1.2} are direct consequences of
Theorems~\ref{t2.3}--\ref{t2.2.r} and
Remark~\ref{r2.2.1.r}. Therefore, their proofs are omitted.

\subsection{Properties of solutions}

\begin{theo}
\label{t2.9}
Let $\ell_1\in\vtn$ and let there exist $\kappa>0$ such that 
\begin{equation}
\label{2.23}
\ell_0(v)(t)+f(v)(t)\geq 0 \forae t\leq t_0,\quad v\in\cnulok.
\end{equation}
Let, moreover, there exist $\gamma\in\acloc$ such that \eqref{2.4} is
fulfilled and
\begin{equation}
\label{2.24}
\gamma(-\infty)<+\infty.
\end{equation}
Then every solution $u$ to \eqref{1.1} on $(-\infty,t_0]$ satisfying
\eqref{2.8} has a finite limit $u(-\infty)$.
\end{theo}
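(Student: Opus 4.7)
The plan is to show that $u$ can be written on $(-\infty,t_0]$ as the difference of a bounded nondecreasing function and a tail integral that vanishes at $-\infty$; the existence of $u(-\infty)$ as a finite number then follows immediately. The starting observation is that the canonical extension $\vartheta(u)$ defined by \eqref{1.4} lies in $\cnulok$: indeed, \eqref{2.8} gives $0\leq u(t)\leq\kappa$ for $t\leq t_0$, and $\vartheta(u)\equiv u(t_0)\in[0,\kappa]$ on $[t_0,+\infty)$. Applying \eqref{2.23} to $\vartheta(u)$ yields $\ell_0(u)(t)+f(u)(t)\geq 0$ for a.e.\ $t\leq t_0$, which combined with \eqref{1.1} and positivity of $\ell_1$ gives
$$
u'(t)\geq -\ell_1(u)(t),\qquad \ell_1(u)(t)\geq 0,\qquad\text{for a.e.\ }t\leq t_0.
$$

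The heart of the argument is the integrability claim $\int_{-\infty}^{t_0}\ell_1(u)(s)\,ds<+\infty$, and this is where $\gamma$ enters. Since $\gamma$ is continuous with a finite limit at $-\infty$, it is bounded on $(-\infty,t_0]$, and after a harmless replacement it may be taken positive there; positivity of $\ell_1$ then gives $\ell_1(\gamma)\geq 0$, so \eqref{2.4} forces $\gamma'\leq 0$ and therefore $0<\gamma(t_0)\leq \gamma(t)\leq \gamma(-\infty)<+\infty$ on $(-\infty,t_0]$. Integrating \eqref{2.4} from $t$ to $t_0$ and letting $t\to-\infty$ yields
$$
\int_{-\infty}^{t_0}\ell_1(\gamma)(s)\,ds\leq \gamma(-\infty)-\gamma(t_0)<+\infty.
$$
Define $\hat\gamma\in\cloc$ by $\hat\gamma:=\gamma$ on $(-\infty,t_0]$ and $\hat\gamma\equiv\gamma(t_0)$ on $[t_0,+\infty)$. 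Then $\vartheta(u)(t)\leq\kappa\leq (\kappa/\gamma(t_0))\,\hat\gamma(t)$ for every $t\in\RR$, so linearity and positivity of $\ell_1$ give $\ell_1(\vartheta(u))\leq (\kappa/\gamma(t_0))\,\ell_1(\hat\gamma)$ a.e.\ on $\RR$; the Volterra property $\ell_1\in\vtn$ together with $\hat\gamma\equiv\gamma$ on $(-\infty,t_0]$ then replaces $\ell_1(\hat\gamma)$ by $\ell_1(\gamma)$ a.e.\ on $(-\infty,t_0]$, delivering the desired integrability.

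With $\int_{-\infty}^{t_0}\ell_1(u)<+\infty$ established, set $\phi(t):=u(t)+\int_{-\infty}^{t}\ell_1(u)(s)\,ds$ for $t\leq t_0$. Since $\phi'(t)=u'(t)+\ell_1(u)(t)\geq 0$ a.e.\ and $\phi$ is bounded above by $\kappa+\int_{-\infty}^{t_0}\ell_1(u)$, $\phi$ is nondecreasing and bounded, so the finite limit $\phi(-\infty)$ exists. The tail integral vanishes at $-\infty$, so the identity $u(t)=\phi(t)-\int_{-\infty}^{t}\ell_1(u)(s)\,ds$ produces $u(-\infty)=\phi(-\infty)\in\RR$. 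The main technical hurdle is the comparison step above: the pointwise bound $u\leq (\kappa/\gamma(t_0))\,\gamma$, which is meaningful only on $(-\infty,t_0]$, must be transported to an inequality between $\ell_1$-images, and this requires a compatible extension of $\gamma$ to $\RR$; the Volterra assumption $\ell_1\in\vtn$ is precisely what lets $\ell_1(\hat\gamma)$ be identified with $\ell_1(\gamma)$ on $(-\infty,t_0]$ after the extension. A subsidiary point is ensuring that $\gamma>0$ on $(-\infty,t_0]$, since the statement only asks for $\gamma\in\acloc$; this can be handled by a shift using $M:=\sup_{t\leq t_0}|\gamma(t)|<+\infty$ (finite by \eqref{2.24} and continuity of $\gamma$), noting that the argument only genuinely uses the positivity of $\gamma$ on $(-\infty,t_0]$.
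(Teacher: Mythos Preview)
Your proof is correct and takes a genuinely different route from the paper. The paper argues via Lemma~\ref{l2.17}: from $u'(t)\geq -\ell_1(u)(t)$ one shows that $\sup\{u(s)/\gamma(s):s\leq\tau\}=u(\tau)/\gamma(\tau)$ for every $\tau\leq t_0$, i.e.\ the ratio $u/\gamma$ is nondecreasing on $(-\infty,t_0]$; since it is bounded below by $0$, it has a finite limit at $-\infty$, and then \eqref{2.24} delivers $u(-\infty)$. Your argument bypasses this ratio lemma entirely: the crude pointwise comparison $\vartheta(u)\leq(\kappa/\gamma(t_0))\,\vartheta(\gamma)$ on all of $\RR$, plus positivity of $\ell_1$, already yields $\ell_1(u)\leq(\kappa/\gamma(t_0))\,\ell_1(\gamma)$, and integrating \eqref{2.4} with \eqref{2.24} makes $\ell_1(u)$ integrable near $-\infty$; the rest is the elementary observation that $u+\int_{-\infty}^{\cdot}\ell_1(u)$ is bounded and monotone. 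Your approach is more direct and self-contained; the paper's approach extracts a sharper structural fact (monotonicity of $u/\gamma$) that it reuses elsewhere, notably in Lemma~\ref{l2.18}.

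Two minor remarks. First, your ``subsidiary point'' about positivity of $\gamma$ is unnecessary: in this paper $\acloc$ denotes $AC_{loc}\big((-\infty,t_0];(0,+\infty)\big)$, so $\gamma>0$ is built into the hypothesis. Second, what you call the ``main technical hurdle'' is slightly mislabelled: your $\hat\gamma$ is exactly $\vartheta(\gamma)$, and by the paper's convention $\ell_1(\gamma)\stackrel{def}{=}\ell_1(\vartheta(\gamma))$, so the identification $\ell_1(\hat\gamma)=\ell_1(\gamma)$ is definitional rather than a consequence of the Volterra property. The genuine point is that the positivity of $\ell_1$ requires the inequality $\vartheta(u)\leq(\kappa/\gamma(t_0))\hat\gamma$ to hold on all of $\RR$, which you do verify.
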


\begin{rem}
\label{r2.6}
Note that the conditions \eqref{2.4}, \eqref{2.24}, and the inclusion $\ell_1\in\vtn$
imply that the function $\ell_1(1)$ is integrable in the neighbourhood of $-\infty$,
i.e., 
\begin{equation}
\label{2.25}
\lim_{t\to -\infty}\int_t^{t_0}\ell_1(1)(s)ds<+\infty.
\end{equation}
Indeed, from \eqref{2.4} it follows that $\gamma$ is a nonincreasing
function, and thus
\begin{equation}
\label{2.26}
\gamma'(t)\leq -\ell_1(1)(t)\gamma(t)\forae t\leq t_0
\end{equation}
(see Lemma~\ref{l2.1} below with $\ell=\ell_1$, $\alpha=1$,
$\beta=-\vartheta(\gamma)$ and $\vartheta$ given by \eqref{1.4}). Now \eqref{2.26} yields 
\begin{equation}
\label{2.27}
0<\gamma(t_0)\leq\gamma(t)
\exp\left(-\int_t^{t_0}\ell_1(1)(s)ds\right)\for t\leq t_0
\end{equation}
and so \eqref{2.25} holds provided \eqref{2.24} is fulfilled.

On the other hand, from \eqref{2.27} it follows that if 
$$
\lim_{t\to -\infty}\int_t^{t_0}\ell_1(1)(s)ds=+\infty,
$$
then necessarily $\gamma(-\infty)=+\infty$. If the latter occurs,
one can apply the following theorem.
\end{rem}

\begin{theo}
\label{t2.10}
Let $\ell_1\in\vtn$, $\sigma\in\Sigma$, \eqref{2.1} hold,   
and let there exist $\kappa>0$ such that \eqref{2.3} is fulfilled.
Let, moreover, there exist $\gamma\in\acloc$ such that
\eqref{2.4}--\eqref{2.6} are satisfied.
Then every solution $u$ to \eqref{1.1} on $(-\infty,t_0]$ satisfying \eqref{2.8}
has a finite limit $u(-\infty)$.
\end{theo}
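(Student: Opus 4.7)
I aim to prove that the bounded solution $u\in[0,\kappa]$ on $(-\infty,t_0]$ has a Cauchy tail at $-\infty$, hence a finite limit there. The central tool is a one-sided pointwise estimate of $\ell_1(u)$ in terms of $p(t):=\ell_1(\gamma)(t)/\gamma(t)$, whose integral over each short $\sigma$-window is uniformly bounded by $M_\sigma$ thanks to \eqref{2.6}; this will convert the downward variation of $u$ into $p$-mass that can be controlled window-by-window.

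First, \eqref{2.3} together with $u\in[0,\kappa]$ on $(-\infty,t_0]$ gives $f(u)(t)\ge 0$ a.e.\ there, while positivity of $\ell_0$ applied to $u\ge 0$ gives $\ell_0(u)(t)\ge 0$. Then \eqref{1.1} yields $u'(t)\ge-\ell_1(u)(t)$ a.e.\ on $(-\infty,t_0]$. From \eqref{2.4} the function $\gamma$ is positive (see Remark~\ref{r2.6}) and nonincreasing on $(-\infty,t_0]$; the argument in Remark~\ref{r2.6} (applying Lemma~\ref{l2.1} to $\ell=\ell_1$, $\alpha=1$, $\beta=-\vartheta(\gamma)$) then yields $\ell_1(\gamma)(t)\ge\gamma(t)\ell_1(1)(t)$, hence $\ell_1(1)(t)\le p(t)$. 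Since $\ell_1\in\vtn$ is positive and $u\le\kappa$ on $(-\infty,t_0]$, we obtain $\ell_1(u)(t)\le\kappa\ell_1(1)(t)\le\kappa p(t)$ a.e.; integrating the resulting bound $u'(t)\ge-\kappa p(t)$ gives
$$
u(s_1)-u(s_2)\le\kappa\int_{s_1}^{s_2}p(r)\,dr\qquad\text{for all }s_1<s_2\le t_0.
$$

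To conclude, I would argue by contradiction: assume $L:=\limsup_{t\to-\infty}u(t)>\ell:=\liminf_{t\to-\infty}u(t)$, fix $\eta\in(0,(L-\ell)/4)$, and choose sequences $b_n<a_n\to-\infty$ with $u(b_n)>L-\eta$ and $u(a_n)<\ell+\eta$, so that the displayed inequality forces $\int_{b_n}^{a_n}p(r)\,dr\ge(L-\ell-2\eta)/\kappa$ on each descending interval. The hardest remaining step, and the main obstacle, is to exploit the memory structure $\ell_0\in\vtn(\sigma)$ together with the matching lower bound $\ell_0(1)\ge p$ from \eqref{2.5} to localize a full oscillation (ascending return plus descending excursion) inside a single $\sigma$-window $[\sigma(t),t]$, at which point \eqref{2.6} is violated. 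The genuine difficulty is that $\int_{-\infty}^{t_0}p$ may be infinite — this is precisely the case excluded from Theorem~\ref{t2.9} — so naive summation of per-oscillation $p$-mass bounds does not yield a contradiction. One must instead use the memory of $\ell_0$ and the lower bound $\ell_0(1)\ge p$ to trap the ascending return of $u$ from level $\ell+\eta$ back up to $L-\eta$ within the same narrow $\sigma$-window as a descending excursion, so that the $p$-mass on that window exceeds $M_\sigma$. This localization, presumably via a careful iterative use of Lemma~\ref{l2.1}, is the crux of the argument.
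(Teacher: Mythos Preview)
Your proposal is incomplete, and the crude bound you derive is too weak to close the argument even after the localization step you flag as missing.

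The key gap is your estimate $\ell_1(u)(t)\le\kappa\,\ell_1(1)(t)\le\kappa\,p(t)$. The paper instead proves the \emph{multiplicative} bound $\ell_1(u)(t)\le p(t)\,u(t)$ (Lemma~\ref{l2.18}): from $u'\ge-\ell_1(u)$ and \eqref{2.4} one shows, via Lemma~\ref{l2.17}, that $u/\gamma$ is nondecreasing on $(-\infty,t_0]$, whence Lemma~\ref{l2.1} with $\alpha=\gamma$, $\beta=u/\gamma$ yields $\ell_1(u)\le p\,u$. Substituting back into \eqref{1.1} gives
\[
u'(t)\ge\ell_0(u)(t)-p(t)u(t)\qquad\text{a.e.\ on }(-\infty,t_0],
\]
which is the form needed for an integrating-factor argument. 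Your additive bound $u'\ge-\kappa p$ would, even after localization to a $\sigma$-window, only produce $u^*-u_*-2\eta\le\kappa M_\sigma$, which is no contradiction.

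The localization you call the ``main obstacle'' is done in the paper as Lemmas~\ref{l2.20}--\ref{l2.21}. One takes $\tau_0<\tau_1$ with $u(\tau_0)\ge u^*-\delta$, $u(\tau_1)\le u_*+\delta$, and $u(t)>u(\tau_1)$ on $[\tau_0,\tau_1)$. Integrating $(u\,e^{-\int_t^{\tau_1}p})'\ge\ell_0(u)e^{-\int_t^{\tau_1}p}$ over $[\tau_1-\varepsilon,\tau_1]$ and using $\ell_0\in V_{t_0}(\sigma)$ together with $\ell_0(1)\ge p$ forces $\sigma(\tau_1)<\tau_0$; hence $\int_{\tau_0}^{\tau_1}p\le M_\sigma$. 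Then integrating over the full interval and using $\ell_0(u)\ge(u_*-\delta)\ell_0(1)\ge(u_*-\delta)p$ (not merely $\ell_0(u)\ge0$) yields
\[
u_*+\delta\ge(u^*-\delta)e^{-\int_{\tau_0}^{\tau_1}p}+(u_*-\delta)\bigl(1-e^{-\int_{\tau_0}^{\tau_1}p}\bigr),
\]
so $2\delta\ge(u^*-u_*)e^{-M_\sigma}$, contradicting the choice $2\delta<(u^*-u_*)e^{-M_\sigma}$. The lower bound on $\ell_0(u)$ coming from the near-$u_*$ floor is essential here; your outline does not invoke it.
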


To formulate our next result we introduce
\begin{defn}
\label{d2.1}
Let $\omega\in\Sigma$, and let $\tau\in\RR$, $\kappa>0$, and $c\in (0,\kappa)$ be
constants. An operator $T:\cloc\to\lloc$ belongs to the
set $\mathcal{O}_{\tau}(\omega,\kappa,c)$ if
$$
\limsup_{t\to -\infty}\int_{\omega(t)}^t T(u)(s)ds>0
$$
whenever $u\in\cnulok$ is such that
$$
u(\tau)=c,
$$
there exists a finite limit $u(-\infty)\leq c$, and 
\begin{gather*}
0<u(-\infty)\leq u(t)\for t\leq
\tau\quad\casif u(-\infty)<c,\\
u(t)=c\for t\leq \tau\quad\casif u(-\infty)=c.
\end{gather*}
\end{defn}

\begin{theo}
\label{t2.11}
Let $\ell_0,\ell_1\in\vtn$, 
\begin{equation}
\label{2.29}
\ell_0(1)(t)\geq \ell_1(1)(t)\forae t\leq t_0,
\end{equation}
and let there exist $\kappa>0$ such that \eqref{2.3} holds. Let, moreover,
there exist a function $\gamma\in\acloc$ such that
\eqref{2.4} and \eqref{2.24} are fulfilled. 
Assume, further, that $c\in(0,\kappa)$ and $u$ is a solution to the problem
\eqref{1.1}, \eqref{1.2} on $(-\infty,t_0]$ satisfying \eqref{2.8} and
having a limit $u(-\infty)$. If either
\begin{equation}
\label{2.30}
\lim_{t\to -\infty}\int_t^{t_0} \ell_0(1)(s)ds=+\infty
\end{equation}
or there exists $\omega\in\Sigma$ such that
\begin{equation}
\label{2.31}
f\in\mathcal{O}_{t_0}(\omega,\kappa,c)
\end{equation}
then 
\begin{equation}
\label{2.32}
u(-\infty)=0.
\end{equation}
\end{theo}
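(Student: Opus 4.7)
The plan is to argue by contradiction: suppose $L:=u(-\infty)>0$. First I integrate \eqref{1.1} from $t$ to $t_0$ to obtain
\[
c - u(t) = \int_t^{t_0}\bigl[\ell_0(u)(s) - \ell_1(u)(s) + f(u)(s)\bigr]\,ds \qquad (t\leq t_0).
\]
By Remark~\ref{r2.6}, the assumption $\gamma(-\infty)<+\infty$ together with \eqref{2.4} forces $\int_{-\infty}^{t_0}\ell_1(1)(s)\,ds<+\infty$; combined with $0\leq\vartheta(u)\leq\kappa$ and the positivity plus Volterra property of $\ell_1$, this gives $\ell_1(u)(s)\leq\kappa\,\ell_1(1)(s)$ for a.e. $s\leq t_0$, so $\int_{-\infty}^{t_0}\ell_1(u)<+\infty$. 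Since $\ell_0(u),f(u)\geq 0$ on $(-\infty,t_0]$ (by positivity of $\ell_0$ and \eqref{2.3}), rearranging the integrated identity forces both $\int_{-\infty}^{t_0}\ell_0(u)$ and $\int_{-\infty}^{t_0}f(u)$ to be finite.

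If \eqref{2.30} holds a contradiction is immediate. Fix $\epsilon\in(0,L)$ and $T_\epsilon$ with $u(s)\geq L-\epsilon$ for $s\leq T_\epsilon$. Extending $\vartheta(u)-(L-\epsilon)$ nonnegatively to the right of $T_\epsilon$ and invoking positivity plus Volterra of $\ell_0$ produces $\ell_0(u)(s)\geq(L-\epsilon)\ell_0(1)(s)$ a.e. on $(-\infty,T_\epsilon]$, so
\[
\int_{-\infty}^{T_\epsilon}\ell_0(u)(s)\,ds \geq (L-\epsilon)\int_{-\infty}^{T_\epsilon}\ell_0(1)(s)\,ds = +\infty
\]
by \eqref{2.30}, contradicting the previous integrability.

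Under \eqref{2.31}, the finiteness of $\int_{-\infty}^{t_0}f(u)$ and $\omega(t)\leq t$ yield
\[
\limsup_{t\to -\infty}\int_{\omega(t)}^t f(u)(s)\,ds \leq \lim_{t\to -\infty}\int_{-\infty}^t f(u)(s)\,ds = 0.
\]
A contradiction with $f\in\mathcal{O}_{t_0}(\omega,\kappa,c)$ will follow once I verify that $\vartheta(u)$ belongs to the family quantified in Definition~\ref{d2.1}: $L\leq c$, with $u(t)\geq L$ on $(-\infty,t_0]$ when $L<c$ and $u\equiv c$ on $(-\infty,t_0]$ when $L=c$. Writing $w:=u-L$, linearity combined with \eqref{2.29} and $f(u)\geq 0$ produces $w'(t)\geq \ell_0(w)(t)-\ell_1(w)(t)$ a.e. on $(-\infty,t_0]$, and $w(-\infty)=0$. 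A minimum-principle argument at a putative negative infimum of $w$ on $(-\infty,t_0]$, using the Volterra property of $\ell_i$ to freeze $w$ at its minimum value and the positivity of $\ell_i$ together with \eqref{2.29}, is meant to force $w\geq 0$ on $(-\infty,t_0]$, yielding $L\leq c$ together with the required monotonicity.

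The main obstacle is exactly this verification of the Definition~\ref{d2.1} hypotheses for $u$ in the second case. Unlike for $\ptnplus$, the operator $\ell_0-\ell_1$ is only required to satisfy the \emph{constant-input} inequality \eqref{2.29}, so the usual differential-inequality comparison principle is not directly available and the monotonicity of $u$ has to be extracted from the ODE via a careful combination of Volterra ``freezing'' and pointwise positivity, using \eqref{2.29} on constant functions. Case \eqref{2.30}, by contrast, is immediate from the integrability of $\int\ell_0(u)$ established at the outset.
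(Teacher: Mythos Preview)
Your treatment of case \eqref{2.30} is correct and in fact more direct than the paper's. The paper routes through Lemma~\ref{l2.23} and then constructs an explicit $\omega\in\Sigma$ with $\limsup\int_{\omega(t)}^t\ell_0(1)>0$; you instead observe that $\int_{-\infty}^{t_0}\ell_0(u)<+\infty$ together with the lower bound $\ell_0(u)(s)\geq(L-\epsilon)\ell_0(1)(s)$ for $s\leq T_\epsilon$ immediately contradicts \eqref{2.30}. Both arguments rely on Remark~\ref{r2.6} for the integrability of $\ell_1(1)$.

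In case \eqref{2.31}, however, the gap you flag as ``the main obstacle'' is real and you have not closed it. The issue is not merely that \eqref{2.29} is weaker than $\ell_0-\ell_1\in\ptnplus$; the essential missing ingredient in your minimum--principle sketch is the hypothesis \eqref{2.4} on $\gamma$, which you never invoke. Volterra freezing and positivity alone are not enough: once you subtract the putative minimum $m=\inf w$ and set $\tilde z=w-m\geq 0$, the sharpest inequality you get from $w'\geq\ell_0(w)-\ell_1(w)$ and \eqref{2.29} is $\tilde z'(t)\geq-\ell_1(\tilde z)(t)$ for $t\leq\tau$ with $\tilde z(\tau)=0$. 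To conclude $\tilde z\leq 0$ (hence $\tilde z\equiv 0$) on $(-\infty,\tau]$ you need the comparison principle of Lemma~\ref{l2.19}, whose proof hinges on the existence of $\gamma$ satisfying \eqref{2.194}. Without $\gamma$ the step fails in general.

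This is exactly what the paper packages as Lemma~\ref{l2.22}: under \eqref{2.29}, \eqref{2.4}, and $u'\geq\ell_0(u)-\ell_1(u)$, one obtains $u(t)\geq u(-\infty)$ for all $t\leq t_0$, and $u\equiv u(-\infty)$ on $(-\infty,\tau]$ whenever $u(\tau)=u(-\infty)$. Applying this with $\tau=t_0$ gives $L\leq c$, $u(t)\geq L$ when $L<c$, and $u\equiv c$ when $L=c$, which are precisely the Definition~\ref{d2.1} hypotheses. So your outline becomes a complete proof once you cite Lemma~\ref{l2.22} (or reproduce its argument, making the role of $\gamma$ explicit).
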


\begin{theo}
\label{t2.12}
Let $\ell_0,\ell_1\in\vtn$, \eqref{2.29} hold,
and let there exist $\kappa>0$ such that \eqref{2.3} is satisfied. Let,
moreover, there exist $\gamma\in\acloc$ satisfying \eqref{2.4}. 
Assume, further, that $c\in(0,\kappa)$ and $u$ is a solution to the problem
\eqref{1.1}, \eqref{1.2} on $(-\infty,t_0]$ satisfying \eqref{2.8} and having a limit $u(-\infty)$.
If there exists $\omega\in\Sigma$ such that
\begin{equation}
\label{2.33}
\sup\left\{\int_{\omega(t)}^t\ell_1(1)(s)ds:t\leq t_0\right\}<+\infty,
\end{equation}
and either
\begin{equation}
\label{2.34}
\limsup_{t\to -\infty}\int_{\omega(t)}^t \big[\ell_0(1)(s)-\ell_1(1)(s)\big]ds>0
\end{equation}
or \eqref{2.31} holds then \eqref{2.32} is fulfilled. 
\end{theo}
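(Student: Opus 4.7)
The plan is to argue by contradiction, assuming $L:=u(-\infty)>0$, and to transport the integrated form
\[
u(t)-u(\omega(t))=\int_{\omega(t)}^t[\ell_0(u)-\ell_1(u)+f(u)](s)\,ds
\]
into an asymptotic inequality in which $\ell_0(u)$ and $\ell_1(u)$ are essentially replaced by $L\,\ell_0(1)$ and $L\,\ell_1(1)$. To set this up I would fix $\varepsilon\in(0,L)$ and choose $T_\varepsilon\le t_0$ with $L-\varepsilon\le u(s)\le L+\varepsilon$ for $s\le T_\varepsilon$, then continuously extend $u|_{(-\infty,T_\varepsilon]}$ to a function $\widetilde u\in\cloc$ whose values stay in $[L-\varepsilon,L+\varepsilon]$ on all of $\RR$. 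Using $\ell_0,\ell_1\in\vtn$ one has $\ell_j(u)(s)=\ell_j(\widetilde u)(s)$ for a.e.\ $s\le T_\varepsilon$, $j=0,1$, and the positivity of $\ell_j$ applied to the nonnegative functions $\widetilde u-(L-\varepsilon)$ and $(L+\varepsilon)-\widetilde u$ then gives the sandwich
\[
(L-\varepsilon)\ell_j(1)(s)\ \le\ \ell_j(u)(s)\ \le\ (L+\varepsilon)\ell_j(1)(s)\qquad\mbox{for a.e. }s\le T_\varepsilon.
\]

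Setting $A(t):=\int_{\omega(t)}^t\ell_0(1)(s)\,ds$ and $B(t):=\int_{\omega(t)}^t\ell_1(1)(s)\,ds$, I would integrate \eqref{1.1} on $[\omega(t),t]$ for $t\le T_\varepsilon$, apply the sandwich and the pointwise nonnegativity $f(u)\ge 0$ granted by \eqref{2.3}, to arrive at
\[
u(t)-u(\omega(t))\ \ge\ (L-\varepsilon)\bigl(A(t)-B(t)\bigr)-2\varepsilon B(t)+\int_{\omega(t)}^t f(u)(s)\,ds.
\]
Hypothesis \eqref{2.29} makes $A(t)-B(t)\ge 0$, hypothesis \eqref{2.33} makes $B(t)\le M:=\sup_{t\le t_0}B(t)<+\infty$, and the existence of $u(-\infty)$ together with $\omega(t)\to-\infty$ as $t\to-\infty$ makes the left-hand side tend to zero. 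In the branch \eqref{2.34}, picking $t_n\to-\infty$ and $\delta>0$ with $A(t_n)-B(t_n)\ge\delta$ and passing to the limit along $t_n$, then letting $\varepsilon\downarrow 0$, produces $0\ge L\delta$, contradicting $L,\delta>0$.

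In the branch \eqref{2.31} I would instead drop the subtracted nonnegative term $(L-\varepsilon)(A(t)-B(t))$ to rewrite the same estimate as
\[
\int_{\omega(t)}^t f(u)(s)\,ds\ \le\ u(t)-u(\omega(t))+2\varepsilon M,
\]
and then take $\limsup_{t\to-\infty}$ followed by $\varepsilon\downarrow 0$ to obtain $\limsup_{t\to-\infty}\int_{\omega(t)}^t f(u)\,ds\le 0$; combined with $f(u)\ge 0$ this $\limsup$ is in fact $0$. Extending $u$ to $\RR$ by $u(t):=c$ for $t\ge t_0$ puts it in $\cnulok$ with $u(t_0)=c$. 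Provided $u$ additionally satisfies the remaining clauses of Definition~\ref{d2.1} (namely $u(-\infty)\le c$ and the dichotomy $u\ge u(-\infty)$ on $(-\infty,t_0]$ if $u(-\infty)<c$, or $u\equiv c$ on $(-\infty,t_0]$ if $u(-\infty)=c$), the assumption $f\in\mathcal{O}_{t_0}(\omega,\kappa,c)$ forces the $\limsup$ to be strictly positive, contradicting the $0$ just obtained.

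The hard part will be the final verification that the extended $u$ actually belongs to the class described in Definition~\ref{d2.1}; I expect to recycle the Volterra sandwich on intervals of the form $[s,T_\varepsilon]$ rather than $[\omega(t),t]$ and to combine it with \eqref{2.29} and the assumed existence of the limit $u(-\infty)$, in order to rule out $u(-\infty)>c$ and to constrain any oscillation of $u$ around $L$. Everything else is a direct manipulation of the integrated form of \eqref{1.1} together with the two structural hypotheses \eqref{2.29} and \eqref{2.33}.
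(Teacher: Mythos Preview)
Your computation is essentially the one the paper carries out; the paper simply packages your sandwich inequality and the passage to the limit as Lemma~\ref{l2.23}, whose conclusion \eqref{2.189} (with $q=f(u)$) is exactly the identity underlying both of your branches. Your treatment of branch~\eqref{2.34} is correct.

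The gap is precisely where you place it: for branch~\eqref{2.31} you must check that the extended $u$ meets the requirements of Definition~\ref{d2.1}, namely $u(-\infty)\le c$, $u(t)\ge u(-\infty)$ for \emph{all} $t\le t_0$ when $u(-\infty)<c$, and $u\equiv c$ on $(-\infty,t_0]$ when $u(-\infty)=c$. Your proposed device---recycling the sandwich on intervals $[s,T_\varepsilon]$---only controls $\ell_j(u)(s)$ at points $s\le T_\varepsilon$, where $u$ is already within $\varepsilon$ of $L$; it gives no information about $u$ on $(T_\varepsilon,t_0]$ and so cannot by itself rule out $u$ dipping below $L$ there. The paper closes this via Lemma~\ref{l2.22}: from \eqref{2.3} and \eqref{2.8} one has $u'\ge\ell_0(u)-\ell_1(u)$; if the infimum of $u$ were attained at some $\tau$, then $z=u-u(\tau)\ge 0$ satisfies $z'\ge -\ell_1(z)$, $z(\tau)=0$, and Lemma~\ref{l2.19} (through Lemma~\ref{l2.17}) forces $z\le 0$ on $(-\infty,\tau]$, hence $u\equiv u(\tau)$ there. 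The essential ingredient in Lemmas~\ref{l2.17}--\ref{l2.19} is the function $\gamma$ from hypothesis~\eqref{2.4}, which you have not yet used; a purely ``sandwich'' argument combining only \eqref{2.29} and the existence of $u(-\infty)$ does not appear to yield the global lower bound $u\ge u(-\infty)$. Once you invoke Lemma~\ref{l2.22}, all three clauses of Definition~\ref{d2.1} follow at once (in particular $u(-\infty)\le u(t_0)=c$ is just \eqref{2.182} evaluated at $t_0$), and your argument is complete.
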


Now we formulate a sufficient condition for the
inclusion \eqref{2.31}.

\begin{prop}
\label{p2.1}
Let $f\in\vtn$, $\omega\in\Sigma$, and let there exist $\kappa>0$,
$g\in\lloc$, and a continuous operator $h_1:\cnul\to\linf$ such that 
\begin{equation}
\label{2.35}
f(v)(t)\geq g(t)h_1(v)(t)\forae t\leq t_0,\quad v\in\cnulok.
\end{equation}
Let, moreover, 
\begin{equation}
\label{2.36}
\sup\left\{\int_{\omega(t)}^t |g(s)|ds:t\leq t_0\right\}<+\infty
\end{equation}
and
\begin{equation}
\label{2.37}
\limsup_{t\to -\infty}\int_{\omega(t)}^t g(s)h_1(x)(s)ds>0
\end{equation}
for every constant function $x:\RR\to (0,\kappa)$. Then \eqref{2.31} holds
for every $c\in(0,\kappa)$.
\end{prop}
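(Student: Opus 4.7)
The plan is to reduce the desired $\limsup$-estimate for the integral of $f(u)$ to the hypothesized estimate \eqref{2.37} at the constant function equal to $d:=u(-\infty)$. Fix $c\in(0,\kappa)$ and let $u\in\cnulok$ be any test function as in Definition~\ref{d2.1} with $\tau=t_0$. Both alternatives in that definition force $d\in(0,c]\subset(0,\kappa)$, so the constant function $\bar d(t)\equiv d$ belongs to $\cnulok$ and is admissible in \eqref{2.37}; in particular, $u(s)\to d$ as $s\to -\infty$.

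The core construction is an approximating sequence $u_n\in\cnulok$ that agrees with $u$ on a left tail of $\RR$ and converges to $\bar d$ in the $\cnul$-norm. For each $n\in\NN$, set
$$
u_n(t):=u\big(\min\{t,t_0-n\}\big)\for t\in\RR.
$$
Then $u_n(t)=u(t)$ for $t\leq t_0-n$ and $u_n(t)=u(t_0-n)$ for $t\geq t_0-n$, so $u_n\in\cnulok$ and
$$
\|u_n-\bar d\|=\sup_{s\leq t_0-n}|u(s)-d|\to 0\casfor n\to +\infty.
$$
Continuity of $h_1:\cnul\to\linf$ therefore yields $\varepsilon_n:=\|h_1(u_n)-h_1(\bar d)\|_\infty\to 0$. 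The Volterra assumption $f\in\vtn$, applied with $\zeta=t_0-n$, gives $f(u)(s)=f(u_n)(s)$ for a.e. $s\leq t_0-n$. Combining this with \eqref{2.35} applied to $u_n$ and setting $M:=\sup\{\int_{\omega(t)}^t|g(s)|ds:t\leq t_0\}$, which is finite by \eqref{2.36}, we obtain, for every $t\leq t_0-n$,
$$
\int_{\omega(t)}^t f(u)(s)ds\geq \int_{\omega(t)}^t g(s)h_1(u_n)(s)ds\geq \int_{\omega(t)}^t g(s)h_1(\bar d)(s)ds - M\varepsilon_n.
$$
Taking $\limsup_{t\to -\infty}$ for fixed $n$ and then letting $n\to +\infty$ removes the error term, so \eqref{2.37} with $x=\bar d$ yields
$$
\limsup_{t\to -\infty}\int_{\omega(t)}^t f(u)(s)ds\geq \limsup_{t\to -\infty}\int_{\omega(t)}^t g(s)h_1(\bar d)(s)ds>0,
$$
which is precisely \eqref{2.31}.

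The main obstacle is that $h_1$ is not assumed to be a Volterra-type operator, while \eqref{2.37} is only guaranteed for \emph{constant} inputs: the quantity that naturally appears along the integration window $[\omega(t),t]$ as $t\to -\infty$ is $h_1(u)$, not $h_1(\bar d)$. The truncation $u_n$ is designed precisely to bridge this gap: the Volterra property of $f$ lets us replace $f(u)$ by $f(u_n)$ on $(-\infty,t_0-n]$ without altering the integrals, and the uniform convergence $u_n\to\bar d$ together with the continuity of $h_1$ makes $h_1(u_n)$ arbitrarily close to $h_1(\bar d)$ in $\linf$, with the resulting error controlled uniformly in $t$ by \eqref{2.36}.
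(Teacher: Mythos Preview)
Your proof is correct and follows essentially the same approach as the paper's: truncate $u$ at a point moving to $-\infty$ to obtain a sequence $u_n\in\cnulok$ agreeing with $u$ on a left half-line and converging uniformly to the constant $u(-\infty)$, then use the Volterra property of $f$ to replace $f(u)$ by $f(u_n)$ on that half-line, apply \eqref{2.35} and the continuity of $h_1$, and control the error via \eqref{2.36}. The only cosmetic difference is that the paper chooses the truncation points $t_n$ via the rate of convergence of $u$ to $u(-\infty)$, whereas you take $t_0-n$; both choices work.
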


\begin{cor}
\label{c2.1}
Let $\ell_0,\ell_1,f\in\vtn$, \eqref{2.29} hold,
and let there exist $\kappa>0$, $g\in\llocrp$, and a continuous operator
$h_1:\cnul\to\linf$ such that 
\begin{equation}
\label{2.217}
h_1(v)(t)\geq 0\forae t\leq t_0,\quad v\in\cnulok
\end{equation}
and \eqref{2.35} is satisfied. Let, moreover, there exist
$\gamma\in\acloc$ satisfying \eqref{2.4} and \eqref{2.24}. 
Assume, further, that $c\in(0,\kappa)$ and either \eqref{2.30} holds or 
\begin{gather}
\label{2.38}
\lim_{t\to -\infty}\int_t^{t_0}g(s)ds=+\infty,\\
\label{2.39}
\lim_{t\to -\infty}\essinf\big\{h_1(x)(s): s\leq t\big\}>0
\end{gather} 
for every constant function $x:\RR\to (0,\kappa)$. Then every solution $u$
to the problem \eqref{1.1}, \eqref{1.2} on $(-\infty,t_0]$ satisfying \eqref{2.8} 
has a finite limit $u(-\infty)$ and \eqref{2.32} holds.
\end{cor}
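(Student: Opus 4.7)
I shall derive the corollary by combining Theorem~\ref{t2.9} (for the finiteness of $u(-\infty)$) with Theorem~\ref{t2.11} (for the identification $u(-\infty)=0$), the hypothesis \eqref{2.31} of the latter being verified, when needed, via Proposition~\ref{p2.1}. The sign-type conditions required by both theorems are immediate from our hypotheses: for $v\in\cnulok$ one has $\ell_0(v)(t)\geq 0$ a.e.\ $t\leq t_0$ by positivity of $\ell_0$, and $g\in\llocrp$ together with \eqref{2.217} and \eqref{2.35} gives
$$
f(v)(t)\geq g(t)\,h_1(v)(t)\geq 0\forae t\leq t_0,
$$
so both \eqref{2.3} and \eqref{2.23} hold. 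Thus Theorem~\ref{t2.9} applies directly and yields a finite $u(-\infty)$.

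For the equality $u(-\infty)=0$ I apply Theorem~\ref{t2.11}. If \eqref{2.30} holds this is immediate. Otherwise, \eqref{2.38} and \eqref{2.39} hold, and I must exhibit $\omega\in\Sigma$ verifying \eqref{2.31} through Proposition~\ref{p2.1}. To that end, set $G(t)=\int_t^{t_0}g(s)\,ds$. By $g\in\llocrp$ and \eqref{2.38}, $G$ is continuous and nonincreasing on $(-\infty,t_0]$, with $G(-\infty)=+\infty$. I define
$$
\omega(t)=\sup\bigl\{s\leq t:G(s)\geq G(t)+1\bigr\}
$$
for $t\leq t_0$ with $G(t)\geq 1$ (the set is non-empty since $G(-\infty)=+\infty$), and extend $\omega$ to the remainder of $\RR$ by a continuous interpolation lying on or below the diagonal. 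Continuity of $G$ gives $G(\omega(t))=G(t)+1$, hence $\int_{\omega(t)}^t g(s)\,ds=1$, for $t$ sufficiently negative.

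The delicate point is continuity of $\omega$: a putative jump of $\omega$ at some $t^*$ from $a$ to $b>a$ would force $G(a)=G(b)=G(t^*)+1$, i.e.\ $G$ would be constant on $[a,b]$, but on the corresponding level set of $G$ the function $\omega$ is itself constant, so no genuine jump occurs; thus $\omega\in\Sigma$. Conditions \eqref{2.36} and \eqref{2.37} of Proposition~\ref{p2.1} then follow readily. Since $g\geq 0$ and $\int_{\omega(t)}^t g(s)\,ds=1$ for $t$ near $-\infty$, together with local integrability of $g$ on the bounded transition region, \eqref{2.36} is clear. For \eqref{2.37}, fixing a constant function $x:\RR\to(0,\kappa)$, condition \eqref{2.39} furnishes $\alpha>0$ and $t_x\leq t_0$ with $h_1(x)(s)\geq\alpha$ for a.e.\ $s\leq t_x$; hence for $t\leq t_x$ sufficiently negative,
$$
\int_{\omega(t)}^t g(s)\,h_1(x)(s)\,ds\geq \alpha\int_{\omega(t)}^t g(s)\,ds=\alpha>0.
$$
Proposition~\ref{p2.1} supplies \eqref{2.31} and Theorem~\ref{t2.11} concludes the proof.

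The main obstacle is, as indicated, the continuity of the memory function $\omega$ constructed from $g$; the $\sup$-convention in its definition is precisely what accommodates the zero-intervals of $g$ without producing jumps, after which everything reduces to a bookkeeping of hypotheses.
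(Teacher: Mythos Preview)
Your overall strategy---Theorem~\ref{t2.9} for finiteness, then Theorem~\ref{t2.11} via Proposition~\ref{p2.1}---matches the paper exactly, and the verifications of \eqref{2.3}, \eqref{2.23}, \eqref{2.37} are fine. The gap is in the continuity of your $\omega$, and your argument for it is incorrect.

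Take $t_0=2$, $g(s)=1$ for $s\in(-\infty,0)\cup(1,2]$ and $g(s)=0$ for $s\in[0,1]$. Then $G(2)=0$, $G(1)=G(0)=1$, $G(-s)=1+s$ for $s\geq 0$. Your definition gives $\omega(2)=\sup\{s\leq 2:G(s)\geq 1\}=1$, but for any $\varepsilon>0$ one has $\omega(2-\varepsilon)=\sup\{s:G(s)\geq 1+\varepsilon\}=-\varepsilon$, so $\omega$ jumps by $1$ at $t^*=2$. Your sentence ``on the corresponding level set of $G$ the function $\omega$ is itself constant'' is true but irrelevant: the level set $[0,1]$ lies in the \emph{range} of the jump, whereas the discontinuity occurs at the domain point $t^*=2$, which is not on that plateau. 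In general, each maximal interval on which $g$ vanishes produces a genuine jump of your $\omega$ at a single domain point, and these can accumulate toward $-\infty$, so no bounded ``transition region'' absorbs them.

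The paper sidesteps the issue by perturbing $g$ with a strictly positive, globally integrable weight: set $\varphi(t)=(t_0+1-t)^{-2}$ and define $\omega$ implicitly by
\[
\int_{\omega(t)}^{t_0}\bigl(g(s)+\varphi(s)\bigr)ds
=1+\int_{t}^{t_0}\bigl(g(s)+\varphi(s)\bigr)ds,
\]
extended constantly for $t>t_0$. Since $g+\varphi>0$ everywhere, the primitive is strictly monotone and $\omega$ is automatically continuous, hence $\omega\in\Sigma$. One then gets $\int_{\omega(t)}^{t}g(s)\,ds=1-\int_{\omega(t)}^{t}\varphi(s)\,ds\to 1$ as $t\to-\infty$ (because $\int_{-\infty}^{t_0}\varphi<\infty$), which is all that \eqref{2.36} and \eqref{2.37} require. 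Replacing your construction of $\omega$ by this one repairs the proof completely.
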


\begin{cor}
\label{c2.2}
Let $\ell_1,f\in\vtn$, $\sigma\in\Sigma$, \eqref{2.1} hold,
and let there exist $\kappa>0$, $g\in\llocrp$, and a continuous operator
$h_1:\cnul\to\linf$ such that \eqref{2.35} and \eqref{2.217} are satisfied. Let, moreover, there exist
$\gamma\in\acloc$ satisfying \eqref{2.4}--\eqref{2.6}. 
Assume, further, that $c\in(0,\kappa)$ and there exists $\omega\in\Sigma$ such that \eqref{2.33} is fulfilled
and either \eqref{2.34} holds or \eqref{2.36} and \eqref{2.37}
for every constant function $x:\RR\to (0,\kappa)$ are satisfied. Then every solution $u$
to the problem \eqref{1.1}, \eqref{1.2} on $(-\infty,t_0]$ satisfying
\eqref{2.8} has a finite limit $u(-\infty)$ and \eqref{2.32} holds.
\end{cor}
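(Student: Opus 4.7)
The plan is to reduce Corollary~\ref{c2.2} to a direct application of Theorem~\ref{t2.10} (to obtain the finite limit $u(-\infty)$) and Theorem~\ref{t2.12} (to obtain $u(-\infty)=0$), invoking Proposition~\ref{p2.1} to supply the membership \eqref{2.31} when only the conditions on $g$ and $h_1$ are assumed. The whole argument is therefore a verification of hypotheses.

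First I would check the hypotheses of Theorem~\ref{t2.10}. The inclusions $\ell_1\in\vtn$, $\sigma\in\Sigma$, $\ell_0\in\vtn(\sigma)$, and the existence of $\gamma\in\acloc$ satisfying \eqref{2.4}--\eqref{2.6} are directly assumed. The only condition that needs a short argument is \eqref{2.3}: for $v\in\cnulok$, \eqref{2.217} gives $h_1(v)(t)\geq 0$, and $g\in\llocrp$ gives $g(t)\geq 0$, so from \eqref{2.35} we conclude $f(v)(t)\geq g(t)h_1(v)(t)\geq 0$ for almost every $t\leq t_0$. Theorem~\ref{t2.10} therefore yields the existence of the finite limit $u(-\infty)$.

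Next, to apply Theorem~\ref{t2.12}, I would verify the remaining ingredient \eqref{2.29} and one of the alternatives \eqref{2.34}/\eqref{2.31}. The inclusion $\ell_0\in\vtn$ follows from $\vtn(\sigma)\subset\vtn$, while \eqref{2.3} and \eqref{2.4} are already in hand. Condition \eqref{2.29} is deduced exactly as in Remark~\ref{r2.6}: Lemma~\ref{l2.1} applied to \eqref{2.4} yields $\ell_1(\gamma)(t)\geq \ell_1(1)(t)\gamma(t)$ a.e.\ on $(-\infty,t_0]$; dividing by $\gamma(t)>0$ and combining with \eqref{2.5} gives $\ell_0(1)(t)\geq \ell_1(\gamma)(t)/\gamma(t)\geq \ell_1(1)(t)$. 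If \eqref{2.34} is assumed, the alternative in Theorem~\ref{t2.12} holds directly. Otherwise \eqref{2.36} and \eqref{2.37} are in force for every constant $x:\RR\to(0,\kappa)$, and Proposition~\ref{p2.1}, whose hypotheses (the inclusion $f\in\vtn$, the continuous operator $h_1:\cnul\to\linf$, and \eqref{2.35}--\eqref{2.37}) match exactly, delivers \eqref{2.31}. Theorem~\ref{t2.12} then gives \eqref{2.32}.

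The main difficulty is purely organizational: matching the somewhat different notational setups of Theorems~\ref{t2.10}, \ref{t2.12}, and Proposition~\ref{p2.1}, and noticing that \eqref{2.29} is not a separate hypothesis to be added but instead a consequence of \eqref{2.4}, \eqref{2.5}, and Lemma~\ref{l2.1}.
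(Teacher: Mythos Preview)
Your proposal is correct and follows essentially the same route as the paper's proof: apply Theorem~\ref{t2.10} (using \eqref{2.35}, \eqref{2.217}, and $g\geq 0$ to get \eqref{2.3}) for the finite limit, derive \eqref{2.29} from \eqref{2.4}, \eqref{2.5} and Lemma~\ref{l2.1}, and then invoke Theorem~\ref{t2.12} together with Proposition~\ref{p2.1} for \eqref{2.32}. The only cosmetic difference is that the paper applies Lemma~\ref{l2.1} with $\alpha=\vartheta(\gamma)$, $\beta=\vartheta(1/\gamma)$ to obtain \eqref{2.29} directly from \eqref{2.5}, whereas you go via $\ell_1(\gamma)\geq \ell_1(1)\gamma$; both are equivalent.
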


\section{Equation with deviating arguments}
\label{sec3}

Now we establish assertions dealing with the equation \eqref{1.3}.

\subsection{Existence theorems}

\begin{theo}
\label{t2.5}
Let there exist $\kappa>0$ such that 
\begin{gather}
\label{2.13}
h(t,x,y)\geq 0\forae t\leq t_0,\quad x,y\in[0,\kappa],\\
\label{2.13.5}
h(t,x,y)\sgn x\leq q(t,|x|+|y|) \forae t\geq t_0,\quad
x,y\in\RR,
\end{gather}
where $q\in\kloctnrp$ is nondecreasing in the second argument and
satisfies \eqref{3.5} for every $b>t_0$.
Let, moreover, 
\begin{gather}
\label{2.14}
h(t,0,0)=0\forae t\leq t_0,\\
\label{2.15.r}
\mu_0(t)\leq t,\qquad \mu_1(t)\leq t,\qquad \nu(t)\leq t
\forae t\in\RR,\\
\label{2.16}
\int_{\mu_1(t)}^t p_1(s)ds\leq \frac{1}{e}\forae t\leq t_0,\\
\label{2.17}
p_0(t)\geq p_1(t)\exp\left(e\int_{\mu_1(t)}^t p_1(s)ds\right)\forae
t\leq t_0,\\
\label{2.18}
\esssup\left\{\int_{\mu_0(t)}^t p_1(s)ds:t\leq t_0\right\}<+\infty.
\end{gather}
Then, for every $c\in \left[0,\kappa e^{-M_{\mu}}\right)$, where
\begin{equation}
\label{2.19}
M_{\mu}=\esssup\left\{\int_{\mu_0(t)}^t p_1(s)\exp\left(e\int_{\mu_1(s)}^sp_1(\xi)d\xi\right)ds:t\leq t_0\right\},
\end{equation}
there exists a global solution $u$ to the problem \eqref{1.3}, \eqref{1.2}
satisfying \eqref{2.8}.
\end{theo}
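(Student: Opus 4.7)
The plan is to apply Theorem~\ref{t2.1} to \eqref{1.3} rewritten as \eqref{1.1} with the identifications
$$
\ell_0(u)(t)=p_0(t)u(\mu_0(t)),\quad \ell_1(u)(t)=p_1(t)u(\mu_1(t)),\quad f(u)(t)=h(t,u(t),u(\nu(t))).
$$
First I would verify that these fit the abstract framework: $\ell_0$ and $\ell_1$ are linear, positive and continuous, $f$ is continuous and satisfies the local Carath\'eodory conditions, and the inclusion $\ell_0,\ell_1,f\in V$ is immediate from \eqref{2.15.r}. Hypothesis \eqref{2.14} is exactly \eqref{2.2}, \eqref{2.13} is exactly \eqref{2.3}, and \eqref{2.13.5} yields \eqref{2.3.5} after replacing $q(t,x)$ by $\tilde q(t,x):=q(t,2x)$, which still belongs to $\kloctnrp$ and still satisfies \eqref{3.5}, since $|v(t)|+|v(\nu(t))|\le 2\|v\|$ for every $v\in\cnul$.

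Next I would take as the Lyapunov-type majorant the function $\gamma(t):=\exp\!\bigl(-e\int_{t_0}^t p_1(s)\,ds\bigr)$, which lies in $\acloc$. A short computation yields $\gamma'(t)=-e\,p_1(t)\gamma(t)$ and $\ell_1(\gamma)(t)/\gamma(t)=p_1(t)\exp\!\bigl(e\int_{\mu_1(t)}^t p_1(\xi)\,d\xi\bigr)$. Hence requirement \eqref{2.4} reduces to $\exp(e\int_{\mu_1(t)}^t p_1)\le e$, which is exactly \eqref{2.16}, while \eqref{2.5} is precisely \eqref{2.17}. With this $\gamma$, the integrand appearing in \eqref{2.6} coincides with the one in the definition \eqref{2.19} of $M_\mu$.

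The main obstacle is to construct a continuous memory $\sigma\in\Sigma$ with $\ell_0\in\vtn(\sigma)$ for which $M_\sigma$ given by \eqref{2.7} is close enough to $M_\mu$ that Theorem~\ref{t2.1}'s closed admissibility range $[0,\kappa e^{-M_\sigma}]$ covers the half-open interval $[0,\kappa e^{-M_\mu})$. Given $c\in[0,\kappa e^{-M_\mu})$, I would fix $\varepsilon>0$ with $c\le\kappa e^{-(M_\mu+\varepsilon)}$, introduce the continuous nondecreasing function
$$
\Phi(t):=\int_{t_0}^t p_1(s)\exp\!\bigl(e\int_{\mu_1(s)}^s p_1(\xi)\,d\xi\bigr)\,ds,
$$
observe that \eqref{2.16} and \eqref{2.18} together give $\Phi(t)-\Phi(\mu_0(t))\le M_\mu$ for a.e.\ $t\le t_0$, and, using a generalised (right-continuous) inverse of $\Phi$, build a continuous $\sigma:\RR\to\RR$ with $\sigma(t)\le t$, $\sigma(t)\le\mu_0(t)$ for a.e.\ $t\le t_0$, and $\Phi(t)-\Phi(\sigma(t))\le M_\mu+\varepsilon$ for every $t\le t_0$. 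This gives $\ell_0\in\vtn(\sigma)$ and $M_\sigma\le M_\mu+\varepsilon$, so that $c\le\kappa e^{-M_\sigma}$ and Theorem~\ref{t2.1} supplies the desired global solution of \eqref{1.3}, \eqref{1.2} satisfying \eqref{2.8}. The delicate point in the construction is the passage from the essential supremum in \eqref{2.19} to the pointwise supremum in \eqref{2.7} (in particular handling possible plateaux of $\Phi$), and this is precisely what forces the strict upper bound for $c$ in the present theorem.
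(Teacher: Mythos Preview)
Your proposal is correct and follows essentially the same route as the paper: the same identifications $\ell_i(u)(t)=p_i(t)u(\mu_i(t))$, $f(u)(t)=h(t,u(t),u(\nu(t)))$, the same majorant $\gamma(t)=\exp\bigl(e\int_t^{t_0}p_1\bigr)$, the same reduction of \eqref{2.4}--\eqref{2.5} to \eqref{2.16}--\eqref{2.17}, and the same strategy of manufacturing, for each admissible $c$, a continuous memory $\sigma$ with $\sigma\le\mu_0$ a.e.\ and $M_\sigma\le M_\mu+\varepsilon$, then invoking Theorem~\ref{t2.1}.

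The one place where the paper is more explicit than your sketch is precisely the ``delicate point'' you flag. Rather than working with a generalised inverse of $\Phi$ and then coping with its jumps at plateaux (a generalised inverse of a merely nondecreasing function is not continuous in general, so ``build a continuous $\sigma$'' still hides real work), the paper perturbs the integrand: with $P_1(t)=p_1(t)\exp\bigl(e\int_{\mu_1(t)}^t p_1\bigr)=\ell_1(\gamma)(t)/\gamma(t)$ and $\varphi(t)=\varepsilon/(t_0+1-t)^2$ (so $\varphi>0$ and $\int_{-\infty}^{t_0}\varphi=\varepsilon$), one defines $\sigma(t)$ for $t\le t_0$ by
\[
\int_{\sigma(t)}^t\bigl(P_1(s)+\varphi(s)\bigr)\,ds=M_\mu+\varepsilon.
\]
Since $P_1+\varphi>0$, this determines $\sigma$ uniquely and continuously; one reads off $\int_{\sigma(t)}^t P_1\le M_\mu+\varepsilon$ (giving $M_\sigma\le M_\mu+\varepsilon$) and $\int_{\sigma(t)}^t P_1=M_\mu+\varepsilon-\int_{\sigma(t)}^t\varphi>M_\mu\ge\int_{\mu_0(t)}^t P_1$ a.e., hence $\sigma(t)\le\mu_0(t)$ a.e., i.e.\ $\ell_0\in\vtn(\sigma)$. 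This device simultaneously resolves the continuity issue and the passage from essential to pointwise supremum, and is worth incorporating into your write-up.
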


\begin{rem}
\label{r2.3}
Condition \eqref{2.17} in Theorem~\ref{t2.5} can be weakened to 
\begin{equation}
\label{100}
p_0(t)\geq
p_1(t)\exp\left(\lambda\int_{\mu_1(t)}^tp_1(s)ds\right)\forae t\leq t_0
\end{equation}
where $\lambda\in[1,e]$ satisfies 
\begin{equation}
\label{101}
\lambda=e^{\lambda p^*},\qquad p^*=\esssup\left\{\int_{\mu_1(t)}^tp_1(s)ds:t\leq t_0\right\}.
\end{equation}
Obviously, in that case the number $M_{\mu}$ can also be improved in
an appropriate sense.
\end{rem}

\begin{rem}
\label{r2.4}
Note that the solution $u$, the existence of which is guaranteed by
Theorem~\ref{t2.5}, has also a finite limit $u(-\infty)$ (see
Theorem~\ref{t2.14}).
\end{rem}

\begin{theo}
\label{t2.6}
Let there exist $\kappa>0$ such that \eqref{2.13} and \eqref{2.13.5} hold,
where $q\in\kloctnrp$ is nondecreasing in the second argument and 
satisfies \eqref{3.5} for every $b>t_0$. Let, moreover,
\eqref{2.14} and \eqref{2.15.r} be fulfilled. Assume further that 
\begin{gather}
\label{2.20}
p_0(t)\geq p_1(t)\forae t\leq t_0,\\
\label{2.21}
p_1(t)\big(\mu_0(t)-\mu_1(t)\big)\geq 0\forae t\leq t_0.
\end{gather}
Then, for every $c\in[0,\kappa]$ there exists a global solution $u$ to the problem
\eqref{1.3}, \eqref{1.2} satisfying \eqref{2.8} and \eqref{2.9}.
\end{theo}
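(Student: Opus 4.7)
The plan is to derive Theorem~\ref{t2.6} directly from Theorem~\ref{t2.2} by matching \eqref{1.3} with a specific instance of \eqref{1.1}. First I set
$$
\ell_0(v)(t) := p_0(t)v(\mu_0(t)),\qquad
\ell_1(v)(t) := p_1(t)v(\mu_1(t)),\qquad
f(v)(t) := h(t,v(t),v(\nu(t))).
$$
Linearity, positivity, and continuity of $\ell_0,\ell_1:\cloc\to\lloc$ are immediate from $p_0,p_1\in\llocrp$, while continuity of $f$ together with its local Carath\'eodory growth follow from the corresponding properties of $h$ and the local essential boundedness of $\nu$. All three operators lie in $V$: given any $\zeta\in\RR$ and $u,v\in\cloc$ agreeing on $(-\infty,\zeta]$, the inequalities $\mu_0(t),\mu_1(t),\nu(t)\leq t\leq\zeta$ from \eqref{2.15.r} force $u(\mu_i(t))=v(\mu_i(t))$ and $u(\nu(t))=v(\nu(t))$ for a.e.\ $t\leq\zeta$.

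The key step---and the only one I expect to require real thought---is to verify that $\ell_0-\ell_1\in\ptnplus$. Let $u\in\aclocrp$ be nondecreasing (extended to $\cloc$ via $\vartheta$ in \eqref{1.4}, which preserves monotonicity and non-negativity). For a.e.\ $t\leq t_0$ either $p_1(t)=0$, in which case $(\ell_0-\ell_1)(u)(t)=p_0(t)u(\mu_0(t))\geq 0$, or $p_1(t)>0$, in which case \eqref{2.21} forces $\mu_0(t)\geq\mu_1(t)$; then monotonicity of $u$ combined with \eqref{2.20} gives
$$
p_0(t)u(\mu_0(t))-p_1(t)u(\mu_1(t))\geq p_1(t)\bigl(u(\mu_0(t))-u(\mu_1(t))\bigr)\geq 0.
$$
This dichotomy is precisely what links \eqref{2.20} and \eqref{2.21} to the required cone inclusion.

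The remaining hypotheses of Theorem~\ref{t2.2} translate routinely: \eqref{2.2} follows from \eqref{2.14}; \eqref{2.3} follows from \eqref{2.13} because $v(t),v(\nu(t))\in[0,\kappa]$ whenever $v\in\cnulok$; and \eqref{2.3.5} follows from \eqref{2.13.5} via
$$
f(v)(t)\sgn v(t)=h(t,v(t),v(\nu(t)))\sgn v(t)\leq q(t,|v(t)|+|v(\nu(t))|)\leq q(t,2\|v\|).
$$
Replacing $q(t,x)$ by $\widetilde q(t,x):=q(t,2x)$ preserves monotonicity in $x$ and \eqref{3.5}, so Theorem~\ref{t2.2} applies and, for every $c\in[0,\kappa]$, yields a global solution $u$ of \eqref{1.1}, \eqref{1.2} with the chosen operators, satisfying \eqref{2.8} and \eqref{2.9}. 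Expanding the definitions of $\ell_0,\ell_1,f$ turns \eqref{1.1} into \eqref{1.3}, completing the proof.
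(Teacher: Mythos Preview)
Your proof is correct and follows essentially the same route as the paper: define $\ell_0,\ell_1,f$ from the data of \eqref{1.3}, verify $\ell_0-\ell_1\in\ptnplus$ via \eqref{2.20}--\eqref{2.21}, and apply Theorem~\ref{t2.2}. The only cosmetic difference is that the paper organizes the $\ptnplus$ check as the algebraic decomposition $\ell_0(u)(t)-\ell_1(u)(t)=(p_0(t)-p_1(t))u(\mu_0(t))+p_1(t)\bigl(u(\mu_0(t))-u(\mu_1(t))\bigr)$ rather than your case split on $p_1(t)$, and the paper leaves the $q(t,2x)$ adjustment for \eqref{2.3.5} implicit.
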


\begin{rem}
\label{r2.5}
Note that the solution $u$, the existence of which is guaranteed by
Theorem~\ref{t2.6}, has also a finite limit $u(-\infty)$ because $u$
is a bounded nondecreasing function.
\end{rem}

\begin{theo}
\label{t2.7}
Let all the assumptions of Theorem~\ref{t2.5} be fulfilled. Let,
moreover, there exist $g\in\llocrp$ and a continuous nondecreasing
function $h_0:(0,+\infty)\to (0,+\infty)$ such that
\begin{equation}
\label{2.22}
h(t,x,y)\leq g(t)h_0(x+y)\forae t\leq t_0, \quad x,y\in\RR_+,\quad x+y\not=0
\end{equation}
and \eqref{2.11} holds. Then, for every $c\in \left(0,\kappa e^{-M_{\mu}}\right)$
with $M_{\mu}$ given by \eqref{2.19}, there exists a global solution $u$
to the problem \eqref{1.3}, \eqref{1.2} satisfying \eqref{2.12}.
\end{theo}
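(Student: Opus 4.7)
My plan is to derive Theorem~\ref{t2.7} from Theorem~\ref{t2.3} by the same reduction of the equation with deviating arguments \eqref{1.3} to the abstract form \eqref{1.1} that already underlies the proof of Theorem~\ref{t2.5}. Define
$$
\ell_0(u)(t):=p_0(t)u(\mu_0(t)),\quad \ell_1(u)(t):=p_1(t)u(\mu_1(t)),\quad f(u)(t):=h(t,u(t),u(\nu(t))),
$$
so that $\ell_0,\ell_1,f\in V$, the inclusion $\ell_0\in\vtn(\sigma)$ with $\sigma:=\mu_0$ follows from \eqref{2.15.r}, and the conditions \eqref{2.14}, \eqref{2.13}, \eqref{2.13.5} pass over verbatim to \eqref{2.2}, \eqref{2.3}, \eqref{2.3.5}.

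The existential ingredient of Theorem~\ref{t2.1}---a function $\gamma\in\acloc$ fulfilling \eqref{2.4}--\eqref{2.6}---is supplied exactly as in the proof of Theorem~\ref{t2.5} by choosing
$$
\gamma(t):=\exp\left(e\int_t^{t_0} p_1(s)\,ds\right).
$$
Then $\gamma'(t)=-e\,p_1(t)\gamma(t)$ and $\gamma(\mu_1(s))/\gamma(s)=\exp\bigl(e\int_{\mu_1(s)}^s p_1\bigr)$, so \eqref{2.16} yields \eqref{2.4}, \eqref{2.17} yields \eqref{2.5} after dividing by $\gamma(t)$, and \eqref{2.18} combined with \eqref{2.16} yields \eqref{2.6}. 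Moreover the constant $M_\sigma$ defined in \eqref{2.7} reduces under this identification to the number $M_\mu$ of \eqref{2.19}. Thus every hypothesis of Theorem~\ref{t2.1}, and hence of Theorem~\ref{t2.3} except its additional clause, is in force.

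It remains to verify \eqref{2.10}--\eqref{2.11} for the operator $f$. Given $v\in\cnulok$ with $v\not\equiv 0$, both $v(t)$ and $v(\nu(t))$ lie in $[0,\|v\|]$, hence $v(t)+v(\nu(t))\leq 2\|v\|$, and by the monotonicity of $h_0$ together with \eqref{2.22},
$$
f(v)(t)=h(t,v(t),v(\nu(t)))\leq g(t)h_0\bigl(v(t)+v(\nu(t))\bigr)\leq g(t)h_0(2\|v\|)\forae t\leq t_0.
$$
Setting $\widetilde h_0(x):=h_0(2x)$, the function $\widetilde h_0$ is continuous, nondecreasing and positive on $(0,+\infty)$, and the change of variable $r=2s$ transforms \eqref{2.11} into the analogous condition for $\widetilde h_0$. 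Hence \eqref{2.10} and \eqref{2.11} hold with $h_0$ replaced by $\widetilde h_0$, and Theorem~\ref{t2.3} delivers a global solution $u$ of \eqref{1.3}, \eqref{1.2} satisfying \eqref{2.12} for every admissible $c$, which covers the range $\left(0,\kappa e^{-M_\mu}\right)$ claimed. The main obstacle is purely bookkeeping---checking that the reduction of \eqref{1.3} to \eqref{1.1} used in Theorem~\ref{t2.5} is compatible with the additional upper bound \eqref{2.22}, and absorbing the harmless factor~$2$ into $h_0$; no new analytic step beyond what Theorem~\ref{t2.5} and Theorem~\ref{t2.3} already provide is required.
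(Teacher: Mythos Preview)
Your overall strategy---reduce \eqref{1.3} to the abstract equation \eqref{1.1} and invoke Theorem~\ref{t2.3}---is sound and matches the paper's approach in content (the paper first applies Theorem~\ref{t2.5} and then repeats the positivity argument of Theorem~\ref{t2.3} via Lemmas~\ref{l2.19} and~\ref{l2.16}; you simply package the two steps together). The verification of \eqref{2.10} via $\widetilde h_0(x)=h_0(2x)$ is correct.

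There is, however, a genuine slip in the line ``$\ell_0\in\vtn(\sigma)$ with $\sigma:=\mu_0$''. The set $\Sigma$ consists of \emph{continuous} functions, whereas $\mu_0$ is only assumed to be a locally essentially bounded measurable function; in particular $\mu_0$ need not lie in $\Sigma$, and the supremum in \eqref{2.7} over all $t\le t_0$ is not even well defined for such a $\sigma$. This is precisely why the proof of Theorem~\ref{t2.5} does \emph{not} take $\sigma=\mu_0$: it fixes $c<\kappa e^{-M_\mu}$, chooses $\varepsilon>0$ with $c\le\kappa e^{-(M_\mu+\varepsilon)}$, and then constructs a continuous $\sigma$ via \eqref{2.209} satisfying $\sigma(t)\le\mu_0(t)$ a.e.\ and $M_\sigma=M_\mu+\varepsilon$. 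Your claim that ``$M_\sigma$ reduces to $M_\mu$'' is therefore not quite right, and this $\varepsilon$-slack is exactly what forces the \emph{strict} inequality $c<\kappa e^{-M_\mu}$ in the conclusion. Once you import that construction of $\sigma$ from the proof of Theorem~\ref{t2.5} (rather than setting $\sigma=\mu_0$), your argument goes through.
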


\begin{rem}
\label{r2.4.5}
Note that the solution $u$, the existence of which is guaranteed by
Theorem~\ref{t2.7}, has also a finite limit $u(-\infty)$ (see
Theorem~\ref{t2.14}).
\end{rem}

\begin{theo}
\label{t2.8}
Let all the assumptions of Theorem~\ref{t2.6} be fulfilled. Let,
moreover, there exist $g\in\llocrp$ and a continuous nondecreasing
function $h_0:(0,+\infty)\to (0,+\infty)$ such that \eqref{2.11} and \eqref{2.22} hold.
Then, for every $c\in(0,\kappa]$ there exists a global solution $u$ to the problem
\eqref{1.3}, \eqref{1.2} satisfying \eqref{2.9} and \eqref{2.12}.
\end{theo}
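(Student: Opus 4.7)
The plan is to reduce Theorem~\ref{t2.8} to Theorem~\ref{t2.4} by identifying equation \eqref{1.3} as a special case of \eqref{1.1}. Namely, set
\[
\ell_0(u)(t):=p_0(t)u(\mu_0(t)),\quad \ell_1(u)(t):=p_1(t)u(\mu_1(t)),\quad f(u)(t):=h(t,u(t),u(\nu(t))).
\]
With this identification, \eqref{1.1} coincides with \eqref{1.3}, and $\ell_0,\ell_1$ are positive linear continuous operators $\cloc\to\lloc$ (because $p_i\in\llocrp$ and $\mu_i$ are locally essentially bounded), while $f$ is a continuous operator satisfying the local Carath\'eodory conditions by virtue of \eqref{2.13.5}. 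The Volterra property $\ell_0,\ell_1,f\in V$ follows directly from \eqref{2.15.r}. It then remains to verify the hypotheses of Theorem~\ref{t2.4}, i.e.\ the hypotheses of Theorem~\ref{t2.2} together with the additional bound \eqref{2.10} and the integral divergence \eqref{2.11}.

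The heart of the argument is verifying $\ell_0-\ell_1\in\ptnplus$; this is the one step where \eqref{2.20} and \eqref{2.21} are used together. Fix a nondecreasing $u\in\aclocrp$. For a.e.\ $t\leq t_0$, either $p_1(t)=0$, in which case $(\ell_0-\ell_1)(u)(t)=p_0(t)u(\mu_0(t))\geq 0$, or $p_1(t)>0$, in which case \eqref{2.21} forces $\mu_0(t)\geq\mu_1(t)$, and monotonicity of $u$ gives $u(\mu_0(t))\geq u(\mu_1(t))$. Using \eqref{2.20},
\[
(\ell_0-\ell_1)(u)(t)\geq p_1(t)\bigl[u(\mu_0(t))-u(\mu_1(t))\bigr]\geq 0.
\]
Condition \eqref{2.2} coincides with \eqref{2.14}; \eqref{2.3} follows from \eqref{2.13} since $v(t),v(\nu(t))\in[0,\kappa]$ whenever $v\in\cnulok$; and \eqref{2.3.5} follows from \eqref{2.13.5} by estimating $|v(t)|+|v(\nu(t))|\leq 2\|v\|$ and replacing $q(t,x)$ with $\tilde q(t,x):=q(t,2x)$, which remains Carath\'eodory, nondecreasing in $x$, and still satisfies \eqref{3.5}.

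For the additional hypotheses of Theorem~\ref{t2.4}, the bound \eqref{2.10} is obtained from \eqref{2.22}: for $v\in\cnulok$ with $v\not\equiv 0$,
\[
f(v)(t)=h(t,v(t),v(\nu(t)))\leq g(t)h_0\bigl(v(t)+v(\nu(t))\bigr)\leq g(t)\tilde h_0(\|v\|),
\]
where $\tilde h_0(x):=h_0(2x)$ is continuous, nondecreasing, and positive. Property \eqref{2.11} persists for $\tilde h_0$ because the substitution $u=2s$ gives $\int_x^1 ds/\tilde h_0(s)=\tfrac{1}{2}\int_{2x}^2 du/h_0(u)$, which tends to $+\infty$ as $x\to 0_+$.

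With all hypotheses of Theorem~\ref{t2.4} checked, that theorem furnishes, for each $c\in(0,\kappa]$, a global solution $u$ to \eqref{1.1}, \eqref{1.2} with the above $\ell_i$ and $f$, satisfying \eqref{2.9} and \eqref{2.12}; this is precisely the required solution to \eqref{1.3}, \eqref{1.2}. The only nontrivial step in the reduction is establishing $\ell_0-\ell_1\in\ptnplus$, since this is the sole place where both of the structural assumptions \eqref{2.20} and \eqref{2.21} enter; the remaining checks amount to bookkeeping and the harmless rescaling $x\mapsto 2x$ inside $q$ and $h_0$.
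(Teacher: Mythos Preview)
Your proof is correct and follows essentially the same strategy as the paper. The paper routes through Theorem~\ref{t2.6} and then invokes Lemma~\ref{l2.16} directly rather than reducing to the abstract Theorem~\ref{t2.4}, but this is only a cosmetic difference since Theorem~\ref{t2.4} is itself proved via Theorem~\ref{t2.2} plus Lemma~\ref{l2.16}, and the verification of $\ell_0-\ell_1\in\ptnplus$ from \eqref{2.20}--\eqref{2.21} is identical to the paper's argument in the proof of Theorem~\ref{t2.6}.
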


\begin{rem}
\label{r2.5.5}
Note that the solution $u$, the existence of which is guaranteed by
Theorem~\ref{t2.8}, has also a finite limit $u(-\infty)$ because $u$
is a bounded nondecreasing function.
\end{rem}

\begin{theo}
\label{t2.5.r}
Let there exist $\kappa>0$ such that \eqref{2.13} and \eqref{2.13.5}
hold where $q\in\kloctnrp$ is nondecreasing in the second argument and
satisfies \eqref{3.5} for every $b>t_0$. Let, moreover, \eqref{2.14},
\eqref{2.15.r}, \eqref{2.17}, \eqref{2.18}, and
\begin{equation}
\label{2.16.r}
\int_{\mu_1(t)}^t p_1(s)ds\leq \frac{1}{e}\forae t\in\RR
\end{equation}
be fulfilled. Let, in addition,
\begin{equation}
\label{2.13.r}
h(t,x,y)\geq 0\forae t\geq t_0,\quad x,y\in\RR_+.
\end{equation}
Then, for every $c\in \left(0,\kappa e^{-M_{\mu}}\right)$, where
$M_{\mu}$ is given by \eqref{2.19}, there exists a global solution $u$ to the problem \eqref{1.3}, \eqref{1.2}
satisfying \eqref{2.8} and \eqref{2.8.r}.
\end{theo}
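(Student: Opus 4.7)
The plan is to cast equation \eqref{1.3} in the abstract form \eqref{1.1} and verify the hypotheses of Theorem~\ref{t2.1.r}. Setting
$$
\ell_0(u)(t):=p_0(t)u(\mu_0(t)),\quad \ell_1(u)(t):=p_1(t)u(\mu_1(t)),\quad f(u)(t):=h(t,u(t),u(\nu(t))),
$$
I get $\ell_0,\ell_1,f\in V$ directly from $\mu_0(t),\mu_1(t),\nu(t)\leq t$ by \eqref{2.15.r}; positivity of $\ell_0,\ell_1$ is immediate from $p_0,p_1\in\llocrp$. Conditions \eqref{2.14}, \eqref{2.13}, and \eqref{2.13.5} translate verbatim into \eqref{2.2}, \eqref{2.3}, and \eqref{2.3.5} with the same $q$. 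For \eqref{2.3.5.r}, whenever $v\in\cnulrp$ with $v(t)\leq\kappa$ on $(-\infty,t_0]$, one has $\ell_0(v)(t)=p_0(t)v(\mu_0(t))\geq 0$ and $f(v)(t)=h(t,v(t),v(\nu(t)))\geq 0$ for a.e.\ $t\geq t_0$ by \eqref{2.13.r}.

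The majorant $\gamma$ is constructed following the Koplatadze--Chanturia template: let
$$
\gamma(t):=\exp\!\left(-e\int_{t_0}^{t}p_1(s)\,ds\right),\qquad t\in\RR,
$$
so that $\gamma\in\aclocrpos$ and $\gamma'(t)=-ep_1(t)\gamma(t)$ a.e. From \eqref{2.16.r} I obtain $\gamma(\mu_1(t))/\gamma(t)=\exp\!\bigl(e\int_{\mu_1(t)}^{t}p_1(s)\,ds\bigr)\leq e$ for a.e.\ $t\in\RR$, hence $\ell_1(\gamma)(t)=p_1(t)\gamma(\mu_1(t))\leq ep_1(t)\gamma(t)=-\gamma'(t)$, which is \eqref{2.4.r}; the same identity gives $\ell_1(\gamma)(t)/\gamma(t)=p_1(t)\exp\!\bigl(e\int_{\mu_1(t)}^{t}p_1(s)\,ds\bigr)$, so \eqref{2.17} coincides with \eqref{2.5}.

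The main technical hurdle is the choice of $\sigma\in\Sigma$: what is needed is a \emph{continuous} function with $\sigma(t)\leq\mu_0(t)$ for a.e.\ $t\leq t_0$ (to guarantee $\ell_0\in\vtn(\sigma)$) for which $M_\sigma$ is as close to $M_\mu$ as one wishes. Since $\mu_0$ is only measurable, it cannot be used directly. Fix $c\in(0,\kappa e^{-M_\mu})$ and select $\varepsilon>0$ with $M_\mu+\varepsilon<\ln(\kappa/c)$; set $\phi(s):=p_1(s)\exp\!\bigl(e\int_{\mu_1(s)}^{s}p_1(\xi)\,d\xi\bigr)$ and $\Phi(t):=\int_{t_0}^{t}\phi(s)\,ds$. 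I plan to define $\sigma(t)$, for $t\leq t_0$, as the largest $s\leq t$ satisfying $\Phi(t)-\Phi(s)\geq M_\mu+\varepsilon$, and to extend this linearly to $[t_0,+\infty)$ by $\sigma(t):=\sigma(t_0)+(t-t_0)$. Since $\Phi$ is continuous and nondecreasing, $\sigma$ is continuous up to a harmless regularization across intervals where $\Phi$ is flat (resolvable by adding a small bounded strictly increasing perturbation to $\Phi$ before inverting, at an arbitrarily small penalty in $M_\sigma$); since $\Phi(\mu_0(t))\geq\Phi(t)-M_\mu$ for a.e.\ $t\leq t_0$ by \eqref{2.18}, I get $\sigma(t)\leq\mu_0(t)$ a.e.; and by construction $M_\sigma\leq M_\mu+\varepsilon<\ln(\kappa/c)$. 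With this $\sigma$ all hypotheses of Theorem~\ref{t2.1.r} hold and, since $c\leq\kappa e^{-M_\sigma}$, that theorem yields a global solution of \eqref{1.3}, \eqref{1.2} satisfying \eqref{2.8} and \eqref{2.8.r}.
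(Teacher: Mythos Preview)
Your proposal is correct and follows exactly the paper's route: the paper's proof of Theorem~\ref{t2.5.r} says only ``Analogously to the proof of Theorem~\ref{t2.5} one can show that all the assumptions of Theorem~\ref{t2.1.r} are fulfilled,'' and your argument reproduces precisely that reduction (same operators $\ell_0,\ell_1,f$, same $\gamma(t)=\exp\bigl(e\int_t^{t_0}p_1(s)\,ds\bigr)$, same idea for $\sigma$). The only place where the paper is more explicit is the construction of $\sigma$: instead of your informal ``add a small bounded strictly increasing perturbation to $\Phi$ before inverting,'' the paper (in the proof of Theorem~\ref{t2.5}) adds the concrete weight $\varphi(t)=\varepsilon/(t_0+1-t)^2$ to $P_1(t)=p_1(t)\exp\bigl(e\int_{\mu_1(t)}^t p_1\bigr)$ and defines $\sigma(t)$ implicitly by $\int_{\sigma(t)}^t\bigl(P_1(s)+\varphi(s)\bigr)ds=M_\mu+\varepsilon$, then sets $\sigma(t)=\sigma(t_0)$ for $t>t_0$; this makes $\sigma$ continuous without further regularization and gives $M_\sigma=M_\mu+\varepsilon$ directly.
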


\begin{rem}
\label{r2.100}
Note that, according to Theorem~\ref{t2.1.r} (see also the proof of Theorem~\ref{t2.5.r}), in the case when
$\mu_0(t)=t$ for almost every $t\geq t_0$, resp. $\mu_0(t)=\nu(t)$ for
almost every $t\geq t_0$, the condition \eqref{2.13.r} in
Theorem~\ref{t2.5.r} can be weakened to
$$
p_0(t)x+h(t,x,y)\geq 0\forae t\geq t_0,\quad x,y\in\RR_+,
$$
resp.
$$
p_0(t)y+h(t,x,y)\geq 0\forae t\geq t_0,\quad x,y\in\RR_+.
$$
Moreover, the condition \eqref{2.17} in Theorem~\ref{t2.5.r} can be weakened to \eqref{100}
where $\lambda\in[1,e]$ satisfies \eqref{101}.
Obviously, in that case the number $M_{\mu}$ can also be improved in
an appropriate sense.
\end{rem}

\begin{rem}
\label{r2.4.r}
Note that the solution $u$, the existence of which is guaranteed by
Theorem~\ref{t2.5.r}, has also a finite limit $u(-\infty)$ (see
Theorem~\ref{t2.14}).
\end{rem}

\begin{theo}
\label{t2.6.r}
Let there exist $\kappa>0$ such that \eqref{2.13} holds and let
\begin{equation}
\label{2.13.5.r}
0\leq h(t,x,y)\leq q(t,x+y) \forae t\geq t_0,\quad
x,y\in\RR_+
\end{equation}
where $q\in\kloctnrp$ is nondecreasing in the second argument and 
satisfies \eqref{3.5} for every $b>t_0$. Let, moreover,
\eqref{2.14} and \eqref{2.15.r} be fulfilled. Assume further that 
\begin{gather}
\label{2.20.r}
p_0(t)\geq p_1(t)\forae t\in\RR,\\
\label{2.21.r}
p_1(t)\big(\mu_0(t)-\mu_1(t)\big)\geq 0\forae t\in\RR.
\end{gather}
Then, for every $c\in[0,\kappa]$ there exists a global solution $u$ to the problem
\eqref{1.3}, \eqref{1.2} satisfying \eqref{2.8} and \eqref{2.9.r}.
\end{theo}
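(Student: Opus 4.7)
The plan is to view Theorem~\ref{t2.6.r} as a direct specialization of Theorem~\ref{t2.2.r} to equation \eqref{1.3}, obtained by setting
\[
\ell_0(u)(t)=p_0(t)u(\mu_0(t)),\qquad
\ell_1(u)(t)=p_1(t)u(\mu_1(t)),\qquad
f(u)(t)=h(t,u(t),u(\nu(t))),
\]
and verifying the hypotheses of Theorem~\ref{t2.2.r} for these operators. Linearity and continuity of $\ell_i$ follow from $p_i\in\llocrp$, and the Carath\'eodory nature of $f$ follows from that of $h$ together with \eqref{2.13.5.r}. The memberships $\ell_0,\ell_1,f\in V$ follow from \eqref{2.15.r}: if $u$ and $v$ coincide on $(-\infty,\zeta]$, then $\mu_i(t),\nu(t)\le t\le\zeta$ a.e.\ for $t\le\zeta$, so the operators agree a.e.\ on $(-\infty,\zeta]$.

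Condition \eqref{2.2} is just \eqref{2.14}, and \eqref{2.3} is \eqref{2.13} applied pointwise to $(v(t),v(\nu(t)))\in[0,\kappa]^2$. For \eqref{2.3.5.rr}, let $v\in\cnulrp$ with $v\le\kappa$ on $(-\infty,t_0]$; then $v(t),v(\nu(t))\ge 0$, so \eqref{2.13.5.r} gives
\[
0\le f(v)(t)\le q(t,v(t)+v(\nu(t)))\le q(t,2\|v\|)=:\widetilde q(t,\|v\|)\qquad\mbox{for a.e.~}t\ge t_0.
\]
The modified kernel $\widetilde q(t,x):=q(t,2x)$ lies in $\kloctnrp$, is nondecreasing in $x$, and inherits \eqref{3.5} via $x^{-1}\int_{t_0}^b \widetilde q(s,x)\,ds=2(2x)^{-1}\int_{t_0}^b q(s,2x)\,ds\to 0$.

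The one substantive step is showing $\ell_0-\ell_1\in\pplus$. Fix $\tau\in\RR$ and let $u$ be any nondecreasing non-negative absolutely continuous function on $(-\infty,\tau]$. For a.e.\ $t\le\tau$, \eqref{2.21.r} forces either $p_1(t)=0$, in which case
\[
(\ell_0-\ell_1)(u)(t)=p_0(t)u(\mu_0(t))\ge 0
\]
since $p_0\ge 0$ and $u\ge 0$; or $p_1(t)>0$, which compels $\mu_0(t)\ge\mu_1(t)$, so the monotonicity of $u$ together with \eqref{2.20.r} yields
\[
p_0(t)u(\mu_0(t))-p_1(t)u(\mu_1(t))\ge p_1(t)\bigl[u(\mu_0(t))-u(\mu_1(t))\bigr]\ge 0.
\]
Since $\tau$ was arbitrary, $\ell_0-\ell_1\in\pplus$.

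With every hypothesis of Theorem~\ref{t2.2.r} verified (using $\widetilde q$), that theorem produces, for each $c\in[0,\kappa]$, a global solution $u$ of \eqref{1.1}, \eqref{1.2} satisfying \eqref{2.8} and \eqref{2.9.r}; by construction of $\ell_0,\ell_1,f$ this $u$ is exactly a solution to \eqref{1.3}, \eqref{1.2}. The only mildly delicate point is the case analysis yielding $\ell_0-\ell_1\in\pplus$; everything else is direct translation between the abstract framework and equation \eqref{1.3}.
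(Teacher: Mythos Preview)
Your proof is correct and follows essentially the same approach as the paper, which simply states that ``analogously to the proof of Theorem~\ref{t2.6} one can show that all the assumptions of Theorem~\ref{t2.2.r} are fulfilled.'' The only cosmetic difference is that the paper verifies $\ell_0-\ell_1\in\pplus$ via the algebraic decomposition $\ell_0(u)(t)-\ell_1(u)(t)=\big(p_0(t)-p_1(t)\big)u(\mu_0(t))+p_1(t)\big(u(\mu_0(t))-u(\mu_1(t))\big)$ rather than your case split on $p_1(t)$, and your explicit introduction of $\widetilde q(t,x)=q(t,2x)$ makes precise a step the paper leaves implicit.
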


\begin{rem}
\label{r2.2.1.r2}
Obviously, if $c>0$ in Theorem~\ref{t2.6.r} then \eqref{2.9.r} implies \eqref{2.8.r}.
\end{rem}

\begin{rem}
\label{r2.5.r}
Note that the solution $u$, the existence of which is guaranteed by
Theorem~\ref{t2.6.r}, has also a finite limit $u(-\infty)$ because $u$
is a bounded nondecreasing function.
\end{rem}

Theorems~\ref{t2.7}--\ref{t2.6.r} together with Remark~\ref{r2.2.1.r2}
imply the following results dealing with the existence of global solutions to
\eqref{1.3}, \eqref{1.2} which are positive on the whole real
line.

\begin{cor}
\label{c1.3}
Let all the assumptions of Theorem~\ref{t2.5.r} be fulfilled. Let,
moreover, there exist $g\in\llocrp$ and a continuous nondecreasing
function $h_0:(0,+\infty)\to (0,+\infty)$ such that \eqref{2.11} and \eqref{2.22} hold.
Then, for every $c\in \left(0,\kappa e^{-M_{\mu}}\right)$
with $M_{\mu}$ given by \eqref{2.19}, there exists a positive global solution $u$
to the problem \eqref{1.3}, \eqref{1.2} satisfying \eqref{2.12.r}.
\end{cor}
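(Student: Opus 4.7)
The plan is to reduce Corollary~\ref{c1.3} to its abstract counterpart, Corollary~\ref{c1.1}, by rewriting \eqref{1.3} as an instance of \eqref{1.1}. Set
$$
\ell_0(u)(t)=p_0(t)u(\mu_0(t)),\qquad \ell_1(u)(t)=p_1(t)u(\mu_1(t)),\qquad f(u)(t)=h(t,u(t),u(\nu(t))),
$$
and choose the memory $\sigma=\mu_0$ together with
$$
\gamma(t)=\exp\!\left(e\int_t^{t_0}p_1(s)\,ds\right)\for t\in\RR.
$$
Since \eqref{2.15.r} ensures $\mu_0(t),\mu_1(t),\nu(t)\leq t$, the operators $\ell_0,\ell_1,f$ lie in $V$, and $\ell_0$ has memory $\sigma$ on $(-\infty,t_0]$, which gives \eqref{2.1}. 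Because $p_0,p_1\geq 0$, the sign and growth hypotheses \eqref{2.2}, \eqref{2.3}, \eqref{2.3.5}, \eqref{2.3.5.r} translate directly from \eqref{2.14}, \eqref{2.13}, \eqref{2.13.5}, \eqref{2.13.r}, after replacing $q(t,\cdot)$ by $q(t,2\,\cdot)$ (which preserves \eqref{3.5}).

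The computational core is the verification of \eqref{2.4.r}--\eqref{2.6} for this $\gamma$. A direct differentiation gives $\gamma'(t)=-e\,p_1(t)\gamma(t)$, while
$$
\ell_1(\gamma)(t)=p_1(t)\gamma(\mu_1(t))=p_1(t)\gamma(t)\exp\!\left(e\int_{\mu_1(t)}^tp_1(s)\,ds\right).
$$
Hence \eqref{2.4.r} reduces, for almost every $t\in\RR$, to $\exp\bigl(e\int_{\mu_1(t)}^t p_1(s)\,ds\bigr)\leq e$, which is exactly \eqref{2.16.r}. Dividing $\ell_1(\gamma)(t)$ by $\gamma(t)$, \eqref{2.5} becomes $p_0(t)\geq p_1(t)\exp\bigl(e\int_{\mu_1(t)}^t p_1(s)\,ds\bigr)$, i.e. \eqref{2.17}. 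For \eqref{2.6} one observes that
$$
\int_{\sigma(t)}^t\frac{\ell_1(\gamma)(s)}{\gamma(s)}\,ds=\int_{\mu_0(t)}^tp_1(s)\exp\!\left(e\int_{\mu_1(s)}^sp_1(\xi)\,d\xi\right)ds,
$$
whose supremum over $t\leq t_0$ is exactly the constant $M_{\mu}$ from \eqref{2.19}; finiteness follows from \eqref{2.18} together with the bound $\exp(\cdot)\leq e$ provided by \eqref{2.16.r}. In particular $M_{\sigma}=M_{\mu}$.

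Next, \eqref{2.22} combined with $v\in\cnulok$ yields
$$
f(v)(t)=h(t,v(t),v(\nu(t)))\leq g(t)h_0\bigl(v(t)+v(\nu(t))\bigr)\leq g(t)\widetilde h_0(\|v\|),\qquad \widetilde h_0(x):=h_0(2x),
$$
so that \eqref{2.10} holds with $\widetilde h_0$ in place of $h_0$; the integral condition \eqref{2.11} passes from $h_0$ to $\widetilde h_0$ by the substitution $u=2s$.

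With all hypotheses of Corollary~\ref{c1.1} verified, its conclusion furnishes, for every $c\in(0,\kappa e^{-M_{\sigma}}]=(0,\kappa e^{-M_{\mu}}]\supset(0,\kappa e^{-M_{\mu}})$, a positive global solution $u$ of \eqref{1.1}, \eqref{1.2} with $u(t)\leq\kappa$ for $t\leq t_0$; because the operators were defined to match \eqref{1.3}, this $u$ solves \eqref{1.3}, \eqref{1.2} and satisfies \eqref{2.12.r}. The only delicate point in the whole argument is that \eqref{2.4.r} is required for almost every $t\in\RR$, not merely $t\leq t_0$; this is precisely why Theorem~\ref{t2.5.r} postulates \eqref{2.16.r} globally, and the above choice of $\gamma$ works throughout $\RR$ thanks to this global bound.
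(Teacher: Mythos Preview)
Your reduction to Corollary~\ref{c1.1} is the right idea and matches the paper's route, but the choice $\sigma=\mu_0$ is not admissible. By definition, $\Sigma$ consists of \emph{continuous} functions, whereas $\mu_0$ is only assumed to be a locally essentially bounded measurable function. Thus in general $\mu_0\notin\Sigma$, and neither the inclusion \eqref{2.1} nor the supremum defining $M_\sigma$ in \eqref{2.7} makes sense with your choice. This also explains why the range of $c$ in Corollary~\ref{c1.3} is the \emph{open} interval $(0,\kappa e^{-M_\mu})$: one cannot in general achieve $M_\sigma=M_\mu$.

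The paper handles this in the proof of Theorem~\ref{t2.5} (to which Theorem~\ref{t2.5.r} and hence Corollary~\ref{c1.3} reduce). Given $c<\kappa e^{-M_\mu}$, one picks $\varepsilon>0$ with $c\leq\kappa e^{-(M_\mu+\varepsilon)}$, introduces a strictly positive integrable weight $\varphi$ with $\int_{-\infty}^{t_0}\varphi=\varepsilon$, and defines $\sigma$ implicitly by an integral equation of the form \eqref{2.209}. This produces a genuine $\sigma\in\Sigma$ satisfying $\sigma(t)\leq\mu_0(t)$ for a.e.\ $t\leq t_0$ (so that \eqref{2.1} holds) and $M_\sigma=M_\mu+\varepsilon$, whence $c\in(0,\kappa e^{-M_\sigma}]$ as required by Corollary~\ref{c1.1}. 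The endpoint $c=\kappa e^{-M_\mu}$ is recoverable only under additional regularity (cf.\ Remark~\ref{r6.1}, where $\mu_0(t)=t-\tau(t)$ with $\tau$ continuous). Once you replace your $\sigma=\mu_0$ by this construction, the rest of your verification of \eqref{2.4.r}, \eqref{2.5}, \eqref{2.10}, \eqref{2.11} goes through as written.
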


\begin{cor}
\label{c1.4}
Let all the assumptions of Theorem~\ref{t2.6.r} be fulfilled. Let,
moreover, there exist $g\in\llocrp$ and a continuous nondecreasing function
$h_0:(0,+\infty)\to (0,+\infty)$ such that \eqref{2.11} and \eqref{2.22} hold.
Then, for every $c\in (0,\kappa]$ there exists a positive global solution $u$ to the
problem \eqref{1.3}, \eqref{1.2} satisfying \eqref{2.9.r} and \eqref{2.12.r}.
\end{cor}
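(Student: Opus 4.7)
The plan is to reduce Corollary~\ref{c1.4} to Corollary~\ref{c1.2} by recasting \eqref{1.3} as a special instance of \eqref{1.1}, in exact parallel with the way Theorem~\ref{t2.6.r} is derived from Theorem~\ref{t2.2.r}. Set
$$
\ell_0(v)(t)=p_0(t)v(\mu_0(t)),\quad \ell_1(v)(t)=p_1(t)v(\mu_1(t)),\quad f(v)(t)=h(t,v(t),v(\nu(t))),
$$
so that \eqref{1.3} becomes \eqref{1.1}. Linearity, continuity, and the local Carath\'eodory condition for $f$ are inherited from the standing assumptions on $p_i$, $\mu_i$, $\nu$, $h$; since $\mu_0(t)\leq t$, $\mu_1(t)\leq t$, $\nu(t)\leq t$ by \eqref{2.15.r}, the operators $\ell_0,\ell_1,f$ belong to $V$.

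The central verification is the inclusion $\ell_0-\ell_1\in\pplus$. For any nondecreasing $v\in\aclocrpos$ and almost every $t\in\RR$ we can write
$$
\ell_0(v)(t)-\ell_1(v)(t)=\bigl[p_0(t)-p_1(t)\bigr]v(\mu_0(t))+p_1(t)\bigl[v(\mu_0(t))-v(\mu_1(t))\bigr];
$$
the first summand is non-negative by \eqref{2.20.r} together with $v\geq 0$, while the second is non-negative because \eqref{2.21.r} forces $\mu_0(t)\geq\mu_1(t)$ wherever $p_1(t)>0$, and $v$ is nondecreasing. This is the only nontrivial ingredient of the translation, and it is precisely what \eqref{2.20.r} and \eqref{2.21.r} are engineered to deliver.

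The remaining hypotheses of Corollary~\ref{c1.2} follow almost mechanically: \eqref{2.2} from \eqref{2.14} and \eqref{2.3} from \eqref{2.13}. For \eqref{2.3.5.rr} and \eqref{2.10}, observe that $v(t)+v(\nu(t))\leq 2\|v\|$ for every $v\in\cnulrp$, so \eqref{2.13.5.r} and \eqref{2.22} yield bounds of the form $q(t,2\|v\|)$ and $g(t)h_0(2\|v\|)$ respectively, using monotonicity of $q(t,\cdot)$ and $h_0$. Replacing $q(t,x)$ by $\tilde q(t,x):=q(t,2x)$ and $h_0(x)$ by $\tilde h_0(x):=h_0(2x)$ preserves monotonicity, condition \eqref{3.5}, and condition \eqref{2.11} (a change of variables in the limit and in the integral costs only a harmless factor of two). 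Corollary~\ref{c1.2} therefore applies and produces, for every $c\in(0,\kappa]$, a positive global solution of \eqref{1.3}, \eqref{1.2} fulfilling \eqref{2.9.r} and \eqref{2.12.r}, which is the conclusion sought.
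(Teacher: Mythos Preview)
Your proposal is correct and follows essentially the same route as the paper. The paper simply declares Corollary~\ref{c1.4} a direct consequence of Theorems~\ref{t2.7}--\ref{t2.6.r} together with Remark~\ref{r2.2.1.r2}, without spelling out the details; unwinding that chain amounts precisely to the translation $\ell_i(v)(t)=p_i(t)v(\mu_i(t))$, $f(v)(t)=h(t,v(t),v(\nu(t)))$ that you perform, with the inclusion $\ell_0-\ell_1\in\pplus$ derived from \eqref{2.20.r}, \eqref{2.21.r} exactly as you do (this is the argument used in the proof of Theorem~\ref{t2.6}, extended to all of $\RR$). Your shortcut of invoking Corollary~\ref{c1.2} directly, rather than going through Theorems~\ref{t2.8} and~\ref{t2.6.r} separately, is cleaner but logically equivalent. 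One tiny slip: in the verification of $\pplus$ you should test against nondecreasing $v\in\acloctrp$ (values in $\RR_+=[0,+\infty)$) rather than $\aclocrpos$, but your inequality holds for all non-negative $v$ anyway, so this is harmless.
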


\begin{rem}
\label{r2.98}
Note that, according to Remarks~\ref{r2.4.r} and \ref{r2.5.r}, the
solution $u$, the existence of which is guaranteed by 
Corollary~\ref{c1.3}, resp. Corollary~\ref{c1.4}, has also a finite limit $u(-\infty)$.
\end{rem}

Corollaries~\ref{c1.3} and \ref{c1.4} are direct consequences of
Theorems~\ref{t2.7}--\ref{t2.6.r} and
Remark~\ref{r2.2.1.r2}. Therefore, their proofs are omitted.

\subsection{Properties of solutions}

\begin{theo}
\label{t2.13}
Let there exist $\kappa>0$ such that \eqref{2.13} holds. Let, moreover,
\begin{gather}
\label{2.15}
\mu_0(t)\leq t,\qquad \mu_1(t)\leq t,\qquad \nu(t)\leq t
\forae t\leq t_0,\\
\label{2.40}
\lim_{t\to -\infty}\int_t^{t_0}p_1(s)ds<+\infty,
\end{gather}
and \eqref{2.16} be fulfilled. Then every solution $u$ to \eqref{1.3} on
$(-\infty,t_0]$ satisfying \eqref{2.8} has a finite limit $u(-\infty)$.
\end{theo}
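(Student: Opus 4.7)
The plan is to reduce this theorem to the already-established Theorem \ref{t2.9} by realizing \eqref{1.3} as a special case of \eqref{1.1} with the operators
$$
\ell_0(v)(t)=p_0(t)v(\mu_0(t)),\quad \ell_1(v)(t)=p_1(t)v(\mu_1(t)),\quad f(v)(t)=h(t,v(t),v(\nu(t))),
$$
and then exhibiting a suitable function $\gamma\in\acloc$ matching the hypotheses \eqref{2.4} and \eqref{2.24}. The inclusion $\ell_1\in\vtn$ is automatic from $\mu_1(t)\leq t$ (condition \eqref{2.15}), and \eqref{2.23} is immediate: for $v\in\cnulok$, one has $\ell_0(v)(t)=p_0(t)v(\mu_0(t))\geq 0$ since $p_0\in\llocrp$ and $0\leq v\leq\kappa$, while $f(v)(t)=h(t,v(t),v(\nu(t)))\geq 0$ for a.e. $t\leq t_0$ by \eqref{2.13}.

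The core step is constructing $\gamma$. I would set
$$
\gamma(t)=\exp\left(e\int_t^{t_0}p_1(s)ds\right)\for t\in\RR,
$$
which lies in $\acloc$ since $p_1\in\llocrp$. A direct computation gives $\gamma'(t)=-e\,p_1(t)\gamma(t)$ a.e., and
$$
\ell_1(\gamma)(t)=p_1(t)\gamma(\mu_1(t))=p_1(t)\gamma(t)\exp\left(e\int_{\mu_1(t)}^t p_1(s)ds\right).
$$
Using the hypothesis \eqref{2.16}, the exponent above is bounded by $e\cdot(1/e)=1$ for a.e. $t\leq t_0$, so $\ell_1(\gamma)(t)\leq e\,p_1(t)\gamma(t)=-\gamma'(t)$, i.e., \eqref{2.4} holds. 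Finally, \eqref{2.24} follows from \eqref{2.40} because
$$
\gamma(-\infty)=\exp\left(e\lim_{t\to-\infty}\int_t^{t_0}p_1(s)ds\right)<+\infty.
$$

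With all hypotheses of Theorem \ref{t2.9} verified, that theorem applies and yields the existence of the finite limit $u(-\infty)$ for any solution $u$ of \eqref{1.3} on $(-\infty,t_0]$ satisfying \eqref{2.8}. The only nontrivial point is the construction of $\gamma$ together with the verification of \eqref{2.4}; the constant $e$ and the threshold $1/e$ in \eqref{2.16} are precisely matched via the elementary inequality $\lambda\geq e^{\lambda p^{*}}$ with $\lambda=e$ and $p^{*}\leq 1/e$, the same mechanism underlying Remark \ref{r2.3}. No other obstacle arises.
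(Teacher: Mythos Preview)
Your proof is correct and follows essentially the same approach as the paper's: define the operators $\ell_0$, $\ell_1$, $f$ via \eqref{2.201}--\eqref{2.202}, construct the function $\gamma(t)=\exp\big(e\int_t^{t_0}p_1(s)ds\big)$ as in \eqref{2.203}, verify \eqref{2.4} from \eqref{2.16} and \eqref{2.24} from \eqref{2.40}, and then invoke Theorem~\ref{t2.9}. The computations you give for $\gamma'$ and $\ell_1(\gamma)$ match \eqref{2.204}--\eqref{2.205} exactly.
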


\begin{theo}
\label{t2.14}
Let there exist $\kappa>0$ such that \eqref{2.13} holds.
Let, moreover, \eqref{2.16}--\eqref{2.18} and \eqref{2.15} be fulfilled.
Then every solution $u$ to \eqref{1.3} on $(-\infty,t_0]$ satisfying \eqref{2.8} has a
finite limit $u(-\infty)$.
\end{theo}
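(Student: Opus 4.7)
My plan is to recast \eqref{1.3} as a special case of \eqref{1.1} and apply Theorem~\ref{t2.10}. Setting
$$
\ell_0(v)(t):=p_0(t)v(\mu_0(t)),\qquad \ell_1(v)(t):=p_1(t)v(\mu_1(t)),\qquad f(v)(t):=h(t,v(t),v(\nu(t))),
$$
the inclusion $\ell_1\in\vtn$ and the Volterra structure of $\ell_0$ on $(-\infty,t_0]$ both follow from \eqref{2.15}, which gives $\mu_0(t),\mu_1(t),\nu(t)\leq t$ for a.e.\ $t\leq t_0$. Hypothesis \eqref{2.3} of Theorem~\ref{t2.10} is an immediate consequence of \eqref{2.13}, since for $v\in\cnulok$ one has $v(t),v(\nu(t))\in[0,\kappa]$ whenever $t\leq t_0$, so $f(v)(t)=h(t,v(t),v(\nu(t)))\geq 0$.

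The analytic core is to construct $\gamma\in\acloc$ verifying \eqref{2.4}--\eqref{2.6}. Motivated by the Koplatadze--Chanturia threshold $1/e$ appearing in \eqref{2.16}, I would take
$$
\gamma(t):=\exp\Bigl(e\int_t^{t_0}p_1(s)\,ds\Bigr)\for t\leq t_0,
$$
extended continuously for $t>t_0$ (for instance by $\gamma(t):=1$). Then $\gamma'(t)=-e\,p_1(t)\gamma(t)$ for a.e.\ $t\leq t_0$, and \eqref{2.16} yields
$$
\gamma(\mu_1(t))=\gamma(t)\exp\Bigl(e\int_{\mu_1(t)}^tp_1(s)\,ds\Bigr)\leq e\,\gamma(t),
$$
whence $\gamma'(t)+\ell_1(\gamma)(t)=-e\,p_1(t)\gamma(t)+p_1(t)\gamma(\mu_1(t))\leq 0$, which is precisely \eqref{2.4}. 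Condition \eqref{2.5} unwinds to $p_0(t)\geq p_1(t)\exp\bigl(e\int_{\mu_1(t)}^tp_1\bigr)$, i.e.\ exactly assumption \eqref{2.17}. Using the same estimate, $\ell_1(\gamma)/\gamma\leq e\,p_1$, so condition \eqref{2.6} reduces to showing that $\sup_{t\leq t_0}\int_{\sigma(t)}^tp_1(s)\,ds<+\infty$ for an appropriately chosen $\sigma$.

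The genuinely delicate step, and the one I expect to be the main obstacle, is producing a $\sigma\in\Sigma$, i.e.\ a continuous $\sigma(t)\leq t$, that serves as a memory of $\ell_0$ on $(-\infty,t_0]$ (so that $\ell_0\in\vtn(\sigma)$) while preserving the uniform bound on $\int_{\sigma(t)}^tp_1$. The natural candidate $\sigma=\mu_0$ already delivers the integral bound by \eqref{2.18}, but it is only measurable; a continuous version must be constructed by a regularisation argument that pushes $\mu_0$ below itself almost everywhere while keeping $\int_{\sigma(t)}^tp_1$ controlled---this is possible precisely because \eqref{2.18} makes the essential supremum of $\int_{\mu_0(t)}^tp_1$ finite. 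Once such a $\sigma$ and the function $\gamma$ above are in hand, every hypothesis of Theorem~\ref{t2.10} is verified and the existence of the finite limit $u(-\infty)$ follows directly.
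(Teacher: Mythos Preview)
Your approach is correct and coincides with the paper's proof: the same operators $\ell_0,\ell_1,f$, the same $\gamma(t)=\exp\bigl(e\int_t^{t_0}p_1\bigr)$, the same verification of \eqref{2.4} and \eqref{2.5}, and the same reduction via Theorem~\ref{t2.10}. The paper resolves the ``delicate step'' you correctly isolate by defining $\sigma$ implicitly through
\[
\int_{\sigma(t)}^{t_0}\bigl(p_1(s)+\varphi(s)\bigr)\,ds=p^*+1+\int_t^{t_0}\bigl(p_1(s)+\varphi(s)\bigr)\,ds,
\]
where $p^*$ is the essential supremum in \eqref{2.18} and $\varphi(t)=(t_0+1-t)^{-2}$ is an auxiliary strictly positive function whose integral over $(-\infty,t_0]$ is finite; this forces $\sigma\in\Sigma$, gives $\int_{\sigma(t)}^t p_1>p^*\geq\int_{\mu_0(t)}^t p_1$ (hence $\sigma(t)\leq\mu_0(t)$ a.e.\ and $\ell_0\in\vtn(\sigma)$), and keeps $\sup_{t\leq t_0}\int_{\sigma(t)}^t p_1\leq p^*+1<+\infty$.
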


\begin{theo}
\label{t2.15}
Let there exist $\kappa>0$, $g\in\llocrp$, and a continuous function
$h_1:(0,\kappa)\times(0,\kappa)\to \RR$ such that
\begin{gather}
\label{2.129}
h_1(x,x)>0 \for x\in (0,\kappa),\\
\label{2.41}
h(t,x,y)\geq g(t)h_1(x,y)\forae t\leq t_0,\quad x,y\in (0,\kappa).
\end{gather}
Let, moreover, \eqref{2.15} and \eqref{2.40} be fulfilled.
Assume, further, that $u$ is a solution to \eqref{1.3} on
$(-\infty,t_0]$ having a limit $u(-\infty)\in[0,\kappa]$. If either
\begin{equation}
\label{2.42}
\lim_{t\to -\infty}\int_t^{t_0}p_0(s)ds=+\infty
\end{equation}
or \eqref{2.38} holds
then either \eqref{2.32} is fulfilled or 
\begin{equation}
\label{2.43}
u(-\infty)=\kappa.
\end{equation}
\end{theo}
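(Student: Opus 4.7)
The plan is to argue by contradiction. Suppose $L := u(-\infty)$ lies in the open interval $(0,\kappa)$; the aim is to derive an inconsistency in the integrated form of \eqref{1.3}. Fix $\varepsilon\in\big(0,\min(L,\kappa-L)\big)$ small enough that the compact square $[L-\varepsilon,L+\varepsilon]^2$ sits inside $(0,\kappa)^2$, and use the existence of the limit $u(-\infty)=L$ to choose $T_{\varepsilon}\leq t_0$ so that $u(s)\in(L-\varepsilon,L+\varepsilon)$ for every $s\leq T_{\varepsilon}$. Condition \eqref{2.15} gives $\mu_0(s),\mu_1(s),\nu(s)\leq s$ for $s\leq t_0$, so the same enclosure applies to $u(\mu_0(s))$, $u(\mu_1(s))$, and $u(\nu(s))$ whenever $s\leq T_{\varepsilon}$; in particular, these arguments lie in the domain of $h_1$.

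Next, integrate \eqref{1.3} from $t_1<T_{\varepsilon}$ to $t_0$ to obtain
$$
u(t_0)-u(t_1)=\int_{t_1}^{t_0}p_0(s)u(\mu_0(s))\,ds-\int_{t_1}^{t_0}p_1(s)u(\mu_1(s))\,ds+\int_{t_1}^{t_0}h(s,u(s),u(\nu(s)))\,ds.
$$
As $t_1\to-\infty$ the left-hand side tends to the finite value $u(t_0)-L$. Since $0\leq u\leq\kappa$, condition \eqref{2.40} yields the uniform bound $\int_{t_1}^{t_0}p_1(s)u(\mu_1(s))\,ds\leq\kappa\int_{-\infty}^{t_0}p_1(s)\,ds<+\infty$, while \eqref{2.13} ensures $p_0u(\mu_0)\geq 0$ and $h(\cdot,u,u\circ\nu)\geq 0$ on $(-\infty,t_0]$. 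Thus the only possibly negative contribution on the right-hand side is uniformly bounded below by a finite constant, and the other two terms are monotone in $t_1$.

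It remains to show that, in each of the two subcases, one of the nonnegative terms pushes the right-hand side to $+\infty$. Under \eqref{2.42}, the pointwise estimate $u(\mu_0(s))\geq L-\varepsilon$ for $s\leq T_{\varepsilon}$ gives
$$
\int_{t_1}^{T_{\varepsilon}}p_0(s)u(\mu_0(s))\,ds\geq(L-\varepsilon)\int_{t_1}^{T_{\varepsilon}}p_0(s)\,ds\longrightarrow+\infty.
$$
Under \eqref{2.38}, continuity of $h_1$ together with \eqref{2.129} produces $\delta>0$ with $h_1\geq\delta$ on $[L-\varepsilon,L+\varepsilon]^2$; then \eqref{2.41} implies $h(s,u(s),u(\nu(s)))\geq\delta g(s)$ for $s\leq T_{\varepsilon}$, and hence $\int_{t_1}^{T_{\varepsilon}}h\,ds\geq\delta\int_{t_1}^{T_{\varepsilon}}g(s)\,ds\to+\infty$. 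In either case the right-hand side of the integrated equation diverges while the left-hand side stays bounded, a contradiction. Hence $u(-\infty)\in\{0,\kappa\}$, yielding \eqref{2.32} or \eqref{2.43}. The only subtle point is that the sign-indefinite term $-p_1u(\mu_1)$ must not be allowed to absorb the divergence produced by the other terms; this is precisely what hypothesis \eqref{2.40} secures, so the argument reduces to careful bookkeeping once the local positivity of $h_1$ near $(L,L)$ is exploited.
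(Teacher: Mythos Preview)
Your overall strategy matches the paper's: assume $L:=u(-\infty)\in(0,\kappa)$, trap $u$ in $(L-\varepsilon,L+\varepsilon)$ for $s\leq T_\varepsilon$, integrate \eqref{1.3}, and force a divergence from either the $p_0$--term or the $h$--term while \eqref{2.40} keeps the $p_1$--term harmless. The paper integrates from $t$ to $t_\delta$ (your $T_\varepsilon$) rather than to $t_0$, but that is cosmetic.

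There is, however, a genuine gap in your justification. You write ``Since $0\leq u\leq\kappa$'' and ``\eqref{2.13} ensures $p_0u(\mu_0)\geq 0$ and $h(\cdot,u,u\circ\nu)\geq 0$''. Neither \eqref{2.8} nor \eqref{2.13} is a hypothesis of Theorem~\ref{t2.15}; the theorem only assumes the \emph{limit} $u(-\infty)$ lies in $[0,\kappa]$, so you may not use a global sign on $u$ or on $h$. This matters in the subcase \eqref{2.42}: you need the $h$--integral not to diverge to $-\infty$, and you have no right to assert $h\geq 0$ from \eqref{2.13}. The fix is immediate and is what the paper does: restrict the integration to $(-\infty,T_\varepsilon]$, where your enclosure $u(s),u(\nu(s))\in(L-\varepsilon,L+\varepsilon)\subset(0,\kappa)$ holds; then \eqref{2.41} gives $h(s,u(s),u(\nu(s)))\geq g(s)h_1(u(s),u(\nu(s)))$, and after shrinking $\varepsilon$ so that (by continuity and \eqref{2.129}) $h_1\geq h_*>0$ on $[L-\varepsilon,L+\varepsilon]^2$, you get $h\geq h_*\,g(s)\geq 0$ for $s\leq T_\varepsilon$. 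The same enclosure gives $u(\mu_1(s))<L+\varepsilon<\kappa$, replacing your unjustified bound $u\leq\kappa$. With these corrections your argument becomes the paper's proof verbatim.
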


\begin{theo}
\label{t2.16}
Let there exist $\kappa>0$, $g\in\llocrp$, and a continuous function
$h_1:(0,\kappa)\times(0,\kappa)\to \RR$ such that \eqref{2.129} and \eqref{2.41} hold.
Let, moreover, \eqref{2.16}, \eqref{2.20}, and \eqref{2.15} be
fulfilled. Assume, further, that $u$ is a solution to \eqref{1.3} on
$(-\infty,t_0]$ having a limit $u(-\infty)\in[0,\kappa]$. 
If there exists $\omega\in\Sigma$ such that
\begin{equation}
\label{2.44}
\sup\left\{\int_{\omega(t)}^tp_1(s)ds:t\leq t_0\right\}<+\infty,
\end{equation}
and either
\begin{equation}
\label{2.45}
\limsup_{t\to -\infty}\int_{\omega(t)}^t \big[p_0(s)-p_1(s)\big]ds>0
\end{equation}
or 
\begin{equation}
\label{2.46}
\limsup_{t\to -\infty}\int_{\omega(t)}^t g(s)ds>0
\end{equation}
then either \eqref{2.32} or \eqref{2.43} holds.
\end{theo}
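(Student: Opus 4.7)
The argument proceeds by contradiction via a reduction to Theorem~\ref{t2.12} applied to \eqref{1.3} with the natural identification $\ell_0(v)(t)=p_0(t)v(\mu_0(t))$, $\ell_1(v)(t)=p_1(t)v(\mu_1(t))$, $f(v)(t)=h(t,v(t),v(\nu(t)))$. Assume, for contradiction, that $L:=u(-\infty)\in(0,\kappa)$. By continuity of $u$ and $u(t)\to L$, one can pick $t^*\leq t_0$ so negative that $u(s)\in\bigl(L/2,(L+\kappa)/2\bigr)\subset(0,\kappa)$ for every $s\leq t^*$; set $c^*:=u(t^*)\in(0,\kappa)$. The plan is to apply Theorem~\ref{t2.12} to $u$ regarded as a solution of \eqref{1.1}, \eqref{1.2} on $(-\infty,t^*]$ with base point $t^*$ and initial value $c^*$; the conclusion $u(-\infty)=0$ of that theorem will then contradict $L>0$, forcing $u(-\infty)\in\{0,\kappa\}$.

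To invoke Theorem~\ref{t2.12} with $t_0$ replaced by $t^*$, I verify all its hypotheses. Condition \eqref{2.15} yields $\ell_0,\ell_1\in V_{t^*}$; \eqref{2.29} reduces to \eqref{2.20}; the sign condition \eqref{2.3} on $(-\infty,t^*]$ follows from \eqref{2.13}; and \eqref{2.33} is a consequence of \eqref{2.44}. As the required auxiliary function I take $\gamma(t):=\exp\bigl(-e\int_{t^*}^t p_1(s)\,ds\bigr)\in\aclocrpos$, so that $\gamma'(t)=-ep_1(t)\gamma(t)$; by \eqref{2.16},
$$
\frac{\gamma(\mu_1(t))}{\gamma(t)}=\exp\!\left(e\int_{\mu_1(t)}^t p_1(s)\,ds\right)\leq e \mforae t\leq t^*,
$$
so $\gamma'(t)\leq -p_1(t)\gamma(\mu_1(t))=-\ell_1(\gamma)(t)$, which gives \eqref{2.4}.

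It remains to verify one of the alternatives \eqref{2.34} or \eqref{2.31}. Under \eqref{2.45}, condition \eqref{2.34} is immediate since $\ell_0(1)=p_0$ and $\ell_1(1)=p_1$. Under \eqref{2.46}, I will show $f\in\mathcal{O}_{t^*}(\omega,\kappa,c^*)$ directly from Definition~\ref{d2.1}: for any admissible $w\in\cnulok$ with $w(t^*)=c^*$, in both cases of the definition the limit $w(-\infty)$ lies in $(0,\kappa)$, and $w(s),w(\nu(s))\to w(-\infty)$ as $s\to-\infty$ because $\nu(s)\leq s\to-\infty$. Continuity of $h_1$ together with the strict positivity in \eqref{2.129} at the diagonal point $(w(-\infty),w(-\infty))$ furnish $m>0$ and $t^{**}\leq t^*$ with $(w(s),w(\nu(s)))\in(0,\kappa)^2$ and $h_1(w(s),w(\nu(s)))\geq m$ for every $s\leq t^{**}$; combined with \eqref{2.41} this gives $h(s,w(s),w(\nu(s)))\geq mg(s)$ on $(-\infty,t^{**}]$, whence
$$
\limsup_{t\to-\infty}\int_{\omega(t)}^t h(s,w(s),w(\nu(s)))\,ds\geq m\limsup_{t\to-\infty}\int_{\omega(t)}^t g(s)\,ds>0
$$
by \eqref{2.46}, establishing \eqref{2.31}. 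Theorem~\ref{t2.12} now delivers $u(-\infty)=0$, contradicting $L>0$, so $u(-\infty)\in\{0,\kappa\}$, i.e.\ \eqref{2.32} or \eqref{2.43} holds. The main obstacle is the base-point shift from $t_0$ to $t^*$ that reconciles the dichotomy conclusion of Theorem~\ref{t2.16} with the single-value conclusion of Theorem~\ref{t2.12}; a secondary technicality is that $h_1$ is only defined on the open square $(0,\kappa)^2$, but the convergence of $w$ and $w\circ\nu$ to an interior diagonal point keeps the argument inside the domain of $h_1$.
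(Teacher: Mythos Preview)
Your reduction to Theorem~\ref{t2.12} has a genuine gap: you invoke hypothesis \eqref{2.13} to obtain the sign condition \eqref{2.3}, but \eqref{2.13} is \emph{not} among the assumptions of Theorem~\ref{t2.16}. The only lower bound on $h$ available here is \eqref{2.41}, namely $h(t,x,y)\geq g(t)h_1(x,y)$ for $x,y\in(0,\kappa)$, with $h_1$ real-valued; since $h_1$ is only positive on the diagonal by \eqref{2.129} and may be negative off it, and since nothing is said about $h$ on the boundary of $[0,\kappa]^2$, you cannot conclude $f(v)(t)\geq 0$ for an arbitrary test function $v\in\cnulok$. Theorem~\ref{t2.12} genuinely needs \eqref{2.3} for all such $v$ (it is used, via Lemma~\ref{l2.23}, to ensure $q=f(u)\geq 0$, and also to handle the alternative \eqref{2.34}), so it cannot be applied as a black box.

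The paper circumvents this by not going through Theorem~\ref{t2.12}: it works directly with the solution $u$. Once $u(-\infty)\in(0,\kappa)$ is assumed and $t_\delta$ is chosen so that $u(t),u(\nu(t))$ stay in a compact subset of $(0,\kappa)$, conditions \eqref{2.41} and \eqref{2.129} give $h(t,u(t),u(\nu(t)))\geq g(t)h_*\geq 0$ on $(-\infty,t_\delta]$ with $h_*>0$. This yields the differential inequality $u'\geq\ell_0(u)-\ell_1(u)$ only along $u$, which is exactly what Lemma~\ref{l2.23} requires. The rest of your argument (the base-point shift, the construction of $\gamma$ from \eqref{2.16}, and your direct verification of the $\mathcal{O}$-condition from \eqref{2.46}) is sound and parallels the paper; the missing ingredient is to replace the appeal to Theorem~\ref{t2.12} by a direct appeal to Lemma~\ref{l2.23}, after which both alternatives \eqref{2.45} and \eqref{2.46} contradict \eqref{2.189} with $q=h(\cdot,u(\cdot),u(\nu(\cdot)))$.
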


\begin{cor}
\label{c2.3}
Let there exist $\kappa>0$, $g\in\llocrp$, and a continuous function
$h_1:(0,\kappa)\times(0,\kappa)\to \RR_+$ such that \eqref{2.129} and \eqref{2.41} hold. Let, moreover,
\eqref{2.16}, \eqref{2.15}, and \eqref{2.40} be fulfilled.
Assume, further, that either \eqref{2.38} or \eqref{2.42} is satisfied.
Then every solution $u$ to \eqref{1.3} on $(-\infty,t_0]$ satisfying
\eqref{2.8} has a finite limit $u(-\infty)$ and either \eqref{2.32} or \eqref{2.43} holds. 
\end{cor}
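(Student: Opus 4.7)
The plan is to derive Corollary~\ref{c2.3} as an immediate combination of Theorem~\ref{t2.13} (for the existence of $u(-\infty)$) and Theorem~\ref{t2.15} (for the dichotomy $u(-\infty)\in\{0,\kappa\}$), so essentially no new argument is needed---only a verification that the hypotheses of those two theorems follow from those of the corollary.

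First, I would check that \eqref{2.13} is available. This is the only hypothesis of Theorem~\ref{t2.13} not stated verbatim in Corollary~\ref{c2.3}: we must confirm that $h(t,x,y)\geq 0$ for a.e.\ $t\leq t_0$ and all $x,y\in[0,\kappa]$. Since $g\in\llocrp$ gives $g\geq 0$ and $h_1$ has range $\RR_+$, condition \eqref{2.41} yields $h(t,x,y)\geq 0$ for $x,y\in(0,\kappa)$ and a.e.\ $t\leq t_0$. Because $h$ satisfies the local Carath\'eodory conditions, the map $(x,y)\mapsto h(t,x,y)$ is continuous for almost every $t$; passing to the limit from the dense set $(0,\kappa)^2$ extends the inequality to $[0,\kappa]^2$. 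The remaining hypotheses of Theorem~\ref{t2.13}, namely \eqref{2.15}, \eqref{2.40}, and \eqref{2.16}, are explicitly assumed. Therefore Theorem~\ref{t2.13} applies and provides a finite limit $u(-\infty)$, which by \eqref{2.8} lies in $[0,\kappa]$.

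Second, I would apply Theorem~\ref{t2.15} to conclude $u(-\infty)\in\{0,\kappa\}$. Its hypotheses are \eqref{2.129}, \eqref{2.41}, \eqref{2.15}, \eqref{2.40}, plus the alternative ``\eqref{2.42} or \eqref{2.38}''; each of these is directly assumed in Corollary~\ref{c2.3} (the codomain $\RR_+$ of $h_1$ is trivially contained in $\RR$, so no adjustment is required). Since $u(-\infty)\in[0,\kappa]$ was established in the previous step, Theorem~\ref{t2.15} yields either \eqref{2.32} or \eqref{2.43}.

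The main (and essentially only) subtlety is the extension of the sign condition on $h$ from the open square $(0,\kappa)^2$ to the closed one $[0,\kappa]^2$ via Carath\'eodory continuity; everything else is a bookkeeping check of hypotheses. Apart from this point, the statement is a clean corollary of the two previously established results, which is presumably why the authors place it as a corollary immediately after Theorem~\ref{t2.15}.
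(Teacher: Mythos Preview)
Your proposal is correct and follows exactly the paper's approach: the paper's proof reads in its entirety ``The assertion follows from Theorems~\ref{t2.13} and~\ref{t2.15}.'' Your only addition is the explicit verification that \eqref{2.13} follows from \eqref{2.41} together with $g\geq 0$, $h_1\geq 0$, and Carath\'eodory continuity---a point the paper leaves implicit but which you correctly identify as the one nontrivial hypothesis check.
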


\begin{cor}
\label{c2.4}
Let all the assumptions of Corollary~\ref{c2.3} be fulfilled. If, in
addition, \eqref{2.20} holds, then every solution $u$ to \eqref{1.3}
on $(-\infty,t_0]$ satisfying \eqref{2.8} has a finite limit
$u(-\infty)$ and either \eqref{2.32} holds or  
\begin{equation}
\label{2.47}
u(t)=\kappa\for t\leq t_0.
\end{equation} 
\end{cor}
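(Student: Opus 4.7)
The plan is to leverage Corollary~\ref{c2.3} and then a Gronwall-type bootstrap to upgrade the dichotomy it provides. Corollary~\ref{c2.3} already yields that every solution $u$ to \eqref{1.3} on $(-\infty,t_0]$ satisfying \eqref{2.8} has a finite limit $u(-\infty)\in\{0,\kappa\}$. The case $u(-\infty)=0$ is exactly \eqref{2.32}; it therefore remains to show that under the additional hypothesis \eqref{2.20}, the alternative $u(-\infty)=\kappa$ forces $u(t)=\kappa$ for every $t\leq t_0$, i.e.\ \eqref{2.47}.

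Assume $u(-\infty)=\kappa$ and put $w:=\kappa-u$, so that $0\leq w\leq\kappa$ on $(-\infty,t_0]$, $w$ is absolutely continuous on every compact subinterval, and $w(-\infty)=0$. Since \eqref{2.15} together with \eqref{2.8} places $u(\mu_0(t))$, $u(\mu_1(t))$, and $u(\nu(t))$ in $[0,\kappa]$ for $t\leq t_0$, hypothesis \eqref{2.13} gives $h(t,u(t),u(\nu(t)))\geq 0$ for a.e.\ $t\leq t_0$. Bounding $p_0(t)u(\mu_0(t))\geq p_1(t)u(\mu_0(t))$ using \eqref{2.20} and $u(\mu_0(t))\geq 0$, and using $u(\mu_1(t))\leq\kappa$, equation \eqref{1.3} yields
$$
u'(t)\geq p_1(t)u(\mu_0(t))-p_1(t)\kappa=-p_1(t)w(\mu_0(t))\forae t\leq t_0,
$$
whence $w'(t)\leq p_1(t)w(\mu_0(t))$ for almost every $t\leq t_0$.

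The main step is to propagate $w(-\infty)=0$ forward to $w\equiv 0$. Define $W(t):=\sup\{w(s):s\leq t\}$; by continuity and nonnegativity of $w$ together with $w(-\infty)=0$, $W$ is continuous, nondecreasing, bounded by $\kappa$, and satisfies $W(-\infty)=0$. For an arbitrary $\varepsilon>0$ choose $T_\varepsilon\leq t_0$ with $w(t)\leq\varepsilon$ for $t\leq T_\varepsilon$, so $W(T_\varepsilon)\leq\varepsilon$. Integrating the inequality $w'(s)\leq p_1(s)w(\mu_0(s))$ over $[T_\varepsilon,t]$ for $t\in[T_\varepsilon,t_0]$ and using $w(\mu_0(s))\leq W(s)$ (from $\mu_0(s)\leq s$ by \eqref{2.15}) gives
$$
W(t)\leq\varepsilon+\int_{T_\varepsilon}^t p_1(s)W(s)\,ds.
$$
By the classical Gronwall lemma,
$$
W(t)\leq \varepsilon\exp\!\bigl(\textstyle\int_{T_\varepsilon}^{t_0}p_1(s)\,ds\bigr)\leq\varepsilon e^{C},
$$
where $C:=\int_{-\infty}^{t_0}p_1(s)\,ds<+\infty$ by \eqref{2.40}. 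Combined with $W\leq\varepsilon$ on $(-\infty,T_\varepsilon]$, this gives $W(t_0)\leq\varepsilon e^C$ for every $\varepsilon>0$. Letting $\varepsilon\to 0^+$ yields $W\equiv 0$, i.e.\ $u(t)=\kappa$ for every $t\leq t_0$, which is \eqref{2.47}.

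The principal obstacle is the last step: turning the delayed pointwise inequality $w'\leq p_1(\cdot)w(\mu_0(\cdot))$ into a scalar integral inequality to which Gronwall applies. The key moves are (i) using \eqref{2.20} to collapse the two-operator expression $p_0u(\mu_0)-p_1u(\mu_1)$ into a one-term object controllable by $p_1$, and (ii) absorbing the delay by passing to the running supremum $W$. Uniformity of the Gronwall bound as $\varepsilon\to 0^+$ rests precisely on the finiteness of $C$ inherited from \eqref{2.40}.
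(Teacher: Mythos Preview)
Your proof is correct, but it takes a genuinely different route from the paper's. The paper simply invokes Lemma~\ref{l2.22} with $\ell_i$ defined by \eqref{2.201} and $\gamma$ by \eqref{2.203}: that lemma yields $u(t)\geq u(-\infty)$ for all $t\leq t_0$, so if $u(-\infty)=\kappa$ then, together with \eqref{2.8}, one gets $u\equiv\kappa$ immediately. Your argument instead sets $w=\kappa-u$, derives the delayed differential inequality $w'(t)\leq p_1(t)w(\mu_0(t))$, passes to the running supremum $W$, and closes with Gronwall using the finiteness of $\int_{-\infty}^{t_0}p_1$ from \eqref{2.40}. Your approach is elementary and self-contained, avoiding the chain of operator lemmas (Lemmas~\ref{l2.17}, \ref{l2.19}, \ref{l2.22}) and the auxiliary function $\gamma$; the paper's approach, by contrast, reuses machinery already in place and in fact does not need \eqref{2.40} for the upgrade step (it relies on \eqref{2.16} through the construction of $\gamma$), so its argument would survive even if $p_1$ were not integrable near $-\infty$. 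Since \eqref{2.40} is among the hypotheses of Corollary~\ref{c2.3} anyway, both proofs are valid for the corollary as stated.
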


\begin{cor}
\label{c2.5}
Let there exist $\kappa>0$, $g\in\llocrp$, and a continuous function
$h_1:(0,\kappa)\times(0,\kappa)\to \RR_+$ such that \eqref{2.129} and \eqref{2.41} holds. Let, moreover,
\eqref{2.16}--\eqref{2.18} and \eqref{2.15} be fulfilled.
Assume, further, that 
$$
\sup\left\{\int_{t-1}^t p_1(s)ds: t\leq t_0\right\}<+\infty,
$$
and either 
$$
\limsup_{t\to -\infty}\int_{t-1}^t\big[p_0(s)-p_1(s)\big]ds>0
$$
or 
$$
\limsup_{t\to -\infty}\int_{t-1}^t g(s)ds>0.
$$
Then every solution $u$ to \eqref{1.3} on $(-\infty,t_0]$ satisfying
\eqref{2.8} has a finite limit $u(-\infty)$ and either \eqref{2.32} or
\eqref{2.47} holds. 
\end{cor}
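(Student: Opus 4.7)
The plan is to assemble the corollary in three steps: first deduce the existence of the finite limit $u(-\infty)\in[0,\kappa]$ from Theorem~\ref{t2.14}; then apply Theorem~\ref{t2.16} with the specific choice $\omega(t)=t-1$ to obtain the dichotomy $u(-\infty)\in\{0,\kappa\}$; and finally, if $u(-\infty)=\kappa$, upgrade this to $u(t)=\kappa$ on all of $(-\infty,t_0]$ via a comparison argument for $w(t)=\kappa-u(t)$.

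For Step~1 I would invoke Theorem~\ref{t2.14} directly: its hypotheses are \eqref{2.13} and \eqref{2.15}--\eqref{2.18}, where \eqref{2.13} follows from \eqref{2.41} together with $g\geq 0$ and $h_1\geq 0$. For Step~2 I would take $\omega(t)=t-1\in\Sigma$; then \eqref{2.129}, \eqref{2.41}, \eqref{2.15}, \eqref{2.16} are direct hypotheses, condition \eqref{2.20} follows from \eqref{2.17} since $\exp\bigl(e\int_{\mu_1(t)}^tp_1(s)ds\bigr)\geq 1$, condition \eqref{2.44} is the first assumed uniform bound $\sup_{t\leq t_0}\int_{t-1}^tp_1(s)ds<\infty$, and \eqref{2.45}, \eqref{2.46} coincide with the two assumed $\limsup$ alternatives. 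Theorem~\ref{t2.16} would then give either $u(-\infty)=0$ (i.e., \eqref{2.32}) or $u(-\infty)=\kappa$.

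In the remaining case $u(-\infty)=\kappa$ I would set $w=\kappa-u\in[0,\kappa]$, so $w(-\infty)=0$. Substituting $u=\kappa-w$ into \eqref{1.3} and using \eqref{2.13} (which applies since $0\leq u\leq\kappa$) together with $p_0\geq p_1$ (a consequence of \eqref{2.17}) yields the linear delay differential inequality
\[
w'(t)\leq p_0(t)w(\mu_0(t))-p_1(t)w(\mu_1(t))\quad\text{for a.e. }t\leq t_0.
\]
The test function $\gamma(t)=\exp\bigl(e\int_t^{t_0}p_1(s)ds\bigr)$ satisfies $\gamma'(t)\leq -p_1(t)\gamma(\mu_1(t))$ by \eqref{2.16}, while \eqref{2.17}--\eqref{2.18} are the versions of \eqref{2.5}--\eqref{2.6} for equation \eqref{1.3}. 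A comparison/Gronwall-type argument in the spirit of the proof of Theorem~\ref{t2.10} (which underlies Theorem~\ref{t2.14}), applied to the above inequality with ``initial datum'' $w(-\infty)=0$, should force $w\equiv 0$ on $(-\infty,t_0]$, yielding \eqref{2.47}. This backward-uniqueness step will be the main obstacle: since $p_0$ need not be integrable at $-\infty$ here, one cannot simply integrate the inequality, and the exponential weight $\gamma$ supplied by \eqref{2.16}--\eqref{2.18} is essential to absorb the positive delayed term $p_0w(\mu_0)$ against the negative feedback $-p_1w(\mu_1)$; for this I would draw directly on the auxiliary propositions developed in Section~\ref{sec4}.
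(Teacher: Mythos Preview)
Your Steps~1 and~2 match the paper's proof exactly: Theorem~\ref{t2.14} for the existence of $u(-\infty)$, then Theorem~\ref{t2.16} with $\omega(t)=t-1$ for the dichotomy, with \eqref{2.20} obtained from \eqref{2.17}.

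Your Step~3, however, takes an unnecessary detour. The paper does not pass to $w=\kappa-u$; it applies Lemma~\ref{l2.22} directly to $u$. Since $h_1\geq 0$ (this is part of the hypothesis of Corollary~\ref{c2.5}), condition \eqref{2.41} yields $h(t,u(t),u(\nu(t)))\geq 0$, so from \eqref{1.3} the solution satisfies \eqref{2.181} with $\ell_i$ defined by \eqref{2.201}. The function $\gamma$ of \eqref{2.203} gives \eqref{2.4} via \eqref{2.16}, and \eqref{2.29} is again \eqref{2.20}. Lemma~\ref{l2.22} then yields $u(t)\geq u(-\infty)=\kappa$ for all $t\leq t_0$, and together with \eqref{2.8} this is exactly \eqref{2.47}.

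By contrast, your inequality $w'\leq \ell_0(w)-\ell_1(w)$ has the sign reversed relative to all the comparison lemmas of Section~\ref{sec4} (Lemmas~\ref{l2.3}, \ref{l2.17}--\ref{l2.22} all treat $u'\geq \ell_0(u)-\ell_1(u)$), so you would have to prove a mirrored version from scratch; the ``obstacle'' you anticipate is real for that route but disappears entirely once you apply Lemma~\ref{l2.22} to $u$ itself. No new backward-uniqueness argument is needed.
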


\begin{rem}
\label{r2.7}
Note that \eqref{2.47} can be fulfilled only if
\begin{gather*}
p_0(t)=p_1(t)\forae t\leq t_0,\qquad h(t,\kappa,\kappa)=0\forae t\leq t_0,\\
\int_{\mu_1(t)}^t p_1(s)ds=0\forae t\leq t_0
\end{gather*}
provided all the assumptions of Corollary~\ref{c2.4} or Corollary~\ref{c2.5} are fulfilled.
\end{rem}

\section{Auxiliary Propositions}
\label{sec4}

\subsection{Preliminaries}

First we introduce some already known results which will be used later.

\begin{defn}
\label{d2.2} 
Let $a,b\in\RR$, $a<b$. A linear continuous operator 
$\ell:\cloc\to\lloc$ is said to belong to the set $\saba$,
resp. $\sabb$, if every function $u\in\acabr$ satisfying
\begin{equation}
\label{2.48}
u'(t)\geq \ell(u)(t)\forae\tin,\qquad
u(a)\geq 0,
\end{equation}
resp.
$$
u'(t)\leq \ell(u)(t)\forae\tin,\qquad
u(b)\geq 0,
$$
admits the inequality
\begin{equation}
\label{2.49}
u(t)\geq 0\for\tin.
\end{equation}
\end{defn}

\begin{defn}
\label{d2.3}
Let $a,b\in\RR$, $a<b$. A linear continuous operator 
$\ell:\cloc\to\lloc$ is said to belong to the set $\sabac$ if every
function $u\in\acabr$ satisfying \eqref{2.48} admits the inequalities
\eqref{2.49} and 
$$
u'(t)\geq 0\forae\tin.
$$
\end{defn}

\begin{prop}[see {\cite[Corollary~1.1]{sab}}]
\label{p2.2}
Let $a\in\RR$, $a<t_0$, $\ell_0\in\vtn$. Then $\ell_0\in\sata$.
\end{prop}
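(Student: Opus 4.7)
The plan is to pass to the negative part of $u$ and carry out a Gronwall-type iteration adapted to the Volterra operator $\ell_0$, exploiting the global positivity of $\ell_0$ assumed throughout the paper. Let $u\in\acatr$ satisfy \eqref{2.48} with $\ell=\ell_0$; setting $v:=u^-=\max\{-u,0\}$, we have $v\in\acatrp$ with $v(a)=0$, and it suffices to prove $v\equiv 0$. I would first establish $v'(t)\leq\ell_0(v)(t)$ for a.e.\ $t\in[a,t_0]$: by Stampacchia's theorem, $v'=0$ a.e.\ on $\{v=0\}=\{u\geq 0\}$, where $\ell_0(v)\geq 0$ by positivity; on $\{u<0\}$, $v'=-u'\leq-\ell_0(u)=\ell_0(u^-)-\ell_0(u^+)\leq\ell_0(v)$ by linearity and $\ell_0(u^+)\geq 0$. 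In every occurrence $\ell_0(v)$ is understood as $\ell_0(\vartheta(v))$ via \eqref{1.4}, and since $v(a)=0$, the extension $\vartheta(v)$ vanishes on $(-\infty,a)$.

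Next I would set up the Gronwall iteration. Because $\ell_0(1)\in\lloc$, fix a partition $a=a_0<a_1<\dots<a_N=t_0$ with $\int_{a_i}^{a_{i+1}}\ell_0(1)(s)\,ds\leq 1/2$ for each $i$. I would prove by induction on $i$ that $v\equiv 0$ on $[a,a_{i+1}]$. Assuming this for $i$ (trivial for $i=0$), introduce the continuous function $w$ equal to $\vartheta(v)$ on $(-\infty,a_{i+1}]$ and constantly $v(a_{i+1})$ on $[a_{i+1},+\infty)$; since $v$ vanishes on $[a,a_i]$, one has $w\equiv 0$ on $(-\infty,a_i]$, while $0\leq w\leq V_i:=\max_{[a_i,a_{i+1}]}v$ pointwise on $\RR$. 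By $\ell_0\in\vtn$, the equality $\ell_0(v)(s)=\ell_0(w)(s)$ holds for a.e.\ $s\leq a_{i+1}$, and by linearity and positivity applied to $V_i-w\geq 0$ one gets $\ell_0(w)(s)\leq V_i\,\ell_0(1)(s)$. Integrating the inequality $v'\leq\ell_0(v)$ from $a_i$ to $t\in[a_i,a_{i+1}]$ together with $v(a_i)=0$ then yields $v(t)\leq V_i\int_{a_i}^{a_{i+1}}\ell_0(1)(s)\,ds\leq V_i/2$; taking the supremum forces $V_i=0$, so $v\equiv 0$ on $[a,a_{i+1}]$.

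The main obstacle is the interplay between the canonical extension $\vartheta$ and the Volterra property: one needs to replace the global bound $\ell_0(v)\leq\|v\|_{a,t_0}\ell_0(1)$ by the local one $\ell_0(v)\leq V_i\,\ell_0(1)$ on the current subinterval. The auxiliary function $w$ is precisely the device that does this, because it agrees with $\vartheta(v)$ on the entire history $(-\infty,a_{i+1}]$ (so that $\ell_0$-values agree by the Volterra property) while being pointwise bounded by the local supremum $V_i$ (so that linearity and positivity produce the desired estimate). The partitioning of $[a,t_0]$ is dictated by the fact that only finiteness, not smallness, of $\int_a^{t_0}\ell_0(1)$ is available.
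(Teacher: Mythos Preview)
Your argument is correct. The paper itself does not prove Proposition~\ref{p2.2}; it is quoted as a known result from \cite[Corollary~1.1]{sab}, so there is no ``paper's own proof'' to compare against.

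That said, your route is a perfectly good self-contained substitute. Passing to $v=u^{-}$, deriving the differential inequality $v'\le\ell_0(v)$ a.e.\ via Stampacchia's lemma and positivity/linearity of $\ell_0$, and then closing with a partitioned Gronwall argument is exactly the standard mechanism for this kind of positivity result. The key device you single out---the auxiliary function $w$ that agrees with $\vartheta(v)$ on the history $(-\infty,a_{i+1}]$ but is globally bounded by the local supremum $V_i$---is precisely what converts the Volterra property $\ell_0\in\vtn$ into the localized estimate $\ell_0(v)\le V_i\,\ell_0(1)$ needed for the contraction step; without it one would only have $\ell_0(v)\le\|v\|_{a,t_0}\,\ell_0(1)$, which is useless for the induction. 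One minor wording issue: your induction hypothesis should be stated as ``$v\equiv 0$ on $[a,a_i]$'' (with the base case $i=0$ being the single point $a_0=a$, where $v(a)=0$ by assumption); the body of the argument already uses it this way, so this is purely cosmetic.
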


\begin{prop}[see {\cite[Theorem~1.2]{sab}}]
\label{p2.3}
Let $a\in\RR$, $a<t_0$, $\ell_1\in\vtn$, and let there exist a
function $\gamma\in\acaton$ such that
\begin{equation}
\label{2.50}
\gamma'(t)\leq -\ell_1(\gamma)(t)\forae t\in[a,t_0].
\end{equation} 
Then $-\ell_1\in\sata$.
\end{prop}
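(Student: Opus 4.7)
The strategy is a Chaplygin-type maximum-principle argument for the operator $L:=\tfrac{d}{dt}+\ell_1$: the positive function $\gamma$ satisfying $L\gamma\leq 0$ acts as a supersolution, and the goal is to force any $u\in\acatr$ with $Lu\geq 0$ a.e.\ and $u(a)\geq 0$ to remain non-negative on $[a,t_0]$.

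I would argue by contradiction. The quotient $v:=u/\gamma$ lies in $\acatr$ and satisfies $v(a)\geq 0$; suppose $m:=\min_{[a,t_0]} v<0$. This minimum is then attained at some $\tau\in(a,t_0]$. Set $z:=u-m\gamma=\gamma(v-m)\geq 0$ on $[a,t_0]$, so that $z(\tau)=0$, $z(a)=u(a)+|m|\gamma(a)>0$, and, by linearity of $\ell_1$ combined with the hypotheses $Lu\geq 0$ and \eqref{2.50},
\begin{equation*}
z'(t)+\ell_1(z)(t)=\bigl(u'(t)+\ell_1(u)(t)\bigr)-m\bigl(\gamma'(t)+\ell_1(\gamma)(t)\bigr)\geq -m\bigl(\gamma'(t)+\ell_1(\gamma)(t)\bigr)
\end{equation*}
for almost every $t\in[a,t_0]$.

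To convert this into a usable integrated bound I would exploit the Volterra property $\ell_1\in\vtn$. Introduce the truncation $\tilde z\in\cloc$ defined by $\tilde z=z(a)$ on $(-\infty,a)$, $\tilde z=z$ on $[a,\tau]$, and $\tilde z\equiv 0$ on $(\tau,+\infty)$; continuity at $\tau$ is ensured by $z(\tau)=0$. Since $\ell_1\in\vtn$, $\ell_1(z)(t)=\ell_1(\tilde z)(t)$ for a.e.\ $t\leq\tau$; since $0\leq\tilde z\leq M\,\vartheta(\gamma)$ on $\RR$ with $M:=\max_{[a,\tau]}z/\gamma$, the positivity of $\ell_1$ together with \eqref{2.50} gives
\begin{equation*}
\ell_1(z)(t)\leq M\,\ell_1(\gamma)(t)\leq -M\gamma'(t)\qquad\text{a.e.\ on }[a,\tau].
\end{equation*}
Integrating the differential inequality for $z'+\ell_1(z)$ between $a$ and $\tau$, substituting $z(\tau)=0$, $z(a)\geq |m|\gamma(a)$, the estimate $\int_a^\tau\ell_1(\gamma)\leq\gamma(a)-\gamma(\tau)$ and the bound just established, elementary rearrangement yields the compatibility relation
\begin{equation*}
|m|\,\gamma(\tau)\leq M_0\bigl(\gamma(a)-\gamma(\tau)\bigr),\qquad M_0:=\max_{[a,\tau]} u/\gamma\geq 0.
\end{equation*}

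The main obstacle is closing the contradiction, since this inequality is a priori consistent with $m<0$ and only forces $\gamma(\tau)<\gamma(a)$. I expect to close it by iterating the same construction on the sub-interval $[a,\tau_1]$ on which the auxiliary maximum $M_0$ is realised, using the fact that $\gamma$ is nonincreasing (since $\gamma'\leq -\ell_1(\gamma)\leq 0$) and the ability to re-apply the Volterra truncation afresh at each stage, to show inductively that the successive values $M_0^{(n)}$ collapse to zero, forcing $m=0$. This bootstrap is the technically delicate step, as it requires uniform control of the truncations across a shrinking nested family of sub-intervals; the cited reference \cite{sab} in effect carries out an equivalent reasoning by constructing the Cauchy function of $\ell_1$ on $[a,t_0]$ and establishing its non-negativity, from which $-\ell_1\in\sata$ then follows via Duhamel's representation formula.
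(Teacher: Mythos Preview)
The paper does not supply its own proof of Proposition~\ref{p2.3}; it simply quotes the result from \cite[Theorem~1.2]{sab}. So there is no in-paper argument to compare against, and the relevant question is whether your attempt stands on its own.

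It does not, and you have correctly flagged where it fails: the ``compatibility relation'' $|m|\gamma(\tau)\leq M_0(\gamma(a)-\gamma(\tau))$ is perfectly consistent with $m<0$, and the bootstrap you propose is neither carried out nor clearly well-founded. In fact the whole integration-and-iteration detour is unnecessary; your own setup closes in one line by the device used in the paper's Lemma~\ref{l2.4}. You already have $z:=u-m\gamma\geq 0$ on $[a,t_0]$, $z(\tau)=0$, $z(a)>0$, and $z'(t)\geq -\ell_1(z)(t)$ a.e.\ on $[a,\tau]$ (your displayed identity, together with $-m>0$ and $\gamma'+\ell_1(\gamma)\leq 0$, gives $z'+\ell_1(z)\geq 0$, not merely $\geq$ something non-positive). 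Now set $\lambda_0:=\max_{[a,\tau]}z/\gamma\geq 0$; then $\lambda_0\gamma-z\geq 0$ and
\[
(\lambda_0\gamma-z)'\leq -\lambda_0\ell_1(\gamma)+\ell_1(z)=-\ell_1(\lambda_0\gamma-z)\leq 0\quad\text{a.e.\ on }[a,\tau],
\]
so $\lambda_0\gamma-z$ is nonincreasing. Since it vanishes at the point where the maximum of $z/\gamma$ is attained, it is $\leq 0$ at $\tau$; combined with $z(\tau)=0$ and $\gamma(\tau)>0$ this forces $\lambda_0=0$, hence $z\equiv 0$ on $[a,\tau]$, contradicting $z(a)>0$. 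This is precisely the mechanism of Lemma~\ref{l2.4}, and it replaces your missing iteration entirely.
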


\begin{prop}[see {\cite[Theorem~1.4]{sab}}]
\label{p2.4}
Let $a\in\RR$, $a<t_0$, $\ell_0\in\sata$, $-\ell_1\in\sata$. Then 
\begin{equation}
\label{2.51}
\ell_0-\ell_1\in\sata.
\end{equation}
\end{prop}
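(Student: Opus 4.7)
The plan is a monotone iteration that realizes the given function $u$ as the increasing limit of a sequence of manifestly nonnegative functions, using each of the two $\sata$-hypotheses in its natural role. Suppose $u\in\acatr$ satisfies $u'(t)\geq\ell_0(u)(t)-\ell_1(u)(t)$ for a.e.\ $t\in[a,t_0]$ and $u(a)\geq 0$. I would first set
\begin{equation*}
g(t):=u'(t)-\ell_0(u)(t)+\ell_1(u)(t)\geq 0\mforae t\in[a,t_0],
\end{equation*}
so that $u$ is the unique solution on $[a,t_0]$ of the Cauchy problem $v'=\ell_0(v)-\ell_1(v)+g$, $v(a)=u(a)$. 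Uniqueness here is not the point at issue: it follows from a standard Gronwall/small-interval patching argument for linear functional differential equations with Volterra structure on a compact interval, and requires neither $\ell_0\in\sata$ nor $-\ell_1\in\sata$.

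Next I would define $v_0\equiv 0$ on $[a,t_0]$ and, for $n\geq 0$, let $v_{n+1}$ be the unique solution of
\begin{equation*}
v_{n+1}'(t)+\ell_1(v_{n+1})(t)=\ell_0(v_n)(t)+g(t),\qquad v_{n+1}(a)=u(a),
\end{equation*}
whose existence and uniqueness follow from the hypothesis $-\ell_1\in\sata$ (this is exactly the solvability statement underlying Proposition~\ref{p2.3}). Two parallel inductions would then give nonnegativity and monotonicity of $\{v_n\}$. If $v_n\geq 0$, then the positivity of $\ell_0$ (one of the paper's standing assumptions) yields $\ell_0(v_n)\geq 0$, whence $v_{n+1}'\geq -\ell_1(v_{n+1})$ a.e.\ with $v_{n+1}(a)\geq 0$, and $-\ell_1\in\sata$ delivers $v_{n+1}\geq 0$. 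Similarly, writing $w_n:=v_{n+1}-v_n$, one has $w_n'+\ell_1(w_n)=\ell_0(v_n-v_{n-1})\geq 0$ and $w_n(a)=0$, so again $-\ell_1\in\sata$ gives $w_n\geq 0$; the base case $v_1\geq v_0=0$ follows from the nonnegativity step.

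It remains to identify the limit of $\{v_n\}$ with $u$; this is the step I expect to be the main obstacle, and it rests entirely on the Volterra structure. Rewrite the iteration as $v_{n+1}=U+Tv_n$, where $U\in\acatr$ is the fixed function solving $U'+\ell_1(U)=g$, $U(a)=u(a)$, and $T\colon\catr\to\catr$ sends $v$ to the solution $y$ of $y'+\ell_1(y)=\ell_0(v)$, $y(a)=0$. Because $\ell_0$ and $\ell_1$ are bounded from $\catr$ into $\latrp$ and the inversion $F\mapsto y$ is a Volterra integral operator on $[a,t_0]$, one obtains the customary factorial decay $\|T^n\|\leq C^n/n!$ for a constant $C$ depending only on $t_0-a$ and on the norms of $\ell_0,\ell_1$. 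Consequently the Neumann series $\sum_{k\geq 0}T^k U$ converges in $\catr$ to a fixed point of $v\mapsto U+Tv$, i.e.\ to a solution of the Cauchy problem satisfied by $u$; uniqueness then forces the limit to equal $u$. Since every $v_n\geq 0$, passing to the limit yields $u\geq 0$ on $[a,t_0]$, proving $\ell_0-\ell_1\in\sata$.
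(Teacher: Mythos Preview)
The paper does not prove Proposition~\ref{p2.4}; it simply quotes it from \cite[Theorem~1.4]{sab}. So there is no in-paper argument to compare against. Your argument, however, does not prove the proposition as stated, because you never use the hypothesis $\ell_0\in\sata$. In the nonnegativity induction you instead invoke ``the positivity of $\ell_0$'' to pass from $v_n\geq 0$ to $\ell_0(v_n)\geq 0$. But $\ell_0\in\sata$ does not mean $\ell_0$ is a positive operator: for example, $\ell_0(v)(t)=-\varepsilon v(t)$ with any $\varepsilon>0$ satisfies $\ell_0\in\sata$ (since $u'\geq -\varepsilon u$, $u(a)\geq 0$ forces $u\geq 0$), yet it sends nonnegative functions to nonpositive ones. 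Positivity of $\ell_0,\ell_1$ is a standing assumption on the operators appearing in \eqref{1.1}, but Proposition~\ref{p2.4} is formulated (and proved in \cite{sab}) for arbitrary linear continuous operators.

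There is a second gap in the convergence step. You claim factorial decay $\|T^n\|\leq C^n/n!$ on the grounds that the inversion $F\mapsto y$ is a Volterra integral operator. That requires $\ell_0,\ell_1$ to be of Volterra type, which again is part of the paper's ambient setting but is not among the hypotheses of the proposition. Without it the spectral radius of $T$ need not vanish, and your Neumann series need not converge; and you cannot rescue convergence by bounding $v_n\leq u$ from above, since $u\geq 0$ is precisely what you are trying to prove. In summary, what you have sketched is: if $\ell_0$ is positive and $\ell_0,\ell_1$ are Volterra, and $-\ell_1\in\sata$, then $\ell_0-\ell_1\in\sata$. That happens to cover every application of Proposition~\ref{p2.4} made in this paper, but it is a different statement from the one quoted, and the actual hypothesis $\ell_0\in\sata$ is idle in your write-up.
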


\begin{prop}[see {\cite[Theorem~1.5]{sab}}]
\label{p2.5}
Let $a\in\RR$, $a<t_0$. Then $-\ell_1\in\satt$ if and only if there
exists $\gamma\in\acaton$ satisfying \eqref{2.50}.
\end{prop}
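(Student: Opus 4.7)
The plan is to prove the two implications separately, constructing $\gamma$ in one direction and using it as a weight in a comparison argument in the other. For the sufficiency direction, given $\gamma\in\acaton$ satisfying \eqref{2.50}, I would take an arbitrary $u\in\acatr$ with $u'(t)\leq -\ell_1(u)(t)$ a.e. on $[a,t_0]$ and $u(t_0)\geq 0$, and set $m=\min\{u(t)/\gamma(t):t\in[a,t_0]\}$, which is finite since $\gamma$ is positive and continuous on the compact interval. If $m\geq 0$ there is nothing more to prove; otherwise the inequality $u(t_0)/\gamma(t_0)\geq 0>m$ forces the minimum to be attained at some point $t^{**}\in[a,t_0)$.

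Setting $v(t)=u(t)-m\gamma(t)$ then yields $v\geq 0$ on $[a,t_0]$, $v(t^{**})=0$, and $v(t_0)=u(t_0)-m\gamma(t_0)>0$. Using linearity of $\ell_1$,
$$
v'(t)+\ell_1(v)(t)=\big(u'(t)+\ell_1(u)(t)\big)-m\big(\gamma'(t)+\ell_1(\gamma)(t)\big)\leq 0\forae t\in[a,t_0],
$$
because the first bracket is nonpositive by hypothesis and the second bracket is nonpositive by \eqref{2.50}, while $m<0$. Thus $v'\leq -\ell_1(v)\leq 0$ (using $v\geq 0$ and the positivity of $\ell_1$), so $v$ is nonincreasing; but then $v(t_0)\leq v(t^{**})=0$ contradicts $v(t_0)>0$, forcing $m\geq 0$ and $u\geq 0$ on $[a,t_0]$.

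For the necessity direction, assuming $-\ell_1\in\satt$, I would construct $\gamma$ as the solution of the linear initial value problem $\gamma'(t)=-\ell_1(\gamma)(t)$ on $[a,t_0]$ with $\gamma(t_0)=1$. Existence and uniqueness in $\acatr$ follow from the standard fixed-point argument for the linear Volterra integral equation $\gamma(t)=1+\int_t^{t_0}\ell_1(\gamma)(s)ds$, iterated on short subintervals using continuity of $\ell_1:\cloc\to\lloc$ together with the memory property $\ell_1\in\vtn$. Once $\gamma$ is produced, one applies $-\ell_1\in\satt$ directly to $\gamma$ itself (which trivially satisfies \eqref{2.50} with equality and has $\gamma(t_0)=1\geq 0$) to conclude $\gamma(t)\geq 0$ on $[a,t_0]$. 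Then $\ell_1(\gamma)\geq 0$, whence $\gamma'=-\ell_1(\gamma)\leq 0$, so by monotonicity $\gamma(t)\geq\gamma(t_0)=1>0$ throughout $[a,t_0]$, yielding $\gamma\in\acaton$.

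The hardest step will be the sign bookkeeping in the sufficiency direction: the derivation $v'+\ell_1(v)\leq 0$ goes through only because we are in the case $m<0$, which flips the inequality when \eqref{2.50} is multiplied by $m$. The existence step in the necessity direction should be routine, with the only mild subtlety being that one may need to iterate the contraction over finitely many subintervals covering $[a,t_0]$ before patching to obtain $\gamma$ on the whole interval.
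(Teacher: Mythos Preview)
The paper does not prove Proposition~\ref{p2.5}; it is quoted from \cite[Theorem~1.5]{sab} without argument. So there is no ``paper's own proof'' to compare against, and your proposal must be judged on its own.

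Your sufficiency direction is correct. The function $v=u-m\gamma$ is nonnegative on $[a,t_0]$, and the linearity/positivity of $\ell_1$ together with $m<0$ gives $v'\leq -\ell_1(v)\leq 0$ a.e., so $v$ is nonincreasing, contradicting $v(t^{**})=0<v(t_0)$.

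The necessity direction has a genuine gap. To produce $\gamma$ you invoke ``the memory property $\ell_1\in\vtn$'' and run a Volterra-type contraction on short subintervals. But Proposition~\ref{p2.5} as stated carries no Volterra hypothesis on $\ell_1$ (compare Remark~\ref{r2.8}, where the inclusion $\ell_1\in\vtn$ is added \emph{afterwards} to draw a further consequence). Without that hypothesis, the integral equation
\[
\gamma(t)=1+\int_t^{t_0}\ell_1(\gamma)(s)\,ds
\]
is of Fredholm, not Volterra, type: the value $\ell_1(\gamma)(s)$ may depend on $\gamma$ over the whole interval $[a,t_0]$, and the stepwise contraction argument does not apply.

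The standard fix is to use the Fredholm alternative for linear functional BVPs (cf.\ \cite{vb}). From $-\ell_1\in\satt$ one sees that the homogeneous problem $u'=-\ell_1(u)$, $u(t_0)=0$ has only the trivial solution on $[a,t_0]$ (apply the defining inequality to $u$ and to $-u$). Hence the inhomogeneous problem $\gamma'=-\ell_1(\gamma)$, $\gamma(t_0)=1$ has a unique solution $\gamma\in\acatr$. From this point your argument is fine: $-\ell_1\in\satt$ gives $\gamma\geq 0$, positivity of $\ell_1$ then gives $\gamma'\leq 0$, and monotonicity yields $\gamma(t)\geq\gamma(t_0)=1>0$, so $\gamma\in\acaton$ and \eqref{2.50} holds with equality.
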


\begin{rem}
\label{r2.8}
Note that, according to Proposition~\ref{p2.5}, we have 
$-\ell_1\in\satautau$ for every $\tau\in (a,t_0)$ provided
$-\ell_1\in\satt\cap\vtn$. 
\end{rem}

\begin{prop}[see {\cite[Theorem~6.2]{cmj}}]
\label{p2.6}
Let $a\in\RR$, $a<t_0$, $\ell_0-\ell_1\in\ptnplus$,
$\ell_0\in\sata$. Then the problem 
\begin{equation}
\label{2.52}
u'(t)=\ell_0(u)(t)-\ell_1(u)(t), \qquad u(t_0)=0
\end{equation}
has on $[a,t_0]$ only the trivial solution.
\end{prop}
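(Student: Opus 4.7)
The plan is to argue by contradiction. Suppose $u\in\acatr$ solves $u'=\ell_0(u)-\ell_1(u)$ with $u(t_0)=0$ and $u\not\equiv 0$. Since the equation is linear, $-u$ is a solution of the same problem, so it suffices to prove $u(t)\leq 0$ on $[a,t_0]$; applying the same bound to $-u$ then forces $u\equiv 0$. Assume to the contrary that $M:=\max_{[a,t_0]}u>0$, and put $t_*:=\max\{t\in[a,t_0]:u(t)=M\}$; because $u(t_0)=0<M$ we have $t_*<t_0$.

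Next I would study the ``deficit'' $w:=M-u\in\acatrp$, which satisfies $w(t_*)=0$, $w(t_0)=M>0$ and, by linearity,
$$
w'(t)=\ell_0(w)(t)-\ell_1(w)(t)-M\bigl[\ell_0(1)(t)-\ell_1(1)(t)\bigr]\forae t\leq t_0.
$$
Since the constant $1$ is trivially nondecreasing and nonnegative, the hypothesis $\ell_0-\ell_1\in\ptnplus$ yields $(\ell_0-\ell_1)(1)(t)\geq 0$ a.e.\ on $(-\infty,t_0]$; together with $\ell_1(w)(t)\geq 0$ (because $w\geq 0$ and $\ell_1$ is positive) this gives $w'(t)\leq\ell_0(w)(t)$ for a.e.\ $t\leq t_0$. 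Setting $y:=-w$ yields $y(t_*)=0$, $y\leq 0$ on $[t_*,t_0]$, and
$$
y'(t)\geq\ell_0(y)(t)\forae t\in[t_*,t_0].
$$

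The final step is to feed this information into $\ell_0\in\sata$. Extend $y$ by $0$ on $[a,t_*]$, obtaining $\tilde y\in\acatr$ with $\tilde y(a)=0$. Because $\ell_0$ has a Volterra-type memory structure on $(-\infty,t_0]$ (implicit in the framework and consistent with Proposition~\ref{p2.2}), the zero-extension of $\tilde y$ on $(-\infty,t_*]$ gives $\ell_0(\tilde y)(t)=0$ a.e.\ on $[a,t_*]$ and $\ell_0(\tilde y)(t)=\ell_0(y)(t)$ on $[t_*,t_0]$. Hence $\tilde y'(t)\geq\ell_0(\tilde y)(t)$ for a.e.\ $t\in[a,t_0]$ together with $\tilde y(a)=0$; the assumption $\ell_0\in\sata$ then forces $\tilde y\geq 0$ on $[a,t_0]$, so $w\leq 0$ on $[t_*,t_0]$. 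Combining with $w\geq 0$ gives $w\equiv 0$ on $[t_*,t_0]$, contradicting $w(t_0)=M>0$.

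The main obstacle I anticipate is precisely this last step, namely the localization of $\sata$ to the subinterval $[t_*,t_0]$: the property is anchored at the initial point $a$, so I need a Volterra-type memory for $\ell_0$ to ensure that zero-extending $y$ below $t_*$ does not alter the values of $\ell_0(\tilde y)$ on $[t_*,t_0]$ (nor produce a nonzero right-hand side on $[a,t_*)$). In the present framework this is automatic, but if one wished to work without the Volterra structure one would have to re-engineer the argument, e.g.\ by a contraction estimate on small intervals near $t_0$ or by exhibiting a Green-function representation for the Cauchy problem associated with $\ell_0$.
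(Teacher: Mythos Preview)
The paper does not give a proof of this proposition; it is quoted from \cite{cmj}. So there is no ``paper's own proof'' to compare with, and your attempt must stand on its own.

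Your argument has a genuine gap, and it is exactly where you suspect. First, the hypothesis is only $\ell_0\in\sata$, not $\ell_0\in\vtn$; the Volterra structure you invoke is not part of the statement (Proposition~\ref{p2.2} gives the implication $\vtn\Rightarrow\sata$, not the converse). Second, and more seriously, even \emph{with} the Volterra property the extension step fails. You claim $\ell_0(\tilde y)(t)=\ell_0(y)(t)$ for a.e.\ $t\in[t_*,t_0]$, but the Volterra condition only says that $\ell_0(\cdot)(t)$ depends on the values on $(-\infty,t]$; for $t>t_*$ this interval contains $[a,t_*]$, where $\tilde y\equiv 0$ while $y=u-M\le 0$. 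Since $\tilde y\ge y$ and $\ell_0$ is positive, you only get $\ell_0(\tilde y)\ge\ell_0(y)$, which is the \emph{wrong} direction to upgrade $y'\ge\ell_0(y)$ to $\tilde y'\ge\ell_0(\tilde y)$. So the appeal to $\sata$ for $\tilde y$ is not justified.

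A clean route that stays within the paper's toolbox and uses no Volterra assumption is to invoke the companion Proposition~\ref{p2.7}: under the very same hypotheses one has $\ell_0-\ell_1\in\satac$. Now if $u$ solves \eqref{2.52}, by linearity we may assume $u(a)\ge 0$; then $u'\ge(\ell_0-\ell_1)(u)$ with $u(a)\ge 0$ yields $u\ge 0$ and $u'\ge 0$ on $[a,t_0]$, so $u$ is nondecreasing, nonnegative, and $u(t_0)=0$ forces $u\equiv 0$. Your use of $\ptnplus$ only through the constant function $1$ is too weak to replace this monotonicity information.
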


\begin{prop}[see {\cite[Theorem~4.1]{cmj}}]
\label{p2.7}
Let $a\in\RR$, $a<t_0$, $\ell_0-\ell_1\in\ptnplus$, $\ell_0\in\sata$. Then 
\begin{equation}
\label{2.53}
\ell_0-\ell_1\in\satac.
\end{equation}
\end{prop}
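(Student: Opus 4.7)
\emph{Proof plan.} We must verify that every $u\in\acabr$ (with $b=t_0$) satisfying $u'(t)\geq(\ell_0-\ell_1)(u)(t)$ a.e.\ and $u(a)\geq 0$ enjoys both $u(t)\geq 0$ on $[a,t_0]$ and $u'(t)\geq 0$ a.e. The strategy proceeds in two stages: first upgrade the kernel triviality of Proposition~\ref{p2.6} to the full inclusion $\ell_0-\ell_1\in\sata$, which yields nonnegativity; then deduce monotonicity of $u$ via a localization-to-subintervals argument exploiting $\ell_0-\ell_1\in\ptnplus$, after which $u'\geq 0$ follows from one more application of $\ptnplus$.

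\textbf{Stage 1 ($\ell_0-\ell_1\in\sata$).} Because $\ell_0\in\sata$, the Cauchy function $C_0(t,s)$ of $\partial_t-\ell_0$ is nonnegative on $\{a\leq s\leq t\leq t_0\}$, and the Cauchy problem $u'=(\ell_0-\ell_1)(u)+q$, $u(a)=u_0$, is equivalent to the Fredholm integral equation $u=f+Ku$ with
\begin{equation*}
f(t)=C_0(t,a)u_0+\int_a^tC_0(t,s)q(s)\,ds,\qquad (Ku)(t)=-\int_a^tC_0(t,s)\ell_1(u)(s)\,ds,
\end{equation*}
where $K$ is compact on $C([a,t_0];\RR)$. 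Proposition~\ref{p2.6} asserts injectivity of $I-K$, so unique solvability follows from the Fredholm alternative. To convert unique solvability into the sign statement required by $\sata$, run a continuation along the family $\ell_0-\lambda\ell_1$, $\lambda\in[0,1]$. Positivity of $\ell_0$ on arbitrary nonnegative functions (standing assumption of the paper) gives $\ell_0\in\pplus$; convexity of $\ptnplus$ together with $\ell_0-\ell_1\in\ptnplus$ then yields $\ell_0-\lambda\ell_1\in\ptnplus$ for every $\lambda\in[0,1]$, so Proposition~\ref{p2.6} applies along the whole family. The set $\Lambda=\{\lambda\in[0,1]:\ell_0-\lambda\ell_1\in\sata\}$ contains $0$; it is open (by continuity of the Cauchy function in $\lambda$) and closed (by uniform bounds on $C_\lambda$ inherited from uniqueness), hence $\Lambda=[0,1]$. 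Applied to our $u$, this gives $u\geq 0$ on $[a,t_0]$.

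\textbf{Stage 2 (monotonicity and sign of $u'$).} Fix $s_1\in[a,t_0)$ and set $\phi(t)=u(t)-u(s_1)$ for $t\in[s_1,t_0]$; then $\phi(s_1)=0$, and since $u(s_1)\geq 0$ is a nondecreasing nonnegative constant, $\ell_0-\ell_1\in\ptnplus$ yields $(\ell_0-\ell_1)(u(s_1))(t)\geq 0$. Linearity then gives
\begin{equation*}
\phi'(t)=u'(t)\geq(\ell_0-\ell_1)(u)(t)=(\ell_0-\ell_1)(\phi)(t)+(\ell_0-\ell_1)(u(s_1))(t)\geq(\ell_0-\ell_1)(\phi)(t).
\end{equation*}
Extending $\phi$ by $0$ on $[a,s_1]$ produces an AC function $\tilde\phi$ (continuous at $s_1$ thanks to $\phi(s_1)=0$) with $\tilde\phi(a)=0\geq 0$ for which the same inequality holds on all of $[a,t_0]$: on $[a,s_1]$, $\tilde\phi'\equiv 0$, and by the Volterra property $\vtn$ together with linearity one has $(\ell_0-\ell_1)(\tilde\phi)(t)=(\ell_0-\ell_1)(0)(t)=0$. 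Stage~1 applied to $\tilde\phi$ delivers $\tilde\phi\geq 0$, hence $u(s_1)\leq u(t)$ for every $t\in[s_1,t_0]$. Since $s_1$ is arbitrary, $u$ is nondecreasing on $[a,t_0]$; extending via $\vartheta$ to a nondecreasing nonnegative element of $\acloctrp$ and using $\ell_0-\ell_1\in\ptnplus$ one last time, $(\ell_0-\ell_1)(u)(t)\geq 0$ a.e., whence $u'(t)\geq 0$ a.e., as required.

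\textbf{Main obstacle.} Stage~1 is the technically demanding part: $\ell_0-\ell_1$ is not positive in the customary sense (only on nondecreasing functions), so nonnegativity of its Cauchy function cannot be obtained by a Neumann series or monotone iteration applied directly to $K$. The continuation in $\lambda$ transports positivity from $\ell_0$ (built in by $\ell_0\in\sata$) along a family remaining in $\ptnplus$, with Proposition~\ref{p2.6} ensuring invertibility throughout; closedness of $\Lambda$ is the delicate step, requiring uniform control of $C_\lambda(t,s)$ to rule out a sign-change of the Cauchy function in the limit. Stage~2 is then essentially mechanical, the crucial observation being that $\phi(s_1)=0$ makes the zero-extension on $[a,s_1]$ compatible with the Volterra structure of $\ell_0$ and $\ell_1$.
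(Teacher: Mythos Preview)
This proposition is not proved in the paper; it is quoted verbatim from \cite[Theorem~4.1]{cmj}, so there is no in-paper argument to compare your outline against. I therefore comment only on the internal soundness of your sketch.

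Stage~2 contains a genuine gap. You claim
\[
(\ell_0-\ell_1)(u)(t)=(\ell_0-\ell_1)(\phi)(t)+(\ell_0-\ell_1)(u(s_1))(t)\qquad\text{for }t\in[s_1,t_0],
\]
and then pass to the zero-extension $\tilde\phi$. But by the convention \eqref{1.4}, applying $\ell_0-\ell_1$ to $\phi$ (defined only on $[s_1,t_0]$) means applying it to $\vartheta(\phi)$, which is identically $0$ on $(-\infty,s_1]$; hence $\vartheta(\phi)+u(s_1)$ equals the constant $u(s_1)$ on $[a,s_1]$, whereas $\vartheta(u)$ equals $u$ itself there. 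The two inputs differ, so linearity does not give the displayed identity. Equivalently, writing $u=\tilde\phi+w$ with $w=u$ on $(-\infty,s_1]$ and $w\equiv u(s_1)$ on $[s_1,+\infty)$, you would need $(\ell_0-\ell_1)(w)(t)\geq 0$ for a.e.\ $t\in[s_1,t_0]$ to conclude $\tilde\phi'\geq(\ell_0-\ell_1)(\tilde\phi)$; for that, $w$ must be nondecreasing on $(-\infty,t_0]$, i.e.\ $u$ must already be nondecreasing on $[a,s_1]$---precisely what you are trying to prove. The localisation-and-zero-extension argument is therefore circular. (Note also that the proposition, as stated, does not assume $\ell_0,\ell_1\in\vtn$, so your appeal to the Volterra property on $[a,s_1]$ is an additional hypothesis.)

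Stage~1 is only a sketch and has its own loose end: Proposition~\ref{p2.6} concerns the homogeneous problem with data $u(t_0)=0$, whereas injectivity of your operator $I-K$ corresponds to the Cauchy problem at $a$; for functional operators these are not a~priori equivalent. The assertion that $\Lambda$ is open and closed is not justified, and closedness in particular requires a uniform bound on the Cauchy functions $C_\lambda$ that does not follow from mere uniqueness along the family. A complete argument proceeds along rather different lines; see the cited source.
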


\begin{lemma}
\label{l2.1}
Let $\ell:\cloc\to\lloc$ be a linear positive\footnote{It transforms
  non-negative functions into the set of non-negative functions.}
continuous operator, $\ell\in\vtn$, $\alpha\in\cloc$ be a non-negative
function, and let $\beta\in\cloc$ be a nondecreasing function. Then
\begin{equation}
\label{2.54}
\ell(\alpha\beta)(t)\leq\ell(\alpha)(t)\beta(t)\forae t\leq t_0.
\end{equation}
\end{lemma}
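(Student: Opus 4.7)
The plan is to reduce the inequality to a pointwise positivity assertion by constructing, for each fixed $t_{*}\le t_0$, a continuous majorant of $\alpha\beta$ that (i) coincides with $\alpha\beta$ on $(-\infty,t_{*}]$ and (ii) equals $\beta(t_{*})\alpha$ globally after a harmless replacement above $t_{*}$. Concretely, I would set
$$
w_{t_{*}}(s)=
\begin{cases}
\alpha(s)\beta(s), & s\le t_{*},\\
\alpha(s)\beta(t_{*}), & s>t_{*},
\end{cases}
$$
which is continuous (the two branches match at $s=t_{*}$), non-negative, and belongs to $\cloc$. Since $\beta$ is nondecreasing and $\alpha\ge 0$, one has $0\le w_{t_{*}}(s)\le\beta(t_{*})\alpha(s)$ for every $s\in\RR$. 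The positivity and linearity of $\ell$ therefore give
$$
\ell(w_{t_{*}})(t)\le\beta(t_{*})\,\ell(\alpha)(t)\qquad\text{for a.e. }t\in\RR.
$$

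Next I would invoke the inclusion $\ell\in\vtn$ applied with $\zeta=t_{*}\le t_0$: since $w_{t_{*}}(s)=\alpha(s)\beta(s)$ for all $s\le t_{*}$, the Volterra property gives $\ell(w_{t_{*}})(t)=\ell(\alpha\beta)(t)$ for a.e. $t\le t_{*}$. Combined with the previous display, this yields a null set $N_{t_{*}}\subseteq(-\infty,t_{*}]$ such that
$$
\ell(\alpha\beta)(t)\le\beta(t_{*})\,\ell(\alpha)(t)\qquad\text{for }t\in(-\infty,t_{*}]\setminus N_{t_{*}}.
$$

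To convert this family of inequalities (one for each $t_{*}$) into the desired statement $\ell(\alpha\beta)(t)\le\beta(t)\ell(\alpha)(t)$ for a.e. $t\le t_0$, I would fix a countable dense subset $\{t_n\}_{n\in\NN}$ of $(-\infty,t_0]$, set $N=\bigcup_{n\in\NN}N_{t_n}$ (still a null set), and note that for every $t\in(-\infty,t_0]\setminus N$ and every $t_n\ge t$ we have $\ell(\alpha\beta)(t)\le\beta(t_n)\ell(\alpha)(t)$. Choosing $t_{n_k}\downarrow t$ and using continuity of $\beta$, passage to the limit produces $\ell(\alpha\beta)(t)\le\beta(t)\ell(\alpha)(t)$, which is exactly \eqref{2.54}.

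The only delicate point is this last step: pointwise inequalities obtained for each $t_{*}$ only hold on complements of $t_{*}$-dependent null sets, so one must be careful to union only countably many of them and then exploit the continuity of $\beta$ to pass from the dense grid to every admissible $t$. Everything else is a straightforward application of positivity, linearity, and the Volterra property already encoded in the definition of $\vtn$, so I do not anticipate further technical difficulties.
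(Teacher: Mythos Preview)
Your argument is correct. Both your proof and the paper's hinge on the same observation: for fixed $t_*\le t_0$, the Volterra property together with $\alpha\ge 0$ and the monotonicity of $\beta$ yield $\ell(\alpha\beta)(s)\le\beta(t_*)\,\ell(\alpha)(s)$ for a.e.\ $s\le t_*$. The only difference lies in how the family of ``a.e.'' inequalities (one per $t_*$) is collapsed to a single a.e.\ inequality with $t_*=t$. You do this by restricting $t_*$ to a countable dense set, taking a countable union of null sets, and then letting $t_*\downarrow t$ using continuity of $\beta$. The paper instead fixes $t$ to be a Lebesgue point of both $\ell(\alpha\beta)$ and $\ell(\alpha)$, averages the inequality over $[t-h,t]$, and invokes the Lebesgue differentiation theorem as $h\to 0$. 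Both devices are standard and equally short; yours is perhaps slightly more elementary in that it avoids any appeal to differentiation of integrals, while the paper's avoids introducing an auxiliary dense set.
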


\begin{proof}
Let $A$ be a set of those points $t\in (-\infty,t_0]$ where the
derivatives 
$$
\frac{d}{dt}\int_t^{t_0}\ell(\alpha\beta)(s)ds\qquad\mbox{and}\qquad
\frac{d}{dt}\int_t^{t_0}\ell(\alpha)(s)ds
$$
exist and are equal to $\ell(\alpha\beta)(t)$ and $\ell(\alpha)(t)$,
respectively. Let $t\in A$ be arbitrary but fixed. According to the
inclusion $\ell\in\vtn$ we have
\begin{equation}
\label{2.55}
\ell(\alpha\beta)(s)\leq \ell(\alpha)(s)\beta(t)\forae s\leq t.
\end{equation}
Consequently, from \eqref{2.55} it follows that
\begin{equation}
\label{2.56}
\frac{1}{h}\int_{t-h}^t\ell(\alpha\beta)(s)ds\leq 
\frac{\beta(t)}{h}\int_{t-h}^t\ell(\alpha)(s)ds\for h>0.
\end{equation}
Passing to the limit as $h$ tends to zero in \eqref{2.56},
we get
$$
\ell(\alpha\beta)(t)\leq \ell(\alpha)(t)\beta(t).
$$
Since $t\in A$ was arbitrary, from the latter inequality it follows
that \eqref{2.54} holds.
\end{proof}

\subsection{Lemmas on a finite interval}

\begin{lemma}
\label{l2.2} Let $a\in\RR$, $a<t_0$, $\ell_0,\ell_1\in\vtn$,
and let 
\begin{equation}
\label{2.57}
-\ell_1\in\satt.
\end{equation}
Then the problem \eqref{2.52} has on $[a,t_0]$ only the trivial solution.
\end{lemma}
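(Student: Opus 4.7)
My plan is to combine the two one-sided Volterra maximum-principle properties (one anchored at $a$, the other at $t_0$), bridged by the positivity of $\ell_0$, to squeeze any solution of \eqref{2.52} between $0$ and $0$.

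First I would derive $\ell_0-\ell_1\in\sata$. Applying Proposition~\ref{p2.5} to the hypothesis $-\ell_1\in\satt$ produces a positive majorant $\gamma\in\acaton$ with $\gamma'(t)\leq-\ell_1(\gamma)(t)$ on $[a,t_0]$. Since $\ell_1\in\vtn$, Proposition~\ref{p2.3} then gives $-\ell_1\in\sata$; Proposition~\ref{p2.2} yields $\ell_0\in\sata$; and Proposition~\ref{p2.4} assembles the two into $\ell_0-\ell_1\in\sata$.

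Next, let $u\in\acatr$ be an arbitrary solution to \eqref{2.52} on $[a,t_0]$. By linearity both $u$ and $-u$ satisfy $(\pm u)'(t)=(\ell_0-\ell_1)(\pm u)(t)$ on $[a,t_0]$, so the inclusion $\ell_0-\ell_1\in\sata$ applied to whichever of $\pm u$ is non-negative at $t=a$ yields the sign dichotomy
$$
u(a)\geq 0\Longrightarrow u(t)\geq 0,\qquad u(a)\leq 0\Longrightarrow u(t)\leq 0\mfor t\in[a,t_0].
$$

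Finally, I would use the terminal condition $u(t_0)=0$ together with $-\ell_1\in\satt$ to reverse the inequality. If $u\geq 0$ on $[a,t_0]$, the positivity of $\ell_0$ forces $\ell_0(u)(t)\geq 0$ almost everywhere, so that $v:=-u$ satisfies
$$
v'(t)=-\ell_0(u)(t)-\ell_1(v)(t)\leq-\ell_1(v)(t)\forae t\in[a,t_0],\qquad v(t_0)=0,
$$
whence $-\ell_1\in\satt$ gives $v\geq 0$, i.e.\ $u\leq 0$; hence $u\equiv 0$. The case $u\leq 0$ is handled symmetrically, taking $v:=u$ and using $\ell_0(u)\leq 0$. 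The one mildly delicate point is this conversion of the \emph{equation} into a backward differential \emph{inequality} via the already-known sign of $u$; once that is in place, the result follows directly from the cited propositions together with the positivity of $\ell_0$.
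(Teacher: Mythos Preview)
Your proof is correct and follows essentially the same approach as the paper's: both first assemble $\ell_0-\ell_1\in\sata$ from Propositions~\ref{p2.2}--\ref{p2.5}, use it to fix the sign of $u$ from the value $u(a)$, and then exploit the positivity of $\ell_0$ to turn the equation into the backward inequality $u'\geq -\ell_1(u)$ (equivalently $(-u)'\leq -\ell_1(-u)$) with $u(t_0)=0$, to which $-\ell_1\in\satt$ applies. The only cosmetic difference is that the paper writes the final step directly for $u$ rather than passing to $v=-u$.
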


\begin{proof}
First note that according to Propositions~\ref{p2.2}--\ref{p2.5}, in
view of \eqref{2.57}, we have \eqref{2.51}.
Let $u$ be a solution to the problem \eqref{2.52} on
$[a,t_0]$. Obviously, without loss of generality we can assume that 
\begin{equation}
\label{2.58}
u(a)\geq 0.
\end{equation}
According to \eqref{2.51}, in view of \eqref{2.52} and \eqref{2.58},
we have
\begin{equation}
\label{2.59}
u(t)\geq 0\for t\in[a,t_0].
\end{equation}
Therefore, from \eqref{2.52} it follows that 
\begin{equation}
\label{2.60}
u'(t)\geq -\ell_1(u)(t)\forae t\in [a,t_0], \qquad u(t_0)=0.
\end{equation}
However, according to \eqref{2.57} and \eqref{2.60}, we have
\begin{equation}
\label{2.61}
u(t)\leq 0\for t\in [a,t_0].
\end{equation}
Now \eqref{2.59} and \eqref{2.61} results in $u\equiv 0$.
\end{proof}

\begin{lemma}
\label{l2.3}
Let $a\in\RR$, $a<t_0$, $\ell_1\in\vtn$, and let \eqref{2.29} and
\eqref{2.57} be fulfilled. Let, moreover, $u\in\acatrp$ satisfy
\begin{equation}
\label{2.62}
u'(t)\geq \ell_0(u)(t)-\ell_1(u)(t)\forae t\in [a,t_0].
\end{equation}
Then
\begin{equation}
\label{2.63}
u(a)=\min\big\{u(t):t\in [a,t_0]\big\},
\end{equation}
and, in addition, if there exists $\tau\in (a,t_0]$ such that
$u(\tau)=u(a)$, then
\begin{equation}
\label{2.64}
u(t)=u(a) \for t\in[a,\tau].
\end{equation}
\end{lemma}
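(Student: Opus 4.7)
The whole argument reduces to one application of the comparison principle $-\ell_1\in\satautau$ for every $\tau\in(a,t_0]$: for $\tau=t_0$ this is exactly hypothesis \eqref{2.57}, while for $\tau\in(a,t_0)$ it is furnished by Remark~\ref{r2.8} applied to $-\ell_1\in\satt\cap\vtn$.

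Pick any $\tau\in[a,t_0]$ at which $u$ attains its minimum $m:=\min\{u(t):t\in[a,t_0]\}\geq 0$. If $\tau=a$, then \eqref{2.63} is trivial; otherwise $\tau\in(a,t_0]$. Set $v(t):=u(\tau)-u(t)$ on $[a,t_0]$, so that $v\in\acatr$, $v(\tau)=0$, and $v\leq 0$ throughout $[a,t_0]$. The key step is to rewrite the right-hand side of \eqref{2.62} around the constant $u(\tau)$ using linearity of $\ell_0,\ell_1$:
\begin{equation*}
u'(t)\geq u(\tau)\bigl[\ell_0(1)(t)-\ell_1(1)(t)\bigr]+\ell_0\bigl(u-u(\tau)\bigr)(t)-\ell_1\bigl(u-u(\tau)\bigr)(t)
\end{equation*}
for a.e.\ $t\in[a,t_0]$. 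The bracketed term is non-negative by \eqref{2.29} and $u(\tau)=m\geq 0$. Moreover, since $u\geq m$ on $[a,t_0]$ and the canonical extension \eqref{1.4} is constant outside $[a,t_0]$, the function $\vartheta(u-u(\tau))$ is non-negative on all of $\RR$, so positivity of $\ell_0$ yields $\ell_0(u-u(\tau))(t)\geq 0$ a.e. Using $\ell_1(u-u(\tau))=-\ell_1(v)$, I conclude $v'(t)=-u'(t)\leq -\ell_1(v)(t)$ a.e.\ on $[a,t_0]$.

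Restricting to $[a,\tau]$ and invoking the Volterra property $\ell_1\in V_\tau$ (which holds since $\tau\leq t_0$ and $\ell_1\in\vtn$), the restriction $v\big|_{[a,\tau]}\in\acataur$ satisfies the same inequality on $[a,\tau]$ together with $v(\tau)=0$. Applying $-\ell_1\in\satautau$ then forces $v\geq 0$ on $[a,\tau]$; combined with $v\leq 0$, this gives $v\equiv 0$ on $[a,\tau]$, that is, $u\equiv u(\tau)=m$ on $[a,\tau]$, and in particular $u(a)=m$, proving \eqref{2.63}. For the ``moreover'' assertion, if $\tau\in(a,t_0]$ satisfies $u(\tau)=u(a)$, then \eqref{2.63} gives $u(\tau)=u(a)=m$, so $\tau$ is itself an argmin and the same reasoning delivers \eqref{2.64}.

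The only subtle step is the decomposition around $u(\tau)$ together with the use of \eqref{2.29} to discard the residual term; after that, the argument is a direct invocation of the comparison principle. Notably, no Volterra hypothesis on $\ell_0$ is required.
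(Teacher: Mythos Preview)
Your proof is correct and follows essentially the same route as the paper's: pick a minimum point $\tau$, form the auxiliary function (you use $v=u(\tau)-u$, the paper uses $z=u-u(\tau)=-v$), use positivity of $\ell_0$, the inequality \eqref{2.29}, and $u(\tau)\geq 0$ to reduce \eqref{2.62} to a one-sided inequality involving only $\ell_1$, and then apply the comparison principle $-\ell_1\in\satautau$ (from Remark~\ref{r2.8}) together with the boundary value at $\tau$ to force the function to vanish on $[a,\tau]$. Your decomposition into three terms is slightly more explicit than the paper's two-step estimate $\ell_0(u)(t)\geq\ell_0(1)(t)u(\tau)\geq\ell_1(1)(t)u(\tau)$, and your remark that no Volterra hypothesis on $\ell_0$ is needed is accurate, but the substance is the same.
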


\begin{proof}
To prove lemma it is sufficient to show that whenever there exists
$\tau\in(a,t_0]$ such that
\begin{equation}
\label{2.65}
u(\tau)=\min\big\{u(t):t\in [a,t_0]\big\}
\end{equation} 
then $u$ satisfies \eqref{2.64}, and so \eqref{2.63} holds
necessarily. Therefore, let $\tau\in (a,t_0]$ be arbitrary but fixed, such
that \eqref{2.65} holds. Put 
\begin{equation}
\label{2.66}
z(t)=u(t)-u(\tau)\for t\in [a,\tau].
\end{equation}
Then, in view of \eqref{2.29}, \eqref{2.62}, \eqref{2.65}, and
\eqref{2.66}, we have
\begin{gather}
\label{2.67}
z(t)\geq 0\for t\in[a,\tau],\\
\label{2.68}
z'(t)\geq \ell_0(1)(t)u(\tau)-\ell_1(u)(t)\geq -\ell_1(z)(t)\forae
t\in[a,\tau],\\
\label{2.69}
z(\tau)=0.
\end{gather}
Moreover, according to Remark~\ref{r2.8}, the inclusion \eqref{2.57}
implies 
\begin{equation}
\label{2.70}
-\ell_1\in\satautau.
\end{equation}
Therefore, from \eqref{2.68} and \eqref{2.69} we
get
\begin{equation}
\label{2.71}
z(t)\leq 0\for t\in[a,\tau].
\end{equation}
Now \eqref{2.66}, \eqref{2.67}, and \eqref{2.71} implies \eqref{2.64}.
\end{proof}

\begin{lemma}
\label{l2.4}
Let $a,\tau\in\RR$, $a<\tau$, and let there exist $\gamma\in\acatauon$
satisfying 
\begin{equation}
\label{2.72}
\gamma'(t)\leq -\ell_1(\gamma)(t)\forae t\in [a,\tau].
\end{equation}
Let, moreover, $u\in\acataur$ be such that
\begin{gather}
\label{2.73}
\max\big\{u(t):t\in [a,\tau]\big\}\geq 0,\\
\label{2.74}
u'(t)\geq -\ell_1(u)(t)\forae t\in [a,\tau].
\end{gather}
Then
\begin{equation}
\label{2.75}
\max\left\{\frac{u(t)}{\gamma(t)}:t\in [a,\tau]\right\}=\frac{u(\tau)}{\gamma(\tau)}.
\end{equation}
\end{lemma}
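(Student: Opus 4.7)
\emph{Proof plan.} Set $M=\max\bigl\{u(t)/\gamma(t):t\in[a,\tau]\bigr\}$; this maximum exists because $u$ is continuous and $\gamma\in\acatauon$ is strictly positive on the compact interval $[a,\tau]$. Since $\max\{u(t):t\in[a,\tau]\}\geq 0$ and $\gamma>0$, we have $M\geq 0$. Introduce the auxiliary function
\begin{equation*}
v(t)=u(t)-M\gamma(t)\for t\in[a,\tau].
\end{equation*}
By the choice of $M$, $v(t)\leq 0$ on $[a,\tau]$ and $v(t_*)=0$ for some $t_*\in[a,\tau]$. Our goal reduces to showing $v(\tau)=0$.

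The key computation is a differential inequality for $v$. Multiplying $\gamma'(t)\leq -\ell_1(\gamma)(t)$ by $-M\leq 0$ (inequality flips) and adding the inequality $u'(t)\geq -\ell_1(u)(t)$, then using linearity of $\ell_1$ (and linearity of the extension $\vartheta$, so that $\vartheta(v)=\vartheta(u)-M\vartheta(\gamma)$ and hence $\ell_1(v)=\ell_1(u)-M\ell_1(\gamma)$), we obtain
\begin{equation*}
v'(t)\geq -\ell_1(u)(t)+M\ell_1(\gamma)(t)=-\ell_1(v)(t)\forae t\in[a,\tau].
\end{equation*}

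Now I use the sign of $v$ together with positivity of $\ell_1$. Because $v\leq 0$ on $[a,\tau]$, the constant extension $\vartheta(v)$ is nonpositive on all of $\RR$; since $\ell_1$ is a positive linear operator, $\ell_1(v)(t)=\ell_1(\vartheta(v))(t)\leq 0$ for a.e.\ $t\in[a,\tau]$. Combined with the previous inequality this yields $v'(t)\geq 0$ for a.e.\ $t\in[a,\tau]$, so $v$ is nondecreasing on $[a,\tau]$. Therefore $v(\tau)\geq v(t_*)=0$, and together with $v(\tau)\leq 0$ we conclude $v(\tau)=0$, i.e. $u(\tau)/\gamma(\tau)=M$, which is \eqref{2.75}.

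The plan is essentially a one-line monotonicity argument once $v$ is set up correctly; I do not foresee a serious obstacle, but the one point that deserves care is the linearity/extension manipulation, namely verifying that $\ell_1(u-M\gamma)=\ell_1(u)-M\ell_1(\gamma)$ at the level of the $\vartheta$-extended functions, and that the sign of $M$ (known to be nonnegative) is the right one to preserve the inequality when passing from $\gamma'\leq -\ell_1(\gamma)$ to $-M\gamma'\geq M\ell_1(\gamma)$.
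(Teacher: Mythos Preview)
Your proof is correct and is essentially the same as the paper's argument, just with the sign of the auxiliary function reversed: the paper sets $\lambda=M$ and works with $\lambda\gamma-u=-v$, showing it is nonnegative and nonincreasing, whereas you show $v=u-M\gamma$ is nonpositive and nondecreasing. Your care about the linearity of $\vartheta$ and $\ell_1$ is warranted but poses no difficulty, since $\vartheta$ as defined in \eqref{1.4} is linear and $\ell_1$ is linear by hypothesis.
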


\begin{proof}
Put
\begin{equation}
\label{2.76}
\lambda=\max\left\{\frac{u(t)}{\gamma(t)}:t\in [a,\tau]\right\}.
\end{equation}
Then, according to \eqref{2.72}--\eqref{2.74} and
\eqref{2.76} we have $\lambda\geq 0$,
\begin{gather}
\label{2.77}
\lambda\gamma(t)-u(t)\geq 0\for t\in [a,\tau],\\
\label{2.78}
\lambda\gamma'(t)-u'(t)\leq -\ell_1(\lambda\gamma-u)(t)\forae t\in [a,\tau],
\end{gather}
and there exists $\tau_0\in[a,\tau]$ such that
\begin{equation}
\label{2.79}
\lambda\gamma(\tau_0)-u(\tau_0)=0.
\end{equation}
However, from \eqref{2.77} and \eqref{2.78} it follows that
$\lambda\gamma-u$ is a nonincreasing function, which together with
\eqref{2.76}, \eqref{2.77}, and \eqref{2.79} results in \eqref{2.75}.
\end{proof}

Now, from Lemma~\ref{l2.4} we get the following

\begin{lemma}
\label{l2.5}
Let $a\in\RR$, $a<t_0$, $\ell_1\in\vtn$, and let there exist
$\gamma\in\acaton$ satisfying \eqref{2.50}. Let, moreover,
$u\in\acatrp$ be such that
\begin{equation}
\label{2.80}
u'(t)\geq -\ell_1(u)(t)\forae t\in [a,t_0].
\end{equation}
Then
\begin{equation}
\label{2.81}
\ell_1(u)(t)\leq \frac{\ell_1(\gamma)(t)}{\gamma(t)}u(t)\forae t\in [a,t_0].
\end{equation}
\end{lemma}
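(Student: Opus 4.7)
The strategy is to combine Lemma~\ref{l2.4} with a Lebesgue-differentiation argument analogous to the one used in Lemma~\ref{l2.1}. The key observation is that for each $\tau\in(a,t_0]$, Lemma~\ref{l2.4} gives the comparison $u(s)/\gamma(s)\leq u(\tau)/\gamma(\tau)$ on $[a,\tau]$; this, together with positivity and the Volterra property of $\ell_1$, should transfer to a pointwise inequality between $\ell_1(u)(\tau)$ and $\ell_1(\gamma)(\tau)u(\tau)/\gamma(\tau)$ at almost every $\tau$.

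First I would verify that, for each fixed $\tau\in(a,t_0]$, Lemma~\ref{l2.4} applies on $[a,\tau]$: the restriction of $\gamma$ satisfies \eqref{2.72} because \eqref{2.50} holds, and the restriction of $u$ is absolutely continuous, satisfies $u\geq 0$ (so that \eqref{2.73} is met trivially) and satisfies \eqref{2.74} by \eqref{2.80}. Thus
$$
\frac{u(s)}{\gamma(s)}\leq\frac{u(\tau)}{\gamma(\tau)}\for s\in[a,\tau],
$$
which rewrites as $u(s)\leq\bigl(u(\tau)/\gamma(\tau)\bigr)\gamma(s)$ for all $s\in[a,\tau]$. Using the canonical extension \eqref{1.4}, $\vartheta(u)(s)=u(a)$ and $\vartheta(\gamma)(s)=\gamma(a)$ for $s<a$, and since $u(a)/\gamma(a)\leq u(\tau)/\gamma(\tau)$, the same pointwise bound $\vartheta(u)(s)\leq\bigl(u(\tau)/\gamma(\tau)\bigr)\vartheta(\gamma)(s)$ extends to all $s\leq\tau$.

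Next I would use the fact that $\ell_1$ is linear, positive and belongs to $\vtn$ to convert this pointwise comparison into an inequality for $\ell_1(u)$ and $\ell_1(\gamma)$ almost everywhere on $(-\infty,\tau]$. Concretely, cutting off both $\vartheta(u)$ and $\vartheta(\gamma)$ at the level $\tau$ (i.e.\ freezing them at $s=\tau$ for $s>\tau$) produces functions agreeing with them on $(-\infty,\tau]$; by the Volterra property the values of $\ell_1$ on $(-\infty,\tau]$ are unchanged, and by positivity applied to the non-negative difference,
$$
\ell_1(u)(s)\leq\frac{u(\tau)}{\gamma(\tau)}\,\ell_1(\gamma)(s)\forae s\leq\tau.
$$

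Finally, I would promote this a.e.\ inequality on $(-\infty,\tau]$ to a pointwise inequality at $\tau$ by the Lebesgue-differentiation argument used already in the proof of Lemma~\ref{l2.1}. Let $A\subseteq(-\infty,t_0]$ be the full-measure set of points at which the derivatives of $t\mapsto\int_t^{t_0}\ell_1(u)(s)\,ds$ and $t\mapsto\int_t^{t_0}\ell_1(\gamma)(s)\,ds$ exist and equal $\ell_1(u)(t)$ and $\ell_1(\gamma)(t)$, respectively. For any $\tau\in A\cap[a,t_0]$, integrating the previous inequality over $[\tau-h,\tau]$, dividing by $h>0$, and letting $h\to 0_+$ yields
$$
\ell_1(u)(\tau)\leq\frac{u(\tau)}{\gamma(\tau)}\,\ell_1(\gamma)(\tau),
$$
which is exactly \eqref{2.81}. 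The main technical obstacle is the passage from the a.e.\ operator inequality on $(-\infty,\tau]$ (which depends on the parameter $\tau$) to a pointwise inequality at $\tau$ itself, but this is dealt with uniformly via the Lebesgue-point argument.
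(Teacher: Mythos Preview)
Your proposal is correct and follows essentially the same approach as the paper: both use Lemma~\ref{l2.4} at each $\tau\in(a,t_0]$ to deduce that $u/\gamma$ is nondecreasing, and then convert this into the operator inequality \eqref{2.81}. The only cosmetic difference is that the paper packages your final two steps into a direct application of Lemma~\ref{l2.1} with $\alpha=\vartheta(\gamma)$ and $\beta=\vartheta(u/\gamma)$ (writing $u=\gamma\cdot(u/\gamma)$), rather than redoing the Lebesgue-differentiation argument by hand.
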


\begin{proof}
Obviously, since $\ell_1\in\vtn$, from \eqref{2.50} and \eqref{2.80}
it follows that the assumptions of Lemma~\ref{l2.4} are fulfilled for arbitrary
$\tau\in (a,t_0]$. Therefore, according to Lemma~\ref{l2.4} we
have 
$$
\max\left\{\frac{u(t)}{\gamma(t)}:t\in[a,\tau]\right\}=\frac{u(\tau)}{\gamma(\tau)}\for
\tau\in [a,t_0].
$$
However, the latter means that the function $u/\gamma$ is
nondecreasing. Therefore, according to Lemma~\ref{l2.1} with
$\ell=\ell_1$, $\alpha=\vartheta(\gamma)$,
$\beta=\vartheta(u/\gamma)$, and $\vartheta$ given by \eqref{1.4}, 
we obtain \eqref{2.81}.
\end{proof}

\begin{lemma}
\label{l2.6}
Let $a\in\RR$, $a<t_0$, $p\in\latrp$, $\sigma\in\Sigma$, \eqref{2.1}
hold, and let 
\begin{equation}
\label{2.82}
\ell_0(1)(t)\geq p(t)\forae t\in [a,t_0].
\end{equation}
Let, moreover, $u\in\acatrp$ satisfy
\begin{equation}
\label{2.83}
u'(t)\geq \ell_0(u)(t)-p(t)u(t)\forae t\in [a,t_0],
\end{equation}
and let there exist an interval $[\tau_0,\tau_1]\subset (a,t_0]$
such that 
\begin{equation}
\label{2.84}
u(t)>u(\tau_1)\for t\in [\tau_0,\tau_1).
\end{equation}
Then
\begin{equation}
\label{2.85}
\sigma(\tau_1)<\tau_0.
\end{equation}
\end{lemma}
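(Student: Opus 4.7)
My plan is to argue by contradiction: assume $\sigma(\tau_1)\geq\tau_0$ and derive a contradiction. The heuristic is that the memory of $\ell_0$ at $\tau_1$ only ``sees'' the values of $u$ on $[\sigma(\tau_1),\tau_1]\subseteq[\tau_0,\tau_1]$, where $u\geq u(\tau_1)$; combined with \eqref{2.82} this should force $w:=u-u(\tau_1)$ to satisfy $w'\geq -p\,w$ in a left neighbourhood of $\tau_1$, which is incompatible with $w(\tau_1)=0$ and $w>0$ on $[\tau_0,\tau_1)$ coming from \eqref{2.84}.

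First I would use continuity of $u$ at $\tau_0$ and the strict inequality $u(\tau_0)>u(\tau_1)$ (a consequence of \eqref{2.84} with $t=\tau_0$) to pick $\delta'\in(0,\tau_0-a]$ so that $w\geq 0$ on $[\tau_0-\delta',\tau_0]$; together with \eqref{2.84} this gives $w\geq 0$ on all of $[\tau_0-\delta',\tau_1]$. Then, using continuity of $\sigma$ and $\sigma(\tau_1)\geq\tau_0$, I would choose $\delta\in(0,\tau_1-\tau_0)$ with $\sigma(t)>\tau_0-\delta'$ for every $t\in[\tau_1-\delta,\tau_1]$; for any such $t$, $[\sigma(t),t]\subseteq[\tau_0-\delta',\tau_1]$, and hence $w\geq 0$ on $[\sigma(t),t]$.

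The key step is to turn this pointwise observation into a lower bound for $\ell_0(w)(t)$ valid for $t$ in a whole interval. To that end I would introduce $\tilde w\in\cloc$ defined by $\tilde w=w$ on $[\sigma(t),t]$, $\tilde w\equiv w(\sigma(t))$ on $(-\infty,\sigma(t)]$ and $\tilde w\equiv w(t)$ on $[t,+\infty)$; by construction $\tilde w\geq 0$ on $\RR$. The memory property \eqref{2.1} yields $\ell_0(w)(t)=\ell_0(\tilde w)(t)$, and positivity of $\ell_0$ gives $\ell_0(\tilde w)(t)\geq 0$, so $\ell_0(w)(t)\geq 0$ for a.e.\ $t\in[\tau_1-\delta,\tau_1]$. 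Linearity of $\ell_0$ together with \eqref{2.82} then yields
\[
\ell_0(u)(t)=u(\tau_1)\ell_0(1)(t)+\ell_0(w)(t)\geq u(\tau_1)p(t)\qquad\mbox{a.e. on } [\tau_1-\delta,\tau_1],
\]
and substituting into \eqref{2.83} produces $w'(t)=u'(t)\geq u(\tau_1)p(t)-p(t)u(t)=-p(t)w(t)$ a.e. on $[\tau_1-\delta,\tau_1]$.

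To close the argument, setting $P(t)=\int_a^tp(s)\,ds$ I would rewrite this as $(w(t)e^{P(t)})'\geq 0$ a.e.\ on $[\tau_1-\delta,\tau_1]$, so the AC function $w\,e^{P}$ is nondecreasing there. But $w(\tau_1)e^{P(\tau_1)}=0$ whereas $w(\tau_1-\delta/2)e^{P(\tau_1-\delta/2)}>0$ by \eqref{2.84}, the desired contradiction. The one delicate point I anticipate is making sure the bound $\ell_0(w)(t)\geq 0$ holds on a \emph{whole} left neighbourhood of $\tau_1$ rather than only at the single point $\tau_1$; this is why I first pick $\delta'$ from the continuity of $u$ and only afterwards choose $\delta$ from the continuity of $\sigma$, which keeps $[\sigma(t),t]$ inside the region where $w\geq 0$ uniformly in $t\in[\tau_1-\delta,\tau_1]$.
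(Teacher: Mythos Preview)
Your proof is correct and follows essentially the same route as the paper's own argument: both proceed by contradiction, extend the interval $[\tau_0,\tau_1)$ slightly to the left using continuity of $u$, then use continuity of $\sigma$ together with $\sigma(\tau_1)\geq\tau_0$ to obtain a left neighbourhood of $\tau_1$ on which the memory condition \eqref{2.1} forces $\ell_0(u)(t)\geq u(\tau_1)\,p(t)$, and finish with an integrating-factor argument. The only cosmetic difference is that you work with $w=u-u(\tau_1)$ and show $w\,e^{P}$ is nondecreasing, whereas the paper integrates the analogous differential inequality for $u$ directly from $\tau_1-\varepsilon$ to $\tau_1$ and obtains $u(\tau_1)\geq u(\tau_1-\varepsilon)$, contradicting \eqref{2.84}; these are equivalent manipulations.
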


\begin{proof}
Assume on the contrary that 
\begin{equation}
\label{2.86}
\sigma(\tau_1)\geq \tau_0.
\end{equation}
According to the continuity of $u$, in view of \eqref{2.84}, there
exists $\delta\in (0,\tau_0-a)$ such that
\begin{equation}
\label{2.87}
u(t)>u(\tau_1)\for t\in [\tau_0-\delta,\tau_1).
\end{equation}
Furthermore, the continuity of $\sigma$, in view of \eqref{2.86},
yields the existence of $\varepsilon>0$ such that
\begin{equation}
\label{2.88}
\sigma_*\geq \tau_0-\delta
\end{equation}
where
\begin{equation}
\label{2.89}
\sigma_*=\min\big\{\sigma(t):t\in[\tau_1-\varepsilon,\tau_1]\big\}.
\end{equation}
Note that in view of \eqref{2.89} we have
\begin{equation}
\label{2.90}
\sigma_*\leq\sigma(\tau_1-\varepsilon)\leq \tau_1-\varepsilon.
\end{equation}
On the other hand, from \eqref{2.83} we get
\begin{equation}
\label{2.91}
\left(u(t)\exp\left(-\int_t^{\tau_1}p(s)ds\right)\right)'\geq
\ell_0(u)(t)\exp\left(-\int_t^{\tau_1}p(s)ds\right)\mforae t\in [a,t_0].
\end{equation}
The integration of \eqref{2.91} from $\tau_1-\varepsilon$ to $\tau_1$
results in
$$
u(\tau_1)\geq
u(\tau_1-\varepsilon)\exp\left(-\int_{\tau_1-\varepsilon}^{\tau_1}p(s)ds\right)+
\int_{\tau_1-\varepsilon}^{\tau_1}\ell_0(u)(s)\exp\left(-\int_s^{\tau_1}p(\xi)d\xi\right)ds,
$$
whence, on account of \eqref{2.1}, \eqref{2.82}, and
\eqref{2.87}--\eqref{2.89}, we obtain
\begin{equation}
\label{2.92}
u(\tau_1)\geq
u(\tau_1-\varepsilon)\exp\left(-\int_{\tau_1-\varepsilon}^{\tau_1}p(s)ds\right)+
u(\tau_1)\left(1-\exp\left(-\int_{\tau_1-\varepsilon}^{\tau_1}p(s)ds\right)\right).
\end{equation}
However, \eqref{2.92} implies 
$$
u(\tau_1)\geq u(\tau_1-\varepsilon)
$$
which, on account of \eqref{2.88} and \eqref{2.90} contradicts \eqref{2.87}.
\end{proof}

\begin{lemma}
\label{l2.7}
Let $a\in\RR$, $a<t_0$, $p\in\lloctrp$, $\sigma\in\Sigma$, and let
\eqref{2.1} and \eqref{2.82} hold. Let, moreover, $u\in\acatrp$
satisfy \eqref{2.83}. Then, for every $\tau\in (a,t_0)$, the estimate
\begin{equation}
\label{2.93}
u(t)\geq u(\tau)e^{-M_{\sigma}(a,t_0)}\for t\in[\tau,t_0]
\end{equation}
holds, where
\begin{equation}
\label{2.94}
M_{\sigma}(a,t_0)=\max\left\{\int_{\sigma(t)}^t p(s)ds: t\in [a,t_0]\right\}.
\end{equation}
\end{lemma}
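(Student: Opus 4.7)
The plan is to combine an integrating-factor estimate, coming directly from the differential inequality \eqref{2.83} and the positivity of $\ell_0$, with the geometric constraint supplied by Lemma~\ref{l2.6}. Before looking at $\tau$ specifically, I would first record an auxiliary estimate: multiplying \eqref{2.83} by $e^{-\int_t^{\tau_1}p(\xi)d\xi}$ and rearranging yields
$$
\left(u(t)\exp\left(-\int_t^{\tau_1}p(\xi)d\xi\right)\right)'\geq \ell_0(u)(t)\exp\left(-\int_t^{\tau_1}p(\xi)d\xi\right)\forae t\in[a,t_0],
$$
and integration from $s$ to $\tau_1$, together with $\ell_0(u)\geq 0$ (since $\ell_0$ is positive and $u\geq 0$), gives the key inequality
$$
u(\tau_1)\geq u(s)\exp\left(-\int_s^{\tau_1}p(\xi)d\xi\right)\for a\leq s\leq \tau_1\leq t_0.
$$

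Next, fix $\tau\in(a,t_0)$ and set $m=\min\{u(t):t\in[\tau,t_0]\}$, which exists by continuity of $u$. Let $t_*=\inf\{t\in[\tau,t_0]:u(t)=m\}$, so that $u(t_*)=m$ by continuity and $u(s)>m=u(t_*)$ for every $s\in[\tau,t_*)$ by minimality of $t_*$. Since for any $t\in[\tau,t_0]$ we have $u(t)\geq u(t_*)$, it suffices to bound $u(t_*)$ from below by $u(\tau)e^{-M_{\sigma}(a,t_0)}$.

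If $t_*=\tau$ there is nothing to do, since then $u(t)\geq u(\tau)\geq u(\tau)e^{-M_{\sigma}(a,t_0)}$ on $[\tau,t_0]$. Otherwise $t_*\in(\tau,t_0]$, and since $\tau>a$, I may apply Lemma~\ref{l2.6} with $\tau_0=\tau$ and $\tau_1=t_*$ to obtain $\sigma(t_*)<\tau$. Plugging $s=\tau$, $\tau_1=t_*$ into the auxiliary estimate gives
$$
u(t_*)\geq u(\tau)\exp\left(-\int_{\tau}^{t_*}p(\xi)d\xi\right),
$$
and the inequality $\sigma(t_*)<\tau<t_*$ combined with $p\geq 0$ yields
$$
\int_{\tau}^{t_*}p(\xi)d\xi\leq \int_{\sigma(t_*)}^{t_*}p(\xi)d\xi\leq M_{\sigma}(a,t_0),
$$
which finishes the proof via $u(t)\geq u(t_*)\geq u(\tau)e^{-M_{\sigma}(a,t_0)}$.

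The only delicate point is step two: I must choose $t_*$ as the \emph{leftmost} minimizer on $[\tau,t_0]$ so that the strict inequality $u(s)>u(t_*)$ on $[\tau,t_*)$ needed to invoke Lemma~\ref{l2.6} is automatic; with any other choice the argument could stall on a flat segment. Once this point is handled properly, the rest of the argument is a routine chaining of the two estimates.
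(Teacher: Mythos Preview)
Your proof is correct and follows essentially the same approach as the paper's. The paper argues by contradiction---assuming \eqref{2.93} fails at some $\tau_0,\tau_1$ and then asserting ``without loss of generality \eqref{2.84} holds''---whereas you make this reduction explicit by taking $t_*$ to be the leftmost minimizer on $[\tau,t_0]$; in both cases the core steps are the integrating-factor bound $u(\tau_1)\geq u(s)\exp\bigl(-\int_s^{\tau_1}p\bigr)$ combined with Lemma~\ref{l2.6} to force $\sigma(t_*)<\tau$ and hence cap the exponent by $M_\sigma(a,t_0)$.
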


\begin{proof}
Assume on the contrary that \eqref{2.93} is not valid, i.e.,
there exist $\tau_0\in(a,t_0)$ and $\tau_1\in (\tau_0,t_0]$ such that
\begin{equation}
\label{2.95}
u(\tau_1)<u(\tau_0)e^{-M_{\sigma}(a,t_0)}.
\end{equation}
Obviously, without loss of generality we can assume that \eqref{2.84}
is fulfilled. Therefore, according to Lemma~\ref{l2.6} we have
\eqref{2.85}.

On the other hand, from \eqref{2.83} we get
$$
u'(t)\geq -p(t)u(t)\forae t\in [a,t_0],
$$
whence we obtain
$$
u(\tau_1)\geq u(\tau_0)\exp\left(-\int_{\tau_0}^{\tau_1}p(s)ds\right).
$$
However, on account of \eqref{2.85}, the latter inequality yields
$$
u(\tau_1)\geq u(\tau_0)\exp\left(-\int_{\sigma(\tau_1)}^{\tau_1}p(s)ds\right),
$$
which, together with \eqref{2.94} contradicts \eqref{2.95}.
\end{proof}

\begin{lemma}
\label{l2.7.1}
Let $a\in\RR$, $a<t_0$, and let $\gamma\in\acloc$ satisfy
\eqref{2.4}. Then \eqref{2.57} holds.
\end{lemma}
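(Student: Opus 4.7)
The plan is to reduce the claim to Proposition~\ref{p2.5}, which characterizes the inclusion $-\ell_1\in\satt$ by the existence of a positive function $\tilde\gamma\in\acaton$ satisfying \eqref{2.50}. The obvious candidate is $\tilde\gamma:=\gamma\big|_{[a,t_0]}$, which clearly lies in $\acaton$ since $\gamma\in\acloc$ takes values in $(0,+\infty)$. The only issue is that \eqref{2.50} involves $\ell_1(\tilde\gamma)=\ell_1(\vartheta(\tilde\gamma))$, with $\vartheta$ given by \eqref{1.4}, while \eqref{2.4} is formulated for the unrestricted $\gamma\in\cloc$; so the task reduces to comparing $\ell_1(\gamma)$ with $\ell_1(\vartheta(\tilde\gamma))$ on $[a,t_0]$.

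The first observation is that $\gamma$ is nonincreasing on $(-\infty,t_0]$: indeed, by the positivity of $\ell_1$ we have $\ell_1(\gamma)(t)\geq 0$ for a.e.\ $t\leq t_0$, so \eqref{2.4} yields $\gamma'(t)\leq 0$ for a.e.\ $t\leq t_0$. Consequently, for $t<a$ one has $\gamma(t)\geq\gamma(a)=\vartheta(\tilde\gamma)(t)$, while $\gamma(t)=\vartheta(\tilde\gamma)(t)$ on $[a,t_0]$. Setting $\phi:=\gamma-\vartheta(\tilde\gamma)\in\cloc$, we therefore have $\phi(t)\geq 0$ for $t\leq t_0$. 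Introducing $\phi^+:=\max\{\phi,0\}\in\cloc$, the identity $\phi=\phi^+$ holds on $(-\infty,t_0]$, so by the Volterra property $\ell_1\in\vtn$ (which is the standing hypothesis on $\ell_1$ throughout the paper) one gets $\ell_1(\phi)(t)=\ell_1(\phi^+)(t)$ for a.e.\ $t\leq t_0$, and by the positivity of $\ell_1$ the latter is non-negative. Hence
$$
\ell_1(\gamma)(t)\geq\ell_1(\vartheta(\tilde\gamma))(t)=\ell_1(\tilde\gamma)(t)\forae t\leq t_0.
$$

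Combining this with \eqref{2.4}, one obtains $\tilde\gamma'(t)=\gamma'(t)\leq-\ell_1(\gamma)(t)\leq-\ell_1(\tilde\gamma)(t)$ for a.e.\ $t\in[a,t_0]$, which is precisely \eqref{2.50}. Proposition~\ref{p2.5} then yields $-\ell_1\in\satt$, i.e.\ \eqref{2.57}. The only real obstacle is the bookkeeping around the convention \eqref{1.4}, namely recognizing that the extension of $\tilde\gamma$ by its endpoint values can be dominated by the original $\gamma$ on $(-\infty,t_0]$ thanks to monotonicity, so that Volterra locality and positivity together move the inequality from $\gamma$ to $\tilde\gamma$ with no change of sign.
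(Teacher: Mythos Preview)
Your proof is correct and follows essentially the same route as the paper's: restrict $\gamma$ to $[a,t_0]$, use monotonicity on $(-\infty,t_0]$ to compare the extension $\vartheta(\tilde\gamma)$ with $\gamma$, deduce \eqref{2.50}, and apply Proposition~\ref{p2.5}. Your version is in fact slightly more careful than the paper's, since you explicitly invoke $\ell_1\in\vtn$ and the truncation $\phi^+$ to justify the inequality $\ell_1(\gamma)\geq\ell_1(\tilde\gamma)$ on $[a,t_0]$, whereas the paper asserts $\vartheta(\gamma_a)\leq\gamma$ on all of $\RR$ directly (which need not hold for $t>t_0$) and then appeals to positivity.
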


\begin{proof}
Let $\gamma_a:[a,t_0]\to (0,+\infty)$ be a restriction of $\gamma$ to
the interval $[a,t_0]$. From \eqref{2.4} it follows that $\gamma$ is a
nonincreasing function, and so
$$
\vartheta(\gamma_a)(t)\leq \vartheta(\gamma)(t)\for t\in\RR
$$
where $\vartheta$ is given by \eqref{1.4}. Therefore,
\begin{equation}
\label{2.98}
\ell_1(\gamma_a)(t)\leq \ell_1(\gamma)(t)\forae t\in[a,t_0].
\end{equation}
Consequently, from \eqref{2.4}, in view of \eqref{2.98}, it follows
that
\begin{equation}
\label{2.99}
\gamma'_a(t)\leq -\ell_1(\gamma_a)(t)\forae t\in [a,t_0].
\end{equation}
Thus, according to Proposition~\ref{p2.5} the inclusion \eqref{2.57}
holds. 
\end{proof}

\begin{lemma}
\label{l2.8}
Let $\ell_1\in \vtn$, $\sigma\in\Sigma$, \eqref{2.1} hold, and let there
exist $\gamma\in\acloc$ such that \eqref{2.4}--\eqref{2.6} hold. Let,
moreover, $a\in\RR$, $a<t_0$, $\kappa>0$, and let $u\in\acatrp$ satisfy \eqref{2.62} and
\begin{equation}
\label{2.96}
u(t_0)\leq \kappa e^{-M_{\sigma}}
\end{equation}
where $M_{\sigma}$ is given by \eqref{2.7}. Then
\begin{equation}
\label{2.97}
u(t)\leq \kappa\for t\in [a,t_0].
\end{equation}
\end{lemma}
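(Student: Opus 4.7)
\medskip

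\noindent\textbf{Proof plan.} The plan is to convert the differential inequality \eqref{2.62} into one of the form \eqref{2.83} (to which Lemma~\ref{l2.7} applies directly) by bounding the $\ell_1(u)$-term from above by $(\ell_1(\gamma)/\gamma)\,u$, and then to read off \eqref{2.97} by evaluating Lemma~\ref{l2.7}'s estimate at the right endpoint.

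\medskip

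\noindent First I would observe that since $\ell_0$ is positive and $u\geq 0$ on $[a,t_0]$, the inequality \eqref{2.62} implies $u'(t)\geq -\ell_1(u)(t)$ a.e.\ on $[a,t_0]$, i.e. \eqref{2.80} holds. Next I would set up the hypotheses of Lemma~\ref{l2.5}: taking the restriction $\gamma_a$ of $\gamma$ to $[a,t_0]$ and using the argument of Lemma~\ref{l2.7.1} (the fact that \eqref{2.4} forces $\gamma$ to be nonincreasing, together with $\ell_1\in\vtn$), one gets $\ell_1(\gamma_a)(t)\leq \ell_1(\gamma)(t)$ a.e.\ on $[a,t_0]$ and hence \eqref{2.50} for $\gamma_a$. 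Then Lemma~\ref{l2.5} yields
$$
\ell_1(u)(t)\le \frac{\ell_1(\gamma_a)(t)}{\gamma_a(t)}\,u(t)\le \frac{\ell_1(\gamma)(t)}{\gamma(t)}\,u(t)\qquad\text{for a.e. }t\in[a,t_0].
$$

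\medskip

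\noindent Setting $p(t):=\ell_1(\gamma)(t)/\gamma(t)$, this estimate combined with \eqref{2.62} gives the inequality \eqref{2.83}, i.e.\ $u'(t)\geq \ell_0(u)(t)-p(t)u(t)$ a.e.\ on $[a,t_0]$. The continuity and positivity of $\gamma$ together with $\ell_1\in\vtn$ ensure $p\in\lloctrp$, hypothesis \eqref{2.82} is precisely condition \eqref{2.5}, and \eqref{2.1} is assumed. Therefore Lemma~\ref{l2.7} applies and gives, for every $\tau\in(a,t_0)$,
$$
u(t)\geq u(\tau)\,e^{-M_\sigma(a,t_0)}\qquad\text{for every }t\in[\tau,t_0],
$$
with $M_\sigma(a,t_0)$ as in \eqref{2.94}. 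Evaluating at $t=t_0$ and using the trivial bound $M_\sigma(a,t_0)\leq M_\sigma$ (since the supremum in \eqref{2.7} is over a larger set) together with \eqref{2.96} yields
$$
u(\tau)\leq u(t_0)\,e^{M_\sigma(a,t_0)}\leq \kappa e^{-M_\sigma}\,e^{M_\sigma}=\kappa
$$
for every $\tau\in(a,t_0)$; continuity of $u$ extends this to the endpoints and gives \eqref{2.97}.

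\medskip

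\noindent The only delicate step is the second one: passing from the abstract inequality $u'\geq -\ell_1(u)$ to the pointwise majorization $\ell_1(u)\leq (\ell_1(\gamma)/\gamma)\,u$. This is where Lemma~\ref{l2.5} (backed by Lemmas~\ref{l2.4} and \ref{l2.1}) does the work, and where the Volterra assumption $\ell_1\in\vtn$ and the truncation \eqref{1.4} must be handled carefully to ensure that $\gamma_a$ (the restriction used in Lemma~\ref{l2.5}) satisfies the required differential inequality and that $\ell_1(\gamma_a)\leq \ell_1(\gamma)$. Once this is verified, everything else is just feeding hypotheses into Lemma~\ref{l2.7}.
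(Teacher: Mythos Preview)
Your proposal is correct and follows essentially the same route as the paper: reduce \eqref{2.62} to an inequality of the form \eqref{2.83} via Lemma~\ref{l2.5} (using the restriction $\gamma_a$ and the argument of Lemma~\ref{l2.7.1}), then apply Lemma~\ref{l2.7} with $p=\ell_1(\gamma)/\gamma$ and compare $M_\sigma(a,t_0)$ with $M_\sigma$. The only cosmetic difference is that the paper phrases the final step as a contradiction (assume $u(\tau)>\kappa$ for some $\tau$, then \eqref{2.93} forces $u(t_0)>\kappa e^{-M_\sigma}$) and invokes Lemma~\ref{l2.3} beforehand to ensure such a $\tau$ would lie in $(a,t_0)$, whereas you argue directly and handle the endpoints by continuity; this lets you bypass Lemma~\ref{l2.3} entirely.
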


\begin{proof}
According to Lemma~\ref{l2.7.1} we have \eqref{2.57}.
Furthermore, from \eqref{2.4} it follows that $\gamma$ is a nonincreasing
function. Thus, according to Lemma~\ref{l2.1} with $\ell=\ell_1$,
$\alpha=\vartheta(\gamma)$, $\beta=\vartheta(1/\gamma)$, $\vartheta$
given by \eqref{1.4}, from \eqref{2.5} it follows that \eqref{2.29} is
fulfilled. Therefore, according to Lemma~\ref{l2.3} we have 
\begin{equation}
\label{2.100}
u(a)\leq u(t_0).
\end{equation}
Assume that \eqref{2.97} does not hold. Then, on account of
\eqref{2.96} and \eqref{2.100}, there exists $\tau\in(a,t_0)$ such
that
\begin{equation}
\label{2.101}
u(\tau)>\kappa.
\end{equation}
Let $\gamma_a:[a,t_0]\to (0,+\infty)$ be a restriction of $\gamma$ to
the interval $[a,t_0]$. From the proof of Lemma~\ref{l2.7.1} it
follows that \eqref{2.98} and \eqref{2.99} hold.
Therefore, according to Lemma~\ref{l2.5}, from \eqref{2.62} we obtain 
\begin{equation}
\label{2.102}
u'(t)\geq \ell_0(u)(t)-\frac{\ell_1(\gamma)(t)}{\gamma(t)}u(t)\forae
t\in [a,t_0].
\end{equation}
Thus, in view of \eqref{2.5}, and \eqref{2.102}, all
the assumptions of Lemma~\ref{l2.7} with $p=\ell_1(\gamma)/\gamma$ are fulfilled. Therefore,
\eqref{2.93} holds with 
\begin{equation}
\label{2.103}
M_{\sigma}(a,t_0)=\max\left\{\int_{\sigma(t)}^t\frac{\ell_1(\gamma)(s)}{\gamma(s)}ds:t\in
[a,t_0]\right\}.
\end{equation}
However, \eqref{2.6}, \eqref{2.7}, and \eqref{2.103} imply
$M_{\sigma}(a,t_0)\leq M_{\sigma}<+\infty$, and so from \eqref{2.93}
it follows that
\begin{equation}
\label{2.104}
u(t_0)\geq u(\tau)e^{-M_{\sigma}}.
\end{equation}
Now \eqref{2.101} and \eqref{2.104} contradicts \eqref{2.96}.
\end{proof}

\subsection{A priori estimates}

The following lemma can be found in \cite{vb}. We formulate it in a
form suitable for us.

\begin{lemma}
\label{l2.9}
Let $a,b\in\RR$, $a<b$, $t_0\in[a,b]$, and let the problem \eqref{2.52} have on
$[a,b]$ only the trivial solution. Let, moreover, there exist $\rho>0$
such that every function $u\in\acabr$ satisfying 
\begin{gather}
\label{2.105}
u'(t)=\ell_0(u)(t)-\ell_1(u)(t)+\lambda f(u)(t)\forae \tin,\\
\label{2.106}
u(t_0)=\lambda c
\end{gather}
for some $\lambda\in(0,1)$, admits the estimate
\begin{equation}
\label{2.107}
\|u\|_{a,b}\leq \rho.
\end{equation}
Then the problem \eqref{1.1}, \eqref{1.2} has at least one solution on $[a,b]$. 
\end{lemma}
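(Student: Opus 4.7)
The plan is to recast the family \eqref{2.105}, \eqref{2.106} as a fixed point problem in $\cabr$ and apply the Leray--Schauder degree. First I would introduce the linear operator
$$
\Lambda:\acabr\to L([a,b];\RR)\times\RR,\qquad \Lambda(u)=\bigl(u'-\ell_0(u)+\ell_1(u),\,u(t_0)\bigr).
$$
Since $u\mapsto(\ell_0(u)-\ell_1(u),0)$ is a compact perturbation of $u\mapsto(u',u(t_0))$, the operator $\Lambda$ is Fredholm of index zero. The hypothesis that \eqref{2.52} admits on $[a,b]$ only the trivial solution is exactly the statement $\ker\Lambda=\{0\}$, so $\Lambda$ is a topological isomorphism with bounded inverse $\Lambda^{-1}$.

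Next, for each $\lambda\in[0,1]$ I would define
$$
T_\lambda:\cabr\to\cabr,\qquad T_\lambda(u)=\Lambda^{-1}\bigl(\lambda f(u),\,\lambda c\bigr).
$$
A function $u\in\acabr$ satisfies \eqref{2.105}, \eqref{2.106} iff $u=T_\lambda(u)$, and a fixed point of $T_1$ is a solution of \eqref{1.1}, \eqref{1.2}. The local Carath\'eodory conditions on $f$ yield uniform integrable bounds on $\{f(u):\|u\|_{a,b}\le R\}$, so by Arzel\`a--Ascoli the restriction of $T_\lambda$ to any bounded subset of $\cabr$ has relatively compact image; joint continuity of $(\lambda,u)\mapsto T_\lambda(u)$ follows from the continuity of $f$ together with the boundedness of $\Lambda^{-1}$. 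Thus $\{T_\lambda\}_{\lambda\in[0,1]}$ is a compact continuous homotopy on $\cabr$, and $T_0\equiv 0$.

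Now fix $R>\rho$ and consider the ball $B_R=\{u\in\cabr:\|u\|_{a,b}<R\}$. The a priori bound $\|u\|_{a,b}\le\rho$ assumed in the statement, together with $T_0\equiv 0\in B_R$, forces $u\neq T_\lambda(u)$ for every $\lambda\in[0,1]$ and every $u\in\partial B_R$. Hence the Leray--Schauder degree $\deg(I-T_\lambda,B_R,0)$ is well defined for all $\lambda\in[0,1]$, and by homotopy invariance equals $\deg(I-T_0,B_R,0)=\deg(I,B_R,0)=1$. In particular $\deg(I-T_1,B_R,0)\neq 0$, so $T_1$ has a fixed point in $B_R$, which is the desired solution of \eqref{1.1}, \eqref{1.2} on $[a,b]$.

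The step that needs the most care is verifying that $\Lambda$ is a topological isomorphism: from the triviality of solutions of the homogeneous problem \eqref{2.52} one must extract continuous dependence of the solution of $u'=\ell_0(u)-\ell_1(u)+g$, $u(t_0)=\alpha$ on the data $(g,\alpha)\in L([a,b];\RR)\times\RR$. Once the Fredholm alternative (or, under an additional Volterra structure, a direct contraction-mapping argument with an exponentially weighted norm on $\cabr$) furnishes this inverse, the compactness of $T_\lambda$ and the degree computation are routine.
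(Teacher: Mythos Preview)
Your argument is the standard Leray--Schauder continuation method, and it is correct in outline. Note that the paper does not prove this lemma at all: it is simply quoted from \cite{vb} (Kiguradze and P\accent23u\v{z}a), where the existence result is established by essentially the same topological scheme you describe. So there is no genuine divergence in approach.

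Two technical points deserve tightening. First, the hypothesis supplies the a~priori bound \eqref{2.107} only for $\lambda\in(0,1)$, so your assertion that $u\neq T_\lambda(u)$ on $\partial B_R$ for \emph{every} $\lambda\in[0,1]$ is not literally covered at $\lambda=1$. The clean fix is to note that $T_\lambda=\lambda T_1$ and invoke the Leray--Schauder nonlinear alternative: either $T_1$ has a fixed point in $\overline{B_R}$, or some $u\in\partial B_R$ satisfies $u=\lambda T_1(u)$ for some $\lambda\in(0,1)$; the latter is excluded by \eqref{2.107}.

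Second, the claim that $u\mapsto(\ell_0(u)-\ell_1(u),0)$ is a compact perturbation (hence $\Lambda$ is Fredholm of index zero) is not automatic for arbitrary bounded linear operators from $C$ to $L$. In the present paper it is the \emph{positivity} of $\ell_0,\ell_1$ that does the work: one has $|\ell_i(v)(t)|\le \ell_i(1)(t)\,\|v\|_{a,b}$, so for any bounded family $\{v_n\}$ the derivatives $(T_\lambda v_n)'$ are dominated by the fixed integrable function $[\ell_0(1)+\ell_1(1)]\cdot\text{const}+q_M$, giving equicontinuity and hence compactness of $T_\lambda$ via Arzel\`a--Ascoli. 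You should make this explicit rather than asserting compactness outright.
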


\begin{defn}
\label{d3.1}
Let $a,b\in\RR$, $a<b$. We say that a pair of
operators $(\ell_0,\ell_1)$ belongs to the set $\aab$ if there
exists $\rho_0>0$ such that, for any $q^*\in\labrp$
and $c^*\in\RR_+$, every function $u\in\acabr$ satisfying the
inequalities 
\begin{gather}
\label{3.1}
\big[u'(t)-\ell_0(u)(t)+\ell_1(u)(t)\big]\sgn u(t)\leq
q^*(t)\forae\tin,\\
\label{3.2}
0\leq u(a)\leq c^*
\end{gather}
admits the estimate
\begin{equation}
\label{3.3}
\|u\|_{a,b}\leq \rho_0\left(c^*+\int_a^b q^*(s)ds\right).
\end{equation}
\end{defn}

\begin{lemma}
\label{l2.10}
Let $a,b\in\RR$, $a<b$, and let $\ell_0,\ell_1\in\vb$. Then $(\ell_0,\ell_1)\in\aab$.
\end{lemma}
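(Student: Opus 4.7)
The plan is to reduce to a Gronwall-type estimate for $v := |u|$. From $u'(t)\sgn u(t) = v'(t)$ a.e.\ combined with \eqref{3.1}, and using positivity of $\ell_0$, $\ell_1$ (which yields $\ell_i(u)(t)\sgn u(t) \le |\ell_i(u)(t)| \le \ell_i(|u|)(t)$), one obtains
$$
v'(t)\le \widetilde{\ell}(v)(t)+q^*(t)\forae t\in[a,b],
$$
where $\widetilde{\ell}:=\ell_0+\ell_1$ is a positive linear continuous operator belonging to $\vb$. Put $\varphi:=\widetilde{\ell}(1)\in\labrp$ and $Q:=\int_a^b q^*(s)ds$. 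By the absolute continuity of the integral, choose $q\in(0,1)$ and a partition $a=t_0<t_1<\cdots<t_n=b$ with $\int_{t_k}^{t_{k+1}}\varphi(s)ds\le q$ for each $k$; here $n$ depends only on $\int_a^b\varphi$, hence only on $\ell_0,\ell_1,a,b$.

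Next I would introduce $\widetilde{M}(t):=\sup\{v(s):s\in[a,t]\}$ and bound $\widetilde{\ell}(v)(s)$ on $[t_k,t_{k+1}]$ by $\widetilde{M}(t_{k+1})\varphi(s)$. The Volterra property combined with positivity handles this: define the truncation $\bar v(t):=\min\{\vartheta(v)(t),\widetilde{M}(t_{k+1})\}$, which equals $\vartheta(v)$ on $(-\infty,t_{k+1}]$ while $0\le\bar v\le\widetilde{M}(t_{k+1})$ on $\RR$. The inclusion $\widetilde{\ell}\in\vb$, with the convention $\widetilde{\ell}(v)=\widetilde{\ell}(\vartheta(v))$, gives $\widetilde{\ell}(v)(s)=\widetilde{\ell}(\bar v)(s)$ for a.e.\ $s\le t_{k+1}$, and positivity then yields $\widetilde{\ell}(\bar v)(s)\le\widetilde{M}(t_{k+1})\varphi(s)$.

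Integrating $v'(t)\le\widetilde{\ell}(v)(t)+q^*(t)$ from $t_k$ to any $r\in[t_k,t_{k+1}]$ and applying this estimate produces
$$
v(r)\le\widetilde{M}(t_k)+\widetilde{M}(t_{k+1})\int_{t_k}^{t_{k+1}}\varphi(s)ds+Q\le\widetilde{M}(t_k)+q\,\widetilde{M}(t_{k+1})+Q.
$$
Taking the supremum over $r\in[a,t_{k+1}]$ gives $\widetilde{M}(t_{k+1})\le\widetilde{M}(t_k)+q\widetilde{M}(t_{k+1})+Q$, i.e.\
$$
\widetilde{M}(t_{k+1})\le(1-q)^{-1}\widetilde{M}(t_k)+(1-q)^{-1}Q.
$$
Starting from $\widetilde{M}(t_0)=u(a)\le c^*$ and iterating $n$ times yields $\|u\|_{a,b}=\widetilde{M}(b)\le\rho_0(c^*+Q)$ for an explicit $\rho_0$ depending only on $n$ and $q$, which is \eqref{3.3}.

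The main obstacle is the careful interplay between the Volterra/memory property of $\widetilde{\ell}$, the extension convention \eqref{1.4}, and the use of positivity to produce the pointwise bound $\widetilde{\ell}(v)(s)\le\widetilde{M}(t_{k+1})\varphi(s)$; without passing to the truncation $\bar v$ one cannot directly invoke positivity of $\widetilde{\ell}$, because $\vartheta(v)$ is controlled by $\widetilde{M}(t_{k+1})$ only on $(-\infty,t_{k+1}]$. Once this step is in place, the remainder is a routine iterative Gronwall argument, and the partition step is the standard device that turns the inequality $\int_{t_k}^{t_{k+1}}\varphi\le q<1$ into an invertible $(1-q)$-factor.
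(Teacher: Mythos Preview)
Your proof is correct, but it takes a somewhat more circuitous route than the paper's. The paper works directly with the running maximum $w(t)=\max\{|u(s)|:s\in[a,t]\}$ (your $\widetilde{M}$), observes that on the set where $w=|u|$ one has $w'=u'\sgn u$ and elsewhere $w'=0$, and then uses positivity together with Lemma~\ref{l2.1} (applied to the nondecreasing function $w$) to obtain the pointwise linear differential inequality
\[
w'(t)\le\big[\ell_0(1)(t)+\ell_1(1)(t)\big]w(t)+q^*(t)\quad\text{for a.e.\ }t\in[a,b].
\]
A single application of the classical Gronwall inequality then gives the clean constant $\rho_0=\exp\big(\int_a^b[\ell_0(1)+\ell_1(1)]\big)$. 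By contrast, you stay with $v=|u|$, which is not monotone, so you cannot invoke Lemma~\ref{l2.1} directly; your truncation $\bar v$ is precisely the device that replaces this step, but at the cost of only getting control on subintervals, which then forces the partition-and-iterate argument. In effect you are reproving a discrete Gronwall estimate by hand, whereas the paper's passage to the monotone majorant $w$ turns the problem into a standard scalar ODE inequality and lets continuous Gronwall do all the work in one line. Both approaches exploit the Volterra property in the same essential way; the paper's is just more economical.
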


\begin{proof}
Let $q^*\in\labrp$, $c^*\in\RR_+$, and let $u\in\acabr$ satisfy \eqref{3.1} and \eqref{3.2}. Put 
\begin{equation}
\label{2.111}
w(t)=\max\big\{|u(s)|:s\in[a,t]\big\}\for \tin.
\end{equation}
Then, in view of \eqref{3.1} and \eqref{2.111} we have that $w\in\acabrp$,
\begin{gather}
\label{2.112}
w'(t)\geq 0\forae \tin,\\
\label{2.113}
w(t)\geq |u(t)|\for \tin,
\end{gather}
and
\begin{equation}
\label{2.114}
w(a)\leq c^*.
\end{equation}
Put
$$
A=\big\{t\in [a,b]:w(t)=|u(t)|\big\}.
$$
Then
\begin{equation}
\label{2.115}
w'(t)=
\begin{cases}
u'(t)\sgn u(t)&\casforae t\in A,\\
0&\casforae \tin\setminus A.
\end{cases}
\end{equation}
Furthermore, in view of \eqref{2.113}, we have
\begin{equation}
\label{2.116}
(-1)^i\ell_i(u)(t)\sgn u(t)\leq \ell_i(w)(t)\forae \tin\quad (i=0,1).
\end{equation}
Moreover, on account of \eqref{2.112} and the inclusions
$\ell_0,\ell_1\in\vb$, according to Lemma~\ref{l2.1} (with $t_0=b$,
$\ell=\ell_i$, $\alpha\equiv 1$, $\beta=\vartheta(w)$, $\vartheta$
given by \eqref{1.4}) we find
\begin{equation}
\label{2.117}
\ell_i(w)(t)\leq \ell_i(1)(t)w(t)\forae \tin\quad (i=0,1).
\end{equation}
Thus from \eqref{3.1}, on account of 
\eqref{2.115}--\eqref{2.117}, we get 
\begin{equation}
\label{2.118}
w'(t)\leq \big[\ell_0(1)(t)+\ell_1(1)(t)\big]w(t)+q^*(t)\forae \tin.
\end{equation}
Now, from \eqref{2.118} we obtain 
\begin{multline}
\label{2.119}
w(b)\leq
\exp\left(\int_a^b\big[\ell_0(1)(s)+\ell_1(1)(s)\big]ds\right)\\
\times\left(w(a)+\int_a^b q^*(s)\exp\left(-\int_a^s
    \big[\ell_0(1)(\xi)+\ell_1(1)(\xi)\big]d\xi\right)ds\right).
\end{multline}
However, from \eqref{2.119}, in view of \eqref{2.112}--\eqref{2.114},
it follows that \eqref{3.3} holds with 
$$
\rho_0=\exp\left(\int_a^b \big[\ell_0(1)(s)+\ell_1(1)(s)\big]ds\right).
$$
\end{proof}

\begin{lemma}
\label{l2.11} 
Let $a\in\RR$, $a<t_0$, $\ell_0,\ell_1,f\in \vtn$, and let \eqref{2.57}
be fulfilled. Let, moreover,
\begin{gather}
\label{2.120}
f(v)(t)\geq 0\forae t\in [a,t_0],\quad v\in\catr,\\
\label{2.121}
f(v)(t)=0 \forae t\in [a,t_0],\quad -v\in\catrp.
\end{gather}
Then, every solution $u$ to \eqref{1.1} on $[a,t_0]$ satisfying
\begin{equation}
\label{2.122}
u(t_0)\geq 0,
\end{equation}
admits also the inequality \eqref{2.59}.
\end{lemma}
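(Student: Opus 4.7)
\emph{Plan.} The strategy is to argue by contradiction, exploiting \eqref{2.121} and the Volterra property of $f$ to reduce the problem on a suitably chosen sub-interval to the homogeneous equation $u'=\ell_0(u)-\ell_1(u)$, then to invoke Lemma~\ref{l2.2}.

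The central auxiliary observation, which I would establish first, is the following consequence of \eqref{2.121} together with $f\in\vtn$: \emph{if $\tau\in[a,t_0]$ and $u(t)\leq 0$ for all $t\in[a,\tau]$, then $f(u)(t)=0$ for a.e.\ $t\in[a,\tau]$.} Indeed, define $\tilde u\in C([a,t_0];\RR)$ by $\tilde u(t)=u(t)$ on $[a,\tau]$ and $\tilde u(t)=u(\tau)$ on $[\tau,t_0]$; since $u(\tau)\leq 0$ one has $\tilde u\leq 0$ on $[a,t_0]$, so \eqref{2.121} gives $f(\tilde u)(t)=0$ a.e.\ on $[a,t_0]$. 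The $\vartheta$-extensions of $u$ and $\tilde u$ agree on $(-\infty,\tau]$ (both equal $u(a)$ for $s<a$), hence by the Volterra property, $f(u)(t)=f(\tilde u)(t)=0$ for a.e.\ $t\in[a,\tau]$.

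Next I would rule out the case $u(a)<0$. Continuity combined with $u(t_0)\geq 0$ produces a maximal $\tau\in(a,t_0]$ such that $u<0$ on $[a,\tau)$, and then $u(\tau)=0$ and $u\leq 0$ throughout $[a,\tau]$. The auxiliary observation yields $f(u)=0$ a.e.\ on $[a,\tau]$, so $u'=\ell_0(u)-\ell_1(u)$ a.e.\ on $[a,\tau]$ with $u(\tau)=0$. Because \eqref{2.57} and the inclusion $\ell_1\in\vtn$ give $-\ell_1\in\mathcal{S}_{a\tau}(\tau)$ (directly when $\tau=t_0$, and via Remark~\ref{r2.8} when $\tau<t_0$), Lemma~\ref{l2.2} applied on $[a,\tau]$ forces $u\equiv 0$ on $[a,\tau]$, contradicting $u(a)<0$. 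Consequently $u(a)\geq 0$.

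The main obstacle is the remaining case, where $u(a)\geq 0$ but $u$ still takes negative values in the interior. Let $\tau_*=\inf\{t\in[a,t_0]:u(t)<0\}$ and $\tau^*=\inf\{t>\tau_*:u(t)\geq 0\}$; then $\tau_*<\tau^*\leq t_0$, $u\leq 0$ on $[\tau_*,\tau^*]$ with $u(\tau_*)=u(\tau^*)=0$, $u\not\equiv 0$ on this interval, and $u\geq 0$ on $[a,\tau_*]$. One would like to repeat the Step~1 argument on $[\tau_*,\tau^*]$, but the Volterra past of $\ell_0$ and $f$ on $(-\infty,\tau_*]$ contains the non-negative values of $u$, so the auxiliary observation does not give $f(u)=0$ there directly. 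My plan to circumvent this is to exploit the sign decomposition $u=u_+-u_-$ (with $u_+$ supported on $(-\infty,\tau_*]$ and $u_-$ on $[\tau_*,\tau^*]$ within the range of interest), use positivity of $\ell_0$ and $\ell_1$ combined with $f(u)\geq 0$ to derive a differential inequality for $u$ on $[\tau_*,\tau^*]$ of the form suitable for Lemma~\ref{l2.2} or Lemma~\ref{l2.3}, and conclude $u\equiv 0$ on $[\tau_*,\tau^*]$---the desired contradiction. The technical heart of the proof is precisely this control of the ``positive past'' contribution of $\ell_0(u)$ across $\tau_*$, and this is the step I expect to require the most care.
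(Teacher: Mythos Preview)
Your Steps~1 and~2 are essentially correct and match the paper's argument (the paper drops $\ell_0(u)\leq 0$ and applies $-\ell_1\in\satautau$ directly rather than invoking Lemma~\ref{l2.2}, but either route works).

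The gap is in Step~3. What you call the ``main obstacle'' is not an obstacle at all, and your vague plan to handle it via a sign decomposition is unnecessary. You have overlooked that hypothesis~\eqref{2.120} says $f(v)\geq 0$ for \emph{every} $v\in\catr$, not just for non-negative $v$. Hence the differential inequality
\[
u'(t)\geq \ell_0(u)(t)-\ell_1(u)(t)\quad\text{for a.e.\ }t\in[a,t_0]
\]
holds unconditionally, without any knowledge of the sign of $u$. The paper's key opening move, which you are missing, is to combine Propositions~\ref{p2.2}--\ref{p2.5} with~\eqref{2.57} to obtain $\ell_0-\ell_1\in\sata$. Once $u(a)\geq 0$ has been established (your Step~2), this inclusion together with the inequality above gives $u\geq 0$ on $[a,t_0]$ immediately from Definition~\ref{d2.2}. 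No separate analysis of an interior interval $[\tau_*,\tau^*]$ is required; the entire ``remaining case'' collapses in one line.
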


\begin{proof}
First note that, according to Propositions~\ref{p2.2}--\ref{p2.5}, in
view of \eqref{2.57} we have \eqref{2.51}. Let $u$ be a solution to
\eqref{1.1} on $[a,t_0]$ satisfying \eqref{2.122}. It is sufficient to
show that \eqref{2.58} holds, because then the assertion follows from
\eqref{2.51}, \eqref{2.58}, and \eqref{2.120}. Therefore, assume on
the contrary that  
\begin{equation}
\label{2.123}
u(a)<0.
\end{equation}
Then, in view of \eqref{2.122}, there exists $\tau\in(a,t_0]$ such that
\begin{equation}
\label{2.124}
u(t)<0\for t\in[a,\tau),\qquad u(\tau)=0.
\end{equation} 
Now, from \eqref{1.1}, on account of \eqref{2.121}, \eqref{2.124}, and
the inclusion $f\in \vtn$, we get
\begin{equation}
\label{2.125}
u'(t)=\ell_0(u)(t)-\ell_1(u)(t)\forae t\in[a,\tau].
\end{equation} 
However, \eqref{2.125}, in view of \eqref{2.124} and the inclusion
$\ell_0\in \vtn$, results in 
\begin{equation}
\label{2.126}
u'(t)\leq -\ell_1(u)(t)\forae t\in[a,\tau],\qquad u(\tau)=0.
\end{equation} 
According to Remark~\ref{r2.8}, the inclusion \eqref{2.57} yields \eqref{2.70},
which together with \eqref{2.126} implies 
\begin{equation}
\label{2.127}
u(t)\geq 0\for t\in[a,\tau].
\end{equation}
However, the inequality \eqref{2.127} contradicts \eqref{2.123}. 
\end{proof}

\begin{lemma}
\label{l2.12} 
Let $a\in\RR$, $a<t_0$, $\ell_0,\ell_1,f\in \vtn$, and let
$\ell_0-\ell_1\in\ptnplus$. Let, moreover, \eqref{2.120} and
\eqref{2.121} be fulfilled. Then, every solution $u$ to \eqref{1.1} on
$[a,t_0]$ satisfying \eqref{2.122} admits also the inequalities
\eqref{2.59} and
\begin{equation}
\label{2.128}
u'(t)\geq 0\forae t\in[a,t_0].
\end{equation}
\end{lemma}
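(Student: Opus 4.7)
The approach will mirror the proof of Lemma~\ref{l2.11}, but exchanges hypothesis \eqref{2.57} for the stronger inclusion $\ell_0-\ell_1\in\ptnplus$, which lets us upgrade the conclusion to include \eqref{2.128}. I begin by recording the ingredients: Proposition~\ref{p2.2} gives $\ell_0\in\sata$, and since $\ell_0-\ell_1\in\ptnplus$, Proposition~\ref{p2.7} yields $\ell_0-\ell_1\in\satac$. Moreover, since the property $\ell_0-\ell_1\in\ptnplus$ and the inclusion $\ell_0\in\vtn$ both restrict naturally to subintervals $(-\infty,\tau]$ with $\tau\in(a,t_0]$, Proposition~\ref{p2.6} applied on $[a,\tau]$ shows that the homogeneous problem $u'=\ell_0(u)-\ell_1(u)$, $u(\tau)=0$ admits only the trivial solution on $[a,\tau]$.

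The first step of the proof proper is to show $u(a)\geq 0$. Arguing by contradiction as in the proof of Lemma~\ref{l2.11}, if $u(a)<0$, continuity of $u$ together with \eqref{2.122} produces some $\tau\in(a,t_0]$ with $u(t)<0$ on $[a,\tau)$ and $u(\tau)=0$. To invoke \eqref{2.121} I would introduce $v(s)=\min\{u(s),0\}\in\catr$, which agrees with $u$ on $[a,\tau]$ and satisfies $-v\in\catrp$; using $f\in\vtn$ and the extension convention \eqref{1.4}, the memory property of $f$ gives $f(u)(t)=f(v)(t)=0$ for almost every $t\in[a,\tau]$. Consequently, on $[a,\tau]$ the solution $u$ satisfies the homogeneous equation with terminal value zero, and by the preceding paragraph this forces $u\equiv 0$ on $[a,\tau]$, contradicting $u(t)<0$ for $t\in[a,\tau)$.

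Once $u(a)\geq 0$ is established, both conclusions follow in a single stroke: by \eqref{2.120}, $f(u)(t)\geq 0$ for almost every $t\in[a,t_0]$, so \eqref{1.1} gives $u'(t)\geq\ell_0(u)(t)-\ell_1(u)(t)$ almost everywhere, and the inclusion $\ell_0-\ell_1\in\satac$ applied with initial value $u(a)\geq 0$ delivers both \eqref{2.59} and \eqref{2.128} at once. The main point demanding care is the Volterra bookkeeping in the middle step---verifying from the definition of $\vtn$ that replacing $u$ by its negative-part truncation $v$ does not alter $f(u)$ on $[a,\tau]$---but the remainder is a direct assembly of Propositions~\ref{p2.2}, \ref{p2.6} and \ref{p2.7}.
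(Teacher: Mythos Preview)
Your proposal is correct and follows essentially the same architecture as the paper: establish $\ell_0-\ell_1\in\satac$ via Propositions~\ref{p2.2} and~\ref{p2.7}, prove $u(a)\geq 0$ by contradiction, then apply $\satac$ together with \eqref{2.120} to obtain both \eqref{2.59} and \eqref{2.128}. The only minor divergence is in the contradiction step: having reduced to the homogeneous equation on $[a,\tau]$ with $u(\tau)=0$, you invoke Proposition~\ref{p2.6} (unique solvability) to force $u\equiv 0$ on $[a,\tau]$, whereas the paper invokes Proposition~\ref{p2.7} on $[a,\tau]$ to obtain $\ell_0-\ell_1\in\satauac$, whence monotonicity gives $u(\tau)\leq u(a)<0$, contradicting $u(\tau)=0$. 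Both routes are valid and rely on the same restriction of $\ptnplus$ and $\vtn$ to the subinterval; your explicit use of the truncation $v=\min\{u,0\}$ to justify the application of \eqref{2.121} is simply a spelled-out version of what the paper records tersely.
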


\begin{proof}
First note that, according to Propositions~\ref{p2.2} and \ref{p2.7},
we have \eqref{2.53}. Let $u$ be a solution to \eqref{1.1} on
$[a,t_0]$ satisfying \eqref{2.122}. It is sufficient to show that 
\eqref{2.58} holds, because then the assertion follows from
\eqref{2.53}, \eqref{2.58}, and \eqref{2.120}. Therefore, assume on 
the contrary that \eqref{2.123} holds. Then, in view of \eqref{2.122}, 
there exists $\tau\in(a,t_0]$ such that \eqref{2.124} is
satisfied. Now, from \eqref{1.1}, on account of \eqref{2.121},
\eqref{2.124}, and the inclusion $f\in \vtn$, we get \eqref{2.125}.
However, the inclusion $\ell_0-\ell_1\in\ptnplus$ yields
$\ell_0-\ell_1\in\ptplus$. Moreover, the inclusion $\ell_0\in\vtn$
implies $\ell_0\in\vtau$, and so, according to
Proposition~\ref{p2.2}, we have $\ell_0\in\sataua$. Consequently,
according to Proposition~\ref{p2.7} (with $t_0=\tau$), we have
$\ell_0-\ell_1\in\satauac$, which together with \eqref{2.123} and
\eqref{2.125}, implies $u(\tau)\leq u(a)<0$. However, the latter
contradicts \eqref{2.124}.  
\end{proof}

\begin{lemma}
\label{l2.13}
Let $a\in\RR$, $a<t_0$, $\ell_0,\ell_1,f\in \vtn$, \eqref{2.57} hold,
and let there exist $q\in\latrp$ such that 
\begin{equation}
\label{2.108}
f(v)(t)\sgn v(t)\leq q(t) \forae t\in [a,t_0],\quad
v\in\catr
\end{equation}
is fulfilled. Let, moreover, \eqref{2.29}, \eqref{2.120}, and
\eqref{2.121} hold. Then, for every $c\in\RR_+$, the problem
\eqref{1.1}, \eqref{1.2} has at least one solution $u$ on $[a,t_0]$
satisfying \eqref{2.59}. 
\end{lemma}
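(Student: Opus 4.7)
My plan is to invoke the topological-degree type existence principle of Lemma~\ref{l2.9} on $[a,t_0]$ (taking $b=t_0$). Two ingredients are required: that the homogeneous problem \eqref{2.52} has only the trivial solution on $[a,t_0]$, and a uniform a priori bound \eqref{2.107} for any solution $u\in\acatr$ of the parameter family \eqref{2.105}, \eqref{2.106}, $\lambda\in(0,1)$. The unique solvability is immediate: the inclusions $\ell_0,\ell_1\in\vtn$ together with \eqref{2.57} are precisely the hypotheses of Lemma~\ref{l2.2}.

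For the a priori bound, fix $\lambda\in(0,1)$ and such a $u$. Since $\lambda f$ inherits membership in $\vtn$ and the sign properties \eqref{2.120}, \eqref{2.121} from $f$, and since $u(t_0)=\lambda c\geq 0$, Lemma~\ref{l2.11} (applied with $\lambda f$ in place of $f$) yields $u(t)\geq 0$ on $[a,t_0]$. Then \eqref{2.120} gives $f(u)(t)\geq 0$ a.e., so \eqref{2.105} implies the differential inequality $u'(t)\geq \ell_0(u)(t)-\ell_1(u)(t)$ a.e. on $[a,t_0]$. Together with $\ell_1\in\vtn$, \eqref{2.29} and \eqref{2.57}, Lemma~\ref{l2.3} therefore forces $u(a)=\min\{u(t):t\in[a,t_0]\}$, so in particular $u(a)\leq u(t_0)=\lambda c\leq c$.

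With $u\geq 0$ and $0\leq u(a)\leq c$ established, I would use \eqref{2.108} to see that
$$
\big[u'(t)-\ell_0(u)(t)+\ell_1(u)(t)\big]\sgn u(t)=\lambda f(u)(t)\sgn u(t)\leq q(t)\forae t\in[a,t_0],
$$
so that \eqref{3.1}, \eqref{3.2} hold with $q^*=q$ and $c^*=c$. Lemma~\ref{l2.10} then supplies $\rho_0>0$, depending only on $\ell_0,\ell_1$, for which $\|u\|_{a,t_0}\leq\rho_0\bigl(c+\int_a^{t_0}q(s)\,ds\bigr)=:\rho$, uniformly in $\lambda$. Lemma~\ref{l2.9} now produces a solution of \eqref{1.1}, \eqref{1.2} on $[a,t_0]$, and one last application of Lemma~\ref{l2.11} to this solution (with the original $f$ and $c$) yields \eqref{2.59}.

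The main conceptual step is the interplay at the end of the second paragraph: Lemma~\ref{l2.11} gives non-negativity, which then activates Lemma~\ref{l2.3} to transfer the terminal datum $u(t_0)=\lambda c$ into an \emph{initial} upper bound $u(a)\leq c$ that Lemma~\ref{l2.10} requires in order to close the a priori estimate uniformly in $\lambda$. Once this chain is in place the rest is routine bookkeeping with the finite-interval lemmas already proved.
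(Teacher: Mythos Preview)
Your proof is correct and follows essentially the same route as the paper's own argument: Lemma~\ref{l2.2} for unique solvability of the homogeneous problem, then Lemma~\ref{l2.11} for non-negativity, Lemma~\ref{l2.3} to convert the terminal bound into $u(a)\leq c$, Lemma~\ref{l2.10} for the uniform a~priori estimate, and finally Lemma~\ref{l2.9} together with one more appeal to Lemma~\ref{l2.11} for \eqref{2.59}. The only cosmetic difference is that you make explicit the substitution $f\mapsto\lambda f$ when invoking Lemma~\ref{l2.11}, which the paper leaves implicit.
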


\begin{proof}
Let $c\in\RR_+$ be arbitrary but fixed. 
According to Lemmas~\ref{l2.2}, \ref{l2.9}, and \ref{l2.11}, it is
sufficient to show that there exists $\rho>0$ such that every function
$u\in\acatr$ satisfying \eqref{2.106} and 
\begin{equation}
\label{2.105.atn}
u'(t)=\ell_0(u)(t)-\ell_1(u)(t)+\lambda f(u)(t)\forae t\in [a,t_0],
\end{equation}
for some $\lambda\in (0,1)$, admits the estimate 
\begin{equation}
\label{2.107.atn}
\|u\|_{a,t_0}\leq \rho.
\end{equation}
Let, therefore, $\lambda\in(0,1)$ and let
$u\in\acatr$ satisfy \eqref{2.106} and \eqref{2.105.atn}. Then, in view of
\eqref{2.120} we have \eqref{2.62} and, according to Lemma~\ref{l2.11}
we have \eqref{2.59}. Thus, on account of \eqref{2.29}, \eqref{2.57},
\eqref{2.59}, and \eqref{2.62}, all the assumptions of
Lemma~\ref{l2.3} are fulfilled. Therefore, \eqref{2.100}
holds. Finally, according to Lemma~\ref{l2.10}, in view of
\eqref{2.59}, \eqref{2.100}, \eqref{2.106}, \eqref{2.108}, and
\eqref{2.105.atn}, there exists $\rho_0>0$ such that
$$
\|u\|_{a,t_0}\leq \rho_0\left(c+\int_a^{t_0} q(s)ds\right)
$$
holds. Consequently, \eqref{2.107.atn} is fulfilled with 
$\rho=\rho_0\left(c+\int_a^{t_0}q(s)ds\right)$.
\end{proof}

\begin{lemma}
\label{l2.14}
Let $a\in\RR$, $a<t_0$, $\ell_0,\ell_1,f\in \vtn$,
$\ell_0-\ell_1\in\ptnplus$. Let, moreover, \eqref{2.120} and
\eqref{2.121} be satisfied. Then, for every $c\in\RR_+$, the problem
\eqref{1.1}, \eqref{1.2} has at least one solution $u$ on $[a,t_0]$
satisfying \eqref{2.59} and \eqref{2.128}.
\end{lemma}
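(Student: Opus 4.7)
The plan is to apply the continuation-type result Lemma~\ref{l2.9} on the interval $[a,t_0]$ (taking $b=t_0$). Two ingredients are required: uniqueness for the homogeneous problem \eqref{2.52}, and a uniform a priori bound for solutions of the parametrized family \eqref{2.105.atn}, \eqref{2.106}. The structure closely parallels the proof of Lemma~\ref{l2.13}, but the inclusion $\ell_0-\ell_1\in\ptnplus$ replaces the differential-inequality assumption \eqref{2.57}, so Proposition~\ref{p2.6} and Lemma~\ref{l2.12} will be used in place of Lemma~\ref{l2.2} and Lemma~\ref{l2.11}.

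First, I would verify uniqueness. Since $\ell_0\in\vtn$, Proposition~\ref{p2.2} gives $\ell_0\in\sata$. Combined with the hypothesis $\ell_0-\ell_1\in\ptnplus$, Proposition~\ref{p2.6} implies that \eqref{2.52} admits only the trivial solution on $[a,t_0]$. Next, I would derive the a priori estimate: fix an arbitrary $\lambda\in(0,1)$ and let $u\in\acatr$ solve \eqref{2.105.atn}, \eqref{2.106}. The operator $\widetilde{f}=\lambda f$ still belongs to $\vtn$ and inherits \eqref{2.120} and \eqref{2.121}, since multiplication by a positive constant preserves non-negativity and vanishing on nonpositive functions. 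Hence Lemma~\ref{l2.12}, applied with $\widetilde{f}$ in place of $f$ and $\lambda c\in\RR_+$ in place of $c$, guarantees that $u$ satisfies both \eqref{2.59} and \eqref{2.128} on $[a,t_0]$. Since $u$ is nondecreasing with $u(t_0)=\lambda c$, this forces $0\leq u(t)\leq \lambda c\leq c$ for all $t\in[a,t_0]$, and therefore $\|u\|_{a,t_0}\leq c$. Thus \eqref{2.107.atn} holds uniformly in $\lambda$, for instance with $\rho=c+1$.

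With both hypotheses of Lemma~\ref{l2.9} verified, we conclude that the problem \eqref{1.1}, \eqref{1.2} has at least one solution $u\in\acatr$. A final invocation of Lemma~\ref{l2.12} (now with $\lambda=1$) shows that this solution satisfies \eqref{2.59} and \eqref{2.128}, completing the proof. The only point requiring care—though ultimately routine—is checking that Lemma~\ref{l2.12} applies to the perturbed operator $\lambda f$; this is immediate because both structural conditions on $f$ are scale-invariant under multiplication by $\lambda>0$, and the Volterra property $\lambda f\in\vtn$ is obvious.
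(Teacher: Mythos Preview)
Your proof is correct and follows essentially the same route as the paper: both use Proposition~\ref{p2.2} and Proposition~\ref{p2.6} for uniqueness of the homogeneous problem, then Lemma~\ref{l2.12} to obtain \eqref{2.59} and \eqref{2.128} for the parametrized family, yielding the a~priori bound $\|u\|_{a,t_0}\leq c$ needed for Lemma~\ref{l2.9}. Your choice $\rho=c+1$ is actually slightly cleaner than the paper's $\rho=c$, since it covers the case $c=0$ where Lemma~\ref{l2.9} requires $\rho>0$.
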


\begin{proof}
Let $c\in\RR_+$ be arbitrary but fixed. 
According to Propositions~\ref{p2.2} and \ref{p2.6}, and
Lemmas~\ref{l2.9} and \ref{l2.12}, it is sufficient to show that there
exists $\rho>0$ such that every function $u\in\acatr$ satisfying
\eqref{2.106} and \eqref{2.105.atn} for some $\lambda\in (0,1)$, admits the
estimate \eqref{2.107.atn}. Let, therefore, $\lambda\in (0,1)$ and let $u\in\acatr$ satisfy
\eqref{2.106} and \eqref{2.105.atn}. Then, according to Lemma~\ref{l2.12}
we have \eqref{2.59} and \eqref{2.128}. Therefore, on account of
\eqref{2.106}, the estimate \eqref{2.107.atn} is fulfilled with $\rho=c$.
\end{proof}

\begin{lemma}
\label{l3.1}
Let $c\in\RR_+$, $b\in\RR$, $b>t_0$, and let
$(\ell_0,\ell_1)\in\atnb$. Let, moreover, 
\begin{equation}
\label{3.4}
f(v)(t)\sgn v(t)\leq q(t,\|v\|_{t_0,b}) \mforae t\in [t_0,b],\quad
v\in\ctbr,\quad 0\leq v(t_0)\leq c,
\end{equation}
where $q\in\ktnbrp$ satisfies \eqref{3.5}.
Then the problem \eqref{1.1}, \eqref{1.2} has at least one solution on $[t_0,b]$.
\end{lemma}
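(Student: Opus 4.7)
The plan is to apply Lemma~\ref{l2.9} with $a=t_0$. For this, two things must be verified: first, that the linear homogeneous problem \eqref{2.52} admits only the trivial solution on $[t_0,b]$; second, that there is an a~priori bound $\rho>0$ valid for every solution of the family \eqref{2.105}, \eqref{2.106} on $[t_0,b]$ with parameter $\lambda\in(0,1)$. Both are obtained by unfolding the hypothesis $(\ell_0,\ell_1)\in\atnb$, i.e., Definition~\ref{d3.1}, with a judicious choice of $q^*$ and $c^*$.

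For the uniqueness of the trivial solution, let $u\in\actbr$ solve \eqref{2.52} on $[t_0,b]$. Then $u(t_0)=0$, and identically
$$
\bigl[u'(t)-\ell_0(u)(t)+\ell_1(u)(t)\bigr]\sgn u(t)=0\forae t\in[t_0,b].
$$
Applying Definition~\ref{d3.1} with $q^*\equiv 0$ and $c^*=0$ yields $\|u\|_{t_0,b}\leq \rho_0\cdot 0=0$, so $u\equiv 0$.

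For the a~priori estimate, fix $\lambda\in(0,1)$ and let $u\in\actbr$ satisfy \eqref{2.105}, \eqref{2.106} on $[t_0,b]$. Since $u(t_0)=\lambda c\in[0,c]$, the hypothesis \eqref{3.4} gives
$$
\bigl[u'(t)-\ell_0(u)(t)+\ell_1(u)(t)\bigr]\sgn u(t)=\lambda f(u)(t)\sgn u(t)\leq q\bigl(t,\|u\|_{t_0,b}\bigr)\forae t\in[t_0,b].
$$
Since $q\in\ktnbrp$, the function $q(\cdot,\|u\|_{t_0,b})$ belongs to $L([t_0,b];\RR_+)$. Applying Definition~\ref{d3.1} once more, now with $q^*(t)=q(t,\|u\|_{t_0,b})$ and $c^*=c$, we obtain
$$
\|u\|_{t_0,b}\leq \rho_0\left(c+\int_{t_0}^b q\bigl(s,\|u\|_{t_0,b}\bigr)\,ds\right),
$$
where $\rho_0>0$ depends only on $(\ell_0,\ell_1)$ and $b$, not on $\lambda$ or $u$. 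By hypothesis \eqref{3.5}, there exists $R>0$ such that $\int_{t_0}^b q(s,x)\,ds\leq x/(2\rho_0)$ for all $x\geq R$. Setting $r=\|u\|_{t_0,b}$, if $r\geq R$ then $r\leq \rho_0 c+r/2$, hence $r\leq 2\rho_0 c$. In either case $r\leq \rho:=\max\{R,2\rho_0 c\}$, which is the uniform bound required by Lemma~\ref{l2.9}. That lemma then yields a solution to \eqref{1.1}, \eqref{1.2} on $[t_0,b]$.

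The only non-routine point is the final bootstrap step: closing the self-referential inequality $r\leq \rho_0(c+\int_{t_0}^b q(s,r)\,ds)$ into a $\lambda$-independent bound. This is exactly where the sublinear growth condition \eqref{3.5} is used, and it is the sole reason that assumption appears in the hypotheses. Everything else is a direct invocation of the already-packaged machinery in Definition~\ref{d3.1} and Lemma~\ref{l2.9}.
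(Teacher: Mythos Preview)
Your proof is correct and follows essentially the same route as the paper's: both verify the hypotheses of Lemma~\ref{l2.9} by using Definition~\ref{d3.1} first with $q^*\equiv 0$, $c^*=0$ to kill the homogeneous problem, and then with $q^*(t)=q(t,\|u\|_{t_0,b})$, $c^*=c$ to obtain the self-referential estimate, closed via \eqref{3.5}. The paper packages the bootstrap slightly differently---it fixes $\rho>2\rho_0 c$ with $\int_{t_0}^b q(s,x)\,ds<x/(2\rho_0)$ for $x>\rho$ and argues by contradiction---but this is the same computation as your $\rho=\max\{R,2\rho_0 c\}$.
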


\begin{proof}
First note that due to the inclusion $(\ell_0,\ell_1)\in\atnb$, the
problem \eqref{2.52} has on $[t_0,b]$ only the trivial solution.

Let $\rho_0$ be the number appearing in
Definition~\ref{d3.1}. According to \eqref{3.5} there exists
$\rho>2c\rho_0$ such that
\begin{equation}
\label{3.6}
\frac{1}{x}\int_{t_0}^b q(s,x)ds<\frac{1}{2\rho_0}\for x>\rho.
\end{equation}
Now assume that a function $u\in\actbr$ satisfies \eqref{2.106} and
\begin{equation}
\label{2.105.tnb}
u'(t)=\ell_0(u)(t)-\ell_1(u)(t)+\lambda f(u)(t)\forae t\in [t_0,b]
\end{equation}
for some $\lambda\in (0,1)$. Then, according to \eqref{3.4} we obtain
that 
\begin{gather}
\label{3.1.tnb}
\big[u'(t)-\ell_0(u)(t)+\ell_1(u)(t)\big]\sgn u(t)\leq
q(t,\|u\|_{t_0,b})\forae t\in [t_0,b],\\
\label{3.2.tnb}
0\leq u(t_0)\leq c.
\end{gather}
Hence, by the inclusion $(\ell_0,\ell_1)\in\atnb$ and \eqref{3.6}, we get the estimate
\begin{equation}
\label{2.107.tnb}
\|u\|_{t_0,b}\leq \rho.
\end{equation}

Since $\rho$ depends neither on $u$ nor on $\lambda$, it follows from
Lemma~\ref{l2.9} that the problem \eqref{1.1}, \eqref{1.2} has at
least one solution on $[t_0,b]$.
\end{proof}

\subsection{Existence of solutions defined on half-lines and on $\RR$}

To formulate the following lemma we need to introduce some notation.
Let $\kappa>0$, $I\subseteq\RR$ be a closed interval. Then, for every
continuous function $v:I\to\RR$, we put
\begin{equation}
\label{2.130}
\overline{f}(v)(t)\stackrel{def}{=}f(\psi(\vartheta(v)))(t)\forae t\in\RR,
\end{equation}
where $\vartheta$ is given by \eqref{1.4} and 
\begin{equation}
\label{2.131}
\psi(v)(t)=
\begin{cases}
\kappa&\casif v(t)>\kappa,\\
v(t)&\casif 0\leq u(t)\leq \kappa,\\
0&\casif v(t)<0
\end{cases}
\for t\in\RR.
\end{equation}
Note that $\overline{f}\in \vtn$ provided $f\in \vtn$. Let
$(a_n)_{n=1}^{+\infty}$ be a sequence of real numbers such that 
\begin{equation}
\label{2.132}
a_n<t_0,\qquad
\lim_{n\to +\infty} a_n=-\infty,
\end{equation}
and consider the auxiliary equation
\begin{equation}
\label{2.133}
u'(t)=\ell_0(u)(t)-\ell_1(u)(t)+\overline{f}(u)(t).
\end{equation}

\begin{lemma}
\label{l2.15}
Let $\ell_0,\ell_1\in \vtn$. Let, moreover, there exist $\kappa>0$ and $c\in[0,\kappa]$ such
that, for every $n\in\NN$, the problem \eqref{2.133}, \eqref{1.2}
has a solution $u_n$ on the interval $[a_n,t_0]$ satisfying
\begin{equation}
\label{2.134}
0\leq u_n(t)\leq \kappa\for t\in[a_n,t_0].
\end{equation}
Then \eqref{1.1}, \eqref{1.2} has at least one solution $u$ on
$(-\infty,t_0]$ satisfying \eqref{2.8}. If, in addition,
\begin{equation}
\label{2.200}
u'_n(t)\geq 0\forae t\in[a_n,t_0],\quad n\in\NN
\end{equation}
then \eqref{1.1}, \eqref{1.2} has at least one solution $u$ on
$(-\infty,t_0]$ satisfying \eqref{2.8} and \eqref{2.9}.
\end{lemma}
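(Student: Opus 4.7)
The plan is to combine a uniform bound on the derivatives $u_n'$ with an Arzelà--Ascoli compactness argument on an exhaustion of $(-\infty,t_0]$ by compact intervals, then pass to the limit using the continuity of $\ell_0,\ell_1,f$ from $\cloc$ into $\lloc$.

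First, I would observe that under the hypothesis $0\leq u_n(t)\leq \kappa$ on $[a_n,t_0]$, the truncation becomes trivial: $\vartheta(u_n)\in [0,\kappa]$ on all of $\RR$ (its boundary values $u_n(a_n)$ and $u_n(t_0)=c$ both lie in $[0,\kappa]$), hence $\psi(\vartheta(u_n))=\vartheta(u_n)$ and therefore $\overline{f}(u_n)(t)=f(u_n)(t)$ for almost every $t\in\RR$. Consequently each $u_n$ is actually a solution of \eqref{1.1} on $[a_n,t_0]$ with $u_n(t_0)=c$. The positivity of $\ell_0,\ell_1$ gives the pointwise bound
\begin{equation*}
0\leq \ell_i(u_n)(t)\leq \kappa\,\ell_i(1)(t)\quad\text{a.e.\ } t\leq t_0\ (i=0,1),
\end{equation*}
while the local Carath\'eodory condition on $f$ yields a locally integrable $q_\kappa$ with $|f(u_n)(t)|\leq q_\kappa(t)$ a.e., uniformly in $n$. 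Thus $|u_n'(t)|\leq \kappa[\ell_0(1)(t)+\ell_1(1)(t)]+q_\kappa(t)$, which is locally integrable and $n$-independent. This makes $\{u_n\}$ uniformly bounded and equi-absolutely-continuous on every compact $[A,t_0]$ once $a_n\leq A$.

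Next I would fix an exhausting sequence $A_k\to -\infty$, apply Arzel\`a--Ascoli on $[A_1,t_0]$ to extract a subsequence converging uniformly, then on $[A_2,t_0]$ to extract a further subsequence, and so on. A standard diagonal extraction produces a subsequence (still denoted $u_n$) and a continuous function $u:(-\infty,t_0]\to[0,\kappa]$ with $u_n\to u$ uniformly on every compact subinterval of $(-\infty,t_0]$ and $u(t_0)=c$. I would then check that $\vartheta(u_n)\to \vartheta(u)$ in the topology of $\cloc$: for a compact $[\alpha,\beta]$ with $\beta\leq t_0$, pick $n$ so large that $a_n<\alpha$ and use uniform convergence of $u_n$ to $u$ on $[\alpha,\beta]$; for $t>t_0$ both $\vartheta(u_n)(t)$ and $\vartheta(u)(t)$ equal $c$; the case $\alpha\leq t_0\leq \beta$ follows by combining the two. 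Since $\vartheta(u)\in[0,\kappa]$ we also have $\psi(\vartheta(u))=\vartheta(u)$, so $\overline{f}(u)(t)=f(u)(t)$ a.e.

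To pass to the limit, I would write the integral form
\begin{equation*}
u_n(t)=c-\int_t^{t_0}\bigl[\ell_0(u_n)(s)-\ell_1(u_n)(s)+f(u_n)(s)\bigr]\,ds,
\end{equation*}
valid for $t\in[a_n,t_0]$. The continuity of $\ell_0,\ell_1,f:\cloc\to\lloc$ combined with the convergence $\vartheta(u_n)\to\vartheta(u)$ in $\cloc$ yields convergence in the mean on $[t,t_0]$ of each term, so the integrals converge to $\int_t^{t_0}[\ell_0(u)(s)-\ell_1(u)(s)+f(u)(s)]\,ds$. Thus $u$ is absolutely continuous on every compact subinterval of $(-\infty,t_0]$, solves \eqref{1.1}, and fulfills \eqref{1.2} together with \eqref{2.8}. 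For the addendum: if each $u_n$ is nondecreasing, then so is its uniform limit $u$, hence $u'\geq 0$ a.e.\ on $(-\infty,t_0]$, giving \eqref{2.9}. The only place requiring real care is verifying convergence $\vartheta(u_n)\to\vartheta(u)$ uniformly on every compact set of $\RR$ (not just on $(-\infty,t_0]$), because the operators $\ell_i,f$ a priori read values of their arguments on the whole line; this is precisely where the observation that $\vartheta(u_n)$ is eventually constant $=c$ past $t_0$ and the diagonal extraction together close the argument.
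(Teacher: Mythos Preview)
Your proof is correct and follows essentially the same route as the paper's: uniform $L^1_{loc}$-bounds on $u_n'$ from $\ell_i(1)$ and the Carath\'eodory majorant, Arzel\`a--Ascoli with a diagonal extraction over an exhaustion of $(-\infty,t_0]$, passage to the limit in the integral identity via the continuity of $\ell_0,\ell_1,f$ on $\cloc$, and preservation of monotonicity in the limit. Your explicit observation that $\psi(\vartheta(u_n))=\vartheta(u_n)$ (so $\overline{f}(u_n)=f(u_n)$) is exactly what the paper uses implicitly when it replaces $\overline{f}$ by $f$ in the integral formula.
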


\begin{proof}
Let $a\in\RR$ be arbitrary but fixed such that $a<t_0$. Then, in view of
\eqref{2.132} there exists $n_0\in\NN$ such that
$[a,t_0]\subseteq [a_n,t_0]$ for $n\geq n_0$. Further, in view of
\eqref{2.130} and \eqref{2.134}, there exists $q\in\latrp$ such that
\begin{equation}
\label{2.135}
|\overline{f}(u_n)(t)|\leq q(t)\forae t\in[a,t_0],\quad n\geq n_0.
\end{equation}
Thus \eqref{2.133}, on account of \eqref{2.134}, and \eqref{2.135}, results in
\begin{equation}
\label{2.136}
|u_n'(t)|\leq \big[\ell_0(1)(t)+\ell_1(1)(t)\big]\kappa+q(t)\forae
t\in[a,t_0],\quad n\geq n_0.
\end{equation}
Therefore, on account of \eqref{2.134} and \eqref{2.136}, the sequence of
solutions $(u_n)_{n=n_0}^{+\infty}$ is uniformly bounded and
equicontinuous on $[a,t_0]$. Since the interval
$[a,t_0]$ was chosen arbitrarily, according to Arzel\`a-Ascoli
theorem, without loss of generality we can assume that there exists
$u\in\cloc$ such that 
\begin{equation}
\label{2.137}
\lim_{n\to
  +\infty}\vartheta(u_n)(t)=u(t)\qquad\mbox{uniformly~on~every~compact~interval}.
\end{equation}
On the other hand, from \eqref{1.2} and \eqref{2.133}, in view of
\eqref{2.130}--\eqref{2.132} and \eqref{2.134} we get 
\begin{equation}
\label{2.138}
u_n(t)=c-\int_t^{t_0}\big[\ell_0(u_n)(s)-\ell_1(u_n)(s)+f(u_n)(s)\big]ds\mfor
t\in[a,t_0],\quad n\geq n_0.
\end{equation}
Thus \eqref{2.137} and \eqref{2.138} yields
$$
u(t)=c-\int_t^{t_0}\big[\ell_0(u)(s)-\ell_1(u)(s)+f(u)(s)\big]ds\for
t\in[a,t_0].
$$
Therefore, because the interval $[a,t_0]$ was chosen
arbitrarily, we have that $u\in\aclocr$ (note that, according to
\eqref{2.137}, $u(t)=c$ for $t\geq t_0$) and the restriction of
$u$ to the interval $(-\infty,t_0]$ is a solution to the problem
\eqref{1.1}, \eqref{1.2} on $(-\infty,t_0]$. Obviously, according to
\eqref{2.134}, $u$ satisfies \eqref{2.8} being a limit of $\vartheta(u_n)$. 

Moreover, if \eqref{2.200} holds then, for every $n\in\NN$,
$$
\vartheta(u_n)(\tau_1)\leq \vartheta(u_n)(\tau_2)\qquad\mbox{whenever~}\,\tau_1\leq \tau_2,
$$
and so, in view of \eqref{2.137}, we
have
$$
u(\tau_1)\leq u(\tau_2)\qquad\mbox{whenever~}\,\tau_1\leq \tau_2.
$$
Therefore, \eqref{2.9} holds.  
\end{proof}

\begin{lemma}
\label{l3.2}
Let $\ell_0,\ell_1,f\in V$, $c\in\RR_+$, and let 
\begin{multline}
\label{3.7}
f(v)(t)\sgn v(t)\leq q(t,\|\vartheta(v)\|) \forae t\geq t_0,\\
v\in\cnultnr,\quad 0\leq v(t_0)\leq c,
\end{multline}
where $q\in\kloctnrp$ satisfies \eqref{3.5} for every $b>t_0$.
Then the problem \eqref{1.1}, \eqref{1.2} has at least one solution on $[t_0,+\infty)$.
\end{lemma}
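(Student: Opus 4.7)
The plan is to exhaust $[t_0,+\infty)$ by compact intervals $[t_0,b_n]$ with $b_n\to+\infty$, apply Lemma~\ref{l3.1} on each to produce a solution $u_n$, and then pass to the limit by Arzel\`a--Ascoli and a standard diagonal extraction. To invoke Lemma~\ref{l3.1} on $[t_0,b_n]$, one needs $(\ell_0,\ell_1)\in\mathcal{A}(t_0,b_n)$, which follows from $\ell_0,\ell_1\in V\subseteq V_{b_n}$ and Lemma~\ref{l2.10}, together with the pointwise bound \eqref{3.4}. The latter I would derive from \eqref{3.7} as follows: given $v\in C([t_0,b_n];\RR)$ with $0\leq v(t_0)\leq c$, extend $v$ to $v^*\in\cnultnr$ by setting $v^*(t)=v(b_n)$ for $t>b_n$; then \eqref{3.7} applied to $v^*$ yields $f(v^*)(t)\sgn v^*(t)\leq q(t,\|v\|_{t_0,b_n})$, and the Volterra property $f\in V$ transfers this to $f(v)(t)\sgn v(t)\leq q(t,\|v\|_{t_0,b_n})$ for a.e.\ $t\in[t_0,b_n]$, i.e.\ \eqref{3.4}. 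Lemma~\ref{l3.1} then supplies the desired $u_n$.

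The main obstacle is securing a priori bounds on each fixed compact subinterval $[t_0,b]$ uniform in $n$: the right-hand side of \eqref{3.7} involves $\|\vartheta(v)\|$, the sup over the whole half-line, so a careless restriction would yield a bound in terms of $\|u_n\|_{t_0,b_n}$, which we have no reason to control. The way out is to apply the same extension trick locally. Fix $b>t_0$, restrict attention to $n$ with $b_n\geq b$, and note that $u_n|_{[t_0,b]}$ still solves \eqref{1.1} on $[t_0,b]$ by the Volterra property of $\ell_0,\ell_1,f$. Extend this restriction to $w_n\in\cnultnr$ by setting $w_n(t)=u_n(b)$ for $t>b$; applying \eqref{3.7} to $w_n$ and transferring back via $f\in V$ gives
$$
f(u_n)(t)\sgn u_n(t)\leq q\big(t,\|u_n\|_{t_0,b}\big)\qquad\mbox{for a.e.\ }t\in[t_0,b],
$$
in which the norm on the right involves only $[t_0,b]$. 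Combining $(\ell_0,\ell_1)\in\mathcal{A}(t_0,b)$ (from Lemma~\ref{l2.10}) with Definition~\ref{d3.1} then yields
$$
\|u_n\|_{t_0,b}\leq \rho_0(b)\left(c+\int_{t_0}^b q(s,\|u_n\|_{t_0,b})\,ds\right),
$$
and the contradiction argument of Lemma~\ref{l3.1} (using \eqref{3.5}) produces a bound $\|u_n\|_{t_0,b}\leq\rho(b)$ independent of $n$.

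With uniform bounds in place, the equation \eqref{1.1}, the positivity of $\ell_0,\ell_1$ (so $|\ell_i(u_n)|\leq\rho(b)\ell_i(1)$), and the local Carath\'eodory condition on $f$ applied to the extension $w_n$ (for which $\|\vartheta(w_n)\|\leq\rho(b)$) dominate $|u_n'|$ on $[t_0,b]$ by an integrable function independent of $n$, so the family $\{u_n|_{[t_0,b]}\}$ is equicontinuous. Arzel\`a--Ascoli combined with a diagonal extraction over an exhaustion $b\to+\infty$ yields a subsequence $u_{n_k}$ converging uniformly on every compact subinterval of $[t_0,+\infty)$ to some continuous function $u$. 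Continuity of $\ell_0,\ell_1,f:\cloc\to\lloc$ finally permits passage to the limit in the integrated equation
$$
u_{n_k}(t)=c+\int_{t_0}^t\big[\ell_0(u_{n_k})(s)-\ell_1(u_{n_k})(s)+f(u_{n_k})(s)\big]\,ds,
$$
proving that $u$ solves \eqref{1.1}, \eqref{1.2} on $[t_0,+\infty)$.
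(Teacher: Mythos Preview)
Your proposal is correct and follows essentially the same approach as the paper's own proof: produce solutions $u_n$ on $[t_0,b_n]$ via Lemmas~\ref{l2.10} and~\ref{l3.1}, use the Volterra property to view restrictions $u_n|_{[t_0,b]}$ as solutions on $[t_0,b]$, obtain a uniform bound from the inclusion $(\ell_0,\ell_1)\in\mathcal{A}(t_0,b)$ together with \eqref{3.5}, derive equicontinuity, and pass to the limit in the integrated equation via Arzel\`a--Ascoli. You are in fact slightly more explicit than the paper in spelling out the extension trick that converts \eqref{3.7} into the local estimate \eqref{3.4}, and in mentioning the diagonal extraction; the paper glosses over both of these.
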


\begin{proof}
Note that, in view of \eqref{3.7}, we have that \eqref{3.4} holds for
every $b\in\RR$, $b>t_0$. Therefore, according to Lemmas~\ref{l2.10}
and \ref{l3.1}, the problem \eqref{1.1}, \eqref{1.2} has at least one
solution on $[t_0,b]$ for every $b>t_0$.

Let $(b_n)_{n=1}^{+\infty}$ be a sequence of real numbers such that 
\begin{equation}
\label{3.8}
b_n>t_0,\qquad
\lim_{n\to +\infty} b_n=+\infty,
\end{equation}
and let, for every $n\in\NN$, $u_n$ be a solution to \eqref{1.1},
\eqref{1.2} on $[t_0,b_n]$. Let, moreover, $b\in\RR$ be arbitrary but
fixed such that $b>t_0$. Then, in view of \eqref{3.8}, there exists
$n_0\in\NN$ such that $[t_0,b]\subseteq [t_0,b_n]$ for $n\geq
n_0$. Further, let, for every $n\geq n_0$, $\overline{u}_n$ be a
restriction of $u_n$ to the interval $[t_0,b]$. Then, in view of the
inclusion $\ell_0,\ell_1,f\in V$ we have
\begin{gather}
\label{3.9}
\overline{u}'_n(t)=\ell_0(\overline{u}_n)(t)-\ell_1(\overline{u}_n)(t)+f(\overline{u}_n)(t) 
\forae t\in [t_0,b],\quad n\geq n_0,\\
\label{3.10}
\overline{u}_n(t_0)=c\for n\geq n_0.
\end{gather}
According to Lemma~\ref{l2.10} we have $(\ell_0,\ell_1)\in\atnb$. Let $\rho_0$ be the number appearing in
Definition~\ref{d3.1}. According to \eqref{3.5} there exists
$\rho>2c\rho_0$ such that \eqref{3.6} holds. Thus, according to
\eqref{3.4}, \eqref{3.9}, and \eqref{3.10}, for every $n\geq n_0$ we obtain
\begin{gather*}
\big[\overline{u}'_n(t)-\ell_0(\overline{u}_n)(t)+\ell_1(\overline{u}_n)(t)\big]\sgn
\overline{u}_n(t)\leq q(t,\|\overline{u}_n\|_{t_0,b})\forae t\in [t_0,b],\\
0\leq \overline{u}_n(t_0)\leq c.
\end{gather*}
Hence, accroding to $(\ell_0,\ell_1)\in\atnb$ and \eqref{3.6}, we get the estimate
\begin{equation}
\label{3.11}
\|\overline{u}_n\|_{t_0,b}\leq \rho\for n\geq n_0.
\end{equation}
Moreover, using \eqref{3.11} in \eqref{3.9} we get
\begin{equation}
\label{3.12}
|\overline{u}'_n(t)|\leq \big[\ell_0(1)(t)+\ell_1(1)(t)\big]\rho+f^*(t)\forae
t\in[t_0,b],\quad n\geq n_0
\end{equation}
where $f^*\in\ltbrp$ is such that 
$$
|f(v)(t)|\leq f^*(t)\forae t\in[t_0,b],\quad v\in\ctbr,\quad
\|v\|_{t_0,b}\leq \rho.
$$
Consequently, from \eqref{3.11} and \eqref{3.12} it follows that the
sequence of solutions $(u_n)_{n=n_0}^{+\infty}$ is uniformly bounded and
equicontinuous on $[t_0,b]$. Since the interval $[t_0,b]$ was chosen
arbitrarily, according to Arzel\`a-Ascoli theorem, without loss of
generality we can assume that there exists $u\in\cloc$ such that
\eqref{2.137} holds.

On the other hand, from \eqref{1.1} and \eqref{1.2}, we have 
\begin{equation}
\label{3.14}
u_n(t)=c+\int_{t_0}^t\big[\ell_0(u_n)(s)-\ell_1(u_n)(s)+f(u_n)(s)\big]ds\mfor
t\in[t_0,b],\quad n\geq n_0.
\end{equation}
Thus, \eqref{2.137} and \eqref{3.14} yield
$$
u(t)=c+\int_{t_0}^t\big[\ell_0(u)(s)-\ell_1(u)(s)+f(u)(s)\big]ds\for
t\in[t_0,b].
$$
Therefore, because the interval $[t_0,b]$ was chosen
arbitrarily, we have that $u\in\aclocr$ (note that, according to
\eqref{2.137}, $u(t)=c$ for $t\leq t_0$) and the restriction of
$u$ to the interval $[t_0,+\infty)$ is a solution to the problem
\eqref{1.1}, \eqref{1.2} on $[t_0,+\infty)$.
\end{proof}

\begin{lemma}
\label{l3.3}
Let $\ell_0,\ell_1,f\in V$, $\kappa>0$, $c\in[0,\kappa]$, and let
\begin{multline}
\label{3.15}
f(v)(t)\sgn v(t)\leq q(t,\|v\|) \mforae t\geq t_0,\quad
v\in\cnul,\\ 
0\leq v(t_0)\leq c,\quad -c\leq v(t)\leq \kappa\mfor t\leq t_0,
\end{multline}
where $q\in\kloctnrp$ is nondecreasing in the second argument and
satisfies \eqref{3.5} for every $b>t_0$. Let, moreover, there exist a
solution $u_0$ to \eqref{1.1}, \eqref{1.2} on $(-\infty,t_0]$
satisfying
\begin{equation}
\label{3.16}
0\leq u_0(t)\leq \kappa\for t\leq t_0.
\end{equation}
Then the problem \eqref{1.1}, \eqref{1.2} has at least one global solution
$u$ such that
$$
u(t)=u_0(t)\for t\leq t_0.
$$ 
\end{lemma}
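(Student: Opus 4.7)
Since $\ell_0,\ell_1,f\in V$ are Volterra on $\RR$, once $u_0$ is fixed on $(-\infty,t_0]$ equation \eqref{1.1} on $[t_0,+\infty)$ becomes a functional differential equation in $v:=u|_{[t_0,+\infty)}$ alone, modulo a $v$-independent integrable drift coming from the past. The plan is to solve this forward problem on each finite interval $[t_0,b]$ and then pass to the limit $b\to+\infty$, mirroring the structure of the proof of Lemma~\ref{l3.2}.

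Explicitly, for $v\in C([t_0,+\infty);\RR)$ with $v(t_0)=c$ I will define
\[
\bar{v}(t)=\begin{cases} u_0(t), & t\leq t_0,\\ v(t), & t\geq t_0,\end{cases}
\]
so that $\bar{v}\in\cloc$, and set $\hat{\ell}_i(v)(t):=\ell_i(\bar{v})(t)$, $\hat{f}(v)(t):=f(\bar{v})(t)$ for $t\geq t_0$. Splitting $\bar{v}=\bar{u}_0+w$, where $\bar{u}_0$ equals $u_0$ on $(-\infty,t_0]$ and $c$ on $[t_0,+\infty)$ and $w$ is supported in $[t_0,+\infty)$ with $w(t_0)=0$, linearity gives $\hat{\ell}_i(v)=\ell_i(w)+g_i$ with fixed drifts $g_i:=\ell_i(\bar{u}_0)\in\lloc$. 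On each interval $[t_0,b]$ I will then apply the Leray--Schauder-type Lemma~\ref{l2.9} to the problem $v'=\hat{\ell}_0(v)-\hat{\ell}_1(v)+\hat{f}(v)$, $v(t_0)=c$. The homogeneous problem \eqref{2.52} for $\ell_0,\ell_1$ on $[t_0,b]$ has only the trivial solution, and the a~priori estimate of Definition~\ref{d3.1} holds, since $\ell_0,\ell_1\in V\subseteq\vb$ forces $(\ell_0,\ell_1)\in\atnb$ by Lemma~\ref{l2.10}. To close the homotopy bound, note that for any admissible $v$ the paste $\bar{v}$ satisfies all constraints of \eqref{3.15} (because $u_0(t)\in[0,\kappa]\subseteq[-c,\kappa]$ and $v(t_0)=c$), so $\hat{f}(v)(t)\sgn v(t)\leq q(t,\max(\kappa,\|v\|_{t_0,b}))$; absorbing the drift $g_0-g_1$ into the $q^*$-term of Definition~\ref{d3.1} and invoking the sublinear growth \eqref{3.5} yields a $\lambda$-independent bound on $\|v\|_{t_0,b}$, giving a solution $v_b$.

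Choosing $b_n\to+\infty$ and setting $u_n:=v_{b_n}$, the construction furnishes uniform bounds on every compact $[t_0,b]$; combined with $\ell_0(1),\ell_1(1)\in\llocrp$ and the local Carath\'eodory bound on $f$ applied to the uniformly bounded $\bar{u}_n$, these give equicontinuity of $(u_n)$. Arzel\`a--Ascoli together with a diagonal extraction then produces $u\in C([t_0,+\infty))$ with $u_n\to u$ uniformly on compact subintervals, and the continuity of $\ell_i,f:\cloc\to\lloc$ applied to $\bar{u}_n\to\bar{u}$ lets me pass to the limit in the integrated form of the equation. The pasted function $\bar{u}$ will then equal $u_0$ on $(-\infty,t_0]$, satisfy \eqref{1.1} a.e.\ on $[t_0,+\infty)$, and hence be the global solution required.

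The hard part is the affine (rather than linear) character of $\hat{\ell}_i$: Lemmas~\ref{l3.1} and \ref{l2.10} are stated for a linear operator pair, so the fixed drift $g_i=\ell_i(\bar{u}_0)$ must be folded carefully into the $q^*$-term of Definition~\ref{d3.1}, and one has to verify that the sublinear-growth closure \eqref{3.5} survives the resulting perturbation. The uniform bound $0\leq u_0\leq \kappa$ is precisely what keeps the drift controlled uniformly in $b$ and along the homotopy parameter.
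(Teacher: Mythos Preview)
Your strategy is the paper's strategy: freeze the past $u_0$, push everything it contributes into a modified forward problem on $[t_0,+\infty)$, and invoke the existence machinery already built (Lemmas~\ref{l2.9}--\ref{l3.2}). The paper carries this out by defining the single auxiliary nonlinearity
\[
\widehat{f}(v)(t)=f\big(\vartheta(u_0)-c+v\big)(t)+\ell_0\big(\vartheta(u_0)-c\big)(t)-\ell_1\big(\vartheta(u_0)-c\big)(t),
\]
checking that $\widehat{f}$ satisfies hypothesis \eqref{3.7} of Lemma~\ref{l3.2} with $\widehat{q}(t,x)=q(t,x+\kappa)+\kappa[\ell_0(1)(t)+\ell_1(1)(t)]$, and then citing Lemma~\ref{l3.2} once. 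Your reworking of the finite-interval plus Arzel\`a--Ascoli steps is therefore redundant but not wrong.

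There is, however, a genuine technical gap in your homotopy step. You define $\bar v$ as the \emph{paste} of $u_0$ on $(-\infty,t_0]$ and $v$ on $[t_0,+\infty)$, which is an element of $\cnul$ only when $v(t_0)=c=u_0(t_0)$. But Lemma~\ref{l2.9} runs the homotopy with $v(t_0)=\lambda c$ for $\lambda\in(0,1)$; for $\lambda<1$ the paste has a jump at $t_0$, so $\bar v\notin\cnul$ and you cannot invoke \eqref{3.15} for it. Your sentence ``for any admissible $v$ the paste $\bar v$ satisfies all constraints of \eqref{3.15} (because \ldots\ and $v(t_0)=c$)'' is precisely where the argument breaks.

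The remedy is not to paste but to \emph{shift}: feed $f$ the function $w:=\vartheta(u_0)-c+\vartheta(v)$. This is continuous for every $v$, equals $v$ on $[t_0,+\infty)$, equals $u_0-c+v(t_0)$ on $(-\infty,t_0]$, and for any $v(t_0)\in[0,c]$ one has $w(t)\in[-c,\kappa]$ there and $w(t_0)=v(t_0)\in[0,c]$, so \eqref{3.15} applies and yields $f(w)(t)\sgn v(t)\le q(t,\|\vartheta(v)\|+\kappa)$. This is exactly the content of the paper's $\widehat{f}$, and once you adopt it the drift terms $\ell_i(\vartheta(u_0)-c)$ are $v$-independent with norm bounded by $\kappa\,\ell_i(1)$, so the sublinearity check \eqref{3.5} for $\widehat{q}$ is immediate and Lemma~\ref{l3.2} applies directly.
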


\begin{proof}
Consider the auxiliary equation
\begin{equation}
\label{3.17}
u'(t)=\ell_0(u)(t)-\ell_1(u)(t)+\widehat{f}(u)(t)
\end{equation}
where 
\begin{multline}
\label{3.18}
\widehat{f}(v)(t)\stackrel{def}{=}f(\vartheta(u_0)-c+v)(t)
+\ell_0(\vartheta(u_0)-c)(t)-\ell_1(\vartheta(u_0)-c)(t)\\ 
\mforae t\in\RR,\quad v\in\cloc
\end{multline}
where $\vartheta$ is given by \eqref{1.4}. Obviously, $\widehat{f}\in V$. 

Now let $v\in\cnultnr$ be arbitrary but fixed such that
\begin{equation}
\label{3.19}
0\leq v(t_0)\leq c
\end{equation}
and put
\begin{gather}
\label{3.20}
w(t)=\vartheta(u_0)(t)-c+\vartheta(v)(t)\for t\in\RR,\\
\label{3.21}
\widehat{q}(t,x)=q(t,x+\kappa)+\kappa\big[\ell_0(1)(t)+\ell_1(1)(t)\big]\forae t\geq t_0,\quad x\in\RR_+.
\end{gather}
Then, in view of \eqref{3.16}, \eqref{3.19}, and \eqref{3.20} we have
$w\in\cnul$,
\begin{gather}
\label{3.22}
0\leq w(t_0)\leq c,\\
\label{3.23}
-c\leq w(t)\leq \kappa\for t\leq t_0,\\
\label{3.24}
\|\vartheta(u_0)-c\|\leq \kappa.
\end{gather}
Consequently, in view of \eqref{3.15} and \eqref{3.21}--\eqref{3.24}
we have
\begin{equation}
\label{3.25}
f(w)(t)\sgn w(t)\leq q(t,\|w\|)\leq
q(t,\|\vartheta(v)\|+\kappa)\forae t\geq t_0.
\end{equation}
On the other hand,
\begin{equation}
\label{3.26}
\widehat{f}(v)(t)\sgn v(t)=
\big[f(w)(t)+\ell_0(\vartheta(u_0)-c)(t)-\ell_1(\vartheta(u_0)-c)(t)\big]\sgn w(t)\mforae t\geq t_0.
\end{equation}
Therefore, \eqref{3.25} and \eqref{3.26}, in view of \eqref{3.19},
\eqref{3.21}, and \eqref{3.24}, result in 
$$
\widehat{f}(v)(t)\sgn v(t)\leq \widehat{q}(t,\|\vartheta(v)\|) \mforae t\geq t_0,\quad
v\in\cnultnr,\quad 0\leq v(t_0)\leq c.
$$
Moreover, on account of \eqref{3.21},
$$
\frac{1}{x}\int_{t_0}^b\widehat{q}(t,x)dt=\left(1+\frac{\kappa}{x}\right)\frac{1}{x+\kappa}\int_{t_0}^b
q(t,x+\kappa)dt+\frac{\kappa}{x}\int_{t_0}^b \big[\ell_0(1)(t)+\ell_1(1)(t)\big]dt \mfor b>t_0,
$$
and thus, in view of \eqref{3.5}, we have
$$
\lim_{x\to +\infty}\frac{1}{x}\int_{t_0}^b\widehat{q}(t,x)dt=0
$$
for every $b>t_0$. Consequently, all the assumptions of
Lemma~\ref{l3.2} are fulfilled with $f=\widehat{f}$ and
$q=\widehat{q}$. Therefore, the problem \eqref{3.17}, \eqref{1.2} has
at least one solution $u_1$ on $[t_0,+\infty)$.

Now put
$$
u(t)=
\begin{cases}
u_0(t)&\casfor t\leq t_0,\\
u_1(t)&\casfor t>t_0.
\end{cases}
$$
Then, obviously, $u\in\aclocr$ and in view of \eqref{3.17} and
\eqref{3.18}, the function
$u$ is a global solution to \eqref{1.1}, \eqref{1.2}.
\end{proof}

\subsection{Properties of a solution in the neighbourhood of $-\infty$}

\begin{lemma}
\label{l2.16}
Let $\ell_0,f\in \vtn$, and let there exist $g\in\llocrp$ and a
continuous nondecreasing function $h_0:(0,+\infty)\to (0,+\infty)$
satisfying \eqref{2.10} and \eqref{2.11}. Let, moreover, $u$ be a
non-negative solution to \eqref{1.1} on $(-\infty,t_0]$ such that 
\begin{equation}
\label{2.139}
u(t)=0\for t\leq \tau,
\end{equation}
for some $\tau\in (-\infty,t_0)$. Then $u\equiv 0$.
\end{lemma}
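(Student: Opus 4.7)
The plan is to argue by contradiction through an Osgood-type bootstrap, using the Volterra property to freeze $u$ into a bounded test function at each point. Set $\tau^{*}:=\sup\{s\in(-\infty,t_0]:u\equiv0\ \text{on}\ (-\infty,s]\}$. By hypothesis $\tau^{*}\geq\tau$, and continuity of $u$ yields $u\equiv0$ on $(-\infty,\tau^{*}]$. Assume for contradiction that $\tau^{*}<t_0$. Introduce the continuous nondecreasing envelope $U(t)=\max\{u(s):s\in[\tau^{*},t]\}$ on $[\tau^{*},t_0]$. Then $U(\tau^{*})=0$ and, by the very choice of $\tau^{*}$, $U(t)>0$ for every $t\in(\tau^{*},t_0]$. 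Pick $\delta\in(0,t_0-\tau^{*}]$ small enough that $U(\tau^{*}+\delta)\leq\kappa$.

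Next I would convert \eqref{1.1} into an integral inequality for $U$. Integrating from $\tau^{*}$ to $t\in[\tau^{*},\tau^{*}+\delta]$ and dropping the non-positive term $-\int_{\tau^{*}}^{t}\ell_1(u)(s)\,ds$ (recall $\ell_1$ is positive and $u\geq0$) gives
$$
u(t)\leq\int_{\tau^{*}}^{t}\ell_0(u)(s)\,ds+\int_{\tau^{*}}^{t}f(u)(s)\,ds.
$$
For each fixed $s\in[\tau^{*},\tau^{*}+\delta]$, I use the Volterra property $\ell_0,f\in\vtn$ to replace $\vartheta(u)$ by the freeze-at-$s$ truncation $w_s$ defined by $w_s(t)=\vartheta(u)(t)$ for $t\leq s$ and $w_s(t)=u(s)$ for $t>s$. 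Then $w_s\in\cnulok$ with $\|w_s\|=U(s)$, and $w_s\not\equiv0$ when $s>\tau^{*}$. Linearity and positivity of $\ell_0$ together with $w_s\leq U(s)\cdot\mathbf{1}$ yield $\ell_0(u)(s)=\ell_0(w_s)(s)\leq U(s)\ell_0(1)(s)$, while \eqref{2.10} gives $f(u)(s)=f(w_s)(s)\leq g(s)h_0(U(s))$. Since the right-hand side is non-negative, taking the maximum of the left-hand side over $[\tau^{*},t]$ produces
$$
U(t)\leq\int_{\tau^{*}}^{t}U(s)\ell_0(1)(s)\,ds+\int_{\tau^{*}}^{t}g(s)h_0(U(s))\,ds,\qquad t\in[\tau^{*},\tau^{*}+\delta].
$$

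Finally, a linear Gronwall argument applied with the nondecreasing forcing term $\int_{\tau^{*}}^{t}g(s)h_0(U(s))\,ds$ absorbs the $\ell_0(1)$-part and yields $U(t)\leq V(t):=M\int_{\tau^{*}}^{t}g(s)h_0(U(s))\,ds$, where $M:=\exp\bigl(\int_{\tau^{*}}^{\tau^{*}+\delta}\ell_0(1)(s)\,ds\bigr)<+\infty$. The function $V$ is absolutely continuous, satisfies $V(\tau^{*})=0$ and $V\geq U>0$ on $(\tau^{*},\tau^{*}+\delta]$, and by monotonicity of $h_0$ obeys $V'(t)\leq Mg(t)h_0(V(t))$ a.e. Separating variables and integrating from $\tau^{*}+\eta$ to $\tau^{*}+\delta$ gives
$$
\int_{V(\tau^{*}+\eta)}^{V(\tau^{*}+\delta)}\frac{dv}{h_0(v)}\leq M\int_{\tau^{*}+\eta}^{\tau^{*}+\delta}g(s)\,ds.
$$
Sending $\eta\to0_+$, the left-hand side tends to $+\infty$ by \eqref{2.11}, whereas the right-hand side stays bounded since $g\in\llocrp$. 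This contradiction forces $\tau^{*}=t_0$ and hence $u\equiv0$. The main obstacle to watch out for is the freeze-at-$s$ truncation step: one must verify that $w_s$ genuinely lies in $\cnulok$ with $\|w_s\|=U(s)$ and is not identically zero when $s>\tau^{*}$, so that \eqref{2.10} is applicable, everything else being a standard Gronwall–Osgood computation.
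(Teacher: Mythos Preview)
Your argument is correct and follows essentially the same route as the paper: both introduce the running maximum of $u$ (the paper's $w$, your $U$), bound $\ell_0(u)$ and $f(u)$ in terms of this envelope, absorb the linear $\ell_0(1)$-part (the paper via an integrating factor, you via Gronwall), and finish by Osgood's criterion \eqref{2.11}. The only point to tighten is the freeze-at-$s$ step: the Volterra property gives $f(u)(t)=f(w_s)(t)$ only for a.e.\ $t\leq s$ with a null set that may depend on $s$, so the equality at $t=s$ itself is not immediate; to get $f(u)(s)\leq g(s)h_0(U(s))$ for a.e.\ $s$ you should either freeze at countably many $\zeta>s$ and pass to the limit using continuity of $U$ and $h_0$, or---as the paper does---use positivity and the Lebesgue-point argument of Lemma~\ref{l2.1}.
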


\begin{proof}
Suppose on the contrary that $u$ assumes positive values. Put
\begin{equation}
\label{2.140}
w(t)=\sup\big\{u(s):s\leq t\big\}\for t\leq t_0.
\end{equation}
Then
\begin{gather}
\label{2.141}
w\in\aclocrp,\\
\label{2.142}
w'(t)\geq 0\forae t\leq t_0,\\
\label{2.143}
w(t)\geq u(t)\for t\leq t_0,
\end{gather}
and there exists $\tau_0\in[\tau,t_0)$ such that 
\begin{equation}
\label{2.144}
w(t)=0\for t\leq \tau_0,\qquad w(t)>0\for t\in (\tau_0,t_0].
\end{equation}
Let
$$
A=\big\{t\in[\tau_0,t_0]:w(t)=u(t)\big\}.
$$
Then
\begin{equation}
\label{2.145}
w'(t)=
\begin{cases}
u'(t)&\casforae t\in A,\\
0&\casforae t\in[\tau_0,t_0]\setminus A.
\end{cases}
\end{equation}
Furthermore, in view of \eqref{2.143}, we have
\begin{equation}
\label{2.146}
\ell_0(u)(t)\leq \ell_0(w)(t)\forae t\in[\tau_0,t_0],
\end{equation}
and, on account of \eqref{2.142}, the non-negativity of $u$, and the
inclusion $\ell_0\in \vtn$, according to Lemma~\ref{l2.1} (with
$\ell=\ell_0$, $\alpha\equiv 1$, $\beta=\vartheta(w)$, $\vartheta$
given by \eqref{1.4}), we find
\begin{gather}
\label{2.147}
\ell_0(w)(t)\leq \ell_0(1)(t)w(t)\forae t\in[\tau_0,t_0],\\
\label{2.148}
\ell_1(u)(t)\geq 0\forae t\in[\tau_0,t_0].
\end{gather}
Moreover, from \eqref{2.10}, in view of \eqref{2.140}, \eqref{2.144},
the non-negativity of $u$, and the inclusion $f\in \vtn$, for every
fixed $t\in(\tau_0,t_0]$ we have
\begin{equation}
\label{2.149}
f(u)(s)\leq g(s)h_0(\|u\|_{\tau_0,t})=g(s)h_0(w(t))\forae s\in[\tau_0,t].
\end{equation}
Consequently, analogously to the proof of Lemma~\ref{l2.1}, one can
show that \eqref{2.149} implies
\begin{equation}
\label{2.150}
f(u)(t)\leq g(t)h_0(w(t))\forae t\in(\tau_0,t_0].
\end{equation}
Thus, since $w$, $g$, and $h_0$ are non-negative
functions, from \eqref{1.1}, \eqref{2.145}--\eqref{2.148} and \eqref{2.150} we get
\begin{equation}
\label{2.151}
w'(t)\leq \ell_0(1)(t)w(t)+g(t)h_0(w(t))\forae t\in(\tau_0,t_0].
\end{equation}
However, \eqref{2.151} results in 
\begin{multline}
\label{2.152}
z'(t)\leq
g(t)\exp\left(\int_t^{t_0}\ell_0(1)(s)ds\right)\times\\
h_0\left(z(t)\exp\left(-\int_t^{t_0}\ell_0(1)(s)ds\right)\right)\mforae
t\in(\tau_0,t_0] 
\end{multline}
where
\begin{equation}
\label{2.153}
z(t)=w(t)\exp\left(\int_t^{t_0}\ell_0(1)(s)ds\right)
\for t\in(\tau_0,t_0]. 
\end{equation}
Since $h_0$ is a nondecreasing function, on account of \eqref{2.144}
and \eqref{2.153}, from \eqref{2.152} it follows that
\begin{equation}
\label{2.154}
\frac{z'(t)}{h_0(z(t))}\leq g(t)\exp\left(\int_t^{t_0}\ell_0(1)(s)ds\right)\forae t\in(\tau_0,t_0].
\end{equation}
Now, the integration of \eqref{2.154} from $t$ to $t_0$ yields
$$
\int_{z(t)}^{z(t_0)}\frac{ds}{h_0(s)}\leq
\int_t^{t_0}g(s)\exp\left(\int_s^{t_0}\ell_0(1)(\xi)d\xi\right)ds \for t\in(\tau_0,t_0]
$$
whence we obtain
\begin{equation}
\label{2.155}
\lim_{t\to \tau_{0+}}\int_{z(t)}^{z(t_0)}\frac{ds}{h_0(s)}\leq
\int_{\tau_0}^{t_0}g(s)\exp\left(\int_s^{t_0}\ell_0(1)(\xi)d\xi\right)ds<+\infty.
\end{equation}
However, \eqref{2.155} together with \eqref{2.141}, \eqref{2.144}, and
\eqref{2.153} contradicts \eqref{2.11}.
\end{proof}

\begin{lemma}
\label{l2.17}
Let $\tau\in\RR$ and let there exist $\gamma\in\acloctau$ satisfying
\begin{equation}
\label{2.194}
\gamma'(t)\leq -\ell_1(\gamma)(t)\forae t\leq \tau.
\end{equation}
Let, moreover, $u\in\acloctaur$ satisfy 
\begin{gather}
\label{2.156}
0\leq \sup\big\{u(t):t\leq \tau\big\}<+\infty,\\
\label{2.157}
u'(t)\geq -\ell_1(u)(t)\forae t\leq \tau.
\end{gather}
Then
\begin{equation}
\label{2.158}
\sup\left\{\frac{u(t)}{\gamma(t)}:t\leq \tau \right\}=\frac{u(\tau)}{\gamma(\tau)}.
\end{equation}
\end{lemma}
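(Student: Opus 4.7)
My plan is to introduce $\lambda_* := \sup\{u(t)/\gamma(t) : t \leq \tau\}$, to verify that $v := \lambda_*\gamma - u$ is non-negative and nonincreasing on $(-\infty,\tau]$, and then to show $v(\tau) = 0$, which rearranges to \eqref{2.158}. First, positivity of $\ell_1$ applied to $\gamma>0$ combined with \eqref{2.194} gives $\gamma' \leq 0$, so $\gamma$ is nonincreasing and $\gamma(t) \geq \gamma(\tau) > 0$ for $t \leq \tau$; together with the boundedness from \eqref{2.156} this bounds $u/\gamma$ above, so $\lambda_* < +\infty$. Also $\lambda_* \geq 0$: if some $u(s) \geq 0$ then $u(s)/\gamma(s) \geq 0$, and otherwise $u < 0$ everywhere with $\sup u = 0$, in which case along a maximizing sequence $t_n$ one has $u(t_n)/\gamma(t_n) \geq u(t_n)/\gamma(\tau) \to 0$, giving $\lambda_* \geq 0$.

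For the monotonicity of $v$, note that $v \geq 0$ on $(-\infty,\tau]$ by definition of $\lambda_*$, and the extension $\vartheta(v)$ satisfies $\vartheta(v) \geq 0$ on $\RR$ (equal to $v$ on $(-\infty,\tau]$ and to the non-negative constant $v(\tau)$ elsewhere); positivity of $\ell_1$ then yields $\ell_1(v)(t) \geq 0$ a.e. On the other hand, $\lambda_* \geq 0$ combined with linearity of $\ell_1$ and the inequalities \eqref{2.194}, \eqref{2.157} gives
$$
v'(t) = \lambda_*\gamma'(t) - u'(t) \leq -\lambda_*\ell_1(\gamma)(t) + \ell_1(u)(t) = -\ell_1(v)(t) \leq 0 \qquad\mbox{for~a.~e.~}t \leq \tau,
$$
so $v$ is nonincreasing on $(-\infty,\tau]$.

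To finish, I show $v(\tau) = 0$ by contradiction. Assume $v(\tau) = \delta > 0$; by monotonicity $v \geq \delta$ on $(-\infty,\tau]$, equivalently $u(t)/\gamma(t) \leq \lambda_* - \delta/\gamma(t)$. Choosing $s_\epsilon \leq \tau$ with $u(s_\epsilon)/\gamma(s_\epsilon) > \lambda_* - \epsilon$ and comparing forces $\gamma(s_\epsilon) > \delta/\epsilon$; then for $\lambda_* > 0$ one obtains $u(s_\epsilon) > (\lambda_* - \epsilon)\delta/\epsilon \to +\infty$ as $\epsilon \to 0^+$, contradicting the boundedness of $u$ in \eqref{2.156}. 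If instead $\lambda_* = 0$, then $u \leq 0$ on $(-\infty,\tau]$, positivity and linearity of $\ell_1$ give $\ell_1(u) \leq 0$, so \eqref{2.157} yields $u' \geq 0$ a.e.; thus $u$ is nondecreasing and $\sup u = u(\tau)$, and \eqref{2.156} forces $u(\tau) = 0$, whence $v(\tau) = 0$, again contradicting $\delta > 0$. Therefore $v(\tau) = 0$, which rearranges to \eqref{2.158}. The main obstacle is precisely this last step: because $\gamma$ may blow up as $t \to -\infty$, monotonicity of $v$ alone is insufficient to force $v(\tau) = 0$, and one must exploit the boundedness of $u$ from \eqref{2.156} to exclude the scenario in which the supremum of $u/\gamma$ is approached only along a sequence where $\gamma \to +\infty$.
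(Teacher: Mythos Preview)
Your proof is correct and follows essentially the same approach as the paper's: define $\lambda_*=\sup\{u/\gamma\}$, show $v=\lambda_*\gamma-u$ is non-negative and nonincreasing on $(-\infty,\tau]$, and conclude $v(\tau)=0$. The only cosmetic difference is in the final step, where the paper splits on whether $\gamma(-\infty)$ is finite or infinite, while you split on $\lambda_*>0$ versus $\lambda_*=0$ and run a unified near-maximizer/contradiction argument; both routes are equivalent in spirit.
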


\begin{proof}
First suppose that $u(t)\leq 0$ for $t\leq \tau$. Then, in view of
\eqref{2.157}, $u$ is a nondecreasing function, which together with
\eqref{2.156} implies $u(\tau)=0$. Therefore, in that case \eqref{2.158} holds.
   
Let, therefore, $u$ assumes positive values. Put
\begin{equation}
\label{2.159}
\lambda=\sup\left\{\frac{u(t)}{\gamma(t)}:t\leq \tau \right\}.
\end{equation}
Then, according to \eqref{2.194}--\eqref{2.157} we have $0<\lambda<+\infty$,
\begin{gather}
\label{2.160}
\lambda\gamma(t)-u(t)\geq 0\for t\leq \tau,\\
\label{2.161}
\lambda\gamma'(t)-u'(t)\leq -\ell_1(\lambda\gamma-u)(t)\forae t\leq \tau.
\end{gather}
According to \eqref{2.194}, $\gamma$ is a nonincreasing
function. Therefore, there exists finite or infinite limit
$\gamma(-\infty)$. If
\begin{equation}
\label{2.162}
\gamma(-\infty)=+\infty
\end{equation}
then, in view of \eqref{2.156} and \eqref{2.159}, there exists $\tau_0\in
(-\infty,\tau]$ such that \eqref{2.79} holds.
If
\begin{equation}
\label{2.163}
\gamma(-\infty)<+\infty
\end{equation}
then, on account of \eqref{2.159}, for every $n\in\NN$ there exists
$\tau_n\in(-\infty,\tau]$ such that
$$
\lambda-\frac{1}{n\gamma(-\infty)}\leq\frac{u(\tau_n)}{\gamma(\tau_n)}
$$
whence, because $\gamma$ is nonincreasing, we get
$$
\lambda\gamma(\tau_n)-u(\tau_n)\leq\frac{\gamma(\tau_n)}{n\gamma(-\infty)}\leq\frac{1}{n}.
$$
Thus, in both cases \eqref{2.162} and \eqref{2.163}, for every $n\in\NN$
there exists $\tau_n\in(-\infty,\tau]$ such that
\begin{equation}
\label{2.164}
\lambda\gamma(\tau_n)-u(\tau_n)\leq\frac{1}{n}.
\end{equation}
However, from \eqref{2.160} and \eqref{2.161} it follows that
$\lambda\gamma-u$ is a nonincreasing function, which together with
\eqref{2.164} implies
\begin{equation}
\label{2.165}
\lambda\gamma(\tau)-u(\tau)\leq\frac{1}{n}\for n\in\NN.
\end{equation}
Now \eqref{2.159}, \eqref{2.160}, and \eqref{2.165} results in \eqref{2.158}.
\end{proof}

Now, from Lemma~\ref{l2.17} we get the following

\begin{lemma}
\label{l2.18}
Let $\ell_1\in \vtn$, and let there exist $\gamma\in\acloc$ such that
\eqref{2.4} holds.
Further, let $u\in\aclocrp$ satisfy
\begin{gather}
\label{2.195}
u'(t)\geq -\ell_1(u)(t)\forae t\leq t_0,\\
\label{2.166}
\sup\big\{u(t):t\leq t_0\big\}<+\infty.
\end{gather}
Then 
\begin{gather}
\label{2.167}
\ell_1(u)(t)\leq \frac{\ell_1(\gamma)(t)}{\gamma(t)}u(t)\forae t\leq t_0.
\end{gather}
\end{lemma}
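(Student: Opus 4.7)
The plan is to imitate the proof of Lemma~\ref{l2.5}, replacing the finite-interval ingredient (Lemma~\ref{l2.4}) with its infinite-interval counterpart (Lemma~\ref{l2.17}), and then invoking the multiplicativity-type estimate of Lemma~\ref{l2.1} to conclude.

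First, for an arbitrary but fixed $\tau\leq t_0$ I would check that all hypotheses of Lemma~\ref{l2.17} are in place. The inclusion $\ell_1\in\vtn$ restricts to $\ell_1\in\vtau$; the restriction of $\gamma$ to $(-\infty,\tau]$ lies in $\acloctau$ (positivity of $\gamma$ on $(-\infty,t_0]$ follows, as in Remark~\ref{r2.6}, from \eqref{2.4} together with $\gamma(t_0)>0$) and satisfies \eqref{2.194} as a direct consequence of \eqref{2.4}. On the other side, \eqref{2.166} together with the non-negativity of $u$ gives \eqref{2.156}, and \eqref{2.195} gives \eqref{2.157}. Lemma~\ref{l2.17} therefore yields
\[
\sup\left\{\frac{u(t)}{\gamma(t)}:t\leq\tau\right\}=\frac{u(\tau)}{\gamma(\tau)}.
\]
Since $\tau\leq t_0$ was arbitrary, this is exactly the statement that $u/\gamma$ is nondecreasing on $(-\infty,t_0]$.

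Next, with $\vartheta$ as in \eqref{1.4} applied to the interval $I=(-\infty,t_0]$, the extension $\vartheta(u/\gamma)$ is nondecreasing on all of $\RR$, because it equals $u/\gamma$ on $(-\infty,t_0]$ and is constantly $u(t_0)/\gamma(t_0)$ on $(t_0,+\infty)$. Likewise $\vartheta(\gamma)$ is non-negative on $\RR$. I would then apply Lemma~\ref{l2.1} with $\ell=\ell_1$, $\alpha=\vartheta(\gamma)$ and $\beta=\vartheta(u/\gamma)$, obtaining
\[
\ell_1\bigl(\vartheta(\gamma)\,\vartheta(u/\gamma)\bigr)(t)\leq \ell_1(\vartheta(\gamma))(t)\,\vartheta(u/\gamma)(t)\qquad\text{for a.e.\ }t\leq t_0.
\]
Since $\vartheta(\gamma)\,\vartheta(u/\gamma)$ agrees with $\vartheta(u)$ pointwise on $\RR$, and since the convention of the paper reads $\ell_1(u)(t)=\ell_1(\vartheta(u))(t)$, $\ell_1(\gamma)(t)=\ell_1(\vartheta(\gamma))(t)$, the displayed inequality is precisely \eqref{2.167}.

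The only real obstacle is bookkeeping: one must verify, before invoking Lemma~\ref{l2.17}, that positivity of $\gamma$ is available on the whole half-line $(-\infty,t_0]$ (so that division by $\gamma$ is legitimate and the extension $\vartheta(u/\gamma)$ is well defined), and that the monotonicity of $u/\gamma$ on $(-\infty,t_0]$ survives the extension by $\vartheta$ to a monotone function on $\RR$. Both checks are routine once the structural remark from Remark~\ref{r2.6} is used; the rest of the argument is a direct composition of Lemmas~\ref{l2.17} and \ref{l2.1}.
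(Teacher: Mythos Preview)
Your proposal is correct and follows exactly the same route as the paper: apply Lemma~\ref{l2.17} at each $\tau\leq t_0$ to conclude that $u/\gamma$ is nondecreasing, then feed $\alpha=\vartheta(\gamma)$ and $\beta=\vartheta(u/\gamma)$ into Lemma~\ref{l2.1}. One small remark: the positivity of $\gamma$ need not be argued via Remark~\ref{r2.6}, since the hypothesis $\gamma\in\acloc$ already means $\gamma$ takes values in $(0,+\infty)$ by definition.
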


\begin{proof}
Obviously, since $\ell_1\in \vtn$, according to
Lemma~\ref{l2.17} we have
$$
\sup\left\{\frac{u(t)}{\gamma(t)}:t\leq
  \tau\right\}=\frac{u(\tau)}{\gamma(\tau)}\for \tau\leq t_0.
$$
However, the latter means that the function $u/\gamma$ is
nondecreasing. Therefore, according to Lemma~\ref{l2.1} with
$\ell=\ell_1$, $\alpha=\vartheta(\gamma)$,
$\beta=\vartheta(u/\gamma)$, and $\vartheta$ given by \eqref{1.4},
we obtain \eqref{2.167}. 
\end{proof}

The other assertion which can be deduced from Lemma~\ref{l2.17} is the following

\begin{lemma}
\label{l2.19}
Let $\tau\in\RR$ and let there exist $\gamma\in\acloctau$ such that
\eqref{2.194} is fulfilled. 
Further, let $u\in\acloctaur$ satisfy \eqref{2.157},  
$$
\sup\big\{u(t):t\leq \tau\big\}<+\infty,
$$
and
\begin{equation}
\label{2.169}
u(\tau)\leq 0.
\end{equation}
Then 
\begin{equation}
\label{73.3}
u(t)\leq 0\for t\leq \tau.
\end{equation}
\end{lemma}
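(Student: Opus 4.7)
The plan is to deduce Lemma~\ref{l2.19} directly from Lemma~\ref{l2.17}, by splitting into the two cases determined by the sign of $\sup\{u(t):t\leq\tau\}$.

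First I would dispose of the easy case $\sup\{u(t):t\leq\tau\}<0$. In this situation every value of $u$ on $(-\infty,\tau]$ is strictly negative, so \eqref{73.3} holds trivially and there is nothing more to prove.

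In the remaining case $0\leq\sup\{u(t):t\leq\tau\}<+\infty$, the hypothesis \eqref{2.156} of Lemma~\ref{l2.17} is fulfilled, and together with \eqref{2.194} and \eqref{2.157} all the assumptions of Lemma~\ref{l2.17} are met. Applying that lemma yields
$$
\sup\left\{\frac{u(t)}{\gamma(t)}:t\leq\tau\right\}=\frac{u(\tau)}{\gamma(\tau)}.
$$
Now the assumption \eqref{2.169} gives $u(\tau)\leq 0$, and $\gamma(\tau)>0$ by the inclusion $\gamma\in\acloctau$, so the right-hand side above is non-positive. Hence $u(t)/\gamma(t)\leq 0$ for every $t\leq\tau$, and multiplying by $\gamma(t)>0$ yields \eqref{73.3}.

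There is essentially no obstacle: the whole content is packaged in Lemma~\ref{l2.17}, and this proof is just a bookkeeping step extracting a sign statement from the attained-supremum formula. The only mildly delicate point is making sure one first separates the trivial case $\sup u<0$ so that the non-negativity hypothesis \eqref{2.156} of Lemma~\ref{l2.17} is genuinely available before invoking it.
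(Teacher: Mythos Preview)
Your proof is correct and follows essentially the same approach as the paper: both reduce the statement to Lemma~\ref{l2.17}. The paper phrases it as a contradiction argument (assuming $u(\tau_0)>0$ for some $\tau_0<\tau$ automatically places one in your second case, so Lemma~\ref{l2.17} applies and yields the contradiction), while you use an explicit case split; the content is identical.
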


\begin{proof}
Assume on the contrary that there exists $\tau_0\in (-\infty,\tau)$ such that
\begin{equation}
\label{2.170}
u(\tau_0)>0.
\end{equation}
Then, according to Lemma~\ref{l2.17} we have \eqref{2.158}. However,
\eqref{2.158} together with \eqref{2.169} contradicts \eqref{2.170}.
\end{proof}

Analogously to Lemma~\ref{l2.6} one can prove the following

\begin{lemma}
\label{l2.20}
Let $p\in\lloctrp$, $\sigma\in\Sigma$, \eqref{2.1} hold, and let
\begin{equation}
\label{2.171}
\ell_0(1)(t)\geq p(t)\forae t\leq t_0.
\end{equation}
Let, moreover, $u\in\aclocrp$ satisfy
\begin{equation}
\label{2.172}
u'(t)\geq \ell_0(u)(t)-p(t)u(t)\forae t\leq t_0,
\end{equation}
and let there exist an interval $[\tau_0,\tau_1]\subset (-\infty,t_0]$
such that \eqref{2.84} is fulfilled. Then \eqref{2.85} holds.
\end{lemma}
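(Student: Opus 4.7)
The plan is to mirror the proof of Lemma~\ref{l2.6} almost verbatim. The argument there is local around $\tau_1$ and uses neither the finite left endpoint of $[a,t_0]$ nor any global property beyond local integrability and absolute continuity, both of which are available here via $p\in\lloctrp$ and $u\in\aclocrp$.

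Proceeding by contradiction, I would assume $\sigma(\tau_1)\geq\tau_0$. Continuity of $u$ together with \eqref{2.84} produces $\delta>0$ with $u(t)>u(\tau_1)$ for every $t\in[\tau_0-\delta,\tau_1)$, and continuity of $\sigma$ at $\tau_1$, combined with $\sigma(\tau_1)\geq\tau_0>\tau_0-\delta$, produces $\varepsilon\in(0,\tau_1-\tau_0)$ such that
\[
\sigma_*:=\min\{\sigma(t):t\in[\tau_1-\varepsilon,\tau_1]\}\geq\tau_0-\delta,
\]
while $\sigma_*\leq\sigma(\tau_1-\varepsilon)\leq\tau_1-\varepsilon$ since $\sigma\in\Sigma$. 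I then rewrite \eqref{2.172} as
\[
\left(u(t)\exp\left(-\int_t^{\tau_1}p(s)ds\right)\right)'\geq\ell_0(u)(t)\exp\left(-\int_t^{\tau_1}p(s)ds\right)\mforae t\leq t_0
\]
and integrate from $\tau_1-\varepsilon$ to $\tau_1$, reproducing the identity used in the proof of Lemma~\ref{l2.6}.

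The decisive estimate is $\ell_0(u)(s)\geq u(\tau_1)p(s)$ for a.e. $s\in[\tau_1-\varepsilon,\tau_1]$: for such $s$ one has $[\sigma(s),s]\subseteq[\sigma_*,\tau_1]\subseteq[\tau_0-\delta,\tau_1]$, on which $u\geq u(\tau_1)$, so the memory property $\ell_0\in\vtn(\sigma)$ from \eqref{2.1}, together with positivity of $\ell_0$ and \eqref{2.171}, delivers the bound. Substituting it into the integrated identity gives $u(\tau_1)\geq u(\tau_1-\varepsilon)$, contradicting the choice of $\delta$. No substantive new obstacle arises relative to Lemma~\ref{l2.6}; the only piece of care required is the memory-property book-keeping, already absorbed in \eqref{2.1}.
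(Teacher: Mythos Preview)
Your proposal is correct and follows precisely the approach the paper intends: the paper gives no separate proof of Lemma~\ref{l2.20}, stating only that it is proved ``analogously to Lemma~\ref{l2.6}'', and your argument faithfully reproduces that proof with the obvious simplification that no constraint $\delta<\tau_0-a$ is needed on the half-line. The bookkeeping you flag---that the memory property \eqref{2.1} localises $\ell_0(u)(s)$ to $[\sigma(s),s]\subseteq[\tau_0-\delta,\tau_1]$ and hence yields $\ell_0(u)(s)\geq u(\tau_1)\ell_0(1)(s)\geq u(\tau_1)p(s)$---is exactly the mechanism behind the paper's step from the integrated inequality to \eqref{2.92}.
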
 

\begin{lemma}
\label{l2.21}
Let $p\in\lloctrp$, $\sigma\in\Sigma$, \eqref{2.1} and \eqref{2.171}
hold, and let 
\begin{equation}
\label{2.173}
\sup\left\{\int_{\sigma(t)}^t p(s)ds:t\leq t_0\right\}<+\infty.
\end{equation}
Let, moreover, $u\in\aclocrp$ satisfy \eqref{2.166} and \eqref{2.172}.
Then there exists a (finite) limit $u(-\infty)$.
\end{lemma}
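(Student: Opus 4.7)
Let $L = \liminf_{t\to-\infty} u(t)$ and $M = \limsup_{t\to-\infty} u(t)$; both lie in $[0,+\infty)$ by \eqref{2.166}, and $L \le M$. The goal is to show $L = M$, which is exactly the existence of the finite limit $u(-\infty)$. My plan is to first extract a Lemma~\ref{l2.7}-type uniform ratio estimate on the whole half-line $(-\infty,t_0]$, and then to bootstrap it by applying it to a shifted function $u-c$ with an appropriately chosen constant $c\ge 0$; the key observation that makes the bootstrap work is that $\ell_0(1)\ge p$ (see \eqref{2.171}) forces this shift to preserve the differential inequality \eqref{2.172}.

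Step 1 (ratio bound on $(-\infty,t_0]$). For each finite $a<t_0$, Lemma~\ref{l2.7} applies to $u|_{[a,t_0]}$, with $M_\sigma(a,t_0)\le M_\sigma:=\sup_{t\le t_0}\int_{\sigma(t)}^t p(s)\,ds<+\infty$ by \eqref{2.173}. Letting $a\to-\infty$ yields
$$
u(t)\ge u(\tau)e^{-M_\sigma}\qquad\text{for all }-\infty<\tau\le t\le t_0.
$$
Taking $\tau\to-\infty$ along a sequence with $u(\tau)\to M$ gives $u(t)\ge M e^{-M_\sigma}$ for every $t\le t_0$, whence $L\ge M e^{-M_\sigma}$. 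If $L=0$ this forces $M=0=L$, completing the proof in that case.

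Step 2 (shift trick and squeeze). Assume $L>0$. Fix $\epsilon\in(0,L)$ and, by definition of $\liminf$, choose $T_\epsilon\le t_0$ with $u(s)\ge L-\epsilon$ for all $s\le T_\epsilon$. Define $\tilde u(t) := u(t)-(L-\epsilon)$; then $\tilde u\ge 0$ and bounded on $(-\infty,T_\epsilon]$, and by linearity of $\ell_0$,
$$
\tilde u'(t)-\ell_0(\tilde u)(t)+p(t)\tilde u(t)=\bigl[u'(t)-\ell_0(u)(t)+p(t)u(t)\bigr]+(L-\epsilon)\bigl[\ell_0(1)(t)-p(t)\bigr]\ge 0
$$
for a.e.\ $t\le T_\epsilon$, using \eqref{2.172}, \eqref{2.171}, and $L-\epsilon\ge 0$. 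So $\tilde u$ satisfies the same structural inequality \eqref{2.172} on $(-\infty,T_\epsilon]$; since $\ell_0\in\vtn(\sigma)\subseteq\mathcal{V}_{T_\epsilon}(\sigma)$, the argument of Step~1 applies verbatim to $\tilde u$ on $(-\infty,T_\epsilon]$ and gives $\tilde u(t)\ge\tilde u(\tau)e^{-M_\sigma}$ for all $\tau\le t\le T_\epsilon$. Since $\liminf_{t\to-\infty}\tilde u(t)=\epsilon$ and $\limsup_{t\to-\infty}\tilde u(t)=M-L+\epsilon$, choosing $\tau_n\to-\infty$ with $\tilde u(\tau_n)\to M-L+\epsilon$ and taking $\liminf_{t\to-\infty}$ in the bound above yields
$$
\epsilon\ge(M-L+\epsilon)e^{-M_\sigma},\qquad\text{equivalently}\qquad M-L\le\epsilon(e^{M_\sigma}-1).
$$
Letting $\epsilon\to 0^+$ gives $M\le L$, hence $L=M$ and the limit $u(-\infty)$ exists.

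The main obstacle is recognizing the shift trick in Step~2: that replacing $u$ by $u-c$ for any constant $c\in[0,L]$ preserves \eqref{2.172}, precisely because the correction term $c(\ell_0(1)-p)$ is nonnegative by \eqref{2.171}. Without it, a single pass through the Lemma~\ref{l2.7}-type argument only gives the ratio bound $L\ge M e^{-M_\sigma}$, which is insufficient to force $L=M$ when $M_\sigma>0$; the shift is what converts this ratio bound into a squeeze.
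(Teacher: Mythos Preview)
Your proof is correct and arrives at the same conclusion, but the route differs from the paper's. The paper argues by contradiction in a single pass: assuming $u^*-u_*>0$, it picks $\delta$ with $2\delta<(u^*-u_*)e^{-M_\sigma}$, locates $\tau_0<\tau_1$ far to the left with $u(\tau_0)\ge u^*-\delta$, $u(\tau_1)\le u_*+\delta$ and \eqref{2.84}, invokes Lemma~\ref{l2.20} to get $\sigma(\tau_1)<\tau_0$, and then integrates \eqref{2.172} once from $\tau_0$ to $\tau_1$, using $\ell_0(u)\ge(u_*-\delta)\ell_0(1)\ge(u_*-\delta)p$ to obtain $2\delta\ge(u^*-u_*)e^{-M_\sigma}$, a contradiction. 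You instead separate the two ideas: Step~1 isolates the ratio bound $u(t)\ge u(\tau)e^{-M_\sigma}$, and Step~2 sharpens it by the shift $u\mapsto u-(L-\epsilon)$. Your key observation---that \eqref{2.171} makes this shift preserve \eqref{2.172}---is exactly what the paper encodes in its lower bound $\ell_0(u)\ge(u_*-\delta)\ell_0(1)$, so the two arguments are equivalent at the core; yours is more modular, the paper's more direct.

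One precision point in Step~1: literally applying Lemma~\ref{l2.7} to $u|_{[a,t_0]}$ is delicate, since in the paper's convention $\ell_0(u|_{[a,t_0]})$ acts on the constant extension below $a$, which need not agree with $\ell_0(u)$ when $\sigma(t)<a$. The clean fix---which the paper in effect anticipates by stating Lemma~\ref{l2.20} on the half-line---is to bypass the restriction and rerun the short argument of Lemma~\ref{l2.7} directly on $(-\infty,t_0]$ using Lemma~\ref{l2.20} in place of Lemma~\ref{l2.6}; this yields your ratio bound without the limiting step.
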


\begin{proof}
Assume on the contrary that 
\begin{equation}
\label{2.174}
u^*-u_*>0
\end{equation}
where 
\begin{equation}
\label{2.175}
u_*=\liminf_{t\to -\infty}u(t),\qquad u^*=\limsup_{t\to -\infty}u(t).
\end{equation}
In view of \eqref{2.173} and \eqref{2.174} there exists $\delta>0$ such that 
\begin{equation}
\label{2.176}
2\delta<(u^*-u_*)e^{-M_{\sigma}}
\end{equation}
where
\begin{equation}
\label{2.177}
M_{\sigma}=\sup\left\{\int_{\sigma(t)}^t p(s)ds:t\leq t_0\right\}.
\end{equation}
Then, in view of \eqref{2.175}, there exists $t_{\delta}\leq t_0$ such
that 
\begin{equation}
\label{2.178}
u(t)\geq u_*-\delta\for t\leq t_{\delta}.
\end{equation}
Further, according to \eqref{2.175} there exist $\tau_0<\tau_1\leq t_{\delta}$
such that 
\begin{equation}
\label{2.179}
u(\tau_0)\geq u^*-\delta,\qquad u(\tau_1)\leq u_*+\delta,
\end{equation}
and, obviously, without loss of generality we can assume that
\eqref{2.84} holds. Thus, according to Lemma~\ref{l2.20} we have
\eqref{2.85}.

On the other hand, from \eqref{2.172} we get
$$
u(\tau_1)\geq
u(\tau_0)\exp\left(-\int_{\tau_0}^{\tau_1}p(s)ds\right)+
\int_{\tau_0}^{\tau_1}\ell_0(u)(s)\exp\left(-\int_s^{\tau_1}p(\xi)d\xi\right)ds,
$$
whence, on account of \eqref{2.1}, \eqref{2.171}, \eqref{2.178}, and \eqref{2.179} we
find
\begin{equation}
\label{2.180}
u_*+\delta\geq
(u^*-\delta)\exp\left(-\int_{\tau_0}^{\tau_1}p(s)ds\right)+(u_*-\delta)
\left(1-\exp\left(-\int_{\tau_0}^{\tau_1}p(s)ds\right)\right). 
\end{equation}
Now \eqref{2.180} results in 
$$
2\delta\geq (u^*-u_*)\exp\left(-\int_{\tau_0}^{\tau_1}p(s)ds\right)
$$
which, together with \eqref{2.85}, \eqref{2.174}, and \eqref{2.177}
contradicts \eqref{2.176}. 
\end{proof}

\begin{lemma}
\label{l2.22}
Let $\ell_1\in \vtn$, \eqref{2.29} hold, and let there exist
$\gamma\in\acloc$ satisfying \eqref{2.4}. 
Further, let $u\in\aclocrp$ satisfy 
\begin{equation}
\label{2.181}
u'(t)\geq \ell_0(u)(t)-\ell_1(u)(t)\forae t\leq t_0,
\end{equation}
and assume that there exists a finite limit $u(-\infty)$. Then 
\begin{equation}
\label{2.182}
u(t)\geq u(-\infty)\for t\leq t_0,
\end{equation}
and, in addition, if there exists $\tau\in(-\infty,t_0]$ such that
$u(\tau)=u(-\infty)$, then
\begin{equation}
\label{2.183}
u(t)=u(-\infty)\for t\leq \tau.
\end{equation}
\end{lemma}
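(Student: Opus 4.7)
The plan is to reduce to Lemma~\ref{l2.3} applied on bounded sub-intervals $[a,\tau]$ with $\tau\leq t_0$, and then let $a\to-\infty$. The existence of the finite limit $u(-\infty)$ is what makes such a limiting procedure work, since it lets us translate the ``left endpoint value'' conclusion of Lemma~\ref{l2.3} into a statement about the behaviour of $u$ near $-\infty$.

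First I would establish \eqref{2.182}. For any fixed $a<t_0$, the restriction $u|_{[a,t_0]}$ lies in $\acatrp$ and satisfies \eqref{2.62} on $[a,t_0]$ by \eqref{2.181}. Lemma~\ref{l2.7.1} gives \eqref{2.57} from \eqref{2.4}, and the remaining hypotheses of Lemma~\ref{l2.3} ($\ell_1\in\vtn$ and \eqref{2.29}) are given directly. Therefore Lemma~\ref{l2.3} yields $u(a)=\min\{u(t):t\in[a,t_0]\}$, so $u(a)\leq u(t)$ for every $t\in[a,t_0]$. Letting $a\to-\infty$ and using the existence of $u(-\infty)$, I conclude $u(-\infty)\leq u(t)$ for all $t\leq t_0$, which is \eqref{2.182}.

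For \eqref{2.183}, fix $\tau\in(-\infty,t_0]$ with $u(\tau)=u(-\infty)$, and let $a<\tau$ be arbitrary. I would apply Lemma~\ref{l2.3} on $[a,\tau]$ with $\tau$ playing the role of $t_0$. Its hypotheses are inherited from the current ones: $\ell_1\in\vtn\subseteq V_\tau$; \eqref{2.29} holds a.e.\ on $(-\infty,\tau]$; and Lemma~\ref{l2.7.1} applied with endpoint $\tau$ (using the restriction of $\gamma$, which still satisfies $\gamma'(t)\leq -\ell_1(\gamma)(t)$ for a.e.\ $t\leq\tau$) gives $-\ell_1\in\mathcal{S}'_{a\tau}(\tau)$. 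Hence $u(a)=\min\{u(t):t\in[a,\tau]\}$; in particular $u(a)\leq u(\tau)=u(-\infty)$, while \eqref{2.182} just proved gives $u(a)\geq u(-\infty)$. Thus $u(a)=u(\tau)$, so the second conclusion of Lemma~\ref{l2.3} yields $u(t)=u(a)=u(-\infty)$ for every $t\in[a,\tau]$. Since $a<\tau$ was arbitrary, \eqref{2.183} follows.

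The only real delicate point is checking that Lemma~\ref{l2.3} can legitimately be invoked on a sub-interval $[a,\tau]$ rather than on the canonical $[a,t_0]$, i.e.\ that all the relevant memory- and sign-type inclusions pass to the new right endpoint $\tau$; once that bookkeeping is done via Lemma~\ref{l2.7.1} and the monotonicity $V_{t_0}\subseteq V_\tau$, the rest of the argument is a straightforward limit.
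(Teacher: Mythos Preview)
There is a genuine gap in the reduction to Lemma~\ref{l2.3}. You assert that the restriction $u_a:=u|_{[a,t_0]}$ satisfies \eqref{2.62}, but this does not follow from \eqref{2.181}. By the extension convention \eqref{1.4}, the quantity $\ell_i(u_a)(t)$ is $\ell_i\big(\vartheta(u_a)\big)(t)$, where $\vartheta(u_a)$ is the constant extension by $u(a)$ to the left of $a$; by contrast, in \eqref{2.181} the operators act on $\vartheta(u)$, which equals the actual values of $u$ on $(-\infty,a)$. Since nothing is known a~priori about the sign of $u(s)-u(a)$ for $s<a$, the two right-hand sides $\ell_0(u)(t)-\ell_1(u)(t)$ and $\ell_0(u_a)(t)-\ell_1(u_a)(t)$ cannot be compared, and \eqref{2.62} for $u_a$ need not hold. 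The same obstruction appears in your second step on $[a,\tau]$.

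The reason this matters is that the crucial inequality in the proof of Lemma~\ref{l2.3}, namely $\ell_0(u)(t)\geq u(\tau)\,\ell_0(1)(t)$, rests on $\vartheta(u)\geq u(\tau)$ \emph{everywhere}; for that you need $u(\tau)$ to be the infimum of $u$ over the whole of $(-\infty,t_0]$, not just over $[a,t_0]$. The paper's proof exploits exactly this: it takes $\tau$ with $u(\tau)=\inf\{u(t):t\leq t_0\}$ (the existence of $u(-\infty)$ guarantees that either the infimum equals $u(-\infty)$, giving \eqref{2.182} directly, or it is attained at a finite point), sets $z=u-u(\tau)$ on $(-\infty,\tau]$, obtains $z'(t)\geq -\ell_1(z)(t)$ with $z(\tau)=0$, and concludes via the half-line comparison Lemma~\ref{l2.19} that $z\equiv 0$, i.e.\ $u\equiv u(\tau)$ on $(-\infty,\tau]$. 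This simultaneously yields \eqref{2.182} and \eqref{2.183}. So the right fix is not to restrict to finite intervals and pass to the limit, but to work globally and replace the finite-interval tool (Lemma~\ref{l2.3}/Proposition~\ref{p2.5}) by its half-line analogue Lemma~\ref{l2.19}.
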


\begin{proof}
To prove lemma it is sufficient to show that whenever there exists
$\tau\in(-\infty,t_0]$ such that
\begin{equation}
\label{2.184}
u(\tau)=\inf\big\{u(t):t\leq t_0\big\}
\end{equation}
then $u$ satisfies \eqref{2.183}, and so \eqref{2.182} holds
necessarily. Therefore, let $\tau\in(-\infty,t_0]$ be arbitrary but fixed such
that \eqref{2.184} holds. Put
\begin{equation}
\label{2.185}
z(t)=u(t)-u(\tau)\for t\leq \tau.
\end{equation}
Then, in view of \eqref{2.29}, \eqref{2.181}, \eqref{2.184}, and \eqref{2.185}, we
have
\begin{gather}
\label{2.186}
z(t)\geq 0\for t\leq \tau,\\
\label{2.187}
z'(t)\geq \ell_0(1)(t)u(\tau)-\ell_1(u)(t)\geq -\ell_1(z)(t)\mforae t\leq \tau,\qquad z(\tau)=0.
\end{gather}
Now, applying Lemma~\ref{l2.19}, on account of the inclusion
$\ell_1\in \vtn$, \eqref{2.187} yields 
\begin{equation}
\label{2.188}
z(t)\leq 0\for t\leq \tau.
\end{equation}
Thus \eqref{2.185}, \eqref{2.186}, and \eqref{2.188} implies \eqref{2.183}.
\end{proof}

\begin{lemma}
\label{l2.23}
Let $\ell_0,\ell_1\in \vtn$, $\omega\in\Sigma$, \eqref{2.29} and
\eqref{2.33} hold, and let there exist a function $\gamma\in\acloc$ satisfying
\eqref{2.4}. Let, moreover, $u\in\aclocrp$ satisfy \eqref{2.181}, and let there
exist a finite limit $u(-\infty)$. Then
\begin{equation}
\label{2.189}
u(-\infty)\limsup_{t\to
  -\infty}\int_{\omega(t)}^t\big[\ell_0(1)(s)-\ell_1(1)(s)\big]ds+\limsup_{t\to
-\infty}\int_{\omega(t)}^t q(s)ds=0
\end{equation}  
where 
\begin{equation}
\label{2.190}
q(t)\stackrel{def}{=}u'(t)-\ell_0(u)(t)+\ell_1(u)(t)\forae t\leq t_0.
\end{equation}
\end{lemma}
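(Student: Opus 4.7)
The plan is to reduce \eqref{2.189} to showing that two manifestly nonnegative quantities sum to something vanishing in the $\limsup$. First, I would apply Lemma~\ref{l2.22} (whose hypotheses $\ell_1\in\vtn$, \eqref{2.29}, \eqref{2.4}, and \eqref{2.181} are all in force) to conclude $u(t)\ge u(-\infty)$ for $t\le t_0$, and set $w:=u-u(-\infty)$. Then $w\ge 0$ on $(-\infty,t_0]$, $w(t)\to 0$ as $t\to -\infty$, and $w$ is bounded on $(-\infty,t_0]$ because $u$ is continuous with a finite limit at $-\infty$. Rewriting \eqref{2.190} as $u'=\ell_0(u)-\ell_1(u)+q$, using linearity of $\ell_0,\ell_1$ to split $u=u(-\infty)+w$, and integrating from $\omega(t)$ to $t$ (which lies in $(-\infty,t_0]$ since $\omega\in\Sigma$) gives
\begin{equation*}
u(t)-u(\omega(t))=A(t)+B(t)+C(t),
\end{equation*}
where
\begin{equation*}
A(t):=u(-\infty)\!\int_{\omega(t)}^t\!\bigl[\ell_0(1)-\ell_1(1)\bigr]ds,\ \ B(t):=\!\int_{\omega(t)}^t\!\bigl[\ell_0(w)-\ell_1(w)\bigr]ds,\ \ C(t):=\!\int_{\omega(t)}^t\! q\,ds.
\end{equation*}
By \eqref{2.29} and $q\ge 0$ (from \eqref{2.181} and \eqref{2.190}), $A(t)\ge 0$ and $C(t)\ge 0$; moreover, the left-hand side tends to $0$ as $t\to -\infty$.

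The crux is a one-sided estimate on $B(t)$. Introduce the nondecreasing envelope $\overline{w}(t):=\sup\{w(s):s\le\min\{t,t_0\}\}$, which is continuous and nondecreasing on $\RR$, majorizes $w$ on $(-\infty,t_0]$, and satisfies $\overline{w}(t)\to 0$ as $t\to -\infty$. Since $\ell_1\in\vtn$ is positive, the Volterra property combined with the nonnegativity of $\overline{w}-w$ on $(-\infty,t_0]$ gives $\ell_1(w)(s)\le\ell_1(\overline{w})(s)$ for a.e. $s\le t_0$, and Lemma~\ref{l2.1} (with $\ell=\ell_1$, $\alpha\equiv 1$, $\beta=\overline{w}$) yields $\ell_1(\overline{w})(s)\le\ell_1(1)(s)\overline{w}(s)$ for a.e. $s\le t_0$. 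Monotonicity of $\overline{w}$ and \eqref{2.33} then give
\begin{equation*}
0\le\int_{\omega(t)}^t\ell_1(w)(s)\,ds\le\overline{w}(t)\int_{\omega(t)}^t\ell_1(1)(s)\,ds\le\overline{w}(t)M_1,
\end{equation*}
with $M_1:=\sup\bigl\{\int_{\omega(t)}^t\ell_1(1)(s)\,ds:t\le t_0\bigr\}<+\infty$. The same Volterra/positivity reasoning applied to $\ell_0$ shows $\ell_0(w)(s)\ge 0$ a.e. $s\le t_0$, so $B(t)\ge -\overline{w}(t)M_1$.

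Substituting back yields
\begin{equation*}
0\le A(t)+C(t)\le u(t)-u(\omega(t))+\overline{w}(t)M_1,
\end{equation*}
whose right-hand side tends to $0$ as $t\to -\infty$. Hence $\limsup_{t\to -\infty}(A(t)+C(t))\le 0$; since $A$ and $C$ are separately nonnegative, the inequalities $\limsup A\le\limsup(A+C)\le 0$ and $\limsup C\le\limsup(A+C)\le 0$ force $\limsup A=\limsup C=0$. Factoring the constant $u(-\infty)$ out of $A$ (with the convention $0\cdot(+\infty)=0$ if $u(-\infty)=0$) gives exactly \eqref{2.189}. I expect the only delicate point to be the careful use of Lemma~\ref{l2.1} together with the Volterra property $\ell_i\in\vtn$ to pass from nonnegativity of $w$ merely on the half-line $(-\infty,t_0]$ to the pointwise estimates $\ell_1(w)(s)\le\ell_1(1)(s)\overline{w}(s)$ and $\ell_0(w)(s)\ge 0$; this is handled by extending $\overline{w}-w$, respectively $w$, to globally nonnegative continuous functions agreeing with them on $(-\infty,t_0]$ and invoking positivity of $\ell_i$. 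The remainder is a routine limit comparison.
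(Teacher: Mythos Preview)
Your proof is correct and follows essentially the same route as the paper's. Both arguments integrate $u'=\ell_0(u)-\ell_1(u)+q$ over $[\omega(t),t]$, invoke Lemma~\ref{l2.22} to obtain $u\ge u(-\infty)$, and then bound $\ell_0(u)\ge u(-\infty)\ell_0(1)$ and $\ell_1(u)\le (u(-\infty)+\text{small})\ell_1(1)$ via positivity and the Volterra property, using \eqref{2.33} to control the resulting $\ell_1(1)$ integral. The only difference is packaging: the paper argues by contradiction, choosing a fixed $\delta>0$ and a threshold $t_\delta$ with $u(t)\le u(-\infty)+\delta$ for $t\le t_\delta$, whereas you argue directly with the nondecreasing envelope $\overline{w}(t)=\sup_{s\le t}w(s)\to 0$; your $\overline{w}(t)M_1$ plays exactly the role of the paper's $\delta M_\omega$.
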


\begin{proof}
Assume on the contrary that \eqref{2.189} does not hold. Then, in view
of \eqref{2.29}, \eqref{2.181}, and \eqref{2.190} we have
$$
u(-\infty)\limsup_{t\to
  -\infty}\int_{\omega(t)}^t\big[\ell_0(1)(s)-\ell_1(1)(s)\big]ds+\limsup_{t\to
-\infty}\int_{\omega(t)}^t q(s)ds>0.
$$
Therefore, according to \eqref{2.33}, there exists $\delta>0$ such that 
\begin{equation}
\label{2.191}
u(-\infty)\limsup_{t\to -\infty}\int_{\omega(t)}^t\big[\ell_0(1)(s)-\ell_1(1)(s)\big]ds+\limsup_{t\to
-\infty}\int_{\omega(t)}^t q(s)ds>2\delta M_{\omega}
\end{equation}
where
$$
M_{\omega}=\sup\left\{\int_{\omega(t)}^t\ell_1(1)(s)ds:t\leq t_0\right\}.
$$
Further, note that according to Lemma~\ref{l2.22}, the inequality
\eqref{2.182} holds and, moreover, there exists $t_{\delta}\leq t_0$ such that
\begin{equation}
\label{2.192}
u(t)\leq u(-\infty)+\delta\for t\leq t_{\delta}.
\end{equation}
Now the integration of \eqref{2.190} from $\omega(t)$ to $t$ yields
$$
u(t)-u(\omega(t))=
\int_{\omega(t)}^t\big[\ell_0(u)(s)-\ell_1(u)(s)+q(s)\big]ds\for t\leq t_{\delta},
$$
whence, in view of \eqref{2.33}, \eqref{2.182}, and \eqref{2.192}, we
get
\begin{equation}
\label{2.193}
u(t)-u(\omega(t))\geq
u(-\infty)\int_{\omega(t)}^t\big[\ell_0(1)(s)-\ell_1(1)(s)\big]ds-\delta
M_{\omega}+\int_{\omega(t)}^t q(s)ds\mfor t\leq t_{\delta}.
\end{equation}
Now \eqref{2.193}, on account of \eqref{2.29}, \eqref{2.181}, and \eqref{2.190}, results in
\begin{gather*}
\delta M_{\omega}\geq u(-\infty)\limsup_{t\to -\infty}\int_{\omega(t)}^t\big[\ell_0(1)(s)-\ell_1(1)(s)\big]ds,\\
\delta M_{\omega}\geq\limsup_{t\to -\infty}\int_{\omega(t)}^t q(s)ds.
\end{gather*}
However, the latter inequalities contradict \eqref{2.191}.
\end{proof}

\section{Proofs}
\label{sec5}

\begin{proof}[Proof~of~Theorem~\ref{t2.1}]
Let $(a_n)_{n=1}^{+\infty}$ be a sequence of real numbers satisfying
\eqref{2.132}, and let $n\in\NN$ be arbitrary but fixed. Then,
according to Lemma~\ref{l2.7.1} we have
$$
-\ell_1\in\santt.
$$
Moreover, according to Lemma~\ref{l2.1} with $\ell=\ell_1$,
$\alpha=\vartheta(\gamma)$, $\beta=\vartheta(1/\gamma)$, and
$\vartheta$ given by \eqref{1.4}, from \eqref{2.5} it follows that
\eqref{2.29} is fulfilled. Finally, according to \eqref{2.130} and
\eqref{2.131}, there exists $q_0\in\lantrp$ such that 
$$
|\overline{f}(v)(t)|\leq q_0(t)\forae t\in [a_n,t_0],\quad v\in\cantr
$$
and, with respect to \eqref{2.2} and \eqref{2.3}, we have
\begin{gather}
\label{2.196}
\overline{f}(v)(t)\geq 0\forae t\in [a_n,t_0],\quad v\in\cantr,\\
\label{2.197}
\overline{f}(v)(t)=0\forae t\in [a_n,t_0],\quad -v\in\cantrp.
\end{gather}
Consequently, all the assumptions of Lemma~\ref{l2.13} (with
$f=\overline{f}$, $a=a_n$, $q=q_0$) are
fulfilled. Therefore, there exists a solution $u_n$ to the problem
\eqref{2.133}, \eqref{1.2} on $[a_n,t_0]$ satisfying
\begin{equation}
\label{2.198}
0\leq u_n(t)\for t\in[a_n,t_0].
\end{equation}
Furthermore, according to \eqref{2.196}, from \eqref{2.133} we obtain
$$
u'_n(t)\geq \ell_0(u_n)(t)-\ell_1(u_n)(t)\forae t\in[a_n,t_0].
$$
Thus, all the assumptions of Lemma~\ref{l2.8} (with $a=a_n$) are
fulfilled, and so  
\begin{equation}
\label{2.199}
u_n(t)\leq \kappa\for t\in[a_n,t_0].
\end{equation}
Now, \eqref{2.198} and \eqref{2.199} imply
\eqref{2.134}. Conseuqently, the theorem follows from
Lemmas~\ref{l2.15} and \ref{l3.3}.
\end{proof}

\begin{proof}[Proof~of~Theorem~\ref{t2.2}]
Let $(a_n)_{n=1}^{+\infty}$ be a sequence of real numbers satisfying
\eqref{2.132}, and let $n\in\NN$ be arbitrary but fixed. Then,
according to \eqref{2.130} and
\eqref{2.131}, with respect to \eqref{2.2} and \eqref{2.3}, we have
\eqref{2.196} and \eqref{2.197}.
Consequently, all the assumptions of Lemma~\ref{l2.14} (with
$f=\overline{f}$, $a=a_n$) are
fulfilled. Therefore, there exists a solution $u_n$ to the problem
\eqref{2.133}, \eqref{1.2} on $[a_n,t_0]$ satisfying \eqref{2.198} and
\begin{equation}
\label{*}
u'_n(t)\geq 0\forae t\in[a_n,t_0].
\end{equation}
Thus, \eqref{1.2}, \eqref{2.198}, and \eqref{*},
with respect to $c\in[0,\kappa]$, imply \eqref{2.134}. Conseuqently, the
theorem follows from Lemmas~\ref{l2.15} and \ref{l3.3}.  
\end{proof}

\begin{proof}[Proof~of~Theorem~\ref{t2.3}]
According to Theorem~\ref{t2.1}, there exists a global solution $u$ to the
problem \eqref{1.1}, \eqref{1.2} satisfying
\eqref{2.8}. We will show that $u$ is positive in $(-\infty,t_0]$. Assume on the contrary
that there exists $\tau<t_0$ such that $u(\tau)=0$. Then, in view of
$\ell_0,f\in\vtn$, \eqref{2.3}, and \eqref{2.8}, from \eqref{1.1} we obtain \eqref{2.157}.

On the other hand, in view of the inclusion $\ell_1\in \vtn$, from
\eqref{2.4} it follows that \eqref{2.194} holds.
Therefore, according to Lemma~\ref{l2.19}, on account of \eqref{2.8}
we get \eqref{2.139}. Now Lemma~\ref{l2.16} yields that $u\equiv 0$ on
$(-\infty,t_0]$ which, together with $c>0$, contradicts \eqref{1.2}. 
\end{proof}

\begin{proof}[Proof~of~Theorem~\ref{t2.4}]
According to Theorem~\ref{t2.2}, there exists a global solution $u$ to the
problem \eqref{1.1}, \eqref{1.2} satisfying
\eqref{2.8} and \eqref{2.9}. We will show that $u$ is positive in $(-\infty,t_0]$. Assume
on the contrary that there exists $\tau<t_0$ such that
$u(\tau)=0$. Then from \eqref{2.8} and \eqref{2.9} we get
\eqref{2.139}. Now Lemma~\ref{l2.16} yields that $u\equiv 0$ on $(-\infty,t_0]$ which,
together with $c>0$, contradicts \eqref{1.2}.
\end{proof}

\begin{proof}[Proof~of~Theorem~\ref{t2.1.r}]
First note that, according to the inclusion $\ell_1\in V$, from
\eqref{2.4.r} it follows that \eqref{2.4} holds. Therefore, according
to Theorem~\ref{t2.1}, there exists a global solution $u$ to the problem
\eqref{1.1}, \eqref{1.2} satisfying \eqref{2.8}. We will show
that $u$ is positive in $[t_0,+\infty)$. Assume on the contrary that
\eqref{2.8.r} does not hold. Then, in view of \eqref{1.2}, there
exists $\tau>t_0$ such that $u(\tau)=0$ and
\begin{equation}
\label{73.2}
u(t)>0\for t\in[t_0,\tau).
\end{equation}
Now, on account of \eqref{2.8}, \eqref{2.3.5.r}, \eqref{73.2}, and the
inclusions $\ell_0,f\in V$, from \eqref{1.1} we obtain
\eqref{2.157}. Moreover, from \eqref{2.4.r}, with respect to the
inclusion $\ell_1\in V$, the inequality \eqref{2.194}
follows. Therefore, according to Lemma~\ref{l2.19} we have
\eqref{73.3}. However, \eqref{73.3} contradicts \eqref{73.2}.
\end{proof}

\begin{proof}[Proof~of~Theorem~\ref{t2.2.r}]
Put
\begin{equation}
\label{73.4}
\widetilde{f}(v)(t)\stackrel{def}{=}f(|v|)(t)\forae t\in\RR,\quad v\in\cloc
\end{equation}
and consider the auxiliary equation
\begin{equation}
\label{1.1.aux}
u'(t)=\ell_0(u)(t)-\ell_1(u)(t)+\widetilde{f}(u)(t).
\end{equation}
Note that from \eqref{2.3.5.rr} it follows that 
$$
\widetilde{f}(v)(t)\sgn v(t)\leq q(t,\|v\|) \mforae t\geq t_0,\quad
v\in\cnul,\quad 
-\kappa\leq v(t)\leq \kappa\mfor t\leq t_0
$$
holds. Therefore, according to Theorem~\ref{t2.2}, for every
$c\in[0,\kappa]$ there exists a global solution $u$ to the problem
\eqref{1.1.aux}, \eqref{1.2} satisfying \eqref{2.8} and
\eqref{2.9}.

Put
\begin{equation}
\label{73.6}
w(t)=\sup\big\{u(s):s\leq t\big\}\for t\in\RR
\end{equation}
and 
$$
A=\big\{t\in\RR:w(t)=u(t)\big\}.
$$
Obviously, on account of \eqref{73.6} and \eqref{2.9} we have
\begin{gather}
\label{73.7}
w\in\aclocr,\\
\label{73.8}
w(t)\geq 0 \for t\in\RR,\\
\label{73.9}
w'(t)\geq 0 \forae t\in\RR,\\
\label{73.10}
w(t)=u(t) \for t\leq t_0,\\
\label{73.11}
w(t)\geq u(t) \for t\geq t_0,
\end{gather}
and 
\begin{equation}
\label{73.12}
w'(t)=
\begin{cases}
u'(t)&\casforae t\in A,\\
0&\casforae t\in\RR\setminus A.
\end{cases}
\end{equation}
According to \eqref{73.10} and \eqref{73.11}, from \eqref{1.1.aux} it
follows that 
\begin{equation}
\label{73.13}
u'(t)\leq \ell_0(w)(t)-\ell_1(u)(t)+\widetilde{f}(u)(t)\forae t\in\RR.
\end{equation}
On the other hand, in view of the inclusions $\ell_0-\ell_1\in\pplus$
and $f\in V$, on account of \eqref{2.3.5.rr}, \eqref{73.4}, and
\eqref{73.7}--\eqref{73.11}, we have
\begin{equation}
\label{73.14}
\ell_0(w)(t)-\ell_1(u)(t)+\widetilde{f}(u)(t)\geq
\ell_0(w)(t)-\ell_1(w)(t)\geq 0\forae t\in\RR.
\end{equation}
Now from \eqref{73.12}--\eqref{73.14} we get
\begin{equation}
\label{73.15}
w'(t)\leq \ell_0(w)(t)-\ell_1(u)(t)+\widetilde{f}(u)(t)\forae t\in\RR.
\end{equation}
Put
\begin{equation}
\label{73.16}
z(t)=w(t)-u(t)\for t\in\RR.
\end{equation}
Then, in view of \eqref{1.1.aux}, \eqref{73.10}, \eqref{73.15}, and
\eqref{73.16}, we have
\begin{gather}
\label{73.17}
z'(t)\leq \ell_0(z)(t)\forae t\in\RR,\\
\label{73.18}
z(t)=0\for t\leq t_0.
\end{gather}
Now the inclusion $\ell_0\in V$, according to Proposition~\ref{p2.2},
yields $\ell_0\in\mathcal{S}_{t_0\tau}(t_0)$ for every
$\tau>t_0$. Consequently, \eqref{73.17} and \eqref{73.18} result in
$z(t)\leq 0$ for $t\geq t_0$ whence, on account of \eqref{73.16}, we
get
\begin{equation}
\label{73.19}
w(t)\leq u(t)\for t\geq t_0.
\end{equation}
However, \eqref{73.10}, \eqref{73.11}, and \eqref{73.19} yield that
$w\equiv u$ on $\RR$, and, consequently, on account of \eqref{73.4},
\eqref{1.1.aux}, \eqref{73.8}, and \eqref{73.9}, we have that $u$ is a
global solution also to the problem \eqref{1.1}, \eqref{1.2}
satisfying \eqref{2.8} and \eqref{2.9.r}.
\end{proof}

\begin{proof}[Proof~of~Theorem~\ref{t2.9}]
Let $u$ be a solution to \eqref{1.1} on $(-\infty,t_0]$ satisfying
\eqref{2.8}, and let $\tau\in\RR$, $\tau\leq t_0$, be arbitrary but
fixed. Then, according to \eqref{2.23}, $u$ satisfies also \eqref{2.156} and
\eqref{2.157}. Moreover, since $\ell_1\in \vtn$, the inequality
\eqref{2.194} holds. Thus, according to Lemma~\ref{l2.17} we have
\eqref{2.158}, and so the function $u/\gamma$
is nondecreasing in $(-\infty,t_0]$. Consequently, in view of
\eqref{2.8}, there exists a finite limit 
\begin{equation}
\label{2.216}
0\leq \lim_{t\to -\infty}\frac{u(t)}{\gamma(t)}<+\infty.
\end{equation}
Now from \eqref{2.24} and \eqref{2.216} it follows that there exists a
finite limit $u(-\infty)$.
\end{proof}

\begin{proof}[Proof~of~Theorem~\ref{t2.10}]
Let $u$ be a solution to \eqref{1.1} on $(-\infty,t_0]$ satisfying
\eqref{2.8}. Then, in view of \eqref{2.3}, $u$ satisfies also
\eqref{2.195}. Thus, according to Lemma~\ref{l2.18}, the estimate
\eqref{2.167} holds. Therefore, in view of \eqref{2.3} and \eqref{2.8},
we have \eqref{2.181} whence, on account of \eqref{2.167} we get 
\begin{equation}
\label{2.220}
u'(t)\geq \ell_0(u)(t)-\frac{\ell_1(\gamma)(t)}{\gamma(t)}u(t)\forae
t\leq t_0.
\end{equation}
Now, \eqref{2.1}, \eqref{2.5}, \eqref{2.6}, \eqref{2.8}, and
\eqref{2.220} yield that all the assumptions of Lemma~\ref{l2.21} are
fulfilled with $p=\ell_1(\gamma)/\gamma$. Therefore, there
exists a finite limit $u(-\infty)$.
\end{proof}

\begin{proof}[Proof~of~Theorem~\ref{t2.11}]
First note that according to Remark~\ref{r2.6} we have
\eqref{2.25}. Further, \eqref{2.3} and
\eqref{2.8} yields \eqref{2.181}. Therefore, according to
Lemma~\ref{l2.23}, on account of \eqref{2.25}, we have
\begin{equation}
\label{2.221}
u(-\infty)\limsup_{t\to -\infty}\int_{\omega(t)}^t\ell_0(1)(s)ds+\limsup_{t\to
-\infty}\int_{\omega(t)}^t f(u)(s)ds=0
\end{equation}
for any $\omega\in\Sigma$.

Consequently, if \eqref{2.30} holds then we define $\omega$ in the following way:
let the values $\omega(t_n)$ and $\omega(t_0)$ be defined by
\begin{equation}
\label{2.222}
\int_{\omega(t_{n-1})}^{t_{n-1}}\ell_0(1)(s)ds=1\for n\in\NN,
\end{equation}
where
\begin{equation}
\label{2.223}
t_n=t_0-n\for n\in\NN.
\end{equation}
Further, put
$$
\omega(t)=(\omega(t_{n-1})-\omega(t_n))(t-t_n)+\omega(t_n)\for
t\in (t_n,t_{n-1}),\quad n\in\NN
$$
and $\omega(t)\stackrel{def}{=}\omega(t_0)$ for $t>t_0$.
Then, obviously, $\omega\in\Sigma$ and, in view of \eqref{2.222} and \eqref{2.223}, we have
$$
\limsup_{t\to -\infty}\int_{\omega(t)}^t\ell_0(1)(s)ds>0.
$$
Thus, in view of \eqref{2.3} and
\eqref{2.8}, from \eqref{2.221} it follows that \eqref{2.32} is fulfilled.

Further note that, in view of Lemma~\ref{l2.22} we have
\eqref{2.182} and if $c=u(-\infty)$ then $u(t)=c$ for $t\leq
t_0$. Therefore, if \eqref{2.31} holds then, on account of 
\eqref{2.8}, we get \eqref{2.32} again.
\end{proof}

\begin{proof}[Proof~of~Theorem~\ref{t2.12}]
First note that \eqref{2.3} and
\eqref{2.8} yields \eqref{2.181}. Therefore, according to
Lemma~\ref{l2.23}, we have
\begin{equation}
\label{2.224}
u(-\infty)\limsup_{t\to -\infty}\int_{\omega(t)}^t\big[\ell_0(1)(s)-\ell_1(1)(s)\big]ds+\limsup_{t\to
-\infty}\int_{\omega(t)}^t f(u)(s)ds=0.
\end{equation}
Consequently, if \eqref{2.34} holds, then, in view of \eqref{2.3} and
\eqref{2.8}, from \eqref{2.224} it follows that \eqref{2.32} is fulfilled.

Further note that, in view of Lemma~\ref{l2.22} we have
\eqref{2.182} and if $c=u(-\infty)$ then $u(t)=c$ for $t\leq
t_0$. Therefore, if \eqref{2.31} holds then, on account of 
\eqref{2.8}, we get \eqref{2.32} again.
\end{proof}

\begin{proof}[Proof~of~Proposition~\ref{p2.1}]
Let $c\in(0,\kappa)$ be arbitrary but fixed and let $u\in\cnulok$ satisfy
conditions of Definition~\ref{d2.1} with $\tau=t_0$. We will show that 
\begin{equation}
\label{2.28}
\limsup_{t\to -\infty}\int_{\omega(t)}^t f(u)(s)ds>0.
\end{equation}
Obviously, for every $n\in\NN$ there exists $t_n\leq t_0$ such that
$$
u(-\infty)\leq u(t)\leq u(-\infty)+\frac{c-u(-\infty)}{n}\for t\leq t_n.
$$
Put
$$
\vartheta_n(u)(t)=
\begin{cases}
u(t)&\casif t\leq t_n,\\
u(t_n)&\casif t>t_n
\end{cases}
\for n\in\NN.
$$
Then
\begin{gather}
\label{2.225}
0<\vartheta_n(u)(t)\leq c\for t\in\RR,\quad n\in\NN,\\
\label{2.226}
\|\vartheta_n(u)-u(-\infty)\|\leq \frac{c-u(-\infty)}{n}\for n\in\NN,
\end{gather}
and, on account of the inclusion $f\in \vtn$, for
every $n\in\NN$ we have
\begin{equation}
\label{2.227}
\int_{\omega(t)}^t f(u)(s)ds=\int_{\omega(t)}^t
f(\vartheta_n(u))(s)ds\for t\leq t_n.
\end{equation}
On the other hand, in view of \eqref{2.226} and the continuity of $h_1$,
there exist $\varepsilon_n>0$ $(n\in\NN)$ such that 
\begin{equation}
\label{2.228}
\lim_{n\to +\infty}\varepsilon_n=0
\end{equation} 
and
\begin{equation}
\label{2.229}
\|h_1(\vartheta_n(u))-h_1(u(-\infty))\|_{\infty}\leq \varepsilon_n\for n\in\NN. 
\end{equation} 
Now, \eqref{2.35}, \eqref{2.225}, \eqref{2.227}, and \eqref{2.229}
results in
\begin{equation}
\label{2.230}
\int_{\omega(t)}^t f(u)(s)ds\geq \int_{\omega(t)}^t
g(s)h_1(u(-\infty))(s)ds-\varepsilon_n g^*\for t\leq t_n,\quad n\in\NN
\end{equation}
where
\begin{equation}
\label{2.231}
g^*=\sup\left\{\int_{\omega(t)}^t |g(s)|ds:t\leq t_0\right\}.
\end{equation}
Consequently, from \eqref{2.230} it follows that 
\begin{equation}
\label{2.232}
\limsup_{t\to -\infty}\int_{\omega(t)}^t f(u)(s)ds\geq \limsup_{t\to -\infty}\int_{\omega(t)}^t
g(s)h_1(u(-\infty))(s)ds-\varepsilon_n g^*\mfor n\in\NN.
\end{equation}
Thus \eqref{2.232}, on account of \eqref{2.36}, \eqref{2.37},
\eqref{2.228}, and \eqref{2.231}, yields \eqref{2.28}.
\end{proof}

\begin{proof}[Proof~of~Corollary~\ref{c2.1}]
Let $u$ be a solution to the problem \eqref{1.1}, \eqref{1.2} on
$(-\infty,t_0]$ satisfying \eqref{2.8}. Then, in view of \eqref{2.35}
and \eqref{2.217}, all the
conditions of Theorem~\ref{t2.9} are satisfied. Therefore, there
exists a finite limit $u(-\infty)$.

Now we will show that \eqref{2.38} and \eqref{2.39} imply \eqref{2.36}
and \eqref{2.37} with a suitable function $\omega$. Let
\begin{equation}
\label{2.233}
\varphi(t)=\frac{1}{(t_0+1-t)^2}\for t\leq t_0.
\end{equation}
Then, obviously,
\begin{equation}
\label{2.234}
\varphi(t)>0 \for t\leq t_0,\qquad \lim_{t\to -\infty}\int_t^{t_0}\varphi(s)ds=1.
\end{equation}
Define $\omega$ by
\begin{equation}
\label{2.235}
\int_{\omega(t)}^{t_0}\big(g(s)+\varphi(s)\big)ds=1+\int_t^{t_0}\big(g(s)+\varphi(s)\big)ds\for
t\leq t_0
\end{equation}
and $\omega(t)\stackrel{def}{=}\omega(t_0)$ for $t>t_0$.
Then, in view of \eqref{2.234} and the non-negativity of $g$, we have
$\omega\in\Sigma$. Moreover, \eqref{2.234} and \eqref{2.235}
yields 
\begin{gather}
\label{2.236}
\lim_{t\to -\infty}\int_{\omega(t)}^t\varphi(s)ds=0,\\
\label{2.237}
\int_{\omega(t)}^t g(s)ds=1-\int_{\omega(t)}^t\varphi(s)ds\for t\leq t_0.
\end{gather}
Therefore, from \eqref{2.236} and \eqref{2.237} we get \eqref{2.36} and
\begin{equation}
\label{2.238}
\lim_{t\to -\infty}\int_{\omega(t)}^t g(s)ds=1.
\end{equation}
Now \eqref{2.37} follows from \eqref{2.39} and \eqref{2.238}.

Consequently, according to Proposition~\ref{p2.1}, all the
assumptions of Theorem~\ref{t2.11} hold and so \eqref{2.32} is satisfied.
\end{proof}

\begin{proof}[Proof~of~Corollary~\ref{c2.2}]
Let $u$ be a solution to the problem \eqref{1.1}, \eqref{1.2} on
$(-\infty,t_0]$ satisfying \eqref{2.8}.  Then, in view of \eqref{2.35}
and \eqref{2.217}, all the conditions of Theorem~\ref{t2.10} are
satisfied. Therefore, there exists a finite limit $u(-\infty)$.

Further, \eqref{2.1} implies $\ell_0\in \vtn$, \eqref{2.4} implies
that $\gamma$ is a nonincreasing function, and so, according to
Lemma~\ref{l2.1} with $\ell=\ell_1$, $\alpha=\vartheta(\gamma)$,
$\beta=\vartheta(1/\gamma)$, and $\vartheta$ given by \eqref{1.4},
from \eqref{2.5} we get \eqref{2.29}. Finally, according to
Proposition~\ref{p2.1}, the inclusion \eqref{2.31} holds. Consequently, all the
assumptions of Theorem~\ref{t2.12} are fulfilled and so \eqref{2.32} is satisfied.
\end{proof}

\begin{proof}[Proof~of~Theorem~\ref{t2.5}]
Define operators $\ell_0$, $\ell_1$, and $f$ by
\begin{gather}
\label{2.201}
\ell_i(u)(t)=p_i(t)u(\mu_i(t))\forae t\in\RR\quad (i=0,1),\\
\label{2.202}
f(u)(t)=h(t,u(t),u(\nu(t)))\forae t\in\RR.
\end{gather}
Then, in view of \eqref{2.15.r} we have $\ell_0,\ell_1,f\in V$, and so from
\eqref{2.13} and \eqref{2.14} it follows that \eqref{2.2} and
\eqref{2.3} hold.  

Further, put
\begin{equation}
\label{2.203}
\gamma(t)=\exp\left(e\int_{t}^{t_0} p_1(s)ds\right)\for t\leq t_0.
\end{equation}
Then, in view of \eqref{2.16}, we have 
\begin{equation}
\label{2.204}
\gamma(t)=\gamma(\mu_1(t))\exp\left(-e\int_{\mu_1(t)}^t
  p_1(s)ds\right)\geq \frac{\gamma(\mu_1(t))}{e}\forae t\leq t_0,
\end{equation}
and so
\begin{equation}
\label{2.205}
\gamma'(t)=-ep_1(t)\gamma(t)\leq-p_1(t)\gamma(\mu_1(t))\forae t\leq t_0.
\end{equation}
Consequently, \eqref{2.4} holds. Moreover, from
\eqref{2.17}, on account of \eqref{2.203}, we get \eqref{2.5}. Finally,
from \eqref{2.18}, in view of \eqref{2.16}, we obtain
\begin{equation}
\label{2.206}
M_{\mu}<+\infty.
\end{equation} 
Now, let $c\in[0,\kappa e^{-M_{\mu}})$. Then there exists $\varepsilon>0$
such that 
\begin{equation}
\label{2.207}
c\leq \kappa e^{-(M_{\mu}+\varepsilon)}.
\end{equation}
Put
$$
\varphi(t)=\frac{\varepsilon}{(t_0+1-t)^2}\for t\leq t_0.
$$
Then, obviously,
\begin{equation}
\label{2.208}
\varphi(t)>0 \for t\leq t_0,\qquad \lim_{t\to -\infty}\int_t^{t_0}\varphi(s)ds=\varepsilon.
\end{equation}
Define a function $\sigma:\RR\to\RR$ by the equalities
\begin{equation}
\label{2.209}
\int_{\sigma(t)}^{t_0}\big(P_1(s)+\varphi(s)\big)ds=M_{\mu}+\varepsilon+\int_t^{t_0}\big(P_1(s)+\varphi(s)\big)ds\for
t\leq t_0
\end{equation}
and $\sigma(t)\stackrel{def}{=}\sigma(t_0)$ for $t>t_0$, where
\begin{equation}
\label{2.210}
P_1(t)=p_1(t)\exp\left(e\int_{\mu_1(t)}^tp_1(s)ds\right)\forae t\leq t_0.
\end{equation}
Then, in view of \eqref{2.208}, \eqref{2.210}, and the non-negativity of $p_1$, we have
$\sigma\in\Sigma$. Moreover, \eqref{2.206} and \eqref{2.208}--\eqref{2.210} yield
\begin{equation}
\label{2.211}
\int_{\sigma(t)}^t p_1(s)\exp\left(e\int_{\mu_1(s)}^sp_1(\xi)d\xi\right)ds=
M_{\mu}+\varepsilon-\int_{\sigma(t)}^t \varphi(s)ds\for t\leq t_0
\end{equation}
and
\begin{equation}
\label{2.212}
\sup\left\{\int_{\sigma(t)}^t
  p_1(s)\exp\left(e\int_{\mu_1(s)}^sp_1(\xi)d\xi\right)ds:t\leq t_0\right\}=
M_{\mu}+\varepsilon<+\infty.
\end{equation}
Now, from \eqref{2.19} and \eqref{2.211}, on account of \eqref{2.208}, we get
$$
\int_{\sigma(t)}^t
p_1(s)\exp\left(e\int_{\mu_1(s)}^sp_1(\xi)d\xi\right)ds>
\int_{\mu_0(t)}^t
p_1(s)\exp\left(e\int_{\mu_1(s)}^sp_1(\xi)d\xi\right)ds\mforae t\leq t_0
$$
whence, in view of the non-negativity of $p_1$, we obtain
\begin{equation}
\label{2.213}
\sigma(t)\leq \mu_0(t)\forae t\leq t_0.
\end{equation}
Therefore, \eqref{2.212} and \eqref{2.213}, with respect
to \eqref{2.15.r}, \eqref{2.203}, and \eqref{2.207}, imply \eqref{2.1},
\eqref{2.6}, and $c\in[0,\kappa e^{-M_{\sigma}}]$ with $M_{\sigma}$ defined by
\eqref{2.7}. 

Thus, the assertion follows from Theorem~\ref{t2.1}.
\end{proof}

\begin{proof}[Proof~of~Theorem~\ref{t2.6}]
Define operators $\ell_0$, $\ell_1$, and $f$ by
\eqref{2.201} and \eqref{2.202}. Then, in view of \eqref{2.15.r} we have
$\ell_0,\ell_1,f\in V$, and so from
\eqref{2.13} and \eqref{2.14} it follows that \eqref{2.2} and
\eqref{2.3} hold. We will show that \eqref{2.20} and \eqref{2.21}
imply $\ell_0-\ell_1\in\ptnplus$. Indeed, let $u\in\aclocrp$ be a
nondecreasing function. Then, in view of \eqref{2.21}, we have
\begin{equation}
\label{2.214}
p_1(t)\big(u(\mu_0(t))-u(\mu_1(t))\big)\geq 0\forae t\leq t_0.
\end{equation}
On the other hand, on account of \eqref{2.201}, we find
\begin{equation}
\label{2.215}
\ell_0(u)(t)-\ell_1(u)(t)=\big(p_0(t)-p_1(t)\big)u(\mu_0(t))+
p_1(t)\big(u(\mu_0(t))-u(\mu_1(t))\big)\mforae t\leq t_0.
\end{equation}
Thus, from \eqref{2.215}, in view of \eqref{2.20} and \eqref{2.214},
we obtain 
$$
\ell_0(u)(t)-\ell_1(u)(t)\geq 0\forae t\leq t_0.
$$
Consequently, $\ell_0-\ell_1\in\ptnplus$, and the assertion follows from
Theorem~\ref{t2.2}. 
\end{proof}

\begin{proof}[Proof~of~Theorem~\ref{t2.7}]
Let $c\in \left(0,\kappa e^{-M_{\mu}}\right)$ be arbitrary but fixed.
According to Theorem~\ref{t2.5}, there exists a global solution $u$ to the
problem \eqref{1.3}, \eqref{1.2} satisfying
\eqref{2.8}. We will show that $u$ is positive in $(-\infty,t_0]$. Assume on the contrary
that there exists $\tau<t_0$ such that $u(\tau)=0$. Define operators
$\ell_0$, $\ell_1$, and $f$ by \eqref{2.201} and \eqref{2.202}. Then,
in view of \eqref{2.15.r} we have $\ell_0,\ell_1,f\in V$ and
\eqref{2.157} is fulfilled.  

Further, define $\gamma$ by \eqref{2.203}. Then, in view of
\eqref{2.16}, we have \eqref{2.204} and \eqref{2.205}. 
Consequently, \eqref{2.194} holds. Therefore, according to
Lemma~\ref{l2.19}, on account of \eqref{2.8}, we get \eqref{2.139}. 
Now Lemma~\ref{l2.16} yields that $u\equiv 0$ on $(-\infty,t_0]$ which,
together with $c>0$, contradicts \eqref{1.2}.
\end{proof}

\begin{proof}[Proof~of~Theorem~\ref{t2.8}]
Let $c\in \left(0,\kappa e^{-M_{\mu}}\right)$ be arbitrary but fixed.
According to Theorem~\ref{t2.6}, there exists a global solution $u$ to the
problem \eqref{1.3}, \eqref{1.2} satisfying
\eqref{2.8} and \eqref{2.9}. We will show that $u$ is positive in
$(-\infty,t_0]$. Assume
on the contrary that there exists $\tau<t_0$ such that
$u(\tau)=0$. Then from \eqref{2.8} and \eqref{2.9} we get
\eqref{2.139}. 
Define operators
$\ell_0$, $\ell_1$, and $f$ by \eqref{2.201} and \eqref{2.202}. Then,
in view of \eqref{2.15.r} we have $\ell_0,\ell_1,f\in V$.
Now Lemma~\ref{l2.16} yields that $u\equiv 0$ on 
$(-\infty,t_0]$ which, together with $c>0$, contradicts \eqref{1.2}.
\end{proof}

\begin{proof}[Proof~of~Theorem~\ref{t2.5.r}]
Analogously to the proof of Theorem~\ref{t2.5} one can show that all
the assumptions of Theorem~\ref{t2.1.r} are fulfilled.
\end{proof}

\begin{proof}[Proof~of~Theorem~\ref{t2.6.r}]
Analogously to the proof of Theorem~\ref{t2.6} one can show that all
the assumptions of Theorem~\ref{t2.2.r} are fulfilled.
\end{proof}

\begin{proof}[Proof~of~Theorem~\ref{t2.13}]
Define operators $\ell_0$, $\ell_1$ and $f$ by \eqref{2.201} and
\eqref{2.202}. Then, in view of \eqref{2.15} we have
$\ell_0,\ell_1,f\in \vtn$, and so from \eqref{2.13} it follows that
\eqref{2.23} holds.  

Furthermore, define $\gamma$ by \eqref{2.203}. Then, on account of
\eqref{2.16}, we have \eqref{2.204} and \eqref{2.205}. Therefore,
\eqref{2.4} holds and \eqref{2.40} implies \eqref{2.24}. Consequently,
the assertion follows from Theorem~\ref{t2.9}. 
\end{proof}

\begin{proof}[Proof~of~Theorem~\ref{t2.14}]
Define operators $\ell_0$, $\ell_1$, and $f$ by
\eqref{2.201} and \eqref{2.202}. Then, in view of \eqref{2.15} we have
$\ell_0,\ell_1,f\in \vtn$, and so from \eqref{2.13} it follows
that \eqref{2.3} holds. 

Further, define $\gamma$ by \eqref{2.203}. Then, in view of
\eqref{2.16}, we have \eqref{2.204} and \eqref{2.205}. 
Consequently, \eqref{2.4} holds. Moreover, from \eqref{2.17}, on
account of \eqref{2.203}, we get \eqref{2.5}.

Now, let
\begin{equation}
\label{2.239}
p^*=\esssup\left\{\int_{\mu_0(t)}^t p_1(s)ds:t\leq t_0\right\},
\end{equation}
and let $\varphi$ be given by \eqref{2.233}. Then \eqref{2.234}
holds. Define a function $\sigma:\RR\to\RR$ by the equalities
\begin{equation}
\label{2.240}
\int_{\sigma(t)}^{t_0}\big(p_1(s)+\varphi(s)\big)ds=p^*+1+\int_t^{t_0}\big(p_1(s)+\varphi(s)\big)ds\for
t\leq t_0
\end{equation}
and $\sigma(t)\stackrel{def}{=}\sigma(t_0)$ for $t>t_0$.
Then, in view of \eqref{2.234} and the non-negativity of $p_1$, we
have $\sigma\in\Sigma$. Moreover, \eqref{2.234}
and \eqref{2.240} yields
\begin{equation}
\label{2.241}
\int_{\sigma(t)}^t p_1(s)ds=p^*+1-\int_{\sigma(t)}^t\varphi(s)ds> p^*\for t\leq t_0
\end{equation}
and
\begin{equation}
\label{2.242}
\sup\left\{\int_{\sigma(t)}^t p_1(s)ds: t\leq t_0\right\}<+\infty.
\end{equation}
Now, from \eqref{2.239} and \eqref{2.241} we get
$$
\int_{\sigma(t)}^t p_1(s)ds> \int_{\mu_0(t)}^t p_1(s)ds\forae
t\leq t_0
$$
whence, in view of non-negativity of $p_1$, we obtain \eqref{2.213}.
Therefore, \eqref{2.213} and \eqref{2.242}, with respect to \eqref{2.15},
\eqref{2.16}, and \eqref{2.203}, implies \eqref{2.1} and \eqref{2.6}.

Thus, the assertion follows from Theorem~\ref{t2.10}.
\end{proof}

\begin{proof}[Proof~of~Theorem~\ref{t2.15}]
Assume on the contrary that $u(-\infty)\in (0,\kappa)$. Then, in view of
\eqref{2.129}, there exist $\delta>0$ and $t_{\delta}\leq t_0$ such that
\begin{gather}
\label{2.243}
h_1(x,y)>0\for x,y\in[u(-\infty)-\delta, u(-\infty)+\delta],\\
\label{2.244}
0<u(-\infty)-\delta\leq u(t)\leq u(-\infty)+\delta<\kappa\for t\leq t_{\delta}.
\end{gather}
Integrating \eqref{1.3} from $t$ to $t_{\delta}$ we obtain
$$
u(t_{\delta})-u(t)=\int_t^{t_{\delta}}
\big[p_0(s)u(\mu_0(s))-p_1(s)u(\mu_1(s))+h(s,u(s),u(\nu(s)))\big]ds\for
t\leq t_{\delta}
$$
whence, on account of \eqref{2.15}, \eqref{2.41}, and \eqref{2.244} we
get
\begin{multline}
\label{2.245}
u(t_{\delta})-u(t)\geq (u(-\infty)-\delta)\int_t^{t_{\delta}}
p_0(s)ds-\kappa\int_t^{t_{\delta}}p_1(s)ds\\
+\int_t^{t_{\delta}}g(s)h_1(u(s),u(\nu(s)))ds\mfor
t\leq t_{\delta}.
\end{multline}
On the other hand, in view of \eqref{2.15}, \eqref{2.243}, and \eqref{2.244} we have
\begin{equation}
\label{2.168}
h_1(u(t),u(\nu(t)))\geq h_*>0\forae t\leq t_{\delta}
\end{equation}
where
\begin{equation}
\label{2.246}
h_*=\min\big\{h_1(x,y):u(-\infty)-\delta\leq x,y\leq u(-\infty)+\delta\big\}.
\end{equation}
Therefore, if \eqref{2.38} or \eqref{2.42} is fulfilled then, on account
of \eqref{2.40}, \eqref{2.244}, and \eqref{2.168}, from \eqref{2.245} we
obtain $u(-\infty)=-\infty$, a contradiction.
\end{proof}

\begin{proof}[Proof~of~Theorem~\ref{t2.16}]
Assume on the contrary that $u(-\infty)\in (0,\kappa)$. Then, in view of
\eqref{2.129}, there exist $\delta>0$ and $t_{\delta}\leq t_0$ such that
\eqref{2.243} and \eqref{2.244} hold. Therefore, on account of
\eqref{2.15}, \eqref{2.243}, and \eqref{2.244}, we have \eqref{2.168}
where $h_*$ is given by \eqref{2.246}. Moreover, from \eqref{1.3},
with respect to \eqref{2.15}, \eqref{2.41}, and \eqref{2.168}, we get
\begin{equation}
\label{2.247}
u'(t)\geq p_0(t)u(\mu_0(t))-p_1(t)u(\mu_1(t))\forae t\leq t_{\delta}.
\end{equation}
Define operators $\ell_0$ and $\ell_1$ by \eqref{2.201}. Then, in view
of \eqref{2.15}, \eqref{2.20}, \eqref{2.44}, and \eqref{2.247} we have
$\ell_0,\ell_1\in\vtd$,
\begin{gather*}
\ell_0(1)(t)\geq \ell_1(1)(t)\forae t\leq t_{\delta},\\
\sup\left\{\int_{\omega(t)}^t\ell_1(1)(s)ds:t\leq t_{\delta}\right\}<+\infty,\\
u'(t)\geq \ell_0(u)(t)-\ell_1(u)(t)\forae t\leq t_{\delta}.
\end{gather*}
Further, define $\gamma$ by \eqref{2.203}. Then, on account of
\eqref{2.16}, we have \eqref{2.204} and \eqref{2.205}. 
Consequently, 
$$
\gamma'(t)\leq -\ell_1(\gamma)(t)\forae t\leq t_{\delta},
$$
Therefore, all the assumptions of Lemma~\ref{l2.23} (with
$t_0=t_{\delta}$) are fulfilled, and thus
\begin{equation}
\label{2.248}
u(-\infty)\limsup_{t\to
  -\infty}\int_{\omega(t)}^t\big[p_0(s)-p_1(s)\big]ds+\limsup_{t\to
-\infty}\int_{\omega(t)}^t h(s,u(s),u(\nu(s)))ds=0.
\end{equation}  
However, from \eqref{2.41}, \eqref{2.168}, and \eqref{2.248}, with
respect to \eqref{2.20}, the inclusions $u(-\infty)\in(0,\kappa)$,
$\omega\in\Sigma$, and the non-negativity of $g$, it follows that
\begin{equation}
\label{2.249}
u(-\infty)\limsup_{t\to
  -\infty}\int_{\omega(t)}^t\big[p_0(s)-p_1(s)\big]ds+h_*\limsup_{t\to
-\infty}\int_{\omega(t)}^t g(s)ds=0.
\end{equation}  
Now it is clear that each of the conditions \eqref{2.45} and
\eqref{2.46} contradicts \eqref{2.249}.
\end{proof}

\begin{proof}[Proof~of~Corollary~\ref{c2.3}]
The assertion follows from Theorems~\ref{t2.13} and \ref{t2.15}.
\end{proof}

\begin{proof}[Proof~of~Corollary~\ref{c2.4}]
The assertion follows from Corollary~\ref{c2.3} and
Lemma~\ref{l2.22} with $\ell_i$ $(i=0,1)$ and $\gamma$ defined by
\eqref{2.201} and \eqref{2.203}, respectively. 
\end{proof}

\begin{proof}[Proof~of~Corollary~\ref{c2.5}]
Note that from \eqref{2.17}, on account of \eqref{2.15}, we have
\eqref{2.20}. Therefore, the assertion follows from
Theorems~\ref{t2.14} and~\ref{t2.16} with $\omega(t)=t-1$ for
$t\in\RR$, and Lemma~\ref{l2.22} with $\ell_i$ $(i=0,1)$ and $\gamma$ defined by
\eqref{2.201} and \eqref{2.203}, respectively.
\end{proof}

\section {Applications}
\label{sec6}

In this section we apply the results obtained above to the model
equations appearing in natural sciences.

  \vspace{0,3cm}
 {\bf{Generalized logistic equation:}} Consider the generalized
 logistic equation
\begin{equation}
\label{6.12}
u'(t)=g_0(t)u(t)\int_{\nu(t)}^t\left|1-\frac{u(s)}{\kappa}\right|^{\lambda}\sgn\left(1-\frac{u(s)}{\kappa}\right)d_sK(t,s),
\end{equation}
where $g_0\in\llocrp$, $\nu:\RR\to\RR$ is a locally essentially bounded
function, $\nu(t)\leq t$ for almost every $t\in\RR$, $\kappa>0$, $\lambda>0$, and
$K:\RR\times\RR\to\RR$ is a measurable function satisfying the following conditions:
\begin{itemize}
\item  $K(t,\cdot):\RR\to\RR$ is a left continuous nondecreasing function of locally
  bounded variation for almost every $t\in\RR$,
\item $\mathcal{K}:\RR\to\RR$, where 
$$
\mathcal{K}(t)\stackrel{def}{=}\int_{\nu(t)}^t d_sK(t,s)\forae t\in\RR,
$$
is an essentially bounded measurable function. 
\end{itemize}

\begin{theo}
\label{t6.2}
Let
$$
\lim_{t\to -\infty}\int_t^{t_0}g_0(s)ds=+\infty,\qquad
\lim_{t\to-\infty}\essinf\big\{\mathcal{K}(s):s\leq t \big\}>0.
$$
Then, for every $t_0\in\RR$ and $c\in(0,\kappa)$ there exists a positive global solution $u$ to
\eqref{6.12} such that 
$$
u(t_0)=c,\qquad u(t)<\kappa \mfor t\leq t_0,\qquad
u'(t)\geq 0 \mforae t\leq t_0,
$$
and there exists a limit $u(-\infty)=0$. 

If, in addition, 
\begin{equation}
\label{6.20}
\lim_{t\to +\infty}\essinf\big\{\nu(s):s\geq t\big\}=+\infty
\end{equation}
and there exists $\omega\in\Sigma$ such that
\begin{equation}
\label{6.14}
\lim_{t\to +\infty}\omega(t)=+\infty,\qquad \limsup_{t\to
  +\infty}\int_{\omega(t)}^t g_0(s)\mathcal{K}(s)ds>0,
\end{equation}
then either $u$ oscillates about $\kappa$ in the neighbourhood of $+\infty$ or there exists a limit $u(+\infty)=\kappa$.
\end{theo}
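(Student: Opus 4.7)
The plan is to recast \eqref{6.12} in the form \eqref{1.1} and then apply the general existence and asymptotic theory from Sections~\ref{sec2}--\ref{sec5}. Writing $\phi(x):=|1-x/\kappa|^\lambda\sgn(1-x/\kappa)$, I would choose
$$
\ell_0(u)(t)=\ell_1(u)(t)=g_0(t)\mathcal{K}(t)u(t),\qquad
f(u)(t)=g_0(t)u(t)\int_{\nu(t)}^t\phi(u(s))\,d_sK(t,s),
$$
so that $\ell_0,\ell_1\in V$, $\ell_0-\ell_1\equiv 0\in\pplus$, $f\in V$ (from $\nu(t)\le t$), $f(0)\equiv 0$, and the fact that $\phi\in[0,1]$ on $[0,\kappa]$ delivers condition \eqref{2.3}.

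To construct the solution on $(-\infty,t_0]$ I would emulate the proof of Theorem~\ref{t2.4}: fix a sequence $a_n\downarrow-\infty$, on each $[a_n,t_0]$ apply Lemma~\ref{l2.14} to the truncated operator $\overline{f}$ defined by \eqref{2.130}--\eqref{2.131} (conditions \eqref{2.120}, \eqref{2.121} being automatic from $\phi([0,\kappa])\subset[0,1]$) to obtain $u_n\in\acantr$ with $0\le u_n\le\kappa$ and $u_n'\ge 0$; pass to a compact-uniform limit $u$ via Lemma~\ref{l2.15}; and apply Lemma~\ref{l2.16} with $g(t)=g_0(t)\mathcal{K}(t)$ and $h_0(x)=x$ (so that \eqref{2.11} reduces to $\int_0^1 ds/s=+\infty$) to force $u>0$ on $(-\infty,t_0]$. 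Monotonicity together with $c<\kappa$ then gives $u(t)<\kappa$ for $t\le t_0$.

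The main technical obstacle is the forward extension to $[t_0,+\infty)$: the superlinear growth of $\phi(x)$ as $x\to+\infty$ obstructs the growth condition \eqref{3.5}, so Corollary~\ref{c1.2} does not directly yield a global solution. I would bypass this by combining standard local existence for Volterra-type functional differential equations with the a priori bound $u(t)\le c\exp(\int_{t_0}^t g_0(s)\mathcal{K}(s)\,ds)$ (which follows from $\phi\le 1$), thereby precluding finite-time blow-up; positivity persists on $[t_0,+\infty)$ because $u(t)=c\exp(\int_{t_0}^t g_0(s)I(s)\,ds)$ with $I(s)=\int_{\nu(s)}^s\phi(u(\xi))\,d_\xi K(s,\xi)$ locally integrable. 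For the limit $u(-\infty)=0$ I would invoke Corollary~\ref{c2.2} with $\sigma(t)=t$, $\gamma(t)=\exp(\int_t^{t_0}g_0(s)\mathcal{K}(s)\,ds)$ (so that \eqref{2.4}--\eqref{2.6} hold with equality), $g=g_0$, $h_1(v)(t)=\psi(v)(t)\int_{\nu(t)}^t(1-\psi(v)(s)/\kappa)^\lambda\,d_sK(t,s)$ where $\psi$ clamps values to $[0,\kappa]$, and $\omega$ defined by $\int_{\omega(t)}^t g_0(s)\,ds=1$ for $t\le t_0$; boundedness of $\mathcal{K}$ yields \eqref{2.33} and \eqref{2.36}, while $\lim_{t\to-\infty}\essinf\{\mathcal{K}(s):s\le t\}>0$ forces $\int_{\omega(t)}^t g_0\mathcal{K}\ge\mathcal{K}_*>0$ for $t$ sufficiently negative and delivers \eqref{2.37}.

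Finally, for the dichotomy at $+\infty$, suppose for contradiction that $u$ does not oscillate about $\kappa$; then $u-\kappa$ has a definite sign on some half-line $[T,+\infty)$. By \eqref{6.20} one has $\nu(s)\ge T$ for all $s$ sufficiently large, so $\phi(u(\xi))$ is bounded away from zero on the delay window $[\nu(s),s]$, whence $u$ is eventually monotone and has a limit $u_\infty$. If $u_\infty\ne\kappa$, integrating $(\ln u)'(t)=g_0(t)\int_{\nu(t)}^t\phi(u(s))\,d_sK(t,s)$ over $[\omega(t),t]$ gives $|\ln u(t)-\ln u(\omega(t))|$ bounded below by a positive multiple of $\int_{\omega(t)}^t g_0(s)\mathcal{K}(s)\,ds$, whose $\limsup$ at $+\infty$ is positive by \eqref{6.14}; but $\omega(t)\to+\infty$ (also by \eqref{6.14}) together with $u(\cdot)\to u_\infty$ drives the left-hand side to $0$, a contradiction. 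Hence $u_\infty=\kappa$.
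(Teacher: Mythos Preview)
Your proposal is broadly correct and reaches the same conclusions, but it diverges from the paper's proof in two places worth noting.

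\textbf{Forward extension.} Your workaround via ``standard local existence plus the a~priori cap $u(t)\le c\exp\bigl(\int_{t_0}^t g_0\mathcal{K}\bigr)$'' is valid in principle, but it steps outside the machinery of Section~\ref{sec2} and leaves the continuation argument informal. The paper instead builds the cap directly into an auxiliary equation: it replaces $u(t)$ in the nonlinearity by $\chi(t,u(t))$, where $\chi$ truncates at the exponential function $U(t)=\kappa\exp\bigl(\int_{t_0}^t g_0\mathcal{K}\bigr)$ for $t>t_0$ (and at $\kappa$ for $t\le t_0$). This makes the modified $f$ satisfy \eqref{2.3.5} with $q(t,x)=g_0(t)U(t)\mathcal{K}(t)$ independent of $x$, so \eqref{3.5} is trivially met and Theorem~\ref{t2.4} yields a genuinely global solution in one stroke. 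A short direct computation then shows the solution never reaches the cap, so it solves the original equation \eqref{6.12}. This is cleaner than invoking an external local‐existence theorem.

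\textbf{Choice of $\ell_i$ and the limit $u(-\infty)=0$.} Your splitting $\ell_0=\ell_1=g_0\mathcal{K}\,u$ is permissible but needlessly heavy: the paper takes $\ell_0\equiv\ell_1\equiv 0$, so that $\gamma\equiv 1$ already satisfies \eqref{2.4} and \eqref{2.24}, and then applies Corollary~\ref{c2.1} (not \ref{c2.2}) with $g=g_0$ and $h_1(v)(t)=v(t)\int_{\nu(t)}^t\phi(v(s))\,d_sK(t,s)$. The two hypotheses of the theorem translate directly into \eqref{2.38} and \eqref{2.39}. Your route through Corollary~\ref{c2.2} with nontrivial $\gamma$ and an explicit $\omega$ also works, but note a small gap: defining $\omega$ by $\int_{\omega(t)}^t g_0=1$ need not produce a continuous $\omega\in\Sigma$ when $g_0$ vanishes on intervals; the paper handles the analogous construction inside the proof of Corollary~\ref{c2.1} by adding the strictly positive perturbation $\varphi(t)=(t_0+1-t)^{-2}$ before inverting.

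Your dichotomy argument at $+\infty$ matches the paper's, though your phrase ``$\phi(u(\xi))$ is bounded away from zero on the delay window'' is misplaced: what gives eventual monotonicity is merely that $\phi(u(\xi))$ has a \emph{fixed sign} once $\nu(s)\ge T$; the lower bound on $|\phi(u(\xi))|$ by $|1-u_\infty/\kappa|^\lambda$ enters only after the limit $u_\infty$ is known to exist, via the monotonicity of $u$.
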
 

\begin{proof}
Let $t_0\in\RR$ and $c\in(0,\kappa)$ be arbitrary but fixed.
Consider the auxiliary equation
\begin{equation}
\label{6.13}
u'(t)=g_0(t)\chi(t,u(t))\int_{\nu(t)}^t\left|1-\frac{u(s)}{\kappa}\right|^{\lambda}\sgn\left(1-\frac{u(s)}{\kappa}\right)d_sK(t,s),
\end{equation}
where 
$$
\chi(t,x)\stackrel{def}{=}
\begin{cases}
(|x|+x)/2 &\casif x< U(t),\\
U(t)&\casif x\geq U(t)
\end{cases}
\for t\in\RR,\quad x\in\RR
$$
and 
$$
U(t)\stackrel{def}{=}
\begin{cases}
\kappa&\casfor t\leq t_0,\\
\kappa\exp\left(\int_{t_0}^tg_0(s)\mathcal{K}(s)ds\right)&\casfor t>t_0. 
\end{cases}
$$
Put $\ell_i\equiv 0$ $(i=0,1)$, $h_0(x)\stackrel{def}{=}x$ for $x\in\RR_+$,
\begin{gather*}
f(v)(t)\stackrel{def}{=}g_0(t)\chi(t,v(t))\int_{\nu(t)}^t\left|1-\frac{v(s)}{\kappa}\right|^{\lambda}\sgn\left(1-\frac{v(s)}{\kappa}\right)d_sK(t,s)\forae
t\in\RR,\\
h_1(v)(t)\stackrel{def}{=}v(t)\int_{\nu(t)}^t\left|1-\frac{v(s)}{\kappa}\right|^{\lambda}\sgn\left(1-\frac{v(s)}{\kappa}\right)d_sK(t,s)\forae
t\in\RR.
\end{gather*}
Then all the assumptions of Theorem~\ref{t2.4} are fulfilled with
$$
q(t,x)\stackrel{def}{=}g_0(t)U(t)\mathcal{K}(t)\mforae t\geq t_0,\quad x\in\RR_+,\qquad
g(t)\stackrel{def}{=}g_0(t)\mathcal{K}(t)\mforae t\in\RR.
$$
Therefore, there exists a global solution $u$ to \eqref{6.13}
satisfying $u(t_0)=c$, $u'(t)\geq 0$ for a.e. $t\leq t_0$, and $0<u(t)<
\kappa$ for $t\leq t_0$. Moreover, also the assumptions of
Corollary~\ref{c2.1} are fulfilled with $\gamma\equiv 1$ and $g\equiv
g_0$. Thus $u(-\infty)=0$.

Now we show that $u$ is positive also on $(t_0,+\infty)$. Assume on the contrary that there
exists $\tau_1>t_0$ such that $u(\tau_1)=0$. Without loss of generality
we can assume that $u(t)>0$ for $t<\tau_1$. Note that $u$ is bounded
on $(-\infty,\tau_1]$, and so there exists $M>0$ such that $u(t)\leq
M$ for $t\leq \tau_1$. Moreover, there exists $\tau_0<\tau_1$ such
that $u(t)< \kappa$ for $t\in[\tau_0,\tau_1]$. Consequently, from
\eqref{6.13} we get
$$
\ln\frac{u(t)}{u(\tau_0)}\geq
-\left|1-\frac{M}{\kappa}\right|^{\lambda}\int_{\tau_0}^t
g_0(s)\mathcal{K}(s)ds\for t\in[\tau_0,\tau_1).
$$
Now the latter inequality yields 
$$
\lim_{t\to\tau_1}\ln\frac{u(t)}{u(\tau_0)}>-\infty
$$   
which contradicts $u(\tau_1)=0$.

Finally we show that $u(t)<U(t)$ for $t\in\RR$ which implies that
$u$ is also a solution to \eqref{6.12}. Assume on the contrary that
there exists $\tau\in\RR$ such that $u(\tau)=U(\tau)$. Obviously,
according to the above proven, $\tau>t_0$ and without loss of
generality we can assume that $u(t)<U(t)$ for $t\in[t_0,\tau)$. Thus
from \eqref{6.13} we get 
\begin{multline*}
u(\tau)=u(t_0)\exp\left(\int_{t_0}^{\tau}g_0(t)
\int_{\nu(t)}^t\left|1-\frac{u(s)}{\kappa}\right|^{\lambda}\sgn\left(1-\frac{u(s)}{\kappa}\right)d_sK(t,s)dt\right)\\
\leq u(t_0)\exp\left(\int_{t_0}^{\tau}g_0(t)\mathcal{K}(t)dt\right)<U(\tau).
\end{multline*}
However, the latter inequality contradicts our assumption. 

Let, in addition, \eqref{6.20} hold and let $\omega\in\Sigma$ be such that \eqref{6.14} is
fulfilled. Then either $u$ oscillates about $\kappa$ in the
neighbourhood of $+\infty$ or there exists
$\tau\in\RR$ such that 
\begin{equation}
\label{6.15}
u(t)\leq\kappa\for t\geq \tau
\end{equation}
or
\begin{equation}
\label{6.16}
u(t)\geq\kappa\for t\geq \tau.
\end{equation}
From \eqref{6.12}, in view of \eqref{6.20}, it follows that $u$ is eventually nondecreasing if
\eqref{6.15} holds and eventually nonincreasing if \eqref{6.16}
is fulfilled. Thus, in both cases there exists a finite limit $u(+\infty)$. 
Therefore, from \eqref{6.12} we get
\begin{equation}
\label{6.17}
\ln\frac{u(t)}{u(\omega(t))}\geq
\left|1-\frac{u(+\infty)}{\kappa}\right|^{\lambda}\int_{\omega(t)}^t
g_0(s)\mathcal{K}(s)ds\for t\geq \tau
\end{equation}
if \eqref{6.15} holds, and 
\begin{equation}
\label{6.18}
\ln\frac{u(t)}{u(\omega(t))}\leq
-\left|1-\frac{u(+\infty)}{\kappa}\right|^{\lambda}\int_{\omega(t)}^t
g_0(s)\mathcal{K}(s)ds\for t\geq \tau
\end{equation}
if \eqref{6.16} is fulfilled. Now both \eqref{6.17} and \eqref{6.18},
in view of \eqref{6.14}, results in
$$
0=\left|1-\frac{u(+\infty)}{\kappa}\right|^{\lambda}.
$$
Consequently, $u(+\infty)=\kappa$.
\end{proof}
 
  \vspace{0,3cm}
 {\bf{ Scalar differential equation without diffusion:}} Consider the
 delay differential equation
\begin{equation}
\label{6.1}
u'(t)=-u(t)+G(u(t-\tau(t)))\for t\in\RR
\end{equation}
where 
\begin{equation}
\label{6.2}
\tau\in\clocrnn,\qquad \limsup_{t\to -\infty}\tau(t)<+\infty,
\end{equation}
and there exists $\kappa>0$ such that the nonlinearity $G$ satisfies
the following conditions:
\begin{gather}
\label{6.3}
G\in\clocrprp,\qquad G(0)=0,\qquad G(s)>s\mfor s\in(0,\kappa),\\
\label{6.4}
\lim_{s\to +\infty}\frac{q_0(s)}{s}=0\qquad\mbox{where}\qquad q_0(s)\stackrel{def}{=}\max\big\{G(x):x\in[0,s]\big\}.
\end{gather}

The delay differential equation \eqref{6.1} covers, e.g., Nicholson's
equation describing the blowflies population, or the Mackey-Glass
equation applied to model white cell production.
As an illustrative example of the function $G$ we can consider 
\begin{equation}
\label{6.5}
G(s)=s^p(\kappa-s)+s\for s\in[0,\kappa],\quad p>0.
\end{equation}

We are interested in the existence of global positive solutions to
\eqref{6.1} satisfying $u(-\infty)=0$. For this purpose let $t_0\in\RR$
and define
\begin{gather}
\label{6.6}
\mu_1(t)\stackrel{def}{=}t\mfor
t\in\RR,\qquad\mu_0(t)=\nu(t)\stackrel{def}{=}t-\tau(t)\mfor
t\in\RR,\\
\label{6.7}
p_0(t)\stackrel{def}{=}
\begin{cases}
1&\casfor t\leq t_0,\\
0&\casfor t>t_0,
\end{cases} 
\qquad p_1(t)\stackrel{def}{=}1\mfor t\in\RR,\\
\label{6.8}
h(t,x,y)\stackrel{def}{=}G(|y|)-p_0(t)y\for t\in\RR,\quad x,y\in\RR,\\
\label{6.9}
q(t,\rho)\stackrel{def}{=}q_0(\rho)\for t\geq t_0,\quad\rho\in\RR_+,
\end{gather}
and consider the problem \eqref{1.3}, \eqref{1.2}. Then it can be
easily verified that all the assumptions of Theorem~\ref{t2.5.r} are
fulfilled. Indeed, we first observe that  $p_i\in\llocrp$,
$\mu_i,\nu:\RR\to \RR$ are locally bounded functions $(i=0,1)$, and
$h:\RR^3\to\RR$ satisfies the Carath\'eodory conditions mentioned in
the introduction, as $G$ is a continuous function. Moreover, the
condition $G(0)=0$ implies that 
$$
h(t,0,0)=0\for t\in\RR,
$$ 
and since $G(s)\geq s$ for $s\in [0,\kappa]$, we have that 
$$
h(t,x,y)\geq 0\for t\in\RR,\quad x,y\in [0,\kappa].
$$
Furthermore, \eqref{6.4} and \eqref{6.7}--\eqref{6.9} implies that 
$$
h(t,x,y)\sgn x\leq q(t,|x|+|y|)\for t>t_0,\quad x,y\in\RR
$$ 
where $q:[t_0,+\infty)\times\RR_+\to\RR_+$ is a Carath\'eodory
function nondecreasing in the second argument and satisfying
\eqref{3.5} for every $b>t_0$. Finally, we also have
$$
\mu_0(t)\leq t,\qquad \mu_1(t)\leq t,\qquad \nu(t)\leq t \for t\in\RR.
$$ 
Thus, the conditions \eqref{2.13}--\eqref{2.15.r} are fulfilled.

On the other hand, observe that $p_i$ satisfy \eqref{2.17} and
\eqref{2.16.r}, as 
$$
\int_{\mu_1(t)}^{t}p_1(s)ds=0\for t\in\RR.
$$
In addition, in view of \eqref{6.2}, \eqref{6.6}, and \eqref{6.7} we have
$$
\int_{\mu_0(t)}^t p_1(s)ds=\tau(t)\leq \sup\big\{\tau(s):s\leq
t_0\big\}<+\infty\for t\leq t_0,
$$ 
and so also the condition \eqref{2.18} is valid. Furthermore,
\eqref{6.3}, \eqref{6.7}, and \eqref{6.8} results in \eqref{2.13.r}. 

Thus, according to Theorem~\ref{t2.5.r} and
Remark~\ref{r2.4.r}, for every $c\in\left(0,\kappa e^{-M_{\tau}}\right)$ with
\begin{equation}
\label{6.10}
M_{\tau}\stackrel{def}{=}\sup\big\{\tau(t):t\leq t_0\big\},
\end{equation}
there exists a global solution $u$ to the problem \eqref{1.3},
\eqref{1.2} having a finite limit $u(-\infty)$ and satisfying
\begin{equation}
\label{6.11}
0\leq u(t)\leq \kappa\mfor t\leq t_0,\qquad u(t)>0 \mfor t>t_0.
\end{equation}
From \eqref{6.6}--\eqref{6.8} and \eqref{6.11} it follows that $u$ is
also a solution to \eqref{6.1}. Moreover, if we put 
$$
h_1(x,y)\stackrel{def}{=}G(y)-y\for x,y\in(0,\kappa)\times(0,\kappa),
$$
then \eqref{2.129} and \eqref{2.41} hold with $g\equiv 1$. Therefore, according to
Corollary~\ref{c2.5} we have $u(-\infty)=0$.
 
Further, let
\begin{gather*}
h_0(y)\stackrel{def}{=}\max\big\{G(s)-s:s\in[0,y]\big\}\for
y\in\RR_+,\\
g(t)=1\for t\leq t_0.
\end{gather*}
Then, obviously, \eqref{2.22} holds. However, \eqref{2.11} is not,
generally speaking, valid, as one can check by the illustrative case
\eqref{6.5}. Obviously, in that case \eqref{2.11} holds if and only if
$p\geq 1$. Consequently, Corollary~\ref{c1.3} can be applied only for
certain $G$ to conclude that $u$ is also positive on the whole real
line.

However, in spite of the fact that the assumptions of
Corollary~\ref{c1.3}, generally speaking, are not fulfilled, still we
can conclude that the solution $u$ is positive on the whole real line
(i.e., also for $p\in(0,1)$ provided \eqref{6.5} is
fulfilled). Indeed, the positivity of $u$ is guaranteed by the
following assertion.

\begin{lemma}
\label{l6.1}
Let \eqref{6.2} and \eqref{6.3} hold. If $u$ is a nontrivial non-negative solution to \eqref{6.1}, then $u(t)>0$ for $t\in\RR$.
\end{lemma}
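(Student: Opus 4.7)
The strategy is to argue by contradiction: assuming that $u(t_*)=0$ for some $t_*\in\RR$, I will show that $u\equiv 0$ on all of $\RR$, contradicting the assumption that $u$ is nontrivial. The proof splits naturally into a backward propagation argument (on $(-\infty,t_*]$) and a forward propagation argument (on $[t_*,+\infty)$).

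\textbf{Step 1 (backward propagation).} Rewriting \eqref{6.1} in the integrating-factor form $(e^t u(t))'=e^t G(u(t-\tau(t)))$, I integrate from an arbitrary $s\leq t_*$ up to $t_*$ to obtain
$$
e^s u(s) = e^{t_*}u(t_*) - \int_s^{t_*} e^{\xi} G\bigl(u(\xi-\tau(\xi))\bigr)\,d\xi
= - \int_s^{t_*} e^{\xi} G\bigl(u(\xi-\tau(\xi))\bigr)\,d\xi.
$$
The left-hand side is nonnegative (since $u\geq 0$) while the right-hand side is nonpositive (since $G\geq 0$ on $\RR_+$ by \eqref{6.3} and $u(\xi-\tau(\xi))\geq 0$). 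Both sides therefore vanish, forcing $u(s)=0$ for every $s\leq t_*$.

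\textbf{Step 2 (forward propagation).} Define
$$
t^* = \sup\bigl\{t\geq t_* : u(\xi)=0 \text{ for all } \xi\leq t\bigr\}.
$$
By Step~1, $t^*\geq t_*$. Suppose for contradiction that $t^*<+\infty$. By continuity, $u(t^*)=0$. Since $\tau$ is continuous with $\tau(t^*)>0$ by \eqref{6.2}, there is some $\delta\in(0,\tau(t^*)/2)$ such that $\tau(\xi)\geq \tau(t^*)/2$ for $\xi\in[t^*,t^*+\delta]$. Consequently, for such $\xi$,
$$
\xi-\tau(\xi)\leq t^*+\delta-\tfrac{\tau(t^*)}{2}<t^*,
$$
so $u(\xi-\tau(\xi))=0$ and $G(u(\xi-\tau(\xi)))=G(0)=0$ on $[t^*,t^*+\delta]$. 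Equation \eqref{6.1} then reduces to $u'(\xi)=-u(\xi)$ on $[t^*,t^*+\delta]$ with the initial condition $u(t^*)=0$, whence $u\equiv 0$ on $[t^*,t^*+\delta]$. This contradicts the definition of $t^*$, so $t^*=+\infty$ and $u\equiv 0$ on $\RR$, which is the desired contradiction.

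\textbf{Main obstacle.} The only nontrivial point is the forward step, where I need a strictly positive lower bound for $\tau$ near $t^*$ to ensure that the delayed argument stays in the region where $u$ is already known to vanish. This is precisely why the hypothesis $\tau\in\clocrnn$ (with strictly positive values, not merely nonnegative) in \eqref{6.2} is used; without it, one could have $\tau(t^*)=0$ and the reduction to the linear ODE $u'=-u$ would fail. The backward step is purely an application of positivity of $G$ and of $u$ combined with the variation-of-constants formula and requires no further hypotheses beyond $G(0)=0$ and $G\geq 0$.
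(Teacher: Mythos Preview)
Your proof is correct and follows essentially the same approach as the paper: backward propagation via the variation-of-constants formula and sign considerations, then forward propagation using strict positivity and continuity of $\tau$ to reduce to $u'=-u$ with zero initial value. The only cosmetic difference is that the paper phrases the forward step by picking a first point $\zeta>\eta$ with $u(\zeta)>0$ rather than defining $t^*$ as a supremum, but the two formulations are equivalent.
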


\begin{proof}
Suppose on the contrary that there exists $\eta\in\RR$ such that $u(\eta)=0$. Then we have
$$
u(t)=-\int_t^{\eta} e^{s-t}G(u(s-\tau(s)))ds\leq 0\for t\leq \eta.
$$
Since $u\geq 0$ for $t\in\RR$, we can conclude that $u(t)=0$ for
$t\leq \eta$. In addition, since $u$ is a nontrivial non-negative
function, there exists $\zeta\in\RR$ such that
$u(\zeta)>0$. Obviously, $\eta<\zeta$ and without loss of generality
we can assume that $u(t)>0$ for $t\in(\eta,\zeta]$.  

Since $\tau(t)>0$ for $t\in\RR$ is continuous, there exists $\varepsilon>0$ such
that $t-\tau(t)\leq \eta$ for $t\in[\eta,\eta+\varepsilon]$, and hence
$u(t-\tau(t))=0$ for $t\in[\eta,\eta+\varepsilon]$. Since $G(0)=0$, we
have $G(u(t-\tau(t)))=0$ for $t\in
[\eta,\eta+\varepsilon]$. Consequently, from \eqref{6.1} it follows
that 
$$
u'(t)=-u(t)\for t\in[\eta,\eta+\varepsilon],\qquad u(\eta)=0,
$$
whence we get $u(t)=0$ for $t\in [\eta,\eta+\varepsilon]$, a contradiction. 
\end{proof}
 
Therefore, the above-mentioned discussion and Lemma~\ref{l6.1} results
in the following assertion.

\begin{theo} 
\label{t6.1}
Let \eqref{6.2}--\eqref{6.4} hold. Then, for each $t_0\in \RR$ and
$c\in \left(0,\kappa e^{-M_{\tau}}\right]$ with $M_{\tau}$ given by
\eqref{6.10}, there exists a positive global solution $u$ to
\eqref{6.1} such that 
$$
u(t_0)=c,\qquad  u(t)\leq \kappa \mfor t\leq t_0,
$$
and there exists a limit $u(-\infty)=0$.  
\end{theo}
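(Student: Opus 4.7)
The plan is to translate \eqref{6.1} into an instance of the general equation \eqref{1.3} with \eqref{1.2}, apply Theorem~\ref{t2.5.r} for existence and \eqref{2.8.r}, use Corollary~\ref{c2.5} to rule out everything except $u(-\infty)=0$, and invoke Lemma~\ref{l6.1} to upgrade non-negativity to strict positivity on all of $\RR$.

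First I would set up the reformulation. With $p_0,p_1,\mu_0,\mu_1,\nu,h$ defined by \eqref{6.6}--\eqref{6.8} and $q$ by \eqref{6.9}, I verify the hypotheses of Theorem~\ref{t2.5.r} one by one. The Carath\'eodory conditions on $h$ are automatic from $G\in\clocrprp$; the identity $h(t,0,0)=0$ follows from $G(0)=0$; the non-negativity \eqref{2.13} on $t\leq t_0$ uses $G(s)\geq s$ on $[0,\kappa]$, while \eqref{2.13.r} on $t\geq t_0$ is immediate because $p_0$ vanishes there and $h(t,x,y)=G(|y|)\geq0$. The growth bound \eqref{2.13.5} comes from \eqref{6.4} together with \eqref{3.5} for $q(t,\rho)=q_0(\rho)$. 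Since $\mu_1(t)=t$, both \eqref{2.17} and \eqref{2.16.r} reduce to $1\geq 1$ and $0\leq 1/e$, which are trivial; finally, \eqref{2.18} is $\sup_{t\leq t_0}\tau(t)<+\infty$, which is exactly \eqref{6.2}. A short computation shows $M_\mu$ from \eqref{2.19} equals $M_\tau$ from \eqref{6.10}.

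Applying Theorem~\ref{t2.5.r} then yields, for every admissible $c$, a global solution $u$ to \eqref{1.3}, \eqref{1.2} satisfying \eqref{2.8} and \eqref{2.8.r}. I would next verify that this $u$ actually solves \eqref{6.1}: splitting into $t\leq t_0$ (where $p_0=p_1=1$ and $u(t-\tau(t))\geq 0$ since $t-\tau(t)<t\leq t_0$) and $t>t_0$ (where $p_0=0$ and $u(\nu(t))\geq 0$ from \eqref{2.8} or \eqref{2.8.r}), in both cases the $|y|$ in \eqref{6.8} drops and the telescoping $p_0(t)u(\mu_0(t))-p_0(t)u(\nu(t))$ reduces \eqref{1.3} to $u'(t)=-u(t)+G(u(t-\tau(t)))$.

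To obtain $u(-\infty)=0$, I apply Corollary~\ref{c2.5} with $g\equiv 1$ and $h_1(x,y)=G(y)-y$ on $(0,\kappa)\times(0,\kappa)$, which is positive on the diagonal by \eqref{6.3} so that \eqref{2.129} holds, and \eqref{2.41} is the identity $h(t,x,y)=G(y)-y$ for $t\leq t_0$, $y\in(0,\kappa)$. The condition $\sup_{t\leq t_0}\int_{t-1}^tp_1\,ds=1<+\infty$ is trivial, and the alternative $\limsup_{t\to-\infty}\int_{t-1}^t g(s)ds=1>0$ is satisfied. Corollary~\ref{c2.5} then gives either \eqref{2.32} or \eqref{2.47}; but \eqref{6.2} forces $M_\tau>0$, hence $c\leq \kappa e^{-M_\tau}<\kappa$, which excludes \eqref{2.47} and leaves $u(-\infty)=0$.

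Finally, Lemma~\ref{l6.1} applied to the non-negative nontrivial solution $u$ (nontrivial because $u(t_0)=c>0$) yields $u(t)>0$ for every $t\in\RR$. The only step I expect to require care is the algebraic verification that the solution furnished by Theorem~\ref{t2.5.r} really satisfies the original equation \eqref{6.1}---specifically, handling the cut-off in the definition of $p_0$ and the $|y|$ inside $G$ by ensuring $u\geq 0$ on the relevant ranges; everything else is a direct translation through the preceding general theory.
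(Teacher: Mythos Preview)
Your approach is essentially the same as the paper's: the paper carries out exactly this reformulation via \eqref{6.6}--\eqref{6.9}, verifies the hypotheses of Theorem~\ref{t2.5.r} in the same order, checks that the resulting solution satisfies \eqref{6.1}, applies Corollary~\ref{c2.5} with $g\equiv 1$ and $h_1(x,y)=G(y)-y$ to get $u(-\infty)=0$, and invokes Lemma~\ref{l6.1} for strict positivity.

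There is one small gap. Theorem~\ref{t2.5.r} only covers $c\in\left(0,\kappa e^{-M_\mu}\right)$, an \emph{open} interval, whereas Theorem~\ref{t6.1} asserts the conclusion for the closed endpoint $c=\kappa e^{-M_\tau}$ as well. The paper handles this in Remark~\ref{r6.1}: because $\tau$ is continuous (so $\mu_0(t)=t-\tau(t)$ is continuous) and $p_1>0$, one may take $\sigma(t)=t-\tau(t)$ directly in the underlying Theorem~\ref{t2.1.r} rather than going through the $\varepsilon$-perturbation in the proof of Theorem~\ref{t2.5}, giving $M_\sigma=M_\tau$ exactly and hence admitting $c=\kappa e^{-M_\tau}$. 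You should add a sentence to this effect.
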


\begin{rem}
\label{r6.1}
In spite of Theorem~\ref{t2.5.r}, the value $c=\kappa e^{-M_\tau}$ is
admissible in Theorem~\ref{t6.1}, because the function $\tau$ is
continuous and $p_1(t)>0$ for $t\in\RR$. Consequently, a function
$\sigma$ can be directly defined as $\sigma(t)=t-\tau(t)$ for
$t\in\RR$ (see the proof of Theorem~\ref{t2.5} for more details).  
\end{rem}

\begin{rem} 
\label{r6.2}
Note that the typical condition on $G$: ``$G$ is differentiable at 0''
is not used in the proof of Theorem~\ref{t6.1}. Therefore, the
results presented complete or improve the already known results.
\end{rem}

{\small

}
\end{document}